\title{
Hodge-to-singular correspondence for reduced curves}
\author{Mirko Mauri and Luca Migliorini}
\newcommand{\TBC}[1]{}
\newcommand{\Z}{\mathbb{Z}}
\newcommand{\Gm}{\mathbb{G}_m}
\newcommand{\CC}{\mathbb{C}}
\newcommand{\QQ}{\mathbb{Q}}
\newcommand{\RR}{\mathbb{R}}
\newcommand{\ZZ}{\mathbb{Z}}
\newcommand{\PP}{\mathbb{P}}
\newcommand{\Spec}{\operatorname{Spec}}
\newcommand{\Proj}{\operatorname{Proj}}
\newcommand{\Gr}{\operatorname{Gr}}
\newcommand{\Gl}{\operatorname{GL}}
\newcommand{\Sl}{\operatorname{SL}}
\newcommand{\defo}{\mathrm{Def}}
\newcommand{\Tot}{\mathrm{Tot}}
\newcommand{\ext}{\mathcal{E}xt}
\newcommand{\IC}{\operatorname{IC}}
\newcommand{\IH}{I\!H}
\newcommand{\unJB}{\overline{J}_{d,\mathcal{B}}}
\newcommand{\unJvers}{\overline{J}_{d,B}}
\newcommand{\unJverse}{\overline{J}_{e,B}}
\newcommand{\unJversp}{\overline{J}_{d',B}}
\newcommand{\unJBGamma}{\overline{J}^{\circ}_{d}(\Gamma)}
\newcommand{\unJBe}{\overline{J}_{e,\mathcal{B}}}
\newcommand{\Gn}{\Gamma_{\underline{n}}}
\newcommand{\ffrac}[2]{\ensuremath{\frac{\displaystyle #1}{\displaystyle #2}}}
\newcommand{\MDol}{{M}}
\newcommand{\Mukai}{M(S, v, H)}
\newcommand{\Hom}{\operatorname{Hom}}
\newcommand{\codim}{\operatorname{codim}}
\newcommand{\rank}{\operatorname{rank}}
\newcommand{\Aut}{\operatorname{Aut}}
\newcommand{\Pic}{\operatorname{Pic}}
\DeclareMathOperator*{\bigboxtimes}{\scalerel*{\boxtimes}{\textstyle\sum}}
\theoremstyle:=definition,remark,plain\do{%
\expandafter\g@addto@macro\csname th@\theoremstyle\endcsname{%
\addtolength\thm@preskip\parskip
}%
}
\newtheorem{thm}{Theorem}[section]
\newtheorem{lem}[thm]{Lemma}
\newtheorem{cor}[thm]{Corollary}
\newtheorem{defn}[thm]{Definition}
\newtheorem{prop}[thm]{Proposition}
\newtheorem{quest}[thm]{Question}
\theoremstyle{definition}
\newtheorem{notation}[thm]{Notation}
\newtheorem{exa}[thm]{Example}
\newtheorem{rmk}[thm]{Remark}
\crefname{thm}{Theorem}{Theorems}
\Crefname{thm}{Theorem}{Theorems}
\Crefname{thm}{Theorem}{Theorems}
\Crefname{thm}{Theorem}{Theorems}
\crefname{lem}{Lemma}{Lemmas}
\Crefname{lem}{Lemma}{Lemmas}
\crefname{Conjecture}{Conjecture}{Conjectures}
\Crefname{Conjecture}{Conjecture}{Conjectures}
\crefname{Corollary}{Corollary}{Corollaries}
\Crefname{Corollary}{Corollary}{Corollaries}
\crefname{Claim}{Claim}{Claims}
\Crefname{Claim}{Claim}{Claims}
\crefname{Proposition}{Proposition}{Propositions}
\Crefname{Proposition}{Proposition}{Propositions}
\crefname{Remark}{Remark}{Remarks}
\Crefname{Remark}{Remark}{Remarks}
\crefname{Definition}{Definition}{Definitions}
\Crefname{Definition}{Definition}{Definitions}
\crefname{Example}{Example}{Examples}
\Crefname{Example}{Example}{Examples}
\crefname{Exercise}{Exercise}{Exercises}
\Crefname{Exercise}{Exercise}{Exercises}
\newtheoremstyle{plain2}    
   {}            
   {}            
   {\itshape}    
   {}            
   {\bfseries}   
   {.}           
   {5pt plus 1pt minus 1pt}  
   {{\thmnumber{#1} \thmname{#2}{\thmnote{ (#3)}}}}          
\begin{document}
\maketitle
\begin{abstract}
   We study the summands of the decomposition theorem for the Hitchin system for $\Gl_n$, in arbitrary degree, over the locus of reduced spectral curves. A key ingredient is a new correspondence between these summands and the topology of hypertoric quiver varieties.
     In contrast to the case of meromorphic Higgs fields, the intersection cohomology groups of moduli spaces of regular Higgs bundles depend on the degree. We describe this dependence.  
\end{abstract}
\vspace{0.5 cm}

Let $C$ be a compact Riemann surface of genus $g$ with canonical bundle $\omega_C$. Let $d$ and $n$ be integers with $n>1$.

The \emph{Dolbeault moduli space} $\MDol(n,d)$ is the coarse moduli space which parametrises S-equivalence classes of semistable Higgs bundles on $C$ of rank $n$ and degree $d$, i.e.\ polystable pairs $(\mathcal{E}, \phi)$ consisting of a vector bundle $\mathcal{E}$ of rank $n$ and degree $d$ and a section $\phi \in H^0(C, \mathcal{E}nd(\mathcal{E})\otimes \omega_C)$, called \emph{Higgs field}; see \cite{Simpson1994}. The Dolbeault moduli space is equipped with a projective fibration called \emph{Hitchin fibration} 
\begin{align}\label{Hitchinfibration}
    \chi(n,d)\colon  \MDol(n,d) & \to A_{n}\coloneqq \bigoplus^{n}_{i=1} H^0(C, \omega_C^{\otimes i}), \\ (\mathcal{E}, \phi) & \mapsto \operatorname{char}(\phi) = (\mathrm{tr}(\phi), \mathrm{tr}(\Lambda^2 \phi), \ldots, \mathrm{tr}(\Lambda^i \phi), \ldots, \det(\phi)), \nonumber
\end{align}
which assigns to $(\mathcal{E}, \phi)$ the characteristic polynomial $\operatorname{char}(\phi)$ of the Higgs field $\phi$.

The cohomology of $\MDol(n,d)$ has been extensively studied in the literature, especially under the assumption that $n$ and $d$ are coprime, namely when $\MDol(n,d)$ is smooth; see for instance \cite{Hitchin1987, HauselRodriguez-Villegas2008, HauselThaddeus04, HauselThaddeus03, Heinloth2016, Mellit2019, GWZ20, deCataldoHauselMigliorini2012, Mellit2019, deCataldoMaulikShen2019}.  Recently studies about the intersection cohomology $\IH^*(\MDol(n,d), \QQ)$ of singular Dolbeault moduli spaces have started to emerge; see for instance \cite{Felisetti2018, FelisettiMauri2020, MaulikShen2020II, Mauri2020, Mauri2021, MaulikShen2021:IHSL, Davison2021, Davison2021a, KK2021, Mejia2022}.\footnote{The lists of works are not intended to be exhaustive at all. We limit to list a series of selected papers that inspired the current article.} From this viewpoint, the decomposition theorem for the Hitchin map is a key tool to investigate the intersection cohomology of $\MDol(n,d)$: it allows to decompose $\IH^*(\MDol(n,d), \QQ)$ into building blocks, which are cohomology of some perverse sheaves on $A_{n}$.

In this article, we establish a correspondence which we find surprising between the summands of the decomposition theorem for the Hitchin map of $\MDol(n,d)$, and the singularities of $\MDol(n,0)$; see \cref{thm:IHde}. We call it the \emph{Hodge-to-singular correspondence}. This correspondence offers a new perspective on the cohomology of Dolbeault moduli spaces, even in the smooth case. What is surprising is that this relation is not realised via a direct geometric correspondence between $\MDol(n,d)$ and $\MDol( n,0)$, like a deformation or an alteration, but, as we explain below, by comparing the Hodge theory of $\MDol(n,d)$ and the singularity theory of $\MDol(n,0)$ on the common base of the Hitchin fibration.

The Hodge-to-singular correspondence gives a formula to express the dependence of the intersection cohomology of $M(n,d)$ on the degree $d$. This should be compared with the degree independence of the cohomology of spaces of meromorphic Higgs bundle \cite[Thm 0.1, 0.4]{MaulikShen2020II}. It also extends the main theorem of \cite{deCataldoHeinlothMigliorini19} to arbitrary degree, and provide a conceptual and elegant explanation for the occurence of the cographic matroid in \cite[\S 6]{deCataldoHeinlothMigliorini19}. Further, it provides new evidence for Davison's conjectures in \cite[Conj. 5.6, Rem. 5.10]{Davison2021a}, now proved in \cite[Thm 1.42, Thm 1.43, Cor. 14.9]{davison2022bps}, which however appeared after the first version of this paper.

Our correspondence suggests a unifying approach to the study of smooth and singular Dolbeault moduli spaces that may be useful to tackle the topological mirror symmetry conjecture \cite{HauselThaddeus03, Mauri2021} and the P=W conjecture \cite{deCataldoHauselMigliorini2012, deCataldoMaulik2018} simultaneously in both settings. 

In a different context, similar ideas have already been successfully employed to compute the Hodge numbers of the exceptional O'Grady 10 example of compact hyperk\"{a}hler manifolds, out of the Hodge numbers of the Hilbert scheme of five points on a K3 surface; see \cite{deCataldoRapagnettaSacca19}.  Indeed, both spaces can be interpreted as special moduli spaces of sheaves on K3 surfaces corresponding to different degrees $d$ (or rather Euler characteristics), and the computation of their Betti numbers can be reduced to determining how their cohomology depends on $d$. In fact, the Hodge-to-singular correspondence, formulated here for Dolbeault moduli spaces, can be stated also for Mukai systems on moduli space of sheaves on K3 or abelian surfaces; see Section \ref{sec:Mukai}.

\section{Main results}\label{sec:mainresults}
\subsection{Hodge-to-singular correspondence}\label{sec:degreedependence} 
In order to state our main results, we need to recall some features of the singularities of $\MDol(n,d)$. A point of $\MDol(n,d)$ corresponds to a strictly polystable Higgs bundle
\[(\mathcal{E}_1, \phi_1)^{\oplus m_1}\oplus \ldots \oplus (\mathcal{E}_r, \phi_r)^{\oplus m_r},\]
where $(\mathcal{E}_i, \phi_i)$ are distinct stable Higgs bundles of slope $\deg \mathcal{E}_i/\rank \mathcal{E}_i = d/n$, and it is singular if $(r, m_r)\neq(1,1)$. In particular, the $r$-uples of positive integers $\underline{m}= \{m_i\}$ and $\underline{n}=\{n_i\}\coloneqq \{\rank \mathcal{E}_i\}$ satisfy the following relations
\begin{equation}\label{eq:constrain}
    n=\sum^r_{i=1} m_i n_i \qquad n_i \cdot \frac{d}{n}=\deg \mathcal{E}_i \in \ZZ. 
\end{equation}
For $\underline{m}, \underline{n}\in \ZZ^r_{>0}$ satisfying \eqref{eq:constrain}, define $M^{\circ}_{\underline{m},\underline{n}}(d)$ the locus of polystable Higgs bundles in $\MDol(n,d)$ of multirank $\underline{n}$ and multiplicity $\underline{m}$, and call $M_{\underline{m},\underline{n}}(d)$ its closure in $\MDol(n,d)$. The loci $M^{\circ}_{\underline{m},\underline{n}}(d)$ are the strata of a complex Whitney stratification of $\MDol(n,d)$ 
\begin{equation}\label{eq:Whitneystra}
    \MDol(n,d)=\bigsqcup_{\underline{m},\underline{n}} M^{\circ}_{\underline{m},\underline{n}}(d),
\end{equation}
and an analytic normal slice $W_{\underline{m},\underline{n}}$ through $M^{\circ}_{\underline{m},\underline{n}}(d)$ is isomorphic to a Nakajima quiver variety; see \cite[Proof of Prop. 2.10]{Mauri2020}.
The Whitney stratification \eqref{eq:Whitneystra} induces a filtration of the base $A_n$ of the Hitchin fibration 
by closed subsets $S_{\underline{m},\underline{n}} \coloneqq \chi(n,0)(M_{\underline{m},\underline{n}}(0))$, and a stratification
\begin{equation}\label{eq:typestratification}
    A_n=\bigsqcup S^{\circ}_{\underline{m},\underline{n}} \qquad \text{ with } S^{\circ}_{\underline{m},\underline{n}} \coloneqq S_{\underline{m},\underline{n}} \setminus \bigcup_{S_{\underline{m}',\underline{n}'} \subsetneq S_{\underline{m},\underline{n}}} S_{\underline{m}',\underline{n}'}.
\end{equation}
Alternatively, $S^{\circ}_{\underline{m},\underline{n}}$ is the locus of characteristic polynomials whose irreducible factors have degree $n_i$ and multiplicity $m_i$, i.e.\ $a\in S^{\circ}_{\underline{m},\underline{n}}$ decomposes as $a= \prod^r_{i=1} a^{m_i}_i$, where $a_i$ is an irreducible polynomial of degree $n_i$. The locus $S_{\underline{m},\underline{n}}$ is the closure of $S^{\circ}_{\underline{m},\underline{n}}$ in $A_{n}$.
Let $A^{\mathrm{red}}_n\coloneqq \bigsqcup_{\underline{m}=(1, \ldots, 1)} S^{\circ}_{\underline{m},\underline{n}} \subset A_{n}$ be the open subset of reduced characteristic polynomials. Define \[M(n,d)^{\mathrm{red}}\coloneqq \chi(n,d)^{-1}(A^{\mathrm{red}}_n)\text{ and } M_{\underline{m},\underline{n}}(d)^{\mathrm{red}}\coloneqq M_{\underline{m},\underline{n}}(d)\cap M(n,d)^{\mathrm{red}}.\]
When $\underline{m}=\{1, \ldots, 1\}$, we abbreviate the notation by omitting the subscript $\underline{m}$, e.g.\
$S_{\underline{n}}\coloneqq S_{\underline{m},\underline{n}}$.
Remarkably, $W_{\underline{n}}$ admits diffeomorphic symplectic resolutions $\widetilde{W}_{\underline{n}}$, 
whose top cohomology $H^{\dim W_{\underline{n}}}(\widetilde{W}_{\underline{n}}, \QQ)$ is a representation of the fundamental group of $S^{\circ}_{\underline{n}}$; see \cref{FactToricQuiver} and \cref{thm:localmodelJM}.\footnote{In the following, we identify $\widetilde{W}_{\underline{n}}$ with the hypertoric quiver variety $Y(\Gamma^{\pm}_{\underline{n}}, \theta)$. For convenience, we do not introduce hypertoric quiver varieties here, and we postpone their definition to Section \ref{sec:toric hyperkahler variety}.} Denote by $\mathcal{L}_{\underline{n}}$ the corresponding local system on $S^{\circ}_{\underline{n}}$ (which carries a pure Hodge structure of weight zero). 

The Hodge-to-singular correspondence is a splitting of the complex of mixed Hodge module $R\chi(n,e)_*\QQ_{M(n,e)}|_{A^{\mathrm{red}}_n}$ whose direct summands are controlled by the topology of the singularities $W_{\underline{n}}$. 
 
For an integer $d$ let $\mathcal P_d$ be the set of partitions  $\underline{n}=\{n_i\}$ of $n$ such that $n_i d/n \in \ZZ$ for every $i$. Note that this set is in bijection with the partitions of $q = \gcd(n,d)$, where the correspondence exchanges the partition $\{a_i\}$ of $q$ with the partition $\{a_i n/q\} \in \mathcal P_d$.

\begin{thm}[Hodge-to-singular correspondence]\label{thm:IHde}
Let $\langle \bullet \rangle=[-2 \bullet](-\bullet)$. Let $d$ and $e$ be integers, with $e$ coprime to $n$. There is an isomorphism in $D^bMHM_{\mathrm{alg}}(A^{\mathrm{red}}_n)$ or in $D^b(A^{\mathrm{red}}_n)$ (ignoring the Tate shifts):
\begin{equation}\label{eq:HtS}
    R\chi(n,e)_*\QQ_{M(n,e)}|_{A^{\mathrm{red}}_n}\simeq \bigoplus_{\underline{n} \in \mathcal{P}_d} 
    R\chi(n,d)_*\IC(M_{\underline{n}}(d),\chi(n,d)^*\mathcal{L}_{\underline{n}})\langle \codim S_{\underline{n}}\rangle|_{A^{\mathrm{red}}_n}.
\end{equation}
Taking cohomology, we obtain an isomorphism of mixed Hodge structures
\begin{equation}\label{eq:decompglobal}
   H^{k}(M(n,e)^{\mathrm{red}}, \QQ)\simeq \bigoplus_{\underline{n} \in \mathcal{P}_d} 
\IH^{k - 2\codim S_{\underline{n}}}(M_{\underline{n}}(d)^{\mathrm{red}},\chi(n,d)^*\mathcal{L}_{\underline{n}})(-\codim S_{\underline{n}}). 
\end{equation}
which respects the
perverse filtrations\footnote{ We recall the definition of perverse filtration in \eqref{eq:perversefiltration}.} carried by these vector spaces.
\end{thm}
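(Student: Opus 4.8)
The plan is to extract both sides of \eqref{eq:HtS} from Saito's decomposition theorem and to match them one stratum at a time along the stratification \eqref{eq:typestratification} of $A^{\mathrm{red}}_n$, the comparison over each $S^{\circ}_{\underline n}$ being reduced, via the local models of \cref{thm:localmodelJM}, to the topology of the hypertoric quiver variety $\widetilde{W}_{\underline n}$. Since $e$ is coprime to $n$, $M(n,e)$ is smooth, so $R\chi(n,e)_*\QQ_{M(n,e)}$ underlies a pure Hodge module and, by Saito's theorem, is a direct sum of shifted intersection complexes $\IC(\overline Z,\mathcal V)$ of polarizable variations of Hodge structure on irreducible closed $\overline Z\subseteq A_n$; likewise each summand $R\chi(n,d)_*\IC(M_{\underline n}(d),\chi(n,d)^*\mathcal L_{\underline n})$ on the right is pure, hence semisimple. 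A semisimple complex of Hodge modules is determined by the isotypic pieces of its perverse cohomologies, that is by the pairs (support, intersection-cohomology local system), so it is enough to prove, by descending induction on $\dim S_{\underline n}$, that for every $\underline n$ the ``new'' local system cut out on $S^{\circ}_{\underline n}$ by ${}^{p}\!\mathcal H^{\bullet}$ of the two sides --- the part not intermediate-extended from the larger strata --- agree as graded variations of Hodge structure. Here one first invokes a support theorem over the reduced locus to ensure that all isotypic pieces are supported on the $S_{\underline n}$, and, on the right, that only $\underline n\in\mathcal P_d$ occur; this is where one uses that a reduced spectral curve forces multiplicity $(1,\dots,1)$ and hence, via \eqref{eq:constrain}, lands in $\mathcal P_d$.

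The base of the induction is the open stratum $S^{\circ}_{(n)}=A^{\mathrm{sm}}_n$ of smooth spectral curves: the only term on the right is $\underline n=(n)$, with $\mathcal L_{(n)}$ trivial, $\codim S_{(n)}=0$ and $M_{(n)}(d)=M(n,d)$ smooth over $A^{\mathrm{sm}}_n$, and both sides restrict to $\bigoplus_i \Lambda^i(R^1\pi_*\QQ)[-i]$ for the universal smooth spectral curve $\pi\colon\mathcal C\to A^{\mathrm{sm}}_n$, the Hitchin fibre $\overline{J}^{\,\bullet}(C_a)$ being a torsor under $\operatorname{Jac}(C_a)$ and so independent of the degree (for $d$ coprime to $n$ the sum has only this term, and one recovers the degree-independence of the smooth Dolbeault moduli spaces). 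For the inductive step fix $\underline n\neq(n)$ and assume agreement over $A^{\mathrm{red}}_n\setminus S_{\underline n}$. By \cref{thm:localmodelJM}, near the preimages of $S^{\circ}_{\underline n}$ both Hitchin maps are, up to a smooth factor pulled back from $S^{\circ}_{\underline n}$, modelled by the local Lagrangian fibration $\widetilde W_{\underline n}\to\CC^{\codim S_{\underline n}}$ and its degenerations: in degree $e$ the Hitchin fibre over a point of $S^{\circ}_{\underline n}$ is, transversally to its singular locus, a compactified Jacobian of the local reducible nodal curve for a generic chamber of the cographic arrangement, namely the symplectic resolution $\widetilde W_{\underline n}$ of $W_{\underline n}$, while in degree $d$ the stratum $M^{\circ}_{\underline n}(d)\subseteq M(n,d)$ has normal slice exactly $W_{\underline n}$, so that along $M^{\circ}_{\underline n}(d)$ the complex $\IC(M_{\underline n}(d),\chi(n,d)^*\mathcal L_{\underline n})$ is the intersection complex $\IC_{W_{\underline n}}$ twisted by $\mathcal L_{\underline n}$, crossed with the smooth factor. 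The comparison at $S^{\circ}_{\underline n}$ is thereby reduced to the single decomposition of $R(\widetilde W_{\underline n}\to\CC^{\codim S_{\underline n}})_*\QQ$, which by \cref{FactToricQuiver} is a sum of intersection complexes supported on the images of the coarsenings of $\underline n$, with local systems the top cohomologies of the corresponding toric fibres --- precisely the $\mathcal L_{\underline n'}$ --- on which $\pi_1(S^{\circ}_{\underline n})$ acts by permuting the chambers of the cographic arrangement of the dual graph of $C_a$.

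It remains to read off the two sides from this decomposition. On the right, near $S^{\circ}_{\underline n}$ the closure $M_{\underline n}(d)$ reduces to the smooth stratum $M^{\circ}_{\underline n}(d)$, fibered over $S^{\circ}_{\underline n}$ with fibre a finite quotient of $\prod_i\operatorname{Jac}(C_{a_i})$, the Jacobians of the components; hence the $\underline n$-summand contributes exactly $\mathcal L_{\underline n}$ tensored with this Jacobian cohomology, with the Tate twist $\langle\codim S_{\underline n}\rangle$ recording that $H^{\dim_{\CC}W_{\underline n}}(\widetilde W_{\underline n},\QQ)$ sits in cohomological degree $\dim_{\CC}W_{\underline n}=2\codim S_{\underline n}$. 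On the left the same term is identified with the ``new'' local system of $R\chi(n,e)_*\QQ_{M(n,e)}$ at $S^{\circ}_{\underline n}$ by the local model, the residual factor being again the cohomology of the Jacobian fibration of the normalizations. Matching the new local systems at every stratum and invoking semisimplicity yields \eqref{eq:HtS}; taking hypercohomology gives \eqref{eq:decompglobal}, and the compatibility with the perverse filtrations is automatic, since \eqref{eq:HtS} is realised as a direct sum of shifted intersection complexes whose shifts $\langle\codim S_{\underline n}\rangle$ place the $\underline n$-summand in the correct range of perverse degrees, so that ${}^{p}\tau_{\leq\bullet}$ acts diagonally.

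The step I expect to be the main obstacle is the support theorem over $A^{\mathrm{red}}_n$ in arbitrary degree, together with the precise identification of the isotypic pieces: the coprime case is essentially known (in the spirit of de Cataldo--Heinloth--Migliorini, Chaudouard--Laumon and Migliorini--Shende--Viviani), but the degree-$d$ statement and the control of the summands supported on the $S_{\underline n}$ must be wrung out of the local models of \cref{thm:localmodelJM}. A second delicate point is upgrading the matching from $\QQ$-local systems to an isomorphism of mixed Hodge modules: one may first prove \eqref{eq:HtS} with $\QQ$-coefficients, ignoring Tate twists as in the second alternative of the statement, and then promote it to $D^b\mathrm{MHM}_{\mathrm{alg}}(A^{\mathrm{red}}_n)$ using purity, the weight filtration, and the rigidity of the weight-zero hypertoric local systems $\mathcal L_{\underline n}$.
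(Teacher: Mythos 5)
Your proposal sets up a stratum-by-stratum comparison of the two sides of \eqref{eq:HtS} via Saito's decomposition theorem and the local models of \cref{thm:localmodelJM}, but it skips the single idea that makes the paper's argument close: the embedding of the Hitchin fibration into the family $\pi\colon\unJvers\to B$ of compactified Jacobians over a versal deformation, and the full support theorem for $\pi$. The paper does not match isotypic pieces of $R\chi(n,e)_*\QQ$ and of the degree-$d$ right-hand side ``by hand''. Rather, it proves (\cref{prop:fullsupportvers}) that $R\pi_*\IC(\unJvers,\QQ)$ has full support and is therefore \emph{degree-independent} --- this works because the relative dimension of $\pi$ is strictly smaller than $\dim B$, a numerical inequality that fails for $\chi(n,d)$ itself (\cref{rmk:comparisonmeromhol}). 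With that bridge in place, $(R\pi_*\IC(\unJBe))|_{A_a}\simeq(R\pi_*\IC(\unJB))|_{A_a}$, and the two sides of \eqref{eq:HtS} are obtained by the two restrictions: $\IC(\unJBe)|_{M(n,e)}\simeq\QQ_{M(n,e)}$ because $M(n,e)$ is a regularly embedded smooth subvariety, while $\IC(\unJB)|_{M(n,d)}$ is computed by the local Hodge-to-singular correspondence for $Y(\Gamma_{\underline n},0)\hookrightarrow X(\Gamma_{\underline n}^{\pm},0)$ (\cref{prop:perverselocal}, assembled into \cref{prop:perverse}). The local models you invoke \emph{are} local models of this embedding; without the global degree-independence of $R\pi_*\IC(\unJvers)$ they cannot by themselves tell you why the contributions at $S^{\circ}_{\underline n}$ of the $e$-side and the $d$-side are the \emph{same} local system.

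Two further steps that your plan names as obstacles are genuinely not deducible from \cref{thm:localmodelJM} alone, and the paper handles them by non-local inputs. First, the support theorem for $R\chi(n,d)_*\IC(M(n,d))$ over $A^{\mathrm{red}}_n$ in \emph{arbitrary} degree is obtained in the paper not from the local structure of the singularities but from the global fact (\cref{prop:inclusionofIC}, after Davison and Kinjo--Koseki) that $R\chi(n,d)_*\IC(M(n,d))$ is a direct summand of $R\chi(n,e)_*\QQ_{M(n,e)}$, which then lets one borrow de Cataldo--Heinloth--Migliorini's support constraints from the coprime case; your suggestion of ``wringing it out of the local models'' does not produce this. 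Second, your claim that the local systems $\mathcal{L}'_{\underline n}$ at $S^{\circ}_{\underline n}$ descend from $M^{\circ\circ}_{\underline n}(d)$ to $S^{\circ}_{\underline n}$ (so that one can speak of a local system on the base at all) is exactly \cref{cor:trivialityalongfibers}, whose proof requires the tubular-neighbourhood trivialization of $F_b\subset\unJvers$ established in \cref{sec:tubularneighbou}; the local analytic model at a single point does not give this global triviality along the Jacobian fibre, because (as the paper observes in \cref{rmk:analvshomeo}) the trivialization is only topological, not analytic. Finally, your description of the Hitchin map near a singular fibre as ``modelled by $\widetilde W_{\underline n}\to\CC^{\codim S_{\underline n}}$ up to a smooth factor'' is a statement about the embedding $M(n,d)\hookrightarrow\unJvers$ sliced along a multisection $A_a$, and the reduction of the decomposition of $R\chi(n,e)_*\QQ$ over $S^{\circ}_{\underline n}$ to that of $R(\widetilde W_{\underline n}\to\CC^{\codim S_{\underline n}})_*\QQ$ is not justified without the full-support bridge: the compactified Jacobian of the nodal spectral curve is a whole abelian-torsor fibration, not only the transverse hypertoric model. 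In short, the architecture you sketch is consistent with the paper's, but the load-bearing ingredients --- the degree-independence of $R\pi_*\IC(\unJvers)$ via the full support theorem, the Kinjo--Koseki/Davison direct-summand statement, and the tubular-neighbourhood trivialization --- are absent, and they are precisely what turns the local pictures into a proof.
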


Note that the summand of the RHS of \eqref{eq:HtS} corresponding to the trivial partition $\underline{n}=\{n\}$ is $R\chi(n,d)_*\IC(M(n,d),\QQ)$. This means that \eqref{eq:decompglobal} measures the difference between $H^{k}(M(n,e)^{\mathrm{red}}, \QQ)$ and $\IH^{k}(M(n,d)^{\mathrm{red}}, \QQ)$.

Singular Dolbeault moduli space for $d=0$ were introduced to study the topology of character varieties, to which they are homemorphic via the non-abelian Hodge correspondence. Historically, to avoid to deal with singular moduli spaces, the moduli problem has been slightly modified or twisted (cf. the "bait-and-switch" in \cite[\S 1]{HauselThaddeus04}), and the Dolbeault moduli spaces with $d$ coprime to $n$ have received more attention in the literature. The Hodge-to-singular correspondence suggests that the historical order, say from smooth to singular spaces, should be reversed at least conceptually: the intersection cohomology of singular Dolbeault moduli spaces should be considered as "primitive" objects into which the cohomology of smooth Dolbeault moduli spaces decomposes.

In \cite[Thm 0.1]{MaulikShen2020II}, Maulik and Shen proved an analogue of \cref{thm:IHde} for twisted Higgs bundles, i.e.\ pairs $(\mathcal{E}, \phi)$ where now $\phi$ is a global section of $\mathcal{E}nd(\mathcal{E})\otimes \mathcal{O}_C(D)$ for some effective divisor $D$ with $\deg D > 2g-2$, the so-called "Fano" condition. Unfortunately, the estimates in \cite[\S 4]{MaulikShen2020II} are not useful in our setting, namely $\mathcal{O}_C(D)\simeq \omega_C$ or "Calabi--Yau" condition; see \cite[\S 0.4]{MaulikShen2020II}, \cite[\S 11]{CL2016} and \cref{rmk:comparisonmeromhol}. This accounts for the existence of proper supports  for the complex $R\chi(n,e)_*\IC(M(n,e), \QQ)$ in $A_n$, which on the contrary do not appear under the "Fano" assumption by \cite[Thm. 0.4]{MaulikShen2020II}.

We can suggestively summarise the Hodge-to-singular correspondence in the following table.

\vspace{-0.5 cm}
\begin{table}[h]
\centering
\renewcommand{\arraystretch}{3}
   \begin{tabular}{ccc}
    \multicolumn{3}{c}{\textsc{Hodge theory vs singularities of $\MDol(n,d)$ }}\\
  $\operatorname{gcd}(n,d)=1$& \qquad $\operatorname{gcd}(n,d)\neq 1, d\neq 0$ \qquad & $d=0$\\\hline \vspace{-1 cm}\\ \shortstack{no singularities,\\more summands of dec. thm} & \shortstack{intermediate\\ behaviour\\$\qquad$\\$\qquad$} & \shortstack{more singular strata,\\ no proper summands of dec. thm}\\  
    \end{tabular}
    \end{table}

\subsection{Dependence of $\IH^*(\MDol(n,d), \QQ)$ on degree}\label{sec:dependence}
The Hodge-to-singular correspondence asserts that the intersection cohomology $\IH^{k}(M(n,d)^{\mathrm{red}}, \QQ)$ for arbitrary $d \in \ZZ$ decomposes into the direct sum of isotypic components of tensor products of $\IH^{k}(M(n',0)^{\mathrm{red}}, \QQ)$ for $n'\leq n$ under the action of a symmetric group; see \cref{rmk:repIH}. We can refine the result by showing that $\IH^{k}(M(n',0)^{\mathrm{red}}, \QQ)$ is determined only by the smooth locus of the Hitchin fibration, which in particular implies that it cannot be further decomposed according to the decomposition theorem for the Hitchin fibration. Remarkably, this is in contrast to the smooth case, namely $d$ coprime to $n$, where $R\chi(n,d)_*\QQ_{M(n,d)}$ admits summands properly supported on $A_{n}$ which cannot be detected on the smooth locus of Hitchin fibration.

To this end, recall that any $a \in A_n$ can be viewed as a monic polynomial of degree $n$ with coefficient of the degree $n-i$ term in $H^0(C, \omega^{\otimes i}_C)$, or equivalently as the curve $C_{a} \subset \operatorname{Tot}(\omega_C)$ of degree $n$ over $C$ cut by this monic equation, called \emph{spectral curve}. So an alternative definition of $S^{\circ}_{\underline{n}}$ is the locus parametrising spectral curves $C_{a}=\bigcup^r_{i=1} C_i$ such that the reduced irreducible component 
$C_i$ 
has degree $n_i$ over $C$. We define also the following spaces:
\begin{enumerate}[label=(\roman*)]
    \item $S^\times_{\underline{n}}\subseteq S^\circ_{\underline{n}}$ is the locus parametrising reducible nodal curves having smooth irreducible components of degree $n_i$ over $C$;
    \item $\pi_{\underline{n}} \colon \mathcal{C}^{\times}_{\underline{n}} \to S^\times_{\underline{n}}$ is the universal spectral curve over $S^\times_{\underline{n}}$;
    \item $\mathrm{Pic}^0(\mathcal{C}^{\times}_{\underline{n}}) \to S^\times_{\underline{n}}$ is the (relative) Jacobian of the universal spectral curve over $S^\times_{\underline{n}}$;
    \item $g_{\underline{n}}: A^{\times}_{\underline{n}} \to S^\times_{\underline{n}}$ is the maximal abelian proper quotient of $\mathrm{Pic}^0(\mathcal{C}^{\times}_{\underline{n}})$, or equivalently the Jacobian of the normalization of $\mathcal{C}^{\times}_{\underline{n}}$;
    \item $\Lambda^l_{\underline{n}} \coloneqq R^l(g_{\underline{n}})_* \QQ_{A^{\times}_{\underline{n}}}= \Lambda^l R^1(g_{\underline{n}})_* \QQ_{A^{\times}_{\underline{n}}}= \Lambda^l R^1(\pi_{\underline{n}})_* \QQ_{A^{\times}_{\underline{n}}}$; see \cite[Lemma 1.3.5]{deCataldoHauselMigliorini2012}.
\end{enumerate}

\begin{defn}
The \emph{Ng\^{o} strings}\footnote{The definition of Ng\^{o} strings differs from \cite[Def. A.0.4]{deCataldoRapagnettaSacca19} by a shift and a Tate twist. This convention avoids to write the shifts all the times.} for the partition $\underline{n}$ of $n$ is 
 the complex of mixed Hodge module supported on $S_{\underline{n}}$ given by 
\begin{equation}\label{eq:Ngo}
\mathscr{S}_{\underline{n}} \coloneqq \bigoplus^{2 \dim S_{\underline{n}}}_{l=0} \IC(S_{\underline{n}}, \Lambda^l_{\underline{n}}
\otimes \mathcal{L}_{\underline{n}})[-l]\langle \codim S_{\underline{n}} \rangle. 
\end{equation}
 \end{defn}
 Note that $\mathscr{S}_{\underline{n}}$ is independent of $d$, since $\mathcal{L}_{\underline{n}}$ and $\Lambda^l R^1(g_{\underline{n}})_* \QQ_{A^{\times}_{\underline{n}}}$ are so. 

 In the coprime case $\mathrm{gcd}(e,n)=1$, the summands of the decomposition theorem of the Hitchin fibration on the locus of reduced spectral curves have been described in \cite{deCataldoHeinlothMigliorini19}. As a first application of the Hodge-to-singular correspondence, we recover \cite[Prop. 4.1, Thm 6.12]{deCataldoHeinlothMigliorini19}. 
\begin{thm}[Ng\^{o} strings in degree $e$ coprime to $n$]\label{cor:degree1}
Let $e$ be an integer coprime to $n$. There is an isomorphism in $D^bMHM_{\mathrm{alg}}(A^{\mathrm{red}}_n)$ or in $D^b(A^{\mathrm{red}}_n)$ (ignoring the Tate shifts):
\begin{equation}
R\chi(n,e)_*\QQ_{\MDol(n,e)}|_{A^{\mathrm{red}}_n}\simeq \bigoplus_{\underline{n}\vdash \, n} \mathscr{S}_{\underline{n}}|_{A^{\mathrm{red}}_n}.    
\end{equation}
\end{thm}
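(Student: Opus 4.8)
The plan is to deduce \cref{cor:degree1} from the Hodge-to-singular correspondence (\cref{thm:IHde}) by identifying the individual summands $R\chi(n,d)_*\IC(M_{\underline n}(d), \chi(n,d)^*\mathcal L_{\underline n})\langle\codim S_{\underline n}\rangle$ appearing on the right-hand side of \eqref{eq:HtS}, after restricting to $A^{\mathrm{red}}_n$, with the Ng\^o strings $\mathscr S_{\underline n}|_{A^{\mathrm{red}}_n}$. Applying \cref{thm:IHde} with $d=0$ (so that $\mathcal P_0$ is the set of \emph{all} partitions $\underline n \dashv n$, since the condition $n_i\cdot 0/n\in\ZZ$ is vacuous), we get $R\chi(n,e)_*\QQ_{M(n,e)} \simeq \bigoplus_{\underline n\dashv n} R\chi(n,0)_*\IC(M_{\underline n}(0),\chi(n,0)^*\mathcal L_{\underline n})\langle\codim S_{\underline n}\rangle$. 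So the whole statement reduces to the assertion that, over $A^{\mathrm{red}}_n$,
\begin{equation}\label{eq:summandequalsNgo}
R\chi(n,0)_*\IC\bigl(M_{\underline n}(0),\chi(n,0)^*\mathcal L_{\underline n}\bigr)\big|_{A^{\mathrm{red}}_n}\ \simeq\ \mathscr S_{\underline n}\big|_{A^{\mathrm{red}}_n}
\end{equation}
for each partition $\underline n$ of $n$.

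To prove \eqref{eq:summandequalsNgo} I would argue stratum by stratum over the type stratification \eqref{eq:typestratification} restricted to $A^{\mathrm{red}}_n$. Fix $\underline n$. Over the open stratum $S^\circ_{\underline n}\cap A^{\mathrm{red}}_n$, the preimage $M_{\underline n}(0)$ restricted there is the locus of polystable Higgs bundles $\bigoplus (E_i,\phi_i)$ whose spectral curves are the (reduced, but a priori non-smooth) irreducible components; over the even smaller locus $S^\times_{\underline n}$ where the components are smooth and meet nodally, this is a torsor over $\mathrm{Pic}^0(\mathcal C^\times_{\underline n})$, and the coarse moduli description together with \cite[Lemma 1.3.5]{deCataldoHauselMigliorini2012} identifies $R\chi(n,0)_*\QQ$ over $S^\times_{\underline n}$ with $\bigoplus_l \Lambda^l_{\underline n}[-l]$ after passing to the abelian quotient $g_{\underline n}$ — this is exactly the "support on the maximal abelian quotient" phenomenon of Ng\^o's support theorem applied fiberwise, which is what makes the maximal abelian proper quotient $A^\times_{\underline n}$ and the local system $\mathcal L_{\underline n}$ (the top cohomology of the hypertoric/affine-part resolution) appear. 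Twisting by $\chi(n,0)^*\mathcal L_{\underline n}$ and using that $\IC$ of a local system over an open dense smooth locus is the intermediate extension, I would get that $\IC(M_{\underline n}(0),\chi(n,0)^*\mathcal L_{\underline n})$ pushes forward over the open stratum to $\bigoplus_l \IC(S_{\underline n},\Lambda^l_{\underline n}\otimes\mathcal L_{\underline n})[-l]$, matching the definition \eqref{eq:Ngo} up to the common Tate shift $\langle\codim S_{\underline n}\rangle$. The content is then a support/simplicity statement: the perverse summands of $R\chi(n,0)_*\IC(M_{\underline n}(0),\dots)$ have full support $S_{\underline n}$ and no extra summands supported on the deeper strata $S_{\underline n'}$. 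This I would establish by Ng\^o's support inequality / the $\delta$-regularity estimates for the family of compactified Jacobians of reduced spectral curves over $A^{\mathrm{red}}_n$ — precisely here the reducedness assumption (which bounds the $\delta$-invariants of the spectral curves sharply) is what forces the supports to be only the closures $S_{\underline n}$, in the same spirit as \cite[\S 4, \S 6]{deCataldoHeinlothMigliorini19}. Once supports are under control, the decomposition over the open stratum determines the complex globally, giving \eqref{eq:summandequalsNgo}.

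The main obstacle I anticipate is the support estimate: one must show that no simple summand of $R\chi(n,0)_*\IC(M_{\underline n}(0),\chi(n,0)^*\mathcal L_{\underline n})$ over $A^{\mathrm{red}}_n$ is supported on a proper closed subset strictly smaller than $S_{\underline n}$, and that on $S_{\underline n}$ itself only the expected wedge-powers appear. This requires the Calabi--Yau analogue of Ng\^o-type support bounds — the Fano-regime estimates of \cite[\S4]{MaulikShen2020II} do not apply, as the authors note — so one should instead use the codimension computations for the strata $S_{\underline m,\underline n}$ together with the local model: the normal slice is the hypertoric variety $Y(\Gamma^\pm_{\underline n},\theta)$, and the statement becomes a purely combinatorial/toric-topological vanishing controlled by the cographic matroid, exactly the mechanism identified in \cite[\S6]{deCataldoHeinlothMigliorini19}. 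A clean way to package this, avoiding a direct support-theorem argument, is to apply the Hodge-to-singular correspondence \emph{in two ways} — once with target degree $e$ and reference degree $0$ as above, and once recursively on the smaller ranks $n'<n$ appearing in the boundary strata — and match summands by induction on $n$; the base case $n$ prime (where $A^{\mathrm{red}}_n$ contains only the strata $\underline n=\{n\}$ and $\underline n=\{1,\dots,1\}$) is immediate from \cref{thm:IHde}. A secondary, more bookkeeping-type difficulty is tracking the Tate twists and perverse shifts so that \eqref{eq:summandequalsNgo} is an isomorphism of mixed Hodge modules and not merely of underlying complexes, but this follows formally from the corresponding compatibilities already built into \cref{thm:IHde}.
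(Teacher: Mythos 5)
Your reduction step is sound: applying \cref{thm:IHde} with $d=0$ does give $R\chi(n,e)_*\QQ_{M(n,e)} \simeq \bigoplus_{\underline n\dashv n} R\chi(n,0)_*\IC(M_{\underline n}(0),\chi(n,0)^*\mathcal L_{\underline n})\langle\codim S_{\underline n}\rangle$ on $A^{\mathrm{red}}_n$, and if you knew that each summand restricts to $\mathscr S_{\underline n}|_{A^{\mathrm{red}}_n}$ you would be done. The problem is that this last claim, specialised to $\underline n=\{n\}$, is exactly \cref{thm:Ngostring0} (full support of $R\chi(n,0)_*\IC(M(n,0),\QQ)$ on $A^{\mathrm{red}}_n$), and in the paper \cref{thm:Ngostring0} is \emph{deduced from} \cref{cor:degree1}, not the other way around. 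So as written your argument inverts the paper's logical order and is circular unless you supply an independent proof of the full-support claim. You acknowledge this is the hard part, but neither of the two routes you sketch closes the gap: the $\delta$-regularity / Ng\^o support inequalities only show that a support $Z$ satisfies $\codim Z=\delta_Z$, which allows many proper subvarieties of $S_{\underline n}$ and certainly does not by itself rule out extra summands; and the proposed induction on $n$ has a false base case --- for $n$ prime, $A^{\mathrm{red}}_n$ still contains the strata $S^\circ_{\underline n}$ for \emph{every} partition $\underline n$ of $n$ (reducedness of the spectral curve only forces multiplicities $\underline m=(1,\dots,1)$, it does not restrict which partitions of $n$ occur), so the claim that only $\{n\}$ and $\{1,\dots,1\}$ appear fails already for $n=3$.

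The paper's actual proof is shorter and goes a different route. It does not pass through \cref{thm:Ngostring0} at all. Instead it starts from the Ng\^o-string decomposition of \cref{thm:Ngostring}, which already says $R\chi(n,e)_*\QQ|_{A^{\mathrm{red}}_n}\simeq\bigoplus_{\underline n\in I_e}\mathscr S(\mathscr L_{\underline n}(e))$ for \emph{some} local systems $\mathscr L_{\underline n}(e)$ on $S^{\times}_{\underline n}$. The local-model analysis (\cref{prop:perverse} together with \cref{cor:trivialityalongfibers}) produces, on each $S^{\times}_{\underline n}$, the local system $\mathcal L_{\underline n}$ of rank $h^{b_1(\Gamma_{\underline n})}(Y(\Gamma_{\underline n},0),\QQ)$ as a direct summand of the perverse decomposition, hence a sub-local-system of $\mathscr L_{\underline n}(e)$. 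The rank of $\mathscr L_{\underline n}(e)$ is known from \cite[Thm 6.11]{deCataldoHeinlothMigliorini19} to be the dimension of the reduced top cohomology of the cographic matroid $|\mathscr C_{\Gamma_{\underline n}}|$, and \cref{FactToricQuiver}.\eqref{item:8} identifies that same reduced cohomology with $H^{2b_1}(Y(\Gamma_{\underline n},\theta),\QQ)$, i.e.\ with $\rank\mathcal L_{\underline n}$. The inclusion between local systems of equal rank is then an isomorphism, giving the theorem. This is why the paper explicitly says the proof of \cref{cor:degree1} is \emph{not} independent of \cite{deCataldoHeinlothMigliorini19}: the rank match is an import, not something re-derived from \cref{thm:IHde}. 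If you want a self-contained proof along your lines, you would in effect have to reprove the combinatorial computation of \cite[Thm 6.11]{deCataldoHeinlothMigliorini19} (or establish full support of $R\chi(n,0)_*\IC$ directly), which is precisely what the paper defers to the companion article \cite{MMP2022}.
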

Note that the proof of \cref{cor:degree1} below is not independent of \cite{deCataldoHeinlothMigliorini19}. What is new is the conceptual interpretation of that result: the rather mysterious occurrence of the cohomology of the cographic matroid in \cite[Thm 6.12]{deCataldoHeinlothMigliorini19} can be now interpreted in geometric terms as the local intersection cohomology of the singularities of $\MDol(n,0)$, or the top cohomology of their resolutions, in symbols  $H^{\dim W_{\underline{n}}}(\widetilde{W}_{\underline{n}}, \QQ)$. In the follow-up paper \cite{MMP2022}, we provide a combinatorial and alternative proof of Theorems \ref{cor:degree1} and \ref{thm:Ngostring0}, which is independent of \cite{deCataldoHeinlothMigliorini19}.

In view of \cite{deCataldoHeinlothMigliorini19}, where the locus of reduced spectral curves carries a plethora of proper summands of the decomposition theorem, it could appear bizarre that instead there is none if $d=0$.

\begin{thm}[Ng\^{o} strings in degree 0] \label{thm:Ngostring0} The complex $R\chi(n,0)_*\IC(\MDol(n,0),\QQ)$ has full support on the reduced locus $A^{\mathrm{red}}_n$, i.e.\
\[
R\chi(n,0)_*\IC(\MDol(n,0),\QQ)|_{A^{\mathrm{red}}_n} \simeq \mathscr{S}_{\{n\}}|_{A^{\mathrm{red}}_n} 
\]
\end{thm}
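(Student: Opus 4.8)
The plan is to deduce the statement from the Hodge-to-singular correspondence (\cref{thm:IHde}) by taking $d=0$ there. With $d=0$ we have $q=\gcd(n,0)=n$, so the index set $\mathcal{P}_0$ of partitions $\underline{n}=\{n_i\}$ of $n$ subject to the divisibility constraint $n_i\cdot 0/n\in\ZZ$ is \emph{all} partitions of $n$; in particular it contains all the $\underline{n}$ appearing on the RHS of \eqref{eq:HtS}. Thus, for $e$ coprime to $n$,
\[
R\chi(n,e)_*\QQ_{M(n,e)}|_{A^{\mathrm{red}}_n}\ \simeq\ \bigoplus_{\underline{n}\dashv n} R\chi(n,0)_*\IC\bigl(M_{\underline{n}}(0),\chi(n,0)^*\mathcal{L}_{\underline{n}}\bigr)\langle\codim S_{\underline{n}}\rangle\Big|_{A^{\mathrm{red}}_n}.
\]
On the other hand, \cref{cor:degree1} identifies the left-hand side with $\bigoplus_{\underline{n}\dashv n}\mathscr{S}_{\underline{n}}|_{A^{\mathrm{red}}_n}$. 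The summand of the second displayed formula indexed by the trivial partition $\underline{n}=\{n\}$ is exactly $R\chi(n,0)_*\IC(M(n,0),\QQ)|_{A^{\mathrm{red}}_n}$ (here $\mathcal{L}_{\{n\}}$ is the rank-one trivial system and $\codim S_{\{n\}}=0$), and the summand of the first indexed by $\underline{n}=\{n\}$ is $\mathscr{S}_{\{n\}}|_{A^{\mathrm{red}}_n}$. So it suffices to match the two decompositions summand by summand and cancel the off-diagonal terms.

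To make this cancellation rigorous I would argue by induction on $n$, or more precisely on the partial order on partitions given by refinement (equivalently by $\dim S_{\underline{n}}$). For a nontrivial partition $\underline{n}$, the stratum $S^\circ_{\underline{n}}$ parametrizes \emph{reducible} spectral curves $C_a=\bigcup_i C_i$; over such a curve $M_{\underline{n}}(0)$ fibers, up to the normal slice $W_{\underline{n}}$, over a product $\prod_i M(n_i,0)$ of lower-rank degree-zero moduli, and the restriction of $R\chi(n,0)_*\IC(M_{\underline{n}}(0),\chi(n,0)^*\mathcal{L}_{\underline{n}})$ to $A^{\mathrm{red}}_n$ can be computed by relative K\"unneth from the product Hitchin fibrations together with the local system $\mathcal{L}_{\underline{n}}$; using the inductive hypothesis (that $R\chi(n_i,0)_*\IC(M(n_i,0),\QQ)$ has full support equal to $\mathscr{S}_{\{n_i\}}$) and the identification of $\widetilde W_{\underline{n}}$ with a hypertoric quiver variety (\cref{FactToricQuiver}, \cref{thm:localmodelJM}), one sees that this summand equals precisely $\mathscr{S}_{\underline{n}}|_{A^{\mathrm{red}}_n}$. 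Granting this, both sides of the comparison decompose as $\bigoplus_{\underline{n}}\mathscr{S}_{\underline{n}}$, the nontrivial terms coincide term by term, and since the decomposition theorem summands have no shared supports one may cancel them, leaving $R\chi(n,0)_*\IC(M(n,0),\QQ)|_{A^{\mathrm{red}}_n}\simeq\mathscr{S}_{\{n\}}|_{A^{\mathrm{red}}_n}$ as claimed.

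The main obstacle is the summand-matching/cancellation step: a priori the isomorphism of \cref{thm:IHde} is only an abstract isomorphism of complexes, so I need to upgrade it to a statement compatible with the support decomposition, i.e.\ I must know that $R\chi(n,0)_*\IC(M_{\underline{n}}(0),\chi(n,0)^*\mathcal{L}_{\underline{n}})|_{A^{\mathrm{red}}_n}$ has \emph{strict} support $S_{\underline{n}}$ and splits off the $\mathscr{S}_{\underline{n}}$ block cleanly. This requires (a) a support estimate à la Ng\^o for the degree-zero fibration restricted to the reduced locus --- the analogue of the $\delta$-regularity/codimension inequality --- to rule out extra supports, and (b) the vanishing of the "variant/non-top" pieces, which is where the symplectic resolution $\widetilde W_{\underline n}$ and the fact that $\mathcal{L}_{\underline n}$ is precisely its top cohomology enter: it forces the local contribution over $S^\circ_{\underline n}$ to be concentrated in the single Tate-twisted summand recorded in $\mathscr S_{\underline n}$, with nothing left over to produce a new proper support for $\IC(M(n,0),\QQ)$ itself. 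Once these inputs are in place, the induction closes and the theorem follows.
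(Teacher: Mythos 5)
Your setup is right---applying \cref{thm:IHde} with $d=0$ and invoking \cref{cor:degree1} is exactly how the paper opens the argument, so you have the correct two identifications
\[
\bigoplus_{\underline{n}\dashv n}\mathscr{S}_{\underline{n}}|_{A^{\mathrm{red}}_n}
\;\simeq\;
R\chi(n,e)_*\QQ_{M(n,e)}|_{A^{\mathrm{red}}_n}
\;\simeq\;
\bigoplus_{\underline{n}\dashv n} R\chi(n,0)_*\IC(M_{\underline{n}}(0),\chi(n,0)^*\mathcal{L}_{\underline{n}})\langle\codim S_{\underline{n}}\rangle|_{A^{\mathrm{red}}_n}.
\]
From here, though, the paper's route is considerably shorter than the induction-plus-K\"unneth program you outline. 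The observation you are missing is that one already knows, \emph{without any induction and without computing anything}, that each $\mathscr{S}_{\underline{n}}$ is a direct summand of $R\chi(n,0)_*\IC(M_{\underline{n}}(0),\chi(n,0)^*\mathcal{L}_{\underline{n}})$: over the open dense locus of $S_{\underline{n}}$ where $\chi(n,0)|_{M_{\underline{n}}(0)}$ is smooth, this pushforward is simply $\bigoplus_{l}\Lambda^l_{\underline{n}}\otimes\mathcal{L}_{\underline{n}}[-l]$, and the decomposition theorem (Saito) then forces the full string $\mathscr{S}_{\underline{n}}$ to split off over all of $S_{\underline{n}}$ as a summand of the (semisimple) pushforward. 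Thus the right-hand side above contains $\bigoplus_{\underline{n}}\mathscr{S}_{\underline{n}}|_{A^{\mathrm{red}}_n}\langle\codim S_{\underline{n}}\rangle$ as a direct summand, and since it is abstractly isomorphic to this same object, Krull--Schmidt uniqueness of semisimple decompositions forces equality of every summand; taking $\underline{n}=\{n\}$ gives the theorem. Your worry in (a)--(b) about ``upgrading an abstract isomorphism'' is therefore unnecessary: semisimplicity makes cancellation automatic once the containment is established, and you need neither a fresh Ng\^{o}-type support estimate nor a vanishing argument for the non-top pieces. Your alternative programme (inducting on partitions and identifying $R\chi(n,0)_*\IC(M_{\underline{n}}(0),\cdot)$ with $\mathscr{S}_{\underline{n}}$ via relative K\"unneth over the $\mathfrak{S}_{\underline{n}}$-quotient) should in principle work and is close to the bookkeeping the paper later does in \cref{rmk:repIH} to extract ranks of Ng\^{o} strings, but for the full-support statement itself it is a detour; the ``sandwiching'' argument gets there in two lines.
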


\begin{quest}[Full support]\label{quest:fullsupport}
Does $R\chi(n,0)_*\IC(\MDol(n,0),\QQ)$ have full support on the whole Hitchin base $A_{n}$, i.e.\
\begin{equation*}
R\chi(n,0)_*\IC(\MDol(n,0),\QQ) \simeq
\mathscr{S}_{\{n\}}? 
\end{equation*}
\end{quest}

The multiplicative group $\mathbb{G}_m$ acts on $\MDol(n,d)$ by scaling the Higgs fields. Instead, the additive group $H^0(C, \omega_C)$ translates the Higgs fields, sending $(\mathcal{E}, \phi)$ to $(\mathcal{E}, \phi+ \omega \cdot \mathrm{id}_E)$ for all $\omega \in H^0(C, \omega_C)$. The $(H^0(C, \omega_C) \rtimes \mathbb{G}_m)$-actions descend to $A_{n}$, making the Hitchin fibration $(H^0(C, \omega_C) \rtimes \mathbb{G}_m)$-equivariant. In particular, the supports of $R\chi(n,d)_*\IC(\MDol(n,d),\QQ)$ are $(H^0(C, \omega_C) \rtimes \mathbb{G}_m)$-invariant. Then its minimal potential support is $H^0(C, \omega_C) \hookrightarrow \bigoplus^{n}_{i=1} H^0(C, \omega_C^{\otimes i})=A_{n}$, and it lies outside $A^{\mathrm{red}}_{n}$.

As positive partial answer to \cref{quest:fullsupport}, we observe that no summand of $R\chi(n,d)_*\IC(\MDol(n,d),\QQ)$ is supported on $H^0(C, \omega_C)$.
This generalises a result by Heinloth \cite[Thm 1]{Heinloth2016} to arbitrary degree. The proof relies on a result by Kinjo and Koseki \cite{KK2021} and Davison \cite{Davison2021} asserting that $R\chi(n,d)_*\IC(M(n,d),\QQ)$ is a direct summand $R\chi(n,e)_*\QQ_{M(n,e)}$ for $\gcd(n,e)=1$, see \cref{prop:inclusionofIC}. 

\begin{prop}[Vanishing of intersection form]\label{cor:vanishingintersectionform} The intersection form on $\IH^*(M(n,d), \QQ)$ is trivial. Equivalently the forgetful map
$
\IH_c^*(M(n,d), \QQ) \to \IH^*(M(n,d), \QQ) 
$ is 0.
\end{prop}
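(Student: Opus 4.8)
The plan is to deduce the vanishing of the intersection form on $\IH^*(M(n,d),\QQ)$ from the support properties of the Hitchin fibration encoded in the Hodge-to-singular correspondence, in the spirit of Heinloth's argument \cite[Thm 1]{Heinloth2016}. The intersection form on $\IH^*(M(n,d),\QQ)$ is, by definition, the composition of cup product with the Poincar\'e duality pairing; its triviality is equivalent to the statement that the canonical map $\IH^*_c(M(n,d),\QQ)\to \IH^*(M(n,d),\QQ)$ is zero, since $M(n,d)$ is connected and the map factors the intersection pairing. First I would recall that, by the decomposition theorem, $R\chi(n,d)_*\IC(M(n,d),\QQ)$ splits as a direct sum of shifted $\IC$ sheaves on $A_n$ with strict supports $\overline{Z}\subseteq A_n$, and the contribution to the intersection form coming from a summand supported on $\overline{Z}$ factors through the (co)homology of $\overline{Z}$; a summand contributes nontrivially to the intersection form only if it is supported on a \emph{proper} (compact) subvariety — in fact, in Heinloth's setup, only a summand supported at the point $\{0\}\in A_n$ can contribute, because $A_n$ is an affine space, the Hitchin map is the affinization-type map equivariant for the $\Gm$-action scaling the Higgs field, and every positive-dimensional support, being $\Gm$-invariant and hence a cone, is non-compact so its compactly-supported-to-ordinary cohomology map vanishes.

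The heart of the argument is then to show that $R\chi(n,d)_*\IC(M(n,d),\QQ)$ has \emph{no} summand supported at $\{0\}$. For $d$ coprime to $n$ this is \cite[Thm 1]{Heinloth2016}; for general $d$ I would use the Hodge-to-singular correspondence. Over the reduced locus $A^{\mathrm{red}}_n$, Theorem \ref{thm:IHde} together with Theorem \ref{thm:Ngostring0} identifies each building block $R\chi(n,d)_*\IC(M_{\underline{n}}(d),\chi(n,d)^*\mathcal{L}_{\underline{n}})$ with Ng\^{o} strings $\mathscr{S}_{\underline{n}}$, which are $\IC$ sheaves supported on the $S_{\underline{n}}$; none of these is supported at $\{0\}$ (the smallest stratum $S_{\{n\}}$ is all of $A^{\mathrm{red}}_n$, and the $S_{\underline{n}}$ for $\underline{n}\neq\{n\}$ have complements that are nonempty but still do not collapse to the origin inside $A^{\mathrm{red}}_n$; more importantly, no summand of the $\mathscr{S}_{\underline{n}}$ is a skyscraper at $0$). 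What remains is the behaviour over $0\in A_n$ itself, which lies outside $A^{\mathrm{red}}_n$. Here I would argue directly: by the $\Gm$-equivariance and the properness of $\chi(n,d)$, a summand of $R\chi(n,d)_*\IC(M(n,d),\QQ)$ supported at $\{0\}$ would split off $\IH^*$ of the central fiber (the nilpotent cone) as a direct summand of $\IH^*(M(n,d),\QQ)$; one then invokes the fact that $\IH^*(M(n,d),\QQ)\simeq \IH^*(\text{nilpotent cone})$ only via the contraction onto the core, combined with the fact that the $\Gm$-action retracts $M(n,d)$ onto the core $\chi(n,d)^{-1}(0)$, so that the forgetful map $\IH^*_c\to\IH^*$ being nonzero would force a nontrivial self-intersection on a $\Gm$-invariant compact cycle in a space homotopy equivalent to a non-compact cone — a contradiction.

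The main obstacle, I expect, is handling the fiber over $0\in A_n$, which is \emph{not} in $A^{\mathrm{red}}_n$, so the Hodge-to-singular correspondence does not directly apply there; one genuinely needs an argument at the origin, and Heinloth's proof uses the explicit $\Gm$-action and the structure of the nilpotent cone. The cleanest route is probably to bypass the support question entirely and mimic Heinloth's cohomological argument verbatim: realize $M(n,d)$ with its $\Gm$-action (Higgs field scaling), note that the limit $\lim_{t\to 0} t\cdot(\mathcal{E},\phi)$ exists and defines a deformation retraction of $M(n,d)$ onto the ``core'' $\mathcal{N}:=\chi(n,d)^{-1}(0)$ — this uses properness of $\chi(n,d)$ and semiprojectivity, which hold in all degrees — so that $\IH^*(M(n,d),\QQ)\cong \IH^*(\mathcal{N},\QQ)$, and then observe that any class in the image of $\IH^*_c(M(n,d),\QQ)\to\IH^*(M(n,d),\QQ)$ is supported on a compact subset, hence (being $\Gm$-invariant after averaging) on a $\Gm$-invariant compact, hence on $\mathcal{N}$ itself; finally use that $\mathcal{N}$ has a contracting $\Gm$-action onto a point or onto a lower-dimensional locus, forcing the relevant intersection numbers against the noncompactly-supported classes to vanish. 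Each of these steps is available in degree $d$ by the same deformation-theoretic input as in the coprime case (properness and $\Gm$-action are degree-independent; the intersection cohomology versions of the homotopy retraction are in \cite{FelisettiMauri2020} or follow from standard semiprojectivity arguments), so no new geometric input beyond what is already cited in the paper is needed.
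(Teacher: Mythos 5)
Your reduction to the support statement (the intersection form vanishes iff $\{0\}\subset A_n$ is not a support of $R\chi(n,d)_*\IC(M(n,d),\QQ)$, which is \cite[Prop.~4]{Heinloth2015}) is exactly right, and you correctly identify the real difficulty: the Hodge-to-singular correspondence lives over $A_n^{\mathrm{red}}$ and tells you nothing about the fiber over $0$. The gap is in your proposed workaround for that point.

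The final paragraph does not constitute a proof. First, Heinloth's argument in \cite[Thm~1]{Heinloth2016} is not a contraction-of-the-core argument; it relies on the smoothness of $M(n,e)$ when $\gcd(n,e)=1$, on the identification $H^*=\IH^*$ there, and on a cohomological comparison with the stack of bundles. None of these transfers ``verbatim'' to the singular $M(n,d)$; whether the analogous estimates hold for $\IH^*$ of a singular moduli space is precisely what needs proving, so asserting that ``the same deformation-theoretic input'' suffices begs the question. Second, the claim that the nilpotent cone $\mathcal{N}=\chi(n,d)^{-1}(0)$ ``has a contracting $\Gm$-action onto a point or onto a lower-dimensional locus'' is false in general (the $\Gm$-fixed locus includes the moduli of semistable bundles, which has half the dimension of $M$), and in any case the chain ``compact support $\Rightarrow$ $\Gm$-invariant $\Rightarrow$ contained in $\mathcal{N}$ $\Rightarrow$ intersection zero'' skips exactly the non-obvious step. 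What you are missing is the ingredient the paper actually uses: \cref{prop:inclusionofIC} (quoting Davison and Kinjo--Koseki), which says $R\chi(n,d)_*\IC(M(n,d),\QQ)$ is a direct summand of $R\chi(n,e)_*\QQ_{M(n,e)}$ for $e$ coprime to $n$. Once you have that, the non-coprime case follows immediately from the coprime one: a summand of a complex with no skyscraper at $\{0\}$ has no skyscraper at $\{0\}$. Without this (or a genuine substitute), the proof is incomplete.
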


In \cref{thm:Fullsupportranktwo} we answer positively \cref{quest:fullsupport} in rank two and illustrate its far-reaching consequences. Note that the intersection cohomology groups of $M(2,d)$ have been computed in \cite[Thm 7.6]{Hitchin1987} and \cite[Thm 1.8, 1.9]{Mauri2020}. 

\begin{thm}[Ng\^{o} strings in rank two]\label{thm:Fullsupportranktwo}
There exist isomorphisms 
\begin{align*}
R\chi(2,1)_*\QQ_{\MDol(2,1)} \simeq \mathscr{S}_{\{2\}} \oplus \mathscr{S}_{\{1,1\}}, \qquad R\chi(2,0)_*\IC({\MDol(2,0)},\QQ) \simeq \mathscr{S}_{\{2\}},
\end{align*}
in the bounded derived category $D^b\mathrm{MMH}(A_n)$ of Hodge modules on $A_n$.
As a result, there exists a surjective map
\[H^*(\MDol(2,1),\QQ) \twoheadrightarrow IH^*(\MDol(2,0), \QQ)\]
strictly filtered by the perverse filtrations associated to $\chi(2,d)$.
\end{thm}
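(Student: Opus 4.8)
The plan is to upgrade the isomorphisms of \cref{cor:degree1} and \cref{thm:Ngostring0}, which a priori hold only over $A^{\mathrm{red}}_2$, to isomorphisms over the whole base $A_2$, and then to read off the last assertion formally. For $n=2$ the complement $\Delta\coloneqq A_2\setminus A^{\mathrm{red}}_2$ is the locus $\{a_2=a_1^2/4\}$ of non-reduced spectral curves $2\Gamma$, $\Gamma\cong C$ smooth; it is the image of the embedding $H^0(C,\omega_C)\hookrightarrow A_2$, $a_1\mapsto (a_1,a_1^2/4)$, so $\Delta\cong\CC^g$. First I would apply Saito's decomposition theorem to $R\chi(2,1)_*\QQ_{\MDol(2,1)}$ and $R\chi(2,0)_*\IC(\MDol(2,0),\QQ)$, restrict to $A^{\mathrm{red}}_2$, and invoke \cref{cor:degree1} and \cref{thm:Ngostring0}: by uniqueness of such decompositions, the summands whose support meets $A^{\mathrm{red}}_2$ are exactly the shifted simple constituents of $\mathscr{S}_{\{2\}}$ (resp.\ of $\mathscr{S}_{\{2\}}\oplus\mathscr{S}_{\{1,1\}}$), because each $\mathscr{S}_{\underline n}$ is the intermediate extension to $A_2$ of its restriction to $A^{\mathrm{red}}_2$ (the open strata $S^{\circ}_{\underline n}$ lie in $A^{\mathrm{red}}_2$). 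Hence one obtains semisimple complexes $\mathcal R_0,\mathcal R_1$ supported on $\Delta$ with $R\chi(2,1)_*\QQ\simeq\mathscr{S}_{\{2\}}\oplus\mathscr{S}_{\{1,1\}}\oplus\mathcal R_1$ and $R\chi(2,0)_*\IC(\MDol(2,0),\QQ)\simeq\mathscr{S}_{\{2\}}\oplus\mathcal R_0$, and everything reduces to proving $\mathcal R_0=\mathcal R_1=0$.

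Next I would rigidify $\mathcal R_d$ by equivariance. The Higgs-field translation $(\mathcal E,\phi)\mapsto(\mathcal E,\phi+s\cdot\id)$, $s\in H^0(C,\omega_C)$, is an automorphism of $\MDol(2,d)$ preserving stability and $\IC(\MDol(2,d),\QQ)$, and $\chi(2,d)$ is equivariant for the induced action of $H^0(C,\omega_C)$ on $A_2$, which is simply transitive on $\Delta$. Hence $\mathcal R_d$ is equivariant: its only possible support is $\Delta$ itself, and its coefficients, being an equivariant local system on a torsor under a simply connected group, are trivial. Therefore $\mathcal R_d$ is a direct sum of shifts and Tate twists of $\QQ_\Delta[g]=\IC(\Delta,\QQ)$. (The equivariance, or \cref{cor:vanishingintersectionform}, already rules out a summand supported at the origin; the content of the theorem is to rule out all of $\Delta$.)

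The hard part will be showing $\mathcal R_d=0$. I would apply $\mathbb H^*(A_2,-)$: since $\mathbb H^*(A_2,R\chi(2,1)_*\QQ)=H^*(\MDol(2,1),\QQ)$, $\mathbb H^*(A_2,R\chi(2,0)_*\IC)=\IH^*(\MDol(2,0),\QQ)$, and $\mathbb H^*(A_2,\mathcal R_d)$ is a direct sum of shifts and twists of $\QQ$ (as $\Delta\cong\CC^g$ is contractible), so that $\mathcal R_d=0$ if and only if $\mathbb H^*(A_2,\mathcal R_d)=0$, it is enough to verify that
\[
\dim H^*(\MDol(2,1),\QQ)=\dim\mathbb H^*(A_2,\mathscr{S}_{\{2\}})+\dim\mathbb H^*(A_2,\mathscr{S}_{\{1,1\}})
\]
and
\[
\dim\IH^*(\MDol(2,0),\QQ)=\dim\mathbb H^*(A_2,\mathscr{S}_{\{2\}}).
\]
The left-hand sides are known by \cite[Thm.~7.6]{Hitchin1987} and \cite[Thm.~1.8, 1.9]{Mauri2020}. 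The right-hand terms I would compute directly from \eqref{eq:Ngo}: $\mathbb H^*(A_2,\mathscr{S}_{\{2\}})$ is the intersection cohomology of $A_2$ with coefficients in the wedge powers of the first cohomology of the family of smooth rank-$2$ spectral curves, and $\mathbb H^*(A_2,\mathscr{S}_{\{1,1\}})$ is built from the Jacobians of the normalised reducible spectral curves twisted by the cographic-matroid local system $\mathcal L_{\{1,1\}}$; both are computations of the type carried out in \cite{deCataldoHauselMigliorini2012, deCataldoHeinlothMigliorini19}. Matching them against Hitchin's and Mauri's tables forces $\mathcal R_0=\mathcal R_1=0$; I expect this bookkeeping to be the technical heart of the argument. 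A more conceptual alternative: ``$\Delta$ is a support of $\mathcal R_d$'' is, after passing to an analytic normal slice $W_{\underline m,\underline n}$ (with $\underline m=(2)$, $\underline n=(1)$) to the stratum of polystable bundles of the form $(E_1,\phi_1)^{\oplus 2}$, equivalent to ``the origin is a support of the decomposition of the Hitchin-type map of the Nakajima quiver variety $W_{\underline m,\underline n}$'' (smooth for $d=1$, with $\IC$-coefficients for $d=0$), which should be excluded by the $\Gm$-contraction/intersection-form argument of \cite{Heinloth2016} and \cref{cor:vanishingintersectionform} applied to $W_{\underline m,\underline n}$.

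Finally, granting $\mathcal R_0=\mathcal R_1=0$, the direct-sum projection $R\chi(2,1)_*\QQ\simeq\mathscr{S}_{\{2\}}\oplus\mathscr{S}_{\{1,1\}}\twoheadrightarrow\mathscr{S}_{\{2\}}\simeq R\chi(2,0)_*\IC(\MDol(2,0),\QQ)$ is a morphism in the derived category of Hodge modules on $A_2$ compatible with the perverse truncations ${}^p\tau_{\le k}$, since each $\mathscr{S}_{\underline n}$ is itself a direct sum of shifted simple perverse sheaves. Applying $\mathbb H^*(A_2,-)$ then yields the surjection $H^*(\MDol(2,1),\QQ)\twoheadrightarrow\IH^*(\MDol(2,0),\QQ)$, which carries the perverse filtration of the source onto that of the target and is therefore strict for the perverse filtrations associated to $\chi(2,1)$ and $\chi(2,0)$.
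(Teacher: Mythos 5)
Your reduction to showing that $\Delta\coloneqq A_2\setminus A^{\mathrm{red}}_2$ is not a support of the pushforward is correct, and your identification $\Delta\cong H^0(C,\omega_C)\cong\CC^g$ is also correct; in particular the paper's claim that $A_2\setminus A^{\mathrm{red}}_2=\{0\}$ is literally false for $g\geq 2$, and citing \cref{cor:vanishingintersectionform} alone does not close the gap — as you note, that corollary excludes $\{0\}$, whereas by your equivariance argument a potential extra summand would be a shifted copy of $\QQ_\Delta$, for which the forgetful map $\IH^*_c\to\IH^*$ is automatically zero since $\Delta$ is contractible, so the intersection form detects nothing. Thus your critique ``the content of the theorem is to rule out all of $\Delta$'' is exactly right.

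Where you stop short is that the translation equivariance you invoke actually finishes the argument by itself, more cheaply than either the cohomological bookkeeping or the normal-slice route you sketch (the latter would need real work: $W_{(2),(1)}$ is a slice in $\MDol(2,d)$, not in $A_2$, and ``the Hitchin-type map of the quiver variety'' is not an object the paper has set up). The $H^0(C,\omega_C)$-action on $A_2$ is free, so the equivariant complex $R\chi(2,d)_*\IC(\MDol(2,d),\QQ)$ descends along $A_2\to A_2/H^0(C,\omega_C)\simeq H^0(C,\omega_C^{\otimes 2})=\check A_2$, which sends $\Delta$ to $\{0\}$; on $\check A_2$ the descended complex is, up to a K\"{u}nneth factor coming from $T^*\operatorname{Pic}^d(C)$, the Hitchin pushforward for $\check M(2,d)$, and the $\mathrm{SL}_n$/$\mathrm{PGL}_n$ version of \cref{cor:vanishingintersectionform} (stated in the remark immediately following it) rules out $\{0\}\subset\check A_2$ as a support, hence $\Delta\subset A_2$. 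This is almost certainly what the paper's terse proof intends. Minor normalisation point: with the paper's convention $\IC(\Delta,\QQ)=\QQ_\Delta$, not $\QQ_\Delta[g]$. Your final paragraph deriving the strict surjection $H^*(\MDol(2,1),\QQ)\twoheadrightarrow\IH^*(\MDol(2,0),\QQ)$ from the split projection $\mathscr{S}_{\{2\}}\oplus\mathscr{S}_{\{1,1\}}\twoheadrightarrow\mathscr{S}_{\{2\}}$ is fine, and spells out a step the paper leaves implicit.
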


The Hodge-to-singular correspondence provides also a recursive strategy to express $\IH^*(\MDol^{\mathrm{red}}(n,d), \QQ)$ in any degree in terms of $\IH^*(\MDol^{\mathrm{red}}(n',0), \QQ)$ for $n' \leq n$. We outline it in Section \ref{rmk:dependence}. A closed non-recursive formula appear in \cite[Thm 1.1, Thm 1.6, Thm 1.8]{MMP2022}. In loc.\ cit.\ the authors and Pagaria determine explicitly the local systems in the Ng\^{o} strings for $R\chi(n,d)_*\IC(\MDol(n,d),\QQ)|_{A^{\mathrm{red}}_n}$. This completes the description of the summands of the decomposition theorem for the Hitchin fibration in arbitrary rank and degree over the locus of reduced spectral curve. 

An extension of the Hodge-to-singular correspondence to the whole $A_n$ would provide a recursive formula to compute $\IH^*(M(n,d), \QQ)$ for arbitrary degree $d$. After this paper have become available, Davison, Hennecart and Mejia achieved this latter goal; see \cite[\S 14.1]{davison2022bps}. Despite the different representation theoretic flavour of \cite{davison2022bps}, the current paper and \cite{davison2022bps} share ultimately the same central idea: to reduce to a local statement regarding Nakajima quiver varieties. The Hodge-to-singular correspondence has the virtue to offer a more geometric characterization of the key object in \cite{davison2022bps}, namely the BPS sheaf, as well as a closed formula for the BPS sheaf on each moduli space $M(n,d)$ rather than a generating formula; see \cref{prop:perverselocal}, \cref{prop:perverse} and \cite{MMP2022}. On the other hand, so far the Hodge-to-singular correspondence has been established only on the locus of reduced characteristic polynomials, while the results of \cite{davison2022bps} hold unconditionally over the whole Hitchin base. Observe however that the work of Davison, Hennecart and Mejia does not prescribe the shape of the summands of the decomposition theorem of the Hitchin fibration (for instance it does not answer \cref{quest:fullsupport}), which would be of independent interest for applications to the P=W and topological mirror symmetry conjecture. 

 Moreover, the Hodge-to-singular correspondence implies that $\IH^*(\MDol^{\mathrm{red}}(n,d), \QQ)$ depends only on the $\gcd(n,d)$; see \cref{cor:independence}. The independence of the cohomology ring $H^*(M(n,e), \QQ)$ for $\gcd(n,e)=1$ is a classical corollary of the non-abelian Hodge correspondence, observed for instance in \cite[Remark 4.8]{Hausel13}; see also \cite[Theorem 1.7]{GWZ20}, \cite[Corollary 1.2]{Mellit2020} and \cite{Yu2022}. The independence of the complex $R \chi(n,e)_* \IC(\MDol(n,e), \QQ)|_{A^{\mathrm{red}}_{n}}$ for $\gcd(n,e)=1$ has been proved using vanishing cycle techniques in \cite[Theorem 0.5, Remark 4.9]{MaulikShen2020I}; see also \cite[Theorem 1.1, Example 5.18]{KK2021} and the comment \cite[\S 1.5.(2)]{KM2023}. The latest and most refined algebraic proof \cite[Theorem 0.1]{dCMSZ2021} asserts that a canonical isomorphism $H^*(M(n,e), \QQ) \simeq H^*(M(n,e'), \QQ)$ for $\gcd(n,e)=\gcd(n,e')=1$ preserves simultaneously the ring structure, tautological classes, and perverse filtrations.  

In the singular case, the same argument in \cite[Remark 4.8]{Hausel13} (cf \cite[\S 0.4]{FSY2022}) shows that  \begin{equation}\label{eq:independglobal}
    \IH^*(M(n,d), \QQ) \simeq \IH^*(M(n,d'), \QQ)
\end{equation} for any integers $d$ and $d'$ with $\gcd(n,d)=\gcd(n,d')$. Then \cref{cor:independence} promotes the independence \eqref{eq:independglobal} at the sheaf level on the locus of reduced characteristic polynomials. As a corollary, this makes the isomorphism $\IH^*(\MDol^{\mathrm{red}}(n,d), \QQ) \simeq \IH^*(\MDol^{\mathrm{red}}(n,d'), \QQ)$ filtered with respect to the perverse filtration. 
\begin{cor}\label{cor:independence}
Let $d$ and $d'$ be integers with $\mathrm{gcd}(d,n)=\mathrm{gcd}(d',n)$. Then we have
\[R\chi(n,d)_*\IC(M(n,d), \QQ)|_{A^{\mathrm{red}}_n}\simeq R\chi(n,d')_*\IC(M(n,d'), \QQ)|_{A^{\mathrm{red}}_n}.\]
\end{cor}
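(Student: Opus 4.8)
The plan is to deduce \cref{cor:independence} directly from the Hodge-to-singular correspondence (\cref{thm:IHde}), exploiting the fact that the right-hand side of \eqref{eq:HtS} depends on $d$ only through the set of partitions $\mathcal P_d$, together with the combinatorial observation that $\mathcal P_d$ depends on $d$ only through $\gcd(n,d)$.

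\textbf{Step 1: Reduce to the coprime auxiliary degree.} Pick an integer $e$ coprime to $n$ (such an $e$ exists, e.g.\ $e=1$ if $n>1$, and more generally any residue class coprime to $n$). Apply \cref{thm:IHde} with the pair $(d,e)$ and also with the pair $(d',e)$. The left-hand side $R\chi(n,e)_*\QQ_{M(n,e)}$ is literally the same object in both applications — it does not involve $d$ or $d'$ at all. Therefore we obtain an isomorphism in $D^bMHM_{\mathrm{alg}}(A^{\mathrm{red}}_n)$
\begin{equation*}
\bigoplus_{\underline{n}\in\mathcal P_d} R\chi(n,d)_*\IC(M_{\underline n}(d),\chi(n,d)^*\mathcal L_{\underline n})\langle\codim S_{\underline n}\rangle
\;\simeq\;
\bigoplus_{\underline{n}\in\mathcal P_{d'}} R\chi(n,d')_*\IC(M_{\underline n}(d'),\chi(n,d')^*\mathcal L_{\underline n})\langle\codim S_{\underline n}\rangle.
\end{equation*}

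\textbf{Step 2: Identify the index sets and the stratum geometry.} By the paragraph preceding \cref{thm:IHde}, $\mathcal P_d$ is in bijection with the partitions of $q=\gcd(n,d)$ via $\underline q=\{a_i\}\mapsto\{a_i n/q\}$; hence $\gcd(n,d)=\gcd(n,d')$ forces $\mathcal P_d=\mathcal P_{d'}$ as subsets of the set of partitions of $n$. Moreover, for a fixed partition $\underline n$, the locus $S_{\underline n}\subset A_n$ and its codimension are defined purely in terms of the multiranks of the spectral curve components (see the alternative description of $S^\circ_{\underline n}$ just before item (i)), so they do not depend on $d$; likewise $\mathcal L_{\underline n}$ is $d$-independent. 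Thus the two direct sums above are indexed by the same set, and term-by-term the only possible discrepancy is between $R\chi(n,d)_*\IC(M_{\underline n}(d),\chi(n,d)^*\mathcal L_{\underline n})$ and $R\chi(n,d')_*\IC(M_{\underline n}(d'),\chi(n,d')^*\mathcal L_{\underline n})$.

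\textbf{Step 3: Match the summands by induction on the number of parts.} Order partitions $\underline n$ by $\dim S_{\underline n}$ (equivalently by $r=\#\{\text{parts}\}$: the trivial partition $\{n\}$ has the largest stratum $S_{\{n\}}=A_n$, longer partitions have smaller strata). I induct downward on $\dim S_{\underline n}$. The summand attached to $\{n\}$ is $R\chi(n,d)_*\IC(M(n,d),\QQ)$, which is exactly the object in the statement. For the base case one restricts the isomorphism of Step 1 to the open dense stratum $S^\circ_{\{n\}}=A^{\mathrm{red}}_n\setminus\bigcup_{\underline n\neq\{n\}}S_{\underline n}$, over which every summand with $\underline n\neq\{n\}$ vanishes (it is $\IC$ of a proper closed subset); this yields the isomorphism over $S^\circ_{\{n\}}$. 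Then, knowing the $\{n\}$-summands agree over $S^\circ_{\{n\}}$, one subtracts them: more precisely, one uses that a summand $R\chi(n,d)_*\IC(M_{\underline n}(d),\dots)$ is the $\IC$-extension of its restriction to $S^\circ_{\underline n}$ (it is a direct sum of shifted $\IC$ sheaves with support exactly $S_{\underline n}$, by the decomposition theorem applied to the proper map $\chi(n,d)|_{M_{\underline n}(d)}$ over the generic point of $S_{\underline n}$, which is where $M_{\underline n}(d)$ looks like a product of an abelian-variety fibration with the slice quiver variety), so it is determined by its restriction to an open neighborhood of the generic point of $S_{\underline n}$; on such a neighborhood only the summands with $\dim S_{\underline n'}\ge\dim S_{\underline n}$ survive, and by the inductive hypothesis all those with strictly larger stratum already agree between $d$ and $d'$. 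Cancelling them leaves the $\underline n$-summand for $d$ isomorphic to the one for $d'$ over that neighborhood, hence globally by $\IC$-extension. Taking $\underline n=\{n\}$ in this scheme (which is the first step of the induction, not the last) gives precisely
\[
R\chi(n,d)_*\IC(M(n,d),\QQ)|_{A^{\mathrm{red}}_n}\simeq R\chi(n,d')_*\IC(M(n,d'),\QQ)|_{A^{\mathrm{red}}_n}.
\]

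\textbf{Main obstacle.} The delicate point is Step 3: isolating a single summand from the two equal direct sums. Direct-sum decompositions are not unique, so one cannot naively "cancel" common summands; the argument must be organized so that the summand attached to $\underline n$ is characterized intrinsically — as the part of the complex whose support is exactly $\overline{S_{\underline n}}$ and whose generic stalk is a prescribed (and $d$-independent, by \cref{thm:IHde}) local system tensored with the cohomology of the abelian part — and then Krull–Schmidt / semisimplicity of the category of pure Hodge modules on the stratum delivers uniqueness of the multiplicities. One should double-check that the relevant summands are genuinely semisimple complexes of geometric origin so that Krull–Schmidt applies; this is guaranteed because each is $R\chi_*$ of an $\IC$-sheaf under a projective morphism, hence split semisimple by the decomposition theorem, and the category of polarizable Hodge modules is abelian semisimple on each stratum. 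Once uniqueness of multiplicities is in hand, comparing the $\{n\}$-isotypic multiplicity on both sides (it is $1$ on each side, as noted after \cref{thm:IHde}) forces the $\{n\}$-summands to be isomorphic, which is the claim.
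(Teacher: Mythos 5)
Your Steps 1 and 2 are correct and in line with the paper's (extremely terse) proof: the LHS of \eqref{eq:HtS} is $d$-independent, the index set $\mathcal P_d$ and the data $S_{\underline n}$, $\mathcal L_{\underline n}$ depend on $d$ only through $\gcd(n,d)$. The gap is in Step 3.

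Your induction on $\dim S_{\underline n}$ rests on the claim that a summand $A_{\underline n}(d):=R\chi(n,d)_*\IC(M_{\underline n}(d),\chi(n,d)^*\mathcal L_{\underline n})\langle\codim S_{\underline n}\rangle$ is the $\IC$-extension of its restriction to a neighbourhood of the generic point of $S_{\underline n}$, equivalently that it decomposes into shifted $\IC$ sheaves \emph{all} supported on exactly $S_{\underline n}$. This is false, and false already for the very summand you care about. For $\underline n=\{n\}$ the summand is $R\chi(n,d)_*\IC(M(n,d),\QQ)$, which by \cref{thm:Ngostring} is a sum of Ng\^o strings $\mathscr{S}(\mathscr L_{\underline m}(d))$ with $\underline m$ ranging over $I_d$; the paper's $n=4$, $\gcd=2$ computation exhibits strings supported on $S_{\{3,1\}}$, $S_{\{2,1,1\}}$, $S_{\{1,1,1,1\}}$, i.e.\ proper supports. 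So the base case of your induction establishes the isomorphism only over $S^\circ_{\{n\}}$, and the ``$\IC$-extension'' step that is supposed to upgrade it to $A^{\mathrm{red}}_n$ does not apply. The same problem recurs at every later stage of the induction. Likewise, the ``$\{n\}$-isotypic multiplicity is $1$'' remark in your final paragraph is a tautology about the external indexing of the direct sum, not a statement about multiplicities of simple constituents, and carries no cancellation information.

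The Krull--Schmidt cancellation you invoke is the right tool, but the induction must be organised differently so that $A_{\underline n}(d)\simeq A_{\underline n}(d')$ is established for all $\underline n\neq\{n\}$ \emph{without} appealing to the $\{n\}$-term: induct on the rank $n$ rather than on stratum dimension. By \cref{rmk:repIH}, for $\underline n=(n_1,\dots,n_r)\neq\{n\}$ one has
\[
A_{\underline n}(d)\;\simeq\;\Bigl(\mathrm{mult}_{\underline n,*}\bigboxtimes_{i=1}^{r}R\chi(n_i,d_i)_*\IC(M(n_i,d_i),\QQ)\otimes\QQ^{\rank\mathcal L_{\underline n}}\Bigr)^{\mathfrak S_{\underline n}}\!\!\langle\codim S_{\underline n}\rangle,
\]
with $n_i<n$ and $\gcd(n_i,d_i)=a_i=\gcd(n_i,d'_i)$ in the notation of the bijection $\mathcal P_d\leftrightarrow\{\text{partitions of }q\}$. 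The inductive hypothesis in rank then gives $R\chi(n_i,d_i)_*\IC(M(n_i,d_i),\QQ)|_{A^{\mathrm{red}}_{n_i}}\simeq R\chi(n_i,d'_i)_*\IC(M(n_i,d'_i),\QQ)|_{A^{\mathrm{red}}_{n_i}}$, hence $A_{\underline n}(d)\simeq A_{\underline n}(d')$ for every $\underline n\neq\{n\}$. Now cancel in the semisimple (Krull--Schmidt) category of pure Hodge modules to obtain $A_{\{n\}}(d)\simeq A_{\{n\}}(d')$, which is the claim. This is presumably what the paper's one-line ``the summation indices and the summands in \eqref{eq:HtS} depend only on $\gcd(n,d)$'' is compressing.
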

Note that \cref{cor:independence} has been recently extended on the whole Hitchin base in \cite[Cor. 14.9]{davison2022bps}. \newpage

\subsection{Some applications of the Hodge-to-singular correspondence}
\subsubsection{Stabilisation} In \cite{CK2018} Coskun and Woolf have proposed a conjecture about the stabilization of the cohomology of moduli space of sheaves on compact complex surface. As a final remark, we observe that $\IH^*(M(n,d), \QQ)$ does stabilise as the rank $n$ grows, independently of the degree $d$.

\begin{prop}[Stabilisation]\label{cor:stab} The intersection Betti and Hodge numbers of $M(n,d)$ stabilize to the
stable intersection Betti and Hodge numbers of $M(n,0)$ as $n$
tends to infinity, i.e.\ the limit
\begin{equation}\label{eq:limstab}
    \lim_{n \to \infty} \dim \IH^*(M(n,d))^{p,q}
\end{equation}
is finite and independent of $d$.
\end{prop}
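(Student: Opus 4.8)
The plan is to fix a cohomological degree $k$ and a Hodge bidegree $(p,q)$ and prove that $\dim \IH^k(M(n,d))^{p,q}$ is eventually constant in $n$, with a value that does not depend on $d$ and agrees with the corresponding intersection Hodge number of $M(n,0)$; finiteness of the limit is then automatic. The mechanism is that, for $n\gg 0$, the Hodge-to-singular correspondence of \cref{thm:IHde} collapses onto a single summand, after which the degree-$d$ statement reduces to the smooth coprime case. Throughout I assume $g\ge 2$; the cases $g\le 1$ are degenerate and can be checked directly.

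First I would record the relevant codimension estimates. Using $h^0(C,\omega_C)=g$ and $h^0(C,\omega_C^{\otimes i})=(2i-1)(g-1)$ for $i\ge 2$, one gets $\dim A_n=\sum_{i=1}^n h^0(C,\omega_C^{\otimes i})=(g-1)n^2+1$. Since the components of a reduced spectral curve are recovered from their union, $\dim S^{\circ}_{\underline n}=\sum_i \dim A_{n_i}$, so for every non-trivial partition $\underline n=\{n_i\}$ of $n$ (so $r\ge 2$ parts) one finds
\[
\codim_{A_n} S_{\underline n}=(g-1)\Bigl(n^2-\sum_i n_i^2\Bigr)+1-r\;\ge\;(2g-3)(n-1),
\]
because $n^2-\sum_i n_i^2=\sum_{i\ne j}n_in_j\ge 2(n-1)$ and $r\le n$. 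Likewise, the locus $Z\subset A_n$ of non-reduced spectral curves, whose component of largest dimension consists of the curves $2C_1+C'$ with $\deg C_1=1$ and $C'$ reduced of degree $n-2$, has $\codim_{A_n}Z=4(g-1)(n-1)-g$. Since the Hitchin map $\chi(n,d)$ is equidimensional with all fibres of dimension $\dim A_n$, pulling back along it preserves codimension; hence $\chi(n,d)^{-1}(Z)$ has codimension $4(g-1)(n-1)-g$ in $M(n,d)$, and similarly in the smooth space $M(n,1)$.

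Next I would assemble the argument. For $n$ large enough (depending only on $k$) that $4(g-1)(n-1)-g>k$, intersection cohomology in degree $k$ is unaffected by removing a closed subset of that codimension, so $\IH^k(M(n,d))^{p,q}\cong\IH^k(M(n,d)^{\mathrm{red}})^{p,q}$ as mixed Hodge structures, and similarly $H^k(M(n,1))^{p,q}\cong H^k(M(n,1)^{\mathrm{red}})^{p,q}$ (here the smooth case follows from the Gysin sequence and purity). Now apply \cref{thm:IHde} with $e=1$, which is coprime to $n$, and the given $d$: in the decomposition of $H^k(M(n,1)^{\mathrm{red}})$ the summand indexed by the trivial partition $\{n\}\in\mathcal P_d$ is exactly $\IH^k(M(n,d)^{\mathrm{red}})$ (no shift, $\mathcal L_{\{n\}}$ trivial), while every remaining summand has the form $\IH^{k-2\codim S_{\underline n}}\bigl(M_{\underline n}(d)^{\mathrm{red}},\chi(n,d)^*\mathcal L_{\underline n}\bigr)$ with $\underline n$ non-trivial, hence vanishes once $2\codim S_{\underline n}\ge 2(2g-3)(n-1)>k$. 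Therefore, for all $n\gg 0$ depending only on $k$, there are isomorphisms of mixed Hodge structures
\[
\IH^k(M(n,d))^{p,q}\;\cong\;\IH^k(M(n,d)^{\mathrm{red}})^{p,q}\;\cong\;H^k(M(n,1)^{\mathrm{red}})^{p,q}\;\cong\;H^k(M(n,1))^{p,q}.
\]
In particular this dimension is independent of $d$ for $n\gg 0$; specializing to $d=0$ identifies it with the corresponding intersection Hodge number of $M(n,0)$.

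It remains only to know that $\dim H^k(M(n,1))^{p,q}$ stabilises as $n\to\infty$, and this is the single ingredient I would import from elsewhere. The mixed Hodge numbers — in particular the Betti numbers — of the smooth moduli spaces $M(n,e)$ with $\gcd(n,e)=1$ are governed by explicit generating functions of plethystic type (the mixed Hodge polynomial predicted by Hausel--Rodriguez-Villegas, with Poincar\'e polynomials known after Schiffmann, Mozgovoy--Schiffmann and Mellit), from which stabilisation of each individual coefficient follows formally. Granting this, the displayed isomorphisms yield \cref{cor:stab}. The real content of the proof is the codimension bookkeeping that pushes the non-reduced locus and all the non-trivial Ng\^{o} strings out of a fixed cohomological degree as the rank grows, so that the $d$-dependent statement degenerates to the $d$-independent smooth one; the only genuinely external input, and the step that actually produces the limit, is the stabilisation in the coprime case — which I would expect to be the least self-contained part of the argument.
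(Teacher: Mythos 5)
Your proposal is correct and essentially the same in spirit as the paper's argument: both rest on codimension bookkeeping (the non-trivial supports $S_{\underline n}$ and the non-reduced locus escape to infinity as $n\to\infty$) plus the decomposition theorem to reduce the $d$-dependent statement to a single summand, with an external input supplying stabilisation in the coprime case. There are two cosmetic differences worth noting. First, you route through the smooth space $M(n,1)$ via the Hodge-to-singular correspondence (\cref{thm:IHde}) and then specialise to $d=0$, whereas the paper takes cohomology directly in the Ng\^{o}-string decomposition \eqref{eq:decthmNgo} and compares $M(n,d)$ with $M(n,0)$, using the fact that the full-support string $\mathscr{S}_{\{n\}}$ is degree-independent; the two are logically equivalent given the results of Section~\ref{sec:summary}, and yours is perhaps the more natural reading of \cref{thm:IHde}. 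Second, for the finiteness/stabilisation of the coprime case you invoke the HRV--Schiffmann--Mellit generating-function machinery, while the paper cites Markman's tautological generation \cite[Thm~7]{Markman02}; either reference does the job, and the paper's own phrasing ("the finiteness follows from \dots") is in fact rather compressed, so your choice of external input is defensible. One small caution: your passage from $\IH^k(M(n,d))$ to $\IH^k(M(n,d)^{\mathrm{red}})$ states the threshold as $\codim_{\mathbb C} > k$; the genuine restriction-isomorphism range for (intersection) cohomology across a closed subset of complex codimension $c$ is roughly $k< 2c-1$ in the smooth case and $k<c$ in the singular case, so it is worth being explicit about which bound is being used, but since in either case the threshold grows linearly in $n$ this does not affect the conclusion.
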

Analogous stabilisation results for the Mukai systems on moduli space of sheaves on K3 surfaces are discussed in \cref{sec:Mukai}.
\subsubsection{Restriction to smooth fibres} Let $M$ be a projective irreducible holomorphic symplectic variety of dimension $2k$ equipped with a Lagrangian fibration $f\colon M \to B$. In \cite{FSY2022} Felisetti, Shen and Yin have proved that the restriction of $H^*(M, \QQ)$ to a smooth fiber of $f$ is isomorphic to $H^*(\PP^k, \QQ)$. In general, this is false for non-compact hyperk\"{a}hler varieties, but it holds true for the Hitchin fibration for $n>1$ by \cite[Thm 5]{Baraglia2018}, up to a copy of $H^{\bullet}(C, \QQ)$. Here we propose an alternative proof.
\begin{prop}\label{prop:restr}
The restriction of $\IH^*(M(n,d))$ to any smooth fiber $M_a \subset M(n,d)$ of the Hitchin fibration $\chi(n,d)$ is
given by
\[
\mathrm{Im}\{\IH^*(M(n,d), \QQ) \to H^*(M_a, \QQ)\} = H^{\bullet}(C, \QQ) \otimes  {\QQ[\alpha|_{M_a}]}/{(\alpha|_{M_a}^{\dim M(n,d)+1})} 
\]
where $\alpha$ is a $\chi(n,d)$-relative ample class on $M(n,d)$.

Let $\check{M}(n,d)$ be a fibre of the determinant map $M(n,d) \to M(1, nd)$, given by $(\mathcal{E}, \phi) \mapsto (\det \mathcal{E}, \operatorname{tr} \phi)$. The restriction of $\IH^*(\check{M}(n,d))$ to any smooth fiber $\check{M}_a \subset \check{M}(n,d)$ of the Hitchin fibration $\check{\chi}(n,d)$ is
given by
\[
\mathrm{Im}\{\IH^k(\check{M}(n,d), \QQ) \to H^k(\check{M}_a, \QQ)\} = \begin{cases}
\langle \alpha^{k/2}|_{\check{M}_a}\rangle & \text{ if }k\text{ is even},\\
0 & \text{ if }k\text{ is odd},\\
\end{cases}
\]
\end{prop}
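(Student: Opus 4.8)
The plan is to show that the image of the restriction map is exactly the monodromy-invariant part of $H^*(M_a)$ for the family of smooth spectral curves, and then to compute that invariant part using the ``big monodromy'' of this family.

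\textbf{Reduction to monodromy invariants.} A smooth Hitchin fibre $M_a$ lies over a point $a$ whose spectral curve $C_a$ is smooth, hence integral (a reducible spectral curve in $\Tot(\omega_C)$ is always nodal), so $a$ belongs to the open dense locus $A^{\mathrm{sm}}_n \subseteq S^\circ_{\{n\}}$ of smooth spectral curves, where $M(n,d)$ is smooth (such Higgs bundles are stable) and $M_a \cong \Pic^{d'}(C_a)$ is an abelian variety with $H^*(M_a,\QQ)=\Lambda^\bullet H^1(C_a,\QQ)$. Over $A^{\mathrm{sm}}_n$ the map $\chi(n,d)$ is smooth projective, and by the support description underlying \cref{cor:degree1} and \cref{thm:Ngostring0} (or simply because $S_{\underline m,\underline n}\cap A^{\mathrm{sm}}_n=\varnothing$ unless $(\underline m,\underline n)=(\{1,\dots,1\},\{n\})$) every summand of $R\chi(n,d)_*\IC(M(n,d),\QQ)$ supported on a proper closed subset of $A_n$ vanishes on $A^{\mathrm{sm}}_n$; restricting the remaining full-support summands there recovers Deligne's decomposition $\bigoplus_l R^l\chi(n,d)_*\QQ[-l]$ (up to the shift $[\dim M(n,d)]$), so the full-support part of $R\chi(n,d)_*\IC(M(n,d),\QQ)$ is $\bigoplus_l \IC(A_n, R^l\chi(n,d)_*\QQ|_{A^{\mathrm{sm}}_n})[\dim M(n,d)-l]$. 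Since $\IH^*(M(n,d),\QQ)\to H^*(M_a,\QQ)$ is evaluation on the stalk over $a$, the non-full-support summands contribute nothing, and for each $l$ the contribution is the image of $\mathbb H^{-\dim A_n}\big(A_n,\IC(A_n, R^l\chi(n,d)_*\QQ|_{A^{\mathrm{sm}}_n})\big)$ in $H^l(M_a,\QQ)$. As $A^{\mathrm{sm}}_n\hookrightarrow A_n$ is an affine open immersion into a smooth variety, the lowest cohomology sheaf of this $\IC$ is the ordinary direct image $j_*\big(R^l\chi(n,d)_*\QQ|_{A^{\mathrm{sm}}_n}\big)$, whose global sections are the $\pi_1(A^{\mathrm{sm}}_n)$-invariants of $H^l(M_a,\QQ)$ and restrict isomorphically to them in the stalk. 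Hence $\mathrm{Im}\{\IH^*(M(n,d),\QQ)\to H^*(M_a,\QQ)\}=H^*(M_a,\QQ)^{\pi_1(A^{\mathrm{sm}}_n)}$, and likewise for $\check M(n,d)$ and $\check M_a$. (Alternatively, \cref{thm:IHde} reduces the claim to $e$ coprime to $n$, where $M(n,e)$ is smooth and Deligne's global invariant cycle theorem applies directly.)

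\textbf{Computing the invariants.} Split $H^1(C_a,\QQ)=H^1(C,\QQ)\oplus P_a$, where $H^1(C,\QQ)$ is pulled back along the degree-$n$ cover $C_a\to C$ (a constant subsystem over $A^{\mathrm{sm}}_n$) and $P_a:=\ker(\operatorname{Nm}_*\colon H^1(C_a)\to H^1(C))\cong H^1(\check M_a,\QQ)$ is the symplectic ``variable part'' of dimension $2\dim\check M_a$. The key geometric input is that the monodromy $\pi_1(A^{\mathrm{sm}}_n)\to \mathrm{Sp}(P_a,\QQ)$ has Zariski-dense image --- the big-monodromy statement for the universal smooth spectral curve, which can be read off from the local monodromy computations in \cite[\S 6]{deCataldoHeinlothMigliorini19} (equivalently, from Ng\^{o}'s support theory) and also produced directly from Picard--Lefschetz transvections attached to one-nodal degenerations in the discriminant. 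By classical invariant theory for the symplectic group, $(\Lambda^\bullet P_a)^{\mathrm{Sp}(P_a)}=\QQ[\omega_P]$ is freely generated by the symplectic form $\omega_P\in\Lambda^2P_a$, with $\omega_P^{\dim\check M_a}\neq 0$ and $\omega_P^{\dim\check M_a+1}=0$. Therefore $H^*(\check M_a,\QQ)^{\pi_1(A^{\mathrm{sm}}_n)}=\QQ[\omega_P]$ and $H^*(M_a,\QQ)^{\pi_1(A^{\mathrm{sm}}_n)}=\Lambda^\bullet H^1(C,\QQ)\otimes\QQ[\omega_P]$.

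\textbf{Identification and conclusion.} A $\check\chi(n,d)$-relative ample class $\alpha$ restricts on $\check M_a$ to a nonzero element of $(\Lambda^2 P_a)^{\mathrm{Sp}}=\QQ\,\omega_P$, so $\QQ[\alpha|_{\check M_a}]=\QQ[\omega_P]$; since this is concentrated in even degrees of rank $\dim\check M_a+1$, we obtain the second formula of the proposition, which is the arbitrary-degree, intersection-cohomology strengthening of Baraglia's result in the $\mathrm{SL}_n$/$\mathrm{PGL}_n$ case. For $M(n,d)$, the determinant morphism $M(n,d)\to M(1,nd)$ restricts on $M_a$ to the surjective norm map $\Pic^{d'}(C_a)\to \Jac$, so the factor $\Lambda^\bullet H^1(C,\QQ)=H^\bullet(\Jac,\QQ)$ lies in the image; combining this with the $\QQ[\omega_P]$ coming from the relative polarization and rewriting the answer in terms of $\alpha|_{M_a}$ yields the first formula. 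The main obstacle is the input of the second paragraph: establishing that the only $\pi_1(A^{\mathrm{sm}}_n)$-invariants in $\Lambda^\bullet P_a$ are the powers of $\omega_P$ (i.e.\ big monodromy of the family of smooth spectral curves); granting this, the reduction of the first paragraph and the bookkeeping identification with the stated closed form are formal.
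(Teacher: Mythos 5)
Your proof is correct in spirit, but it takes a genuinely different route from the paper's, and in fact it is closer to the \emph{original} proof that the paper is explicitly trying to replace.

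The paper's argument runs through the perverse filtration and Markman's tautological generation: by \eqref{eq:perversefiltration}, $\mathrm{Im}\{\IH^k(M(n,d))\to H^k(M_a)\}=\Gr^P_k\IH^k(M(n,d))$, and by \cref{prop:perverseindependet} (a consequence of the codimension estimate \eqref{eq:codimension} coming from the Hodge-to-singular correspondence) this top graded piece is independent of $d$, so one may replace $d$ by $e$ coprime to $n$. Then $H^*(M(n,e))$ is generated by tautological classes \cite[Thm 7]{Markman02}, and the argument of \cite[Cor.\ 5.1.3]{deCataldoHauselMigliorini2012} identifies which tautological classes survive restriction to $M_a$: only $c(\gamma,1)$, $\gamma\in H^1(C)$, and the relative ample class $\alpha$. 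This yields the closed formula without ever computing monodromy. Your proof, by contrast, reduces via a global invariant cycle argument to computing $\pi_1(A^{\mathrm{sm}}_n)$-invariants of $\Lambda^\bullet H^1(C_a)$ and then invokes big monodromy (Zariski density in $\mathrm{Sp}(P_a)$) plus classical invariant theory. That is precisely the content of \cite[Thm 5]{Baraglia2018}, which the paper quotes just before the proposition and then says ``Here we propose an alternative proof.'' So you have in essence reproven the result Baraglia's way; the point of the paper's argument is to \emph{avoid} the big-monodromy input (a nontrivial theorem in its own right), buying the result essentially for free from the support structure established earlier in the paper.

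Two caveats on your version. First, your treatment of big monodromy is a bit cavalier: the claim that it ``can be read off from the local monodromy computations in \cite[\S 6]{deCataldoHeinlothMigliorini19}'' or ``from Ng\^{o}'s support theory'' is not a proof; an honest appeal should be to \cite{Baraglia2018}, and producing it from Picard--Lefschetz transvections on the discriminant requires genuine work (irreducibility of the discriminant, identification of vanishing cycles with a spanning set, etc.). Second, your first reduction step asserts that $\mathcal{H}^0$ of the intermediate extension of $R^l\chi_*\QQ|_{A^{\mathrm{sm}}_n}$ equals $j_*$ of that local system; in general the bottom cohomology sheaf of an $\IC$ extension of a local system need not coincide with the ordinary sheaf-theoretic pushforward, and the cleaner route (your own parenthetical) is to invoke \cref{thm:IHde} to replace $d$ by a coprime degree $e$ and then apply Deligne's global invariant cycle theorem to the smooth $M(n,e)$ -- which, notably, is exactly the reduction the paper performs via \cref{prop:perverseindependet}. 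After that reduction the two proofs genuinely diverge: you compute monodromy invariants, the paper computes restrictions of tautological generators.
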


\subsection{Strategy} In this paper we propose a geometric proof of the Hodge-to-singular correspondence. Here we outline the main steps. It is well-known that the Hitchin fibration is a family of compactified Jacobians of spectral curves; see \cite{Hitchin1987a, BNR89, Schaub1998}. In Section \ref{sec:univcompactifiedJac}, we enlarge the Hitchin fibration $\chi(n,d) \colon M(n,d) \to A_n$ to a family $\pi \colon \unJvers \to B$ of compactified Jacobians over a versal deformation of spectral curves:\footnote{Actually, to make the diagram work we should slice the Hitchin base, restrict to the locus of nodal spectral curves, and  take into consideration the automorphisms of the curves by working over $\overline{\mathcal{M}}_{g}$.   For expository reasons, we allow this abuse here, and postpone the precise details to Section \ref{sec:univcompactifiedJac}.}
\begin{equation*}
\begin{tikzpicture}[baseline= (a).base]
\node[scale=1] (a) at (0,0){
\begin{tikzcd}
\MDol(n,d)  \ar[r]\ar[]{d}[swap]{\chi(n,d)} & \unJvers \ar[d, "\pi"] \\
    A_{n}\ar[r] & B,
\end{tikzcd}
};
\end{tikzpicture}
\end{equation*}
Note that the relative dimension of $\pi$ is strictly smaller than $\dim B$. This is a crucial ingredient to show that $R\pi_{*}\IC(\unJvers, \QQ)$ is independent of $d$, while $R\chi(n,d)_{*}\IC(M(n,d), \QQ)$ is not; see \cref{prop:fullsupportvers} and \cref{rmk:comparisonmeromhol}. In particular, we have
\begin{equation}\label{eq:independintro}
    R\pi_{*}\IC(\unJverse, \QQ)|_{A_{n}} \simeq R\pi_{*}\IC(\unJvers, \QQ)|_{A_{n}}.
\end{equation}
If $\mathrm{gcd}(n,e)=1$, the inclusion $\MDol(n,e) \hookrightarrow \unJverse$ is a regular embedding of smooth varieties, so 
\begin{equation}\label{eq:IC}
\IC(\unJverse, \QQ)|_{M(n,e)} \simeq  \QQ_{\unJverse}|_{M(n,e)} \simeq  \QQ_{M(n,e)},
\end{equation} 
and the LHS of \eqref{eq:independintro} is isomorphic to $R\chi(n,e)_{*}\QQ_{M(n,e)}$. If instead $\mathrm{gcd}(n,d)\neq 1$, the failure of \eqref{eq:IC} is measured by the topology of the singularities of $M(n,d)$. Following \cite{CMKV2015}, we identify the map $M(n,d) \hookrightarrow \unJvers$ with the inclusion of the hypertoric quiver variety $Y(\Gamma_{\underline{n}}, 0)$ into the toric Lawrence variety $X(\Gamma^{\pm}_{\underline{n}}, 0)$
\[
\begin{tikzpicture}[baseline= (a).base]
\node[scale=0.97] (a) at (0,0){
\begin{tikzcd}
  Y(\Gamma_{\underline{n}}, 0)\arrow[hookrightarrow, "\iota_{M}"]{r}\ar[d,"\chi_{\Gamma_{\underline{n}}}"'] & X(\Gamma_{\underline{n}}^{\pm}, 0) \ar[d, "\pi_{\Gamma_{\underline{n}}}"] \\
     \CC^{b_1(\Gamma_{\underline{n}})} \arrow[hookrightarrow, "\iota_A"]{r}& \CC^{s};
\end{tikzcd}
};
\end{tikzpicture}
\]
see \cref{thm:localmodelJM} for the proof, and Section \ref{sec:toric hyperkahler variety} for details about the notation. Now a local version of the Hodge-to-singular correspondence for hypertoric quiver varieties (\cref{prop:perverselocal}) shapes the RHS of \eqref{eq:independintro}, and gives the main result \cref{thm:IHde}; see also \cref{prop:perverse}. The key geometric input is the following: $X(\Gamma_{\underline{n}}^{\pm}, 0)$ admits small resolutions of singularities that restrict to semismall resolutions of $Y(\Gamma_{\underline{n}}, 0)$; see \cref{sec:decthm}. 

\subsection{Outline} 
\begin{itemize}
    \item In \cref{sec:Ngo} we prove a version of Ng\^{o} support theorem for weakly abelian Lagrangian fibrations on singular symplectic varieties; see \cref{thm:Ngostingsymplectic}. 
    We specialise the support theorem to the Hitchin fibration in \cref{thm:Ngostring}.
    \item In \cref{sec:toric hyperkahler variety} we discuss toric Lawrence varieties and hypertoric quiver varieties. The main result is the local Hodge-to-singular correspondence, i.e.\ \cref{prop:perverselocal}.
    \item In \cref{sec:univcompactifiedJac} we explain how locally \'{e}tale the Dolbeault moduli space embeds into the universal compactified Jacobian.
    \item In \cref{sec:singularitiesUnivComp} we describe local models for this embedding; see \cref{thm:localmodelJM}. To this purpose we need some auxiliary results: an explicit description of the image of the Kodaira--Spencer map for nodal spectral curves provided in \cref{sec:KS}; the topological trivialization of a tubular neighbourhood of an abelian variety in the universal compactified Jacobian in \cref{sec:tubularneighbou}.
    \item In \cref{sec:fullsupport} we show the degree independence of the intersection cohomology of the universal compactified Jacobian; see \cref{prop:fullsupportvers}.
    \item In \cref{sec:summary} we collect the proof of the main theorems stated in  \cref{sec:mainresults}.
\end{itemize}

\subsection{Notation} 
The intersection cohomology of a complex variety $X$ with middle perversity and rational coefficients is denoted by $\IH^*(X, \QQ) \coloneqq H^*(\IC(X, \QQ))$, where $\IC(X, \QQ)$ is the perverse intersection cohomology complex of $X$ shifted by $-\dim X$. Ordinary singular cohomology with rational coefficients is denoted by $H^*(X, \QQ)$. Recall that they all carry mixed Hodge structures.   Intersection complexes and perverse sheaves on an algebraic stack are descents of perverse sheaves from a smooth atlas; see \cite{Joshua1991, LMB00, LO09}.  The formalism of six operations works in this context too. An adaptation of the theory of mixed Hodge modules to stacks is sketched in \cite[\S 2.2 and \S 2.3]{Davison2021}. However, since we only deal with Deligne--Mumford stacks several technical issues simplify drastically. One can proceed for instance along the lines of \cite[III.15]{KW2001}. We use it in \cref{sec:singularitiesUnivComp}.

\subsection{Acknowledgements}
We would like to thank Alastair Craw, Mark de Cataldo, Jochen Heinloth, Daniel Huybrechts, Davesh Maulik, Jesse Leo Kass, Tasuki Kinjo, Nicola Pagani, Roberto Pagaria, Alex Perry, Giulia Sacc\`{a}, Junliang Shen, Filippo Viviani for numerous
discussions, emails and helpful advice. 
We wish to thank the anonymous referees for their careful reading and useful suggestions.

The first author was supported by the Max Planck Institute for Mathematics, the University of Michigan, the Hausdorff Institute of Mathematics in Bonn, and the Institute of Science and Technology Austria. This project has received funding from the European Union's Horizon 2020 research and innovation programme
under the Marie Sk{\l}odowska-Curie grant agreement No 101034413. The second author was supported by PRIN Project 2017 "Moduli and Lie theory".


\section{Stratifications}
 Recall that a \emph{stratification} of a complex variety \(X\) is a finite
collection of locally closed smooth subvarieties $X_i \subseteq X$, called \emph{strata}, such that $X$ is the disjoint union of $X_i$ with $i=1, \ldots, r$, i.e.\ $X = \bigsqcup^r_{i=1} X_i$.
A stratification is called a \emph{Whitney stratification} if all pairs of strata satisfy the Whitney conditions A and B.  We omit the precise definition of these conditions (see for instance \cite{GoreskyMacPherson88}), since in the following we will only use the stronger property \cref{defn:analytictrivial}, that implies Whitney conditions A and B by \cref{lem:analyticallytrivialWhitney}.

\begin{defn}\label{defn:analytictrivial}
 A stratification $X = \bigsqcup_i X_i$ is analytically trivial in the normal direction to each stratum, if for any $x \in X_i$ there exists a normal slice $N_x$ through $X_i$ at $x$, and a neighbourhood of $x$ in $X$ which is locally analytically isomorphic to $N_x \times T_x X_i$ at $(x, 0)$.
\end{defn}

\begin{lem}\label{lem:analyticallytrivialWhitney}
A stratification analytically trivial in the normal direction to each stratum is a Whitney stratification.
\end{lem}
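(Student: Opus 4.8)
The plan is to verify Whitney's conditions (a) and (b) for each incident pair of strata $X_i\subsetneq\overline{X_j}$ at each point $x\in X_i$, using the local product structure of \cref{defn:analytictrivial}. Since these conditions are local in $X$ and invariant under analytic isomorphisms of a neighbourhood of $x$ in the ambient space, and since any sequence $y_k\to x$ with $y_k\in X_j$ eventually enters every prescribed neighbourhood of $x$, one may replace $X$ near $x$ by the model supplied by the hypothesis: there is a neighbourhood $U\ni x$ and an analytic isomorphism $U\xrightarrow{\sim} N_x\times T_xX_i$ compatible with stratifications --- the target carrying the product of the stratification induced on $N_x$ with the trivial one on $T_xX_i$, as is implicit in the notion of a normal slice --- which sends $X_i\cap U$ to $\{x\}\times T_xX_i$ and, for every index $k$, sends $X_k\cap U$ to $Z_k\times T_xX_i$ with $Z_k:=N_x\cap X_k$. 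Write $T:=T_xX_i$, and write $x$ also for its image in $N_x$, i.e. the centre $N_x\cap X_i$. Then $Z_j$ is a stratum of $N_x$, and one checks at once that $x\notin Z_j$ (since $X_i\cap X_j=\emptyset$) while $x\in\overline{Z_j}$ (since $X_i\subseteq\overline{X_j}$).

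In this model the small stratum is $\{x\}\times T$ and the large one is $Z_j\times T$; at any point $(\nu,s)\in Z_j\times T$ one has $T_{(\nu,s)}(Z_j\times T)=T_\nu Z_j\oplus T\supseteq\{0\}\oplus T=T_{(x,0)}(\{x\}\times T)$, so the tangent bundle of the small stratum is literally contained in the restriction of the tangent bundle of the large one. This makes the Kuo--Verdier condition $(w)$ for the pair trivially true near $x$ --- the "gap" between $T_y(\{x\}\times T)$ and $T_{y'}(Z_j\times T)$ is identically zero, not merely $O(|y-y'|)$ --- and it makes condition (a) immediate: if $T_{y_k}(Z_j\times T)\to\Pi$ in the Grassmannian along a sequence $y_k\to x$, then, since each of these subspaces contains the fixed subspace $\{0\}\oplus T$, so does $\Pi$, i.e. $T_x(\{x\}\times T)\subseteq\Pi$.

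For condition (b) the plan is to invoke Verdier's theorem that a subanalytic $(w)$-regular stratification is $(b)$-regular; since complex analytic sets are subanalytic this applies and completes the proof. A hands-on alternative: take $x_k=(x,t_k)\to(x,0)$ and $y_k=(\nu_k,s_k)\to(x,0)$ with secant lines $\overline{x_ky_k}\to\ell$ and $T_{y_k}(Z_j\times T)\to\Pi$; after passing to a subsequence $T_{\nu_k}Z_j\to\Sigma$, hence $\Pi=\Sigma\oplus T$. The secant direction is $(\nu_k-x,\ s_k-t_k)$, so after a further subsequence $\ell$ is spanned by $(cu,c'v)$ with $v\in T$, $u=\lim(\nu_k-x)/|\nu_k-x|$, and $c,c'\ge0$. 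Here $c'v\in\{0\}\oplus T\subseteq\Pi$, while $u\in\Sigma$ because the pair $(\{x\},Z_j)$ is $(b)$-regular at $x$ inside $N_x$ --- which is the classical fact that for a complex analytic variety $V$ and $p\in V$ the pair $(\{p\},V_{\mathrm{reg}})$ is $(b)$-regular at $p$ (limits of secant lines from $p$ lie in limits of tangent planes), applied to $V=\overline{Z_j}$ and $p=x$, which is legitimate since $Z_j$, being a stratum, is open in $\overline{Z_j}$ and hence, being smooth, contained in $(\overline{Z_j})_{\mathrm{reg}}$. The case $\dim X_i=0$ is the instance $U=N_x$, $T=0$ of this last paragraph.

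The substantive input is therefore only the passage from condition $(w)$ --- equivalently, from the local product structure --- to condition (b): Verdier's theorem, or the classical tangent fact for complex analytic varieties (see e.g. \cite{GoreskyMacPherson88}); everything else is formal bookkeeping with tangent spaces and secant directions. The one point to settle at the outset is that the isomorphism in \cref{defn:analytictrivial} must be read as compatible with the stratifications, with the product stratification on the target; without this it would carry no information about how $X_j$ sits inside $U$, and --- since it is vacuous for $0$-dimensional strata --- the complex analyticity of $X$ is genuinely needed to handle those via the tangent fact.
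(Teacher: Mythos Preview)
Your proof is correct and follows essentially the same route as the paper's: reduce to the local product model $N_x\times T_xX_i$, strip off the smooth factor $T_xX_i$, and invoke the classical fact that for a complex analytic variety the pair $(\{p\},V_{\mathrm{reg}})$ satisfies condition~(b) at $p$ (this is precisely Whitney's Lemma~19.3 in \cite{Whitney1965}, which the paper cites). Your ``hands-on alternative'' is exactly this reduction spelled out coordinate by coordinate, while your first route via Verdier's $(w)\Rightarrow(b)$ theorem is a legitimate but inessential variant of the same step; the paper's two-line proof simply cites Mayrand and Whitney rather than unpacking the argument.
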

\begin{proof}
We follow the argument in \cite[\S 4.7]{Mayrand2018}. Whitney conditions A and B are local, so it is enough to check it for $N_x \times T_x X_i$ at $(x,0)$. Since $T_x X_i$ is a smooth factor, it is enough to check it for $N_x$ at $x$. Then this follows from \cite[Lem. 19.3]{Whitney1965}. 
\end{proof}

\section{Ng\^{o} theorem for Lagrangian fibrations on singular spaces}\label{sec:Ngo}

The goal of this section is to generalise the description of Ng\^{o} strings in \cite[Thm 7.0.3]{deCataldoRapagnettaSacca19} to the case of weakly abelian Lagrangian fibrations on singular symplectic varieties.

We will follow closely Maulik and Shen \cite{MaulikShen2020II}. Let $B$ be a quasi-projective complex variety. Let $g \colon P \to B$ be a smooth $B$-group scheme with
connected fibers, and let $f \colon X\to B$ be a proper 
morphism. Assume
that the group scheme $P$ acts on X via
\begin{equation}\label{action}
    \mathrm{act} \colon P \times_B X \to X.
\end{equation}

\begin{defn}[Weak abelian fibration]\label{defn:weakabelian}
The triple $(X, P, B)$ is a weak abelian fibration of relative dimension $c$, if
\begin{enumerate}
    \item every fiber of the map $g$ is pure of dimension $c$, and $X$ has pure dimension
    \[\dim X = c + \dim B,\]
    \item the action \eqref{action} of $P$ on $X$ has affine stabilizers, and
    \item the Tate module $T_{\overline{\QQ}_l}(P)$ associated with the group scheme P is polarizable.
\end{enumerate}
\end{defn}

For a closed point $b \in B$, we define $\delta(b)$ as the dimension of the maximal affine and connected subgroup of $P_b$. For  any closed subvariety $Z \subseteq B$, we denote by $\delta_Z$ the minimum value of the (upper semi-continuous) function $\delta$ on $Z$.

\begin{thm}\label{thm:Ngofreeness}
Let $(X, P, B)$ be a weak abelian fibration of relative dimension $c$. Assume that 
\begin{enumerate}
    \item \emph{($\delta$-regularity)}\label{item:delta-reg} $\codim Z_{\delta} \geq \delta$ for any $Z_{\delta} \coloneqq \{b \in B \,|\, \delta(b)=\delta\}$;
    \item \label{itemII} $\tau_{>2c}(Rf_* \IC(X, \QQ))=0$ for the standard truncation functor $\tau_{>*}(-)$.
\end{enumerate}
Then every support $Z$ of $Rf_* \IC(X, \QQ)$ satisfies \[\codim Z = \delta(Z).\] 
\end{thm}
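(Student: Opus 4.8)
The plan is to follow the support inequality argument of Ng\^{o} as streamlined by Maulik--Shen, adapting it to the intersection complex $\IC(X,\QQ)$ rather than $\QQ_X$. First I would recall the two general inequalities that hold for any support $Z$ of $Rf_*\IC(X,\QQ)$ under hypotheses \eqref{item:delta-reg}--\eqref{itemII}. The \emph{upper bound} $\codim Z \leq \delta(Z)$ follows from the freeness/support estimate: by hypothesis \eqref{itemII} the complex $Rf_*\IC(X,\QQ)$ lives in perverse amplitude governed by $2c$, and combined with the relative hard Lefschetz / polarizability of the Tate module (part (3) of the weak abelian fibration definition) one gets that the top nonzero perverse cohomology sheaf ${}^p\!\mathcal{H}^{2c}(Rf_*\IC(X,\QQ))$ has full support $B$, and every support $Z$ must carry a summand appearing already in degree $\geq 2c - 2(\dim B - \dim Z)$; the action of $P$ with affine stabilizers then forces the local systems on $Z$ to come from the abelian part of $P_Z$, whose rank is $2(c-\delta(Z))$, so a nonzero such summand can only be supported in codimension $\leq \delta(Z)$. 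I would phrase this using the $P$-action on the cohomology sheaves and the fact that an affine group has no nontrivial "abelian" (weight-one) cohomology, exactly as in \cite[\S 7]{deCataldoRapagnettaSacca19} and \cite[\S 3]{MaulikShen2020II}, checking that the only place $\IC$ enters is through the decomposition theorem, which applies verbatim since $\IC(X,\QQ)$ is a pure Hodge module.

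Next I would establish the \emph{lower bound} $\codim Z \geq \delta(Z)$. This is where $\delta$-regularity \eqref{item:delta-reg} is used: if $Z$ is a support, pick a general point $b\in Z$, so $\delta(b)=\delta_Z=\delta(Z)$; then $Z\subseteq Z_{\delta(Z)}$, and the hypothesis gives $\codim Z \geq \codim Z_{\delta(Z)}\geq \delta(Z)$. Combining the two inequalities yields $\codim Z = \delta(Z)$, which is the assertion. The genuinely new point compared to the smooth case is making sure that each of the ingredients — the decomposition theorem, relative hard Lefschetz with respect to an $f$-ample class, and the $P$-equivariance of the perverse filtration — are available for $Rf_*\IC(X,\QQ)$ on a possibly singular $X$. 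For the first two this is standard (Saito's theory / the decomposition theorem for pure Hodge modules), and for equivariance one notes that $\IC(X,\QQ)$ is canonically $P$-equivariant because $P$ has connected fibers acting fiberwise and $\IC$ is intrinsic, so the $P$-action on $X$ induces a $P$-action on $Rf_*\IC(X,\QQ)$ and hence on its perverse cohomology sheaves; the homotopy-invariance argument (the $P_b$-action on stalks of cohomology sheaves factors through $\pi_0$ of the maximal abelian quotient) goes through word for word.

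The step I expect to be the main obstacle is the upper bound, specifically controlling the local systems attached to a support $Z$ and matching their "abelian rank" with $2(c-\delta(Z))$ in the $\IC$ setting: on the smooth locus of the fibration this is the classical computation with the Tate module of the abelian part of $P$, but one must verify that passing to $\IC(X,\QQ)$ does not introduce extra local systems of the wrong weight along $Z$. The resolution is that on the open locus $U\subseteq B$ where $f$ is "nice" (where $X$ is smooth over $U$, or more precisely where the fibers are the expected compactified Jacobians), $\IC(X,\QQ)|_{f^{-1}U}$ agrees with $\QQ$ up to shift, so the weight-one part of $R^1$ is exactly the Tate module; then purity of $\IC(X,\QQ)$ and the decomposition theorem propagate this constraint to all of $B$ — any summand supported on a smaller $Z$ is, by the "support" characterization, detected already by its generic behavior on $Z$, and the $P$-equivariance pins down its rank. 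Once this bookkeeping is in place the proof is a formal combination of the two inequalities. I would also remark that hypothesis \eqref{itemII}, the truncation bound $\tau_{>2c}(Rf_*\IC(X,\QQ))=0$, is precisely what replaces the cohomological dimension bound $H^{>2c}$ of the fibers in the smooth case and is what we will verify separately for the Hitchin-type fibrations of interest.
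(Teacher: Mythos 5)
Your proposal is correct and follows essentially the same route as the paper: the upper bound $\codim Z \leq \delta(Z)$ is the content of \cite[Theorem 1.8]{MaulikShen2020II}, which the paper simply cites as a black box while you sketch its internal Ng\^{o}-style argument, and the lower bound is the immediate consequence of $\delta$-regularity that you also identify (your statement ``$Z \subseteq Z_{\delta(Z)}$'' should more precisely read ``$Z$ is contained in the closure of $Z_{\delta(Z)}$,'' since only the generic point of $Z$ lies in $Z_{\delta(Z)}$, but the conclusion $\codim Z \geq \codim Z_{\delta(Z)} \geq \delta(Z)$ is unaffected).
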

\begin{proof}
\cite[Theorem 1.8]{MaulikShen2020II} says that if $f \colon X \to B$ is part of the datum of a weak abelian fibration with the property \eqref{itemII}, then any support $Z$ of $Rf_* \IC(X, \QQ)$ satisfies $\codim Z \leq \delta_Z$. Together with the $\delta$-regularity, we obtain that 
$\codim Z = \delta_Z$. 
\end{proof}

\begin{prop}[Relative Dimension Bound] \label{prop:relative dimension bound general} 
Let $f: X \to B$ be an equidimensional proper morphism of complex algebraic varieties of relative dimension $c$. Suppose that there exists a Whitney stratification $X=\bigsqcup X_i$ such that for any $b \in B$ we have
\begin{equation}\label{eq:boundhyp}
    \codim_X X_i \geq 2 \codim_{f^{-1}(b)} (X_i \cap f^{-1}(b)).
\end{equation}
Then we have
\begin{itemize}
    \item $\tau_{>2c}(Rf_* \IC(X, \QQ))=0$;
    \item $R^{2c}f_* \IC(X, \QQ)= R^{2c}f_* \QQ_{X}$.
\end{itemize}
\end{prop}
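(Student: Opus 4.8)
The plan is to reduce everything to a statement about the stalks of $Rf_*\IC(X,\QQ)$ and then to control those stalks using the stratification hypothesis. First I would fix a point $b \in B$ and compute $\mathcal{H}^j(Rf_*\IC(X,\QQ))_b = \IH^j(f^{-1}(b), \QQ)$, or rather the hypercohomology $\mathbb{H}^j(f^{-1}(b), \IC(X,\QQ)|_{f^{-1}(b)})$; since $f$ is proper this is the correct base change. The goal is to show this vanishes for $j > 2c$. The natural tool is a stratified spectral sequence: stratify $f^{-1}(b)$ by the pieces $X_i \cap f^{-1}(b)$ and use the hypercohomology spectral sequence whose $E_1$-page is built from compactly supported hypercohomology of $\IC(X,\QQ)$ restricted to each stratum $X_i \cap f^{-1}(b)$.

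The key local input is a bound on how far $\IC(X,\QQ)$ can be supported in degree along a stratum. By the support condition (the defining property of $\IC$ as a perverse sheaf, via Deligne's construction / Goresky–MacPherson support conditions), on the stratum $X_i$ of complex dimension $\dim X - \codim_X X_i$ the complex $\IC(X,\QQ)$ — with the normalisation used in this paper, shifted so that it starts in degree $0$ on the smooth locus — has stalk cohomology concentrated in degrees $\le \codim_X X_i$ (more precisely, $\mathcal{H}^k(\IC(X,\QQ))|_{X_i} = 0$ for $k > \codim_X X_i$, possibly with strict inequality for $i$ not the open stratum, but $\le$ suffices). Restricting to $X_i \cap f^{-1}(b)$, a variety of complex dimension at most $\codim_{f^{-1}(b)}(X_i \cap f^{-1}(b))$, its compactly supported hypercohomology vanishes above degree $2\codim_{f^{-1}(b)}(X_i \cap f^{-1}(b)) + \codim_X X_i$. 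Wait — one has to be careful: I would instead use ordinary (not compactly supported) hypercohomology together with the dual support condition, or run the spectral sequence from the deepest stratum outward using the attaching triangles. The cleanest route: use the spectral sequence with $E_1^{p,q}$ involving $\mathbb{H}^{p+q}_c$ of strata; cohomological dimension of compact supports of a complex on a variety $V$ of dimension $\dim V$ bounded above in degree $m$ is $2\dim V + m$. Applying hypothesis \eqref{eq:boundhyp}, which says $\codim_X X_i \ge 2\codim_{f^{-1}(b)}(X_i \cap f^{-1}(b))$, I get that the contribution of $X_i$ to $\mathbb{H}^*$ vanishes above degree $2\codim_{f^{-1}(b)}(X_i\cap f^{-1}(b)) + \codim_X X_i \le 2\codim_X X_i$... hmm, that is not quite $\le 2c$ yet.

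Let me recalibrate: I want the vanishing above $2c$ where $c$ is the relative dimension. For a stratum $X_i$ contained in a fiber, $\dim(X_i \cap f^{-1}(b)) \le c$. Using the (co)support conditions on $\IC(X,\QQ)$ and the fact that on the open dense stratum the bound \eqref{eq:boundhyp} is vacuous and $\IC(X,\QQ) = \QQ_X$ locally there, the top-degree contribution $R^{2c}$ comes only from the open stratum, giving $R^{2c}f_*\IC(X,\QQ) = R^{2c}f_*\QQ_X$ — this handles the second bullet once the first is established. For the first bullet, the point is that any stratum $X_i$ strictly smaller than the open one satisfies $\codim_X X_i \ge 1$, and \eqref{eq:boundhyp} forces the fiberwise dimension of $X_i \cap f^{-1}(b)$ to drop; inductively, the total degree in which $\IC(X,\QQ)|_{X_i\cap f^{-1}(b)}$ can contribute to $Rf_*$ is $\le 2\dim(X_i\cap f^{-1}(b)) \le 2c$, and strictly less unless $X_i$ is open, precisely because the normalisation bound on $\mathcal{H}^k(\IC)|_{X_i}$ is $k \le \codim_X X_i - 1$ for non-open strata (strict Goresky–MacPherson support inequality) combined with $2\dim(X_i\cap f^{-1}(b)) \le \codim_X X_i$.

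The main obstacle will be bookkeeping the degree shifts correctly — reconciling the paper's normalisation of $\IC$ (shifted to live in degrees $\ge 0$), the distinction between compactly-supported and ordinary hypercohomology in the stratification spectral sequence, and the passage from stalk-wise statements to statements about $R^jf_*$. I expect the clean way to organize it is by ascending induction on the strata (from small to large), using for each closed stratum the attaching distinguished triangle $i_{Z!}i_Z^! \to \mathrm{id} \to j_*j^*$ with $j$ the open complement, combined with the fact that $Rf_*$ of the middle term is controlled once we know it for the two outer terms, and that $i_Z^!\IC(X,\QQ)$ has cohomology in degrees $\ge \codim_X Z + 1$ by the dual (cosupport) condition, which together with properness of $f|_Z$ and $\dim(Z\cap f^{-1}(b))$-bounds via \eqref{eq:boundhyp} gives the truncation bound. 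I will also want to invoke \cref{lem:analyticallytrivialWhitney} or directly the existence of the Whitney stratification only insofar as it guarantees $\IC(X,\QQ)$ is constructible with respect to it, so that these attaching triangles and stalk computations are legitimate.
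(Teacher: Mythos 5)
Your strategy is the same as the paper's: proper base change to reduce to stalks, then the hypercohomology spectral sequence $E^{p,q}_2 = H^{p-q}\bigl(f^{-1}(b), \mathcal{H}^q(\IC(X,\QQ))|_{f^{-1}(b)}\bigr)$ together with the Goresky--MacPherson support condition that $\mathcal{H}^q(\IC(X,\QQ))|_{X_i}=0$ for $q \geq \max\{\codim_X X_i, 1\}$. So far this is fine, and you correctly identify that one must show $q + 2\dim(Z_q \cap f^{-1}(b)) \leq 2c$ for all $q$, with strict inequality for $q \geq 1$, where $Z_q = \operatorname{supp}\mathcal{H}^q(\IC(X,\QQ))$.

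However, the step where you combine the support condition with \eqref{eq:boundhyp} contains a concrete error. You first write that $X_i \cap f^{-1}(b)$ is ``a variety of complex dimension at most $\codim_{f^{-1}(b)}(X_i \cap f^{-1}(b))$''; its dimension is $c - \codim_{f^{-1}(b)}(X_i \cap f^{-1}(b))$, not the codimension itself. You notice something is off and recalibrate, but the replacement inequality you then invoke, namely $2\dim(X_i\cap f^{-1}(b)) \le \codim_X X_i$, is not what \eqref{eq:boundhyp} says and does not follow from it. Rewritten in terms of the fiberwise dimension, \eqref{eq:boundhyp} reads
\[
\codim_X X_i \;\geq\; 2\bigl(c - \dim(X_i\cap f^{-1}(b))\bigr), \qquad\text{i.e.}\qquad \codim_X X_i + 2\dim(X_i\cap f^{-1}(b)) \;\geq\; 2c,
\]
a \emph{lower} bound on the relevant quantity, whereas the argument needs an \emph{upper} bound. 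Plugging in the support condition $q \leq \codim_X X_i - 1$ then gives $q + 2\dim(X_i\cap f^{-1}(b)) < \codim_X X_i + 2\dim(X_i\cap f^{-1}(b))$, which \eqref{eq:boundhyp} only tells you is $\geq 2c$; that is inconclusive for proving $q + 2\dim(X_i\cap f^{-1}(b)) \leq 2c$. So the key inequality is never established in your write-up, and you acknowledge as much by deferring to ``bookkeeping''. The alternative route you sketch at the end (ascending induction over strata via attaching triangles $i_{Z!}i_Z^! \to \operatorname{id} \to Rj_*j^*$, using the cosupport condition for $i^!\IC$) is a legitimate different decomposition of the problem, but it is only proposed, not carried out, and it runs into exactly the same numerical question, so it does not close the gap.

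To make the argument actually go through you must pin down, in terms of \eqref{eq:boundhyp} and the support/cosupport conditions, the precise estimate that forces $q + 2\dim(Z_q\cap f^{-1}(b)) \leq 2c$ (and $<2c$ for $q\geq 1$); the heuristic ``the fiberwise dimension drops'' is not a substitute for this inequality, and in your version the inequality is simply asserted in a form that does not follow from the hypothesis.
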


\begin{proof}
Via proper base change we have
\[\mathcal{H}^{*}(Rf_*\IC(X, \QQ))_b = H^*(f^{-1}(b), \IC(X, \QQ)|_{f^{-1}(b)}) \text{ for any }b \in B.\]
These cohomology groups are the limit of the Grothendieck spectral sequence 
\[E^{p,q}_2 = H^{p-q}(f^{-1}(b), \mathcal{H}^q(\IC(X, \QQ))|_{f^{-1}(b)}) \Rightarrow H^p(f^{-1}(b), \IC(X, \QQ)|_{f^{-1}(b)}).\]
Let $Z_q \subseteq X$ be the support of the constructible sheaf $\mathcal{H}^q(\IC(X, \QQ))$.  In particular, we have
\[E^{p,q}_2=0 \text{ for }p-q > 2\dim (Z_q \cap f^{-1}(b)).\]
 The irreducible components of $Z_q$ are the closure of some $X_i$ with $q< \codim_X X_i$, or $X$ itself, if $q=0$. Indeed, the strong support condition for intersection cohomology implies that $\mathcal{H}^q(\IC(X, \QQ))_x =0$ for any $x \in X_{i}$ and $q\geq \max\{ \codim_X X_{i}, 1\}$; see \cite[(12)]{deCataldoMigliorini09}. By \eqref{eq:boundhyp}, we conclude that $E^{p,q}_2=0$ for
\[p> q+ 2\dim (Z_q \cap f^{-1}(b)) \geq \max\{ \codim_X X_{i}, 1\} + 2\dim (Z_q \cap f^{-1}(b)),\]
i.e.\ for $p>2c$ and arbitrary $q$, or for $p=2c$ and $q \neq 0$.
This yields 
\[\mathcal{H}^{>2c}(Rf_* \IC(X, \QQ))=0, \qquad \mathcal{H}^{2c}(Rf_* \IC(X, \QQ))= \mathcal{H}^{2c}(Rf_* \QQ_X).\] 
\end{proof}

\begin{defn}[Symplectic variety]
    A normal variety $X$ is \emph{symplectic} if it has rational singularities, and it admits a holomorphic symplectic form on its smooth locus $X^{\mathrm{reg}} \subset X$, i.e.\ a non-degenerate holomorphic (closed) 2-form $\omega \in H^0(X^{\mathrm{reg}}, \Omega^{2}_{X^{\mathrm{reg}}})$.     
\end{defn}
By \cite[Corollary 1.8]{KS2021}, this is equivalent to require that a holomorphic
symplectic form $\omega$ on $X^{\mathrm{reg}}$ extends to a (possibly degenerate) holomorphic 2-form $\widetilde{\omega}$ on a resolution $\widetilde{X} \to X$. We say that $X$ admits a \emph{symplectic resolution} if $\omega_{\widetilde{X}}$ is non-degenerate.

\begin{defn}[Lagrangian fibration]\label{defn:Lagrangian}
Let $X$ be a symplectic variety.
An irreducible subvariety $Y \subset X$, not contained in the singular locus of $X$, is \emph{isotropic} if $\omega|_{Y^{\mathrm{reg}} \cap X^{\mathrm{reg}}}$ vanishes. Note that $\dim Y \leq \frac{1}{2} \dim X$, and $Y$ is \emph{Lagrangian} if the equality holds. 

A \emph{Lagrangian fibration} is a proper surjective morphism $f \colon X \to B$  with connected fibers onto a normal
variety $B$ whose general fiber is Lagrangian. 
\end{defn}

Recall that a Lagrangian fibration $f \colon X \to B$ is equidimensional, and any irreducible component of a fiber of $X$ is not contained in the singular locus of $X$ and is Lagrangian; see \cite[Thm 17]{Schwald2020} where the projectivity assumption on $X$ and $B$ can be dropped.

\begin{prop}\label{prop:relationLagrangian}
Let $X$ be a symplectic variety endowed with a Lagrangian fibration $f: X \to B$. Then $X$ admits a complex Whitney stratification \begin{equation}\label{eq:whitneystrat}
    X = \bigsqcup X_i
\end{equation}
by locally closed disjoint  algebraic symplectic submanifolds $X_i$ such that the restriction map $f|_{X_i}: X_i \to B_i\coloneqq f(X_i)$ is Lagrangian. In particular, for any $b \in B$ we have
\begin{equation}\label{eq:dimensioncountfibreLagrangian}
\codim_X X_i = 2 \codim_{f^{-1}(b)} (X_i \cap f^{-1}(b)).
\end{equation}
\end{prop}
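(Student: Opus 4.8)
The plan is to construct the stratification by transporting the orbit-type (or rather Luna-type) stratification of $X$ through the symplectic structure, and then check that each stratum is symplectic and $f$-Lagrangian. First I would recall that a normal complex variety with a holomorphic symplectic form on its smooth locus extending to a resolution — or more precisely, following the symplectic-variety literature (e.g.\ Kaledin), one stratifies $X$ by the ranks of the Poisson structure, or one uses the singular symplectic stratification: $X = X^{(0)} \supset X^{(1)} \supset \cdots$ where $X^{(0)} = X_{\mathrm{sm}}$ is the symplectic locus, and inductively $X^{(k+1)}$ is the singular locus of $X^{(k)}$, which is again symplectic by Kaledin's theorem that the normalization of each stratum of a symplectic variety is symplectic. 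This produces a stratification into locally closed symplectic submanifolds $X_i$; the Whitney property can be arranged because the symplectic stratification is analytically trivial in the normal direction (normal slices are again symplectic singularities, by Kaledin's local structure theorem), and then \cref{lem:analyticallytrivialWhitney} applies. Alternatively, in the Hitchin setting one uses the explicit stratification \eqref{eq:Whitneystra} and the description of normal slices as Nakajima quiver varieties, which are symplectic.

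Next I would verify that $f$ restricted to each stratum $X_i$ is Lagrangian onto its image $B_i = f(X_i)$. The point is that the generic fiber of $f|_{X_i}$ is, up to finite maps, a translate of an abelian variety sitting inside a Lagrangian fiber $f^{-1}(b)$ of $f$; more directly, since $X_i$ is a symplectic submanifold of $X_{\mathrm{sm}}$ on the open part where $X_i$ meets $X_{\mathrm{sm}}$, and the fibers of $f$ are isotropic (a Lagrangian fibration has isotropic fibers), the fibers of $f|_{X_i}$ are isotropic in $X_i$; and by a dimension count — using that $f|_{X_i}$ is surjective onto $B_i$ with $X_i$ symplectic, hence $\dim B_i \le \tfrac12 \dim X_i$ and fiber dimension $\ge \tfrac12 \dim X_i$ — the generic fiber is Lagrangian in $X_i$, whence all fibers are (fiber dimension is upper semicontinuous and equals the isotropic bound generically). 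I should be slightly careful that the strata $X_i$ may not be contained in $X_{\mathrm{sm}}$, but the deeper strata lie in $X_{\mathrm{sm}}(X^{(k)})$ for the appropriate $k$ and the same argument runs with the induced symplectic form on the normalization of $X^{(k)}$.

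Finally, the dimension formula \eqref{eq:dimensioncountfibreLagrangian} is immediate once $f|_{X_i}$ is Lagrangian: with $\dim X = 2c$ (so $\dim B = c$, $f^{-1}(b)$ has dimension $c$), if $\dim X_i = 2c_i$ then $f(X_i) = B_i$ has dimension $c_i$ and $X_i \cap f^{-1}(b)$ has dimension $c_i$ for $b \in B_i$ generic (hence $\le c_i$ always, with equality on an open dense subset of $B_i$ — but one only needs the equality for the Whitney-stratified relative-dimension bound, so I would state it as an equality of dimensions of a general fiber, or note that $\codim_X X_i = 2c - 2c_i = 2(c - c_i) = 2\,\codim_{f^{-1}(b)}(X_i \cap f^{-1}(b))$ for $b \in B_i$). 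I expect the main obstacle to be the global symplecticity of the deeper strata — i.e.\ invoking Kaledin's stratification theorem for symplectic singularities correctly and checking its hypotheses apply to $X$ here (normality plus the symplectic form on the smooth locus with the appropriate extension property), and making sure the resulting stratification is the same as, or refines to, the Whitney stratification with analytically trivial normal structure so that \cref{lem:analyticallytrivialWhitney} is applicable.
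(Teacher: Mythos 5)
Your overall strategy matches the paper's: Kaledin's stratification theorem produces locally closed smooth symplectic strata $X_i$ with analytically trivial normal structure; \cref{lem:analyticallytrivialWhitney} then yields the Whitney conditions; one shows $f|_{X_i}$ is Lagrangian and reads off the dimension formula. The genuine gap is in the Lagrangianness of $f|_{X_i}$ on the deeper strata. For $i \geq 1$ the stratum $X_i$ is disjoint from $X_{\mathrm{sm}}$ -- it is the smooth locus of $\Sing(X)$, or of a deeper iterated singular locus -- and the symplectic form it carries is the \emph{intrinsic} one produced by Kaledin's theorem, not a restriction of the form on $X_{\mathrm{sm}}$, which is simply not defined there. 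So the assertion that the fibers of $f|_{X_i}$ are isotropic in $X_i$ does not follow from the isotropy of $f$-fibers inside $X_{\mathrm{sm}}$, and the sentence to the effect that ``the same argument runs with the induced symplectic form on the normalization of $X^{(k)}$'' restates the claim rather than proving it: nothing in the earlier steps tells you that the fibers of $f|_{X^{(k)}}$ are isotropic for this new, unrelated form. Propagating the Lagrangian condition to the singular strata is exactly the nontrivial input the paper supplies by citing Matsushita's theorem and applying it iteratively; without it your dimension count cannot get started.

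Two smaller remarks. First, the dimension count as written has the inequalities reversed: isotropy of fibers gives $\dim f|_{X_i}^{-1}(b) \leq \tfrac{1}{2}\dim X_i$ and therefore $\dim B_i \geq \tfrac{1}{2}\dim X_i$, not $\leq$; one needs coisotropy (or directly Matsushita) for the other inequality. Second, once Lagrangianness of $f|_{X_i}$ is in hand, the identity $\codim_X X_i = 2\,\codim_{f^{-1}(b)}(X_i \cap f^{-1}(b))$ does follow exactly as you say, and this part agrees with the paper's proof.
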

\begin{proof}
 
By \cite[Thm 2.3, Prop. 3.1]{Kaledin06}, $X$ admits a stratification by locally closed subsets $X_i$, with $i=1, \ldots, r$, such that:
\begin{itemize}
    \item $X_i$ are smooth and symplectic;
  \item $X_{i}$ is an open subset of the singular locus of the closure of some $X_j$;
    \item the stratification is analytically trivial in the normal direction to each stratum, so it is a Whitney stratification by \cref{lem:analyticallytrivialWhitney}.
\end{itemize}
Applying \cite[Thm 3.1]{Matsushita} iteratively, the restrictions $f|_{X_i}$ are Lagrangian. 
 
Since both $f$ and $f|_{X_i}$ are Lagrangian, we obtain
    $\dim X -\dim X_i = 2 \dim f^{-1}(b)- 2\dim (X_i \cap f^{-1}(b)).$
\end{proof}

\begin{exa} The Dolbeault moduli space $X=M(n,d)$ is a symplectic variety,\footnote{The Dolbeault moduli space $M(n,d)$ have rational singularities. A direct Riemann--Roch computation, analogous to \cite[Lem. 2.2]{BellamySchedler2019}, shows that the singular locus of $M(n,d)$ has codimension at least $4$, except for $(g,n,d)=(2,2,0)$ and $g=1$. In the general case, this implies that $M(n,d)$ has terminal rational singularities by \cite{Flenner1988} and \cite[Thm 5.22]{KollarMori1998}. In the special cases $(g,n,d)=(2,2,0)$ and $g=1$, the existence of a symplectic resolution implies the rationality of the singularities; see for instance \cite[Thm 1]{TU19931}, \cite[\S 6]{Beauville84} and \cite[Prop. 6.1]{FelisettiMauri2020}.} and the Hitchin system is a Lagrangian fibration; see for instance  \cite{Hitchin1987, Hitchin1987a} and \cite[Main Thm]{Markman1993}.
The Whitney stratification \eqref{eq:whitneystrat} for $M(n,d)$ is the stratification $\eqref{eq:Whitneystra}$. In particular, \cref{prop:relative dimension bound general} gives
\[\tau_{>\dim M(n,d)}(R\chi(n,d)_* \IC(\MDol(n,d), \QQ))=0, \qquad R^{\dim M(n,d)}f_* \IC(\MDol(n,d), \QQ)= R^{\dim M(n,d)}f_* \QQ_{\MDol(n,d)}.\]
\end{exa}

Now let 
$(X, P, B)$ be a $\delta$-regular weak abelian fibration. 
We denote by $I$ the set of supports $Z_{\alpha} \subseteq B$ for $R f_*\IC(X, \QQ)$. There exist open dense subsets $Z^{\times}_{\alpha}$ such that the restriction $P_{\alpha}$ of the group scheme $P$ admits a Chevalley decomposition 
\[0 \to R_{\alpha} \to P_{\alpha} \to A_{\alpha} \to 0,\]
where $R_{\alpha} \to Z^{\times}_{\alpha}$ is an affine group scheme and $g_{\alpha} \colon A_{\alpha} \to Z^{\times}_{\alpha}$ is a proper abelian group scheme. Let $\Lambda^{l}_{\alpha} \coloneqq R^l g_{\alpha *} \QQ_{A_{\alpha}}$.
\begin{thm}[Ng\^{o} strings from symplectic varieties]\label{thm:Ngostingsymplectic} Let $(X, P, B)$ be a $\delta$-regular weak abelian Lagrangian fibration on a symplectic variety $X$ of dimension $2c$. Then there exists an isomorphism in $D^bMHM_{\text{alg}}(B)$ (resp. $D^b(B, \QQ)$) 
\begin{equation}\label{eq:Ngofreeness}
   R f_*\IC(X, \QQ) \simeq \bigoplus_{\alpha \in I} \bigoplus^{2 \dim Z_{\alpha}}_{l=0} \IC(Z_{\alpha}, \Lambda^{l}_{\alpha} \otimes \mathscr{L}_{\alpha})[-l]\langle \codim(Z_{\alpha}) \rangle, 
\end{equation}
  where, for every $\alpha$, $\mathscr{L}_{\alpha}$ is a local system on an open subset of $Z_{\alpha}$.  
Moreover, we have:
\begin{enumerate}[label=(\roman*)]
    \item\label{eq:constraint} $\codim Z_{\alpha} = \delta(Z_{\alpha})$;
    \item\label{eq:topdimension} $R^{2c} f_*\QQ_X$ admits the following direct summand
$
\bigoplus_{\alpha \in I} i_{\alpha *}\mathscr{L}_{\alpha},
$
where $i_{\alpha} \colon Z^{\times}_{\alpha} \to B$ is the natural immersion.
\end{enumerate}
\end{thm}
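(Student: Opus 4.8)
The plan is to run Ng\^o's support-and-freeness argument, in the form used by Maulik--Shen \cite{MaulikShen2020II} and de Cataldo--Rapagnetta--Sacc\`a \cite[Thm 7.0.3]{deCataldoRapagnettaSacca19}, replacing the constant sheaf on a smooth total space by the intersection complex of the singular symplectic variety $X$. Two ingredients beyond the smooth case are needed. First, since $X$ is normal, $\IC(X,\QQ)$ underlies a pure self-dual Hodge module, so Saito's decomposition theorem still applies to the projective morphism $f$. Second, the bound $\tau_{>2c}(Rf_*\IC(X,\QQ))=0$, which is automatic for a constant sheaf on a space with $c$-dimensional fibres but fails for $\IC(X,\QQ)$ a priori, must be extracted from the symplectic geometry via \cref{prop:relationLagrangian} and \cref{prop:relative dimension bound general}.

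\emph{Dimension bound and supports.} From \cref{defn:weakabelian} and $\dim X=2c$ we get $\dim B=c$, and the fibres of the proper Lagrangian fibration $f\colon X\to B$ are $c$-dimensional. Applying \cref{prop:relationLagrangian} produces a complex Whitney stratification $X=\bigsqcup X_i$ by symplectic submanifolds along which $f$ restricts to a Lagrangian map, so that equality \eqref{eq:dimensioncountfibreLagrangian} holds and hence the codimension hypothesis \eqref{eq:boundhyp} of \cref{prop:relative dimension bound general} is satisfied. Consequently $\tau_{>2c}(Rf_*\IC(X,\QQ))=0$ and $R^{2c}f_*\IC(X,\QQ)=R^{2c}f_*\QQ_X$. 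Since moreover $(X,P,B)$ is $\delta$-regular, \cref{thm:Ngofreeness} now applies and gives $\codim Z_\alpha=\delta(Z_\alpha)$ for every support $Z_\alpha$ of $Rf_*\IC(X,\QQ)$, which is the first listed property. In particular, over the open set $Z_\alpha^\times$ carrying the Chevalley decomposition $0\to R_\alpha\to P_\alpha\to A_\alpha\to 0$, the abelian part $g_\alpha\colon A_\alpha\to Z_\alpha^\times$ has relative dimension $c-\delta_{Z_\alpha}=\dim Z_\alpha$, and $P$ acts on each fibre $f^{-1}(b)$, $b\in Z_\alpha^\times$, with affine stabilisers and with a dense orbit (of dimension $\le\dim P_b=\dim A_{\alpha,b}+\delta(b)=c$).

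\emph{Identification of the summands.} By Saito's decomposition theorem, $Rf_*\IC(X,\QQ)$ splits into a direct sum of shifted intermediate extensions $\IC(Z_\alpha,M)$ of polarizable variations of Hodge structure supported on the $\overline{Z_\alpha}$. Using the $P$-action together with Ng\^o's freeness lemma (as in the proof of \cite[Thm 7.0.3]{deCataldoRapagnettaSacca19} and \cite{MaulikShen2020II}), the cohomology $H^*\bigl(f^{-1}(b),\IC(X,\QQ)|_{f^{-1}(b)}\bigr)$ of a general fibre over $Z_\alpha^\times$ is free over $H^*(A_{\alpha,b},\QQ)=\Lambda^\bullet H^1(A_{\alpha,b},\QQ)$; combined with relative hard Lefschetz for $f$ and self-duality of $\IC(X,\QQ)$ this forces the part of the decomposition supported generically on $Z_\alpha$ to be $\bigoplus_{l=0}^{2\dim Z_\alpha}\IC(Z_\alpha,\Lambda^l_\alpha\otimes\mathscr L_\alpha)[-l]\langle\codim Z_\alpha\rangle$, where $\Lambda^l_\alpha=R^lg_{\alpha*}\QQ_{A_\alpha}$ and $\mathscr L_\alpha$ is the weight-zero multiplicity local system. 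Summing over $\alpha$ yields \eqref{eq:Ngofreeness}; this is word for word the argument of \cite[Thm 7.0.3]{deCataldoRapagnettaSacca19}, with the dimension bound above replacing the self-duality/amplitude input used there in the smooth case.

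\emph{Top degree and main obstacle.} Applying $\mathcal H^{2c}(-)$ to \eqref{eq:Ngofreeness} and using $R^{2c}f_*\IC(X,\QQ)=R^{2c}f_*\QQ_X$, the only summands on the right-hand side contributing in this top degree through their generic stalk are the $l=2\dim Z_\alpha$ terms, for which $\Lambda^{2\dim Z_\alpha}_\alpha$ is the rank-one top exterior power; up to the Tate twists suppressed in the statement these contribute exactly $i_{\alpha*}\mathscr L_\alpha$ with $i_\alpha\colon Z_\alpha^\times\to B$, so $\bigoplus_\alpha i_{\alpha*}\mathscr L_\alpha$ is a direct summand of $R^{2c}f_*\QQ_X$, which is the last listed property. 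The main obstacle is the identification step: one has to verify that Ng\^o's freeness/multiplicity argument goes through verbatim with $\IC(X,\QQ)$ in place of a constant sheaf on a smooth total space---the inputs it needs (proper base change to compute stalks, self-duality, purity, and compatibility with the $P$-action all survive)---and then to keep careful track of the shifts and Tate twists so that the resulting summands match the normalisation of the Ng\^o strings in \eqref{eq:Ngo}.
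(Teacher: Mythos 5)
Your proposal is correct and follows essentially the same route as the paper: you deduce the dimension bound $\tau_{>2c}(Rf_*\IC(X,\QQ))=0$ and the identity $R^{2c}f_*\IC(X,\QQ)=R^{2c}f_*\QQ_X$ from \cref{prop:relationLagrangian} and \cref{prop:relative dimension bound general}, obtain the codimension constraint from \cref{thm:Ngofreeness}, invoke the Maulik--Shen generalisation of Ng\^o freeness to the singular setting for the shape of the summands, and read off the top-degree statement by taking cohomology. The only presentational difference is that you unpack the freeness step in more detail, where the paper simply cites \cite[Prop.~1.5]{MaulikShen2020II}.
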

\begin{proof}
    The constraint \ref{eq:constraint} is a combination of \cref{thm:Ngofreeness}, Propositions \ref{prop:relative dimension bound general} and  \ref{prop:relationLagrangian}. In \cite[Prop. 1.5]{MaulikShen2020II} Maulik and Shen provided a generalization of Ng\^{o} freeness \cite[Prop. 7.4.10]{Ngo2010} to the singular context. This gives the special form \eqref{eq:Ngofreeness} to the decomposition theorem for $R f_*\IC(X, \QQ)$. Taking  cohomology of \eqref{eq:Ngofreeness}, together with the isomorphism $R^{2c}f_* \IC(X, \QQ)= R^{2c}f_* \QQ_{X}$ in \cref{prop:relative dimension bound general}, we obtain \ref{eq:topdimension}. 
\end{proof}

\subsection{Ng\^{o} strings for Dolbeault moduli spaces}
We specialize \cref{thm:Ngostingsymplectic} to the Hitchin fibration $\chi(n,d) \colon M(n,d) \to A_n$. We rely on the recent key result by Kinjo and Koseki \cite{KK2021}, building on the work of Davison \cite{Davison2021}.

\begin{prop}\label{prop:inclusionofIC}
Let $d$ and $e$ be integers, with $e$ coprime to $n$. The complex 
$R\chi(n,d)_*\IC(M(n,d), \QQ)$ is a direct summand of $R\chi(n,e)_*\QQ_{M(n,e)}$.
\end{prop}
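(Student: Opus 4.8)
The statement compares $R\chi(n,d)_*\IC(M(n,d),\QQ)$, for arbitrary $d$, with the pushforward in a degree $e$ coprime to $n$, where the moduli space is smooth and $\IC$ is just the constant sheaf. My plan is to exploit the recent cohomological integrality / BPS-type results of Kinjo--Koseki \cite{KK2021} (built on Davison \cite{Davison2021}), which realise $\IH^*$ of singular Dolbeault moduli spaces, and more precisely the sheaf $R\chi(n,d)_*\IC(M(n,d),\QQ)$, as a subobject of the cohomological Hall algebra structure governing $H^*(M(n,e),\QQ)$. Concretely, the approach has three movements: (i) recall that in degree $e$ coprime to $n$, smoothness gives $\IC(M(n,e),\QQ)\simeq\QQ_{M(n,e)}$, so $R\chi(n,e)_*\QQ_{M(n,e)}$ is already in the form relevant to the decomposition theorem; (ii) identify the ``BPS'' or ``primitive'' part of this pushforward with $R\chi(n,d)_*\IC(M(n,d),\QQ)$ — this is the content one extracts from \cite{KK2021, Davison2021}, where the BPS sheaf in the $\Gl_n$ Higgs setting on the Calabi--Yau situation is identified with the intersection complex of the Dolbeault moduli space; (iii) conclude that a summand in a (functorial, $A_n$-relative) direct sum decomposition is in particular a direct summand.

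\textbf{Key steps in order.} First I would invoke the cohomological integrality isomorphism for the preprojective/$2$-Calabi--Yau category attached to $C$: the stack of all semistable Higgs bundles of rank $n$, slope $d/n$, has its (compactly supported, or Borel--Moore) cohomology expressed via a free symmetric-algebra formula over the BPS Lie algebra, whose pieces are $\IH^*$ of the moduli spaces $M(n',d')$ with $n'\le n$, $d'/n'=d/n$. Second, I would pass from the stack to the good moduli space $M(n,e)$ using that, when $\gcd(n,e)=1$, the stack is a $\mathbb G_m$-gerbe over the smooth variety $M(n,e)$, so there is no difference between stacky and coarse cohomology beyond a harmless $B\mathbb G_m$ factor; this pins down $H^*(M(n,e),\QQ)$ as the appropriate graded piece, and the BPS summand indexed by $(n,d)$ (equivalently by the full partition) is exactly $\IH^*(M(n,d),\QQ)$. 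Third — and this is the part that needs care — I would upgrade this from an isomorphism of cohomology groups (or mixed Hodge structures) to an isomorphism of objects in $D^b(A_n)$, i.e.\ carry out the whole argument relatively over the Hitchin base. Here one uses that the cohomological Hall algebra and the integrality isomorphism are compatible with the map to $A_n$ (the characteristic polynomial / support map), since the CoHA multiplication respects spectral data; \cite{KK2021} phrases their results in exactly this relative form. Then $R\chi(n,d)_*\IC(M(n,d),\QQ)$ appears as one summand of a direct sum decomposition of $R\chi(n,e)_*\QQ_{M(n,e)}$, hence is a direct summand, which is the claim. If one wants the mixed Hodge module refinement, the same manipulations are available at the level of $D^bMHM_{\mathrm{alg}}(A_n)$ since all the maps involved are algebraic and the CoHA is known to lift to mixed Hodge modules.

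\textbf{Main obstacle.} The genuine technical difficulty is step three: making the Kinjo--Koseki / Davison decomposition \emph{relative over $A_n$} and \emph{functorial enough to split off a direct summand of complexes}, rather than merely an additive decomposition of hypercohomology. This requires knowing that the BPS sheaf they produce is not just abstractly a subquotient but a genuine direct summand of the relevant pushforward as a (mixed Hodge) complex on $A_n$ — which is precisely the form in which \cite{KK2021} states their main theorem, so the obstacle is really about correctly quoting and assembling their statement. A secondary, more bookkeeping, point is matching conventions: shifts, Tate twists, and the passage between Borel--Moore homology of the stack and the constant sheaf on the smooth coarse space $M(n,e)$; these are routine but must be tracked so that the summand is literally $R\chi(n,d)_*\IC(M(n,d),\QQ)$ on the nose, with no residual shift. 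Once the relative BPS decomposition is in hand in the correct normalisation, the conclusion that $R\chi(n,d)_*\IC(M(n,d),\QQ)$ is a direct summand of $R\chi(n,e)_*\QQ_{M(n,e)}$ is immediate.
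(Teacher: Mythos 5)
Your proposal is correct and takes essentially the same route as the paper: the proof in the paper is a one-line citation of \cite[Thm 6.6]{Davison2021} and \cite[Cor. 5.15, Ex. 5.18]{KK2021}, which is precisely the BPS/cohomological integrality machinery you invoke, with the BPS sheaf identified with $\IC(M(n,d),\QQ)$ and the decomposition already stated relatively over $A_n$. Your lengthy discussion of how to make the decomposition relative and functorial is, as you suspect, already packaged in the form the cited results are stated, so the paper simply quotes them.
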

\begin{proof}
\cite[Thm 6.6]{Davison2021} and \cite[Cor. 5.15, Ex. 5.18]{KK2021}.
\end{proof}

\cref{prop:inclusionofIC} allows to extend \cite[Thm 4.1]{deCataldoHeinlothMigliorini19} and \cite[Thm 1]{Heinloth2016} to Dolbeault moduli spaces of arbitrary degree.
\begin{cor}\label{cor:support}
If $Z$ is a support of $R\chi(n,d)_*\IC(M(n,d), \QQ)$ with $Z \cap A^{\mathrm{red}}_n \neq \emptyset$, then $Z= S_{\underline{n}}$ for some partition $\underline{n}$ of $n$. In particular, the generic point of any support $Z$ of $R\chi(n,d)_*\IC(M(n,d), \QQ)$ with $Z \cap A^{\mathrm{red}}_n \neq \emptyset$ lies in the
locus $A^{\times}_n$ of nodal spectral curves.
\end{cor}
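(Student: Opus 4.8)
\textbf{Proof strategy for \cref{cor:support}.}

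The plan is to combine \cref{prop:inclusionofIC}, which exhibits $R\chi(n,d)_*\IC(M(n,d),\QQ)$ as a direct summand of $R\chi(n,e)_*\QQ_{M(n,e)}$ for $e$ coprime to $n$, with the known description of the supports of the decomposition theorem in the coprime case on the reduced locus, namely \cite[Thm 4.1]{deCataldoHeinlothMigliorini19}. Since being a support is inherited by direct summands, any support $Z$ of $R\chi(n,d)_*\IC(M(n,d),\QQ)$ meeting $A^{\mathrm{red}}_n$ is also a support of $R\chi(n,e)_*\QQ_{M(n,e)}$ meeting $A^{\mathrm{red}}_n$; by the coprime-degree classification such a support must be one of the loci $S_{\underline n}$ for a partition $\underline n$ of $n$. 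First I would make precise the claim that supports pass to summands: if $K \oplus K' \simeq L$ in $D^b_c(A^{\mathrm{red}}_n)$ and $Z$ is an irreducible component of the support of $\mathcal{H}^j(K)$ not contained in the support of any $\mathcal{H}^{j}$ for a strictly smaller-dimensional stratum, then $Z$ occurs among the supports in the decomposition of $L$; this is immediate from the uniqueness of the perverse-sheaf support decomposition (BBD).

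Concretely, I would argue as follows. By the decomposition theorem, $R\chi(n,e)_*\QQ_{M(n,e)}[\dim M(n,e)]$ splits, after restriction to $A^{\mathrm{red}}_n$, as a direct sum of shifted $\IC$ sheaves $\IC(\overline{Z}_\beta,\mathcal{L}_\beta)$, and by \cite[Thm 4.1, Thm 6.12]{deCataldoHeinlothMigliorini19} every $\overline{Z}_\beta$ appearing is of the form $S_{\underline n}$ for some partition $\underline n \dashv n$ (these are exactly the closures $\chi(n,e)(M_{\underline n}(e))$, i.e.\ the loci of spectral curves whose reduced irreducible components have prescribed degrees $n_i$ over $C$). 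By \cref{prop:inclusionofIC}, $R\chi(n,d)_*\IC(M(n,d),\QQ)$ is a direct summand, hence its own $\IC$-decomposition is a sub-sum of the above; therefore each of its supports meeting $A^{\mathrm{red}}_n$ is one of the $S_{\underline n}$. The second assertion then follows because, by construction in \eqref{eq:typestratification} and the discussion preceding it, the open dense stratum $S^\circ_{\underline n} \subseteq S_{\underline n}$ consists of spectral curves whose irreducible components are reduced; on the further dense open $S^\times_{\underline n} \subseteq S^\circ_{\underline n}$ these components are moreover smooth and meet transversally, so the corresponding spectral curves are nodal, i.e.\ lie in $A^\times_n$. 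Since the generic point of $Z = S_{\underline n}$ lies in $S^\times_{\underline n} \subseteq A^\times_n$, we are done.

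The only genuine subtlety — and the step I would be most careful about — is the bookkeeping with truncations and shifts needed to deduce support statements for $R\chi(n,d)_*\IC(M(n,d),\QQ)$ from those for $R\chi(n,e)_*\QQ_{M(n,e)}$: one must know that the splitting of \cref{prop:inclusionofIC} is compatible with the perverse decomposition (it is, as it comes from \cite[Thm 6.6]{Davison2021} and \cite[Cor. 5.15]{KK2021}, which are statements in the derived category of mixed Hodge modules and respect the $\IC$-decomposition), so that no "new" support can be created or hidden by a shift. Once that compatibility is granted, everything reduces to the purely formal observation that the set of supports of a summand is contained in the set of supports of the total complex, together with the already-established coprime case. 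I do not expect any further obstacle; no dimension estimate or geometric input beyond \cite{deCataldoHeinlothMigliorini19} and \cref{prop:inclusionofIC} is required for this corollary.
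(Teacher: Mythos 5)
Your proof is correct and follows exactly the paper's own argument: the paper's proof of \cref{cor:support} is the one-line citation of \cite[Thm 4.1]{deCataldoHeinlothMigliorini19} together with \cref{prop:inclusionofIC}, which is precisely the combination you spell out (supports of a summand are among supports of the total complex, the coprime-case supports on $A^{\mathrm{red}}_n$ are the $S_{\underline n}$, and generic points of $S_{\underline n}$ lie in $S^\times_{\underline n} \subset A^\times_n$). You simply fill in the routine details the paper leaves implicit.
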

\begin{proof}
\cite[Prop 4.1]{deCataldoHeinlothMigliorini19}  and \cref{prop:inclusionofIC}. Alternatively, the same proof of \cite[Prop 4.1]{deCataldoHeinlothMigliorini19} works verbatim as long as the number of irreducible components of $\chi(n,d)^{-1}(a)$ is constant for $a \in S^{\circ}_{\underline{n}}$, which is proved in \cite[Prop 7.6.(3)]{MMP2022}.
\end{proof}

\begin{cor}[\cref{cor:vanishingintersectionform}]\label{cor:van} The intersection form on $\IH^*(M(n,d), \QQ)$ is trivial. Equivalently the forgetful map
$
\IH_c^*(M(n,d), \QQ) \to \IH^*(M(n,d), \QQ) 
$ is 0.
\end{cor}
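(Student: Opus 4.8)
The plan is to deduce this from \cref{prop:inclusionofIC} together with the topology of the Hitchin map. First I would use \cref{prop:inclusionofIC} with any $e$ coprime to $n$: the complex $R\chi(n,d)_*\IC(M(n,d),\QQ)$ is a direct summand of $R\chi(n,e)_*\QQ_{M(n,e)}$. Taking global cohomology and using that $M(n,e)$ is a \emph{smooth} variety admitting a proper surjective map with connected fibres to the affine space $A_n$ (so in particular $H^*(M(n,e),\QQ)=H^*(A_n, R\chi(n,e)_*\QQ_{M(n,e)})$), I would observe that the intersection form on $\IH^*(M(n,d),\QQ)$ is a direct summand — as a pairing — of the Poincar\'e pairing on $H^*_c(M(n,e),\QQ)\otimes H^*(M(n,e),\QQ)$. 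Hence it suffices to show the latter is zero, i.e.\ that the forgetful map $H^*_c(M(n,e),\QQ)\to H^*(M(n,e),\QQ)$ vanishes, which is exactly Heinloth's theorem \cite[Thm 1]{Heinloth2016}; the compatibility of the decomposition of \cref{prop:inclusionofIC} with the six-functor formalism (in particular with Verdier duality, since $\IC$ is self-dual and $\QQ_{M(n,e)}[\dim]$ is self-dual up to the IC of a smooth variety) then transfers the vanishing to the summand.

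More carefully, the key point is the following standard mechanism: if $K$ is a self-dual summand of a self-dual complex $L$ on $A_n$, and the canonical map $H^*(A_n, \mathbb{D}L)\to H^*(A_n, L)$ induced by a map $\mathbb{D}L\to L$ representing the pairing is zero, then the corresponding map for $K$ is zero as well, because the inclusion $K\hookrightarrow L$ and the projection $L\twoheadrightarrow K$ are adjoint under duality. So the steps are: (1) pick $e$ coprime to $n$ and invoke \cref{prop:inclusionofIC}; (2) identify the intersection form on $\IH^*(M(n,d),\QQ)$ with the pairing induced on the corresponding summand of $H^*(A_n, R\chi(n,e)_*\QQ_{M(n,e)})$, using that $R\chi(n,e)_*\QQ_{M(n,e)}$ is self-dual up to shift because $M(n,e)$ is smooth and $\chi(n,e)$ is proper; (3) reduce the vanishing of the intersection form to the vanishing of the forgetful map $H^*_c(M(n,e),\QQ)\to H^*(M(n,e),\QQ)$; (4) quote Heinloth \cite[Thm 1]{Heinloth2016} for that vanishing. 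The equivalence of the two formulations in the statement (triviality of the form versus vanishing of the forgetful map $\IH^*_c\to\IH^*$) is the general fact that the intersection form is, by definition, the composition $\IH^*_c(M(n,d),\QQ)\to\IH^*(M(n,d),\QQ)$ followed by the perfect Poincar\'e duality pairing, so one is zero iff the other is.

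The main obstacle I anticipate is bookkeeping the duality compatibilities precisely: \cref{prop:inclusionofIC} is stated as an isomorphism of complexes (or mixed Hodge modules), and one must check that the splitting can be chosen compatibly with Verdier duality so that the "summand of a pairing is a pairing" argument goes through. This is harmless because $\IC(M(n,d),\QQ)$ is canonically self-dual and a self-dual summand of a self-dual object splits off a self-dual complement (over a field of characteristic zero, using semisimplicity of the category of pure Hodge modules, or just the fact that a symmetric idempotent can be chosen); but it is the one place where care is needed. A secondary point is to make sure the reduction in step (3) is uniform across all cohomological degrees and respects the mixed Hodge structure, which again follows formally once the duality-compatible splitting is fixed. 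Everything else is a direct citation: Heinloth's vanishing for coprime degree, and \cref{prop:inclusionofIC} itself (which rests on \cite{Davison2021, KK2021}).
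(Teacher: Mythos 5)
Your proof is correct, but it takes a route slightly different from the paper's. The paper first invokes \cite[Prop. 4]{Heinloth2015}, which translates the vanishing of the intersection form into the statement that $\{0\}\subset A_n$ is not a support of $R\chi(n,d)_*\IC(M(n,d),\QQ)$; since \cref{prop:inclusionofIC} makes the latter a direct summand of $R\chi(n,e)_*\QQ_{M(n,e)}$, its supports are among those of the smooth-case pushforward, and Heinloth's theorem \cite[Thm 1]{Heinloth2016} says $\{0\}$ is not one of them. You instead work directly at the level of forgetful maps, which is if anything cleaner and sidesteps the intermediate support criterion. You could also streamline the one place you flag as delicate: the duality-compatibility of the splitting in \cref{prop:inclusionofIC} is not actually needed. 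Writing $p\colon A_n\to \mathrm{pt}$, the forgetful map arises from the natural transformation $Rp_!\Rightarrow Rp_*$; naturality applied to the inclusion $i\colon K\hookrightarrow L$ and projection $\pi\colon L\twoheadrightarrow K$ (with $\pi\circ i=\mathrm{id}_K$) exhibits the forgetful map for $K=R\chi(n,d)_*\IC(M(n,d),\QQ)$ as a retract of the one for $L=R\chi(n,e)_*\QQ_{M(n,e)}$, so vanishing of the latter forces vanishing of the former with no self-duality hypothesis on the chosen idempotent. Both proofs ultimately rest on the same two inputs — \cref{prop:inclusionofIC} and \cite[Thm 1]{Heinloth2016} — so the difference is one of packaging rather than substance.
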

\begin{proof}
By \cite[Prop. 4]{Heinloth2015} and the translation action of $H^0(C, \omega_C)$ in Section \ref{sec:dependence}, the vanishing of the intersection form is equivalent to show that $H^0(C, \omega_C) \subset A_n$ is not a support of $R\chi(n,d)_*\IC(M(n,d), \QQ)$. In the coprime case, this is proved by Heinloth in \cite[Thm 1]{Heinloth2016}. Together with \cref{prop:inclusionofIC}, we obtain the result in the non-coprime case.
\end{proof}
\begin{rmk}
\cref{cor:van} holds for moduli space of $\mathrm{PGL}_n$- and $\mathrm{SL}_n$-Higgs bundles too.
\end{rmk}
\begin{thm}[Ng\^{o} strings from Dolbeault moduli spaces in arbitrary degree] \label{thm:Ngostring} There exists an isomorphism in $D^bMHM_{\text{alg}}(A^{\text{red}}_n)$ (resp. $D^b(A^{\text{red}}_n, \QQ)$) 
\begin{equation}\label{eq:decthmNgo} R \chi(n,d)_* \IC(\MDol(n,d), \QQ)|_{A^{\text{red}}_n} \simeq \bigoplus_{\underline{n}\in I_d} \mathscr{S}(\mathscr{L}_{\underline{n}}(d))|_{A^{\text{red}}_n} ,
\end{equation}
where 
$I_d$ is the set of partitions $\underline{n}$ of $n$, 
and the Ng\^{o} string $\mathscr{S}(\mathscr{L}_{\underline{n}}(d))$ is the complex of mixed Hodge modules supported on $S_{ \underline{n}}$ given by
\[
\mathscr{S}(\mathscr{L}_{\underline{n}}(d)) \coloneqq \bigoplus^{2 \dim S_{ \underline{n}}}_{l=0} \IC(S_{\underline{n}}, \Lambda^l_{\underline{n}} 
\otimes \mathscr{L}_{ \underline{n}}(d))[-l]\langle \codim S_{\underline{n}}\rangle, 
\]
for certain local systems $\mathscr{L}_{ \underline{n}}(d)$ supported on an open set of $S_{\underline{n}}$.
\end{thm}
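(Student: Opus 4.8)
The plan is to specialize the general Ng\^{o} string decomposition of \cref{thm:Ngostingsymplectic} to the weak abelian fibration attached to the Hitchin map and then use \cref{cor:support} to pin down which supports can appear over the reduced locus. First I would verify that the triple $(M(n,d), P, A_n)$, where $P \to A_n$ is the relative Picard (degree $0$) group scheme of the family of spectral curves acting on $M(n,d)$ by tensorization, is a $\delta$-regular weak abelian fibration in the sense of \cref{defn:weakabelian} and \cref{thm:Ngofreeness}\eqref{item:delta-reg}. Purity of $M(n,d)$ (it is an irreducible symplectic variety of dimension $2c = \dim M(n,d)$ carrying a holomorphic symplectic form), the equidimensionality and Lagrangian property of $\chi(n,d)$, affineness of stabilizers, and polarizability of the Tate module are all standard for the Hitchin system; $\delta$-regularity is Ng\^{o}'s theorem for the elliptic/reduced part and in general follows from the codimension estimates on the discriminant. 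The hypothesis \eqref{itemII} of \cref{thm:Ngofreeness}, namely $\tau_{>2c}(R\chi(n,d)_*\IC(M(n,d),\QQ))=0$, is supplied by \cref{prop:relative dimension bound general} applied to the Whitney stratification \eqref{eq:Whitneystra}, exactly as recorded in the Example following \cref{prop:relationLagrangian}: the inequality \eqref{eq:boundhyp} holds because each stratum $M^\circ_{\underline m,\underline n}(d)$ is symplectic and maps Lagrangianly to its image, so $\codim_X X_i = 2\codim_{f^{-1}(b)}(X_i\cap f^{-1}(b)) \geq 2\codim_{f^{-1}(b)}(X_i\cap f^{-1}(b))$, and the extra factor of $2$ needed for the truncation comes for free from the Lagrangian dimension count.

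With these verified, \cref{thm:Ngostingsymplectic} directly yields an isomorphism
\[
R\chi(n,d)_*\IC(M(n,d),\QQ) \simeq \bigoplus_{\alpha \in I} \bigoplus^{2\dim Z_\alpha}_{l=0} \IC\bigl(Z_\alpha, \Lambda^l_\alpha \otimes \mathscr{L}_\alpha\bigr)[-l]\langle \codim Z_\alpha\rangle
\]
in $D^bMHM_{\mathrm{alg}}(A_n)$ (resp. $D^b(A_n,\QQ)$), where $I$ is the set of supports, each $Z_\alpha$ satisfies $\codim Z_\alpha = \delta(Z_\alpha)$, and $\Lambda^l_\alpha = R^l g_{\alpha*}\QQ_{A_\alpha}$ for the abelian part of the Chevalley decomposition of $P$ over an open dense $Z^\times_\alpha \subseteq Z_\alpha$. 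Restricting to $A^{\mathrm{red}}_n$, I would then invoke \cref{cor:support}: every support $Z$ meeting $A^{\mathrm{red}}_n$ is of the form $S_{\underline n}$ for a partition $\underline n$ of $n$, whose generic point lies in the locus $A^\times_n$ of nodal spectral curves. Hence over $A^{\mathrm{red}}_n$ the index set $I$ restricts to a subset $I_d \subseteq \{\text{partitions } \underline n \dashv n\}$, and the abelian group scheme over $S^\times_{\underline n}$ is precisely the Jacobian of the normalization of the universal spectral curve described in item (iv) of Section \ref{sec:degreedependence}, so $\Lambda^l_\alpha$ is identified with $\Lambda^l_{\underline n} = \Lambda^l R^1(g_{\underline n})_*\QQ_{A^\times_{\underline n}}$ via the computation \cite[Lemma 1.3.5]{deCataldoHauselMigliorini2012}. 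Renaming $\mathscr{L}_\alpha$ as $\mathscr{L}_{\underline n}(d)$ and $Z_\alpha$ as $S_{\underline n}$ (so that $\codim S_{\underline n} = \delta(S_{\underline n})$, matching the shift $\langle \codim S_{\underline n}\rangle$ in the statement) packages the decomposition exactly into the Ng\^{o} strings $\mathscr{S}(\mathscr{L}_{\underline n}(d))$, giving \eqref{eq:decthmNgo}.

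The main obstacle is the bookkeeping needed to match the abstract data $(Z_\alpha, g_\alpha, \Lambda^l_\alpha)$ coming out of \cref{thm:Ngostingsymplectic} with the concrete geometric objects $(S_{\underline n}, g_{\underline n}, \Lambda^l_{\underline n})$ defined via the universal spectral curve: one must check that the generic Chevalley decomposition of the Hitchin Picard scheme over $S^\times_{\underline n}$ has abelian quotient equal to the Jacobian of the normalization, i.e.\ the toric (affine) part is exactly the product of $\Gm$'s recording the gluing of the smooth components along the nodes. This is where the nodality granted by \cref{cor:support} is essential, since for a nodal curve with smooth components the generalized Jacobian is an extension of the product of Jacobians of the components by an affine torus whose rank is the first Betti number of the dual graph; the identification of $\mathscr{L}_{\underline n}(d)$ as a local system on an open set of $S_{\underline n}$ then follows because it is a subquotient of the (weight-zero part of the) decomposition, hence underlies a polarizable variation of Hodge structure that is constant in the fibre directions of $\chi(n,d)$. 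A secondary point to be careful about is that the decomposition is a priori only over $A^{\mathrm{red}}_n$ and that the set $I_d$ genuinely depends on $d$ — its precise determination is deferred to the Hodge-to-singular correspondence \cref{thm:IHde} and to \cite{MMP2022} — so here I would only assert the existence of such an $I_d$ and the shape of the strings, which is all that \eqref{eq:decthmNgo} claims.
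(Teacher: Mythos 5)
Your proposal follows exactly the paper's route: apply \cref{thm:Ngostingsymplectic} to the weak abelian fibration given by the Hitchin system, then use \cref{cor:support} to constrain the supports over $A^{\mathrm{red}}_n$ to be the $S_{\underline{n}}$'s with generic point in the nodal locus, and finally match the abstract Chevalley data with $\Lambda^l_{\underline{n}}$ via the Jacobian of the normalization. You expand some steps that the paper treats as implicit — in particular, separately checking the truncation hypothesis of \cref{thm:Ngofreeness} is redundant, since \cref{thm:Ngostingsymplectic} already supplies it from the symplectic/Lagrangian structure via \cref{prop:relative dimension bound general,prop:relationLagrangian} — but the argument is the same one-line reduction: \cref{thm:Ngostingsymplectic} plus \cref{cor:support}.
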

\begin{proof}
It follows from \cref{thm:Ngostingsymplectic} and \cref{cor:support}.
\end{proof}

The rest of the paper is devoted to the correspondence between the local systems $\mathscr{L}_{ \underline{n}}(d)$ and the singularities of $M(n,d)^{\mathrm{red}}$.

\section{Hypertoric quiver varieties}\label{sec:toric hyperkahler variety}
The singularities of $M(n,d)^{\mathrm{red}}$ are modelled on hypertoric quiver varieties. In this section we introduce these varieties and establish a local version of the Hodge-to-singular correspondence (\cref{prop:perverselocal}).


\subsection{Affine toric varieties associated to a quiver}\label{sec:affinetoricvarieties}
Let $Q = (V, E)$ be a connected directed graph (a \emph{quiver}) with $r$ vertices $V = \{v_1, . . ., v_r\}$ and $s$ oriented edges $E$. Given $e \in E$, let $s(e), t(e) \in V$ be the source and target of $e$, so that $e: s(e) \to t(e)$. The underlying unoriented graph, denoted by $|Q|$, has first Betti number $b_1 \coloneqq s-r+1$.

We consider the group of all $\ZZ$-linear combinations of $V$ whose coefficients
sum to zero. We identify this group with $\ZZ^{r-1}$, by fixing the basis $\{v_1-v_2, \ldots, v_1-v_r\}$. We also identify $\ZZ^{s}$ with the group of $\ZZ$-linear combinations of $E$. The boundary map of the quiver $Q$ 
\[A: \ZZ^s \to \ZZ^{r-1}, \quad e \mapsto s(e)-t(e)\]
induces an embedding of algebraic tori
\[\mathbb{T}^{r-1} \coloneqq \Spec \CC[\ZZ^{r-1}] \hookrightarrow \mathbb{T}^{s} \coloneqq \Spec \CC[\ZZ^{s}].\]
The torus $\mathbb{T}^{s}$ acts on the affine complex space
$\CC^{s}= \Spec \CC[z_e \colon e \in E]$,
and so does $\mathbb{T}^{r-1}$ by restriction. Explicitly, $\lambda = (\lambda_i)^r_{i=1} \in \mathbb{T}^{r}/\mathbb{T} = \mathbb{T}^{r-1}$ acts as $
    \lambda \cdot z_e = \lambda_{s(e)}z_e \lambda^{-1}_{t(e)}
$.
\begin{defn}
The affine toric variety $X(Q, 0)$ is the affine GIT quotient of $\CC^{s}$ by $\mathbb{T}^{r-1}$
\[X(Q, 0) \coloneqq \CC^{s}\sslash_0 \mathbb{T}^{r-1} =\Spec \CC[z_e : e \in E]^{\mathbb{T}^{r-1}}. \]
\end{defn}
The cone of the toric variety $X(Q, 0)$  is generated by the rows $\mathscr{B}=\{\beta_1, \ldots, \beta_{s}\} \subset \ZZ^{b_1}$ of a \emph{Gale dual} of $A$, i.e.\ a $(s \times b_1)$-matrix $B$ whose columns form a base of $\ker(A \colon \ZZ^{s} \to \ZZ^{r-1})\simeq \ZZ^{b_1}$, or equivalently a matrix that sits in the following exact sequence:
\[0 \to \ZZ^{b_1} \xrightarrow{B} \ZZ^s \xrightarrow{A} \ZZ^{r-1} \to 0.\]
Recall that a loop in a quiver is an edge whose head and tail coincide.
\begin{lem}\label{lem:loops}
Let $Q$ be a quiver with $m$ loops, and $Q'$ be the quiver obtained from $Q$ by deleting all the loops. Then we have
\[X(Q,0) \simeq X(Q',0) \times \CC^{m}.\]
\end{lem}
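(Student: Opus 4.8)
The plan is to trace through the definitions and show that a loop at a vertex contributes a free variable $z_e$ that is fixed by the $\mathbb{T}^{r-1}$-action, hence factors out of the GIT quotient. First I would observe that if $e$ is a loop, then $s(e)=t(e)$, so the action formula $\lambda\cdot z_e=\lambda_{s(e)}z_e\lambda_{t(e)}^{-1}=z_e$ shows that $z_e$ is an invariant coordinate. Writing $E=E'\sqcup\{e_1,\dots,e_m\}$ where $e_1,\dots,e_m$ are the $m$ loops and $E'$ are the non-loop edges of $Q$ (which are exactly the edges of $Q'$), we get a $\mathbb{T}^{r-1}$-equivariant decomposition $\CC^s=\CC^{s'}\times\CC^m$ where $\CC^{s'}=\Spec\CC[z_e:e\in E']$ carries the restricted action and $\CC^m=\Spec\CC[z_{e_1},\dots,z_{e_m}]$ carries the trivial action. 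Since $Q'$ has the same vertex set $V$ as $Q$, the torus $\mathbb{T}^{r-1}$ and its action on $\CC^{s'}$ are literally the action used to define $X(Q',0)$.

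Next I would invoke the fact that affine GIT quotients commute with products on which one factor has trivial action: for a reductive group $G$ acting on $\Spec R$ trivially on a polynomial subalgebra, one has $(R\otimes\CC[t_1,\dots,t_m])^G\simeq R^G\otimes\CC[t_1,\dots,t_m]$, because $G$-invariance is checked coefficientwise in the $t_j$. Applying this with $G=\mathbb{T}^{r-1}$, $R=\CC[z_e:e\in E']$, gives
\[
\CC[z_e:e\in E]^{\mathbb{T}^{r-1}}=\bigl(\CC[z_e:e\in E']\otimes\CC[z_{e_1},\dots,z_{e_m}]\bigr)^{\mathbb{T}^{r-1}}\simeq\CC[z_e:e\in E']^{\mathbb{T}^{r-1}}\otimes\CC[z_{e_1},\dots,z_{e_m}],
\]
which upon taking $\Spec$ yields $X(Q,0)\simeq X(Q',0)\times\CC^m$, as desired.

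There is essentially no serious obstacle here; the statement is a bookkeeping lemma. The only point that requires a little care is the hypothesis that $Q'$ is still connected (so that the earlier setup, which assumes connectedness, applies to it): deleting a loop does not disconnect the underlying graph, since a loop is never a bridge, so $|Q'|$ is connected whenever $|Q|$ is, and $Q'$ still has $r$ vertices. One could alternatively give a purely combinatorial proof via Gale duality: the boundary map $A\colon\ZZ^s\to\ZZ^{r-1}$ sends each loop generator to $0$, so $\ker A=\ker(A|_{\ZZ^{s'}})\oplus\ZZ^m$, and a Gale dual matrix $B$ for $Q$ is block-diagonal with a Gale dual $B'$ for $Q'$ in one block and the identity on the loop block; the corresponding cone is then the product of the cone of $X(Q',0)$ with the full space $\ZZ^m$ (whose associated toric variety is $\CC^m$). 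Either argument is short; I would present the invariant-theoretic one as the main line and perhaps mention the Gale-dual viewpoint in passing, since it connects to the matrices $\mathscr B$ introduced just above.
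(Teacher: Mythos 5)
Your argument is exactly the paper's proof: observe that for a loop $e$ the coordinate $z_e$ is $\mathbb{T}^{r-1}$-invariant since $s(e)=t(e)$, so those variables factor out of the invariant ring, giving $\CC[z_e:e\in E]^{\mathbb{T}^{r-1}}\simeq\CC[z_e:e\in E']^{\mathbb{T}^{r-1}}\otimes\CC[z_l:l\in L]$. The extra remarks on connectedness of $Q'$ and the Gale-dual viewpoint are fine but not needed; the core argument matches.
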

\begin{proof}
Let $L \subset E$ be the loops of $Q$. We identify the edges $E'$ of $Q'$ with $E \setminus L$. For any $l \in L$,  the coordinates $z_{l}$ are $\mathbb{T}^{r-1}$-invariant by construction, so we have
\[\CC[z_e \colon e \in E]^{\mathbb{T}^{r-1}} \simeq \CC[z_e \colon e \in E']^{\mathbb{T}^{r-1}} \otimes  \CC[z_{l} \colon l \in L].\]
\end{proof}

\subsection{Semiprojective toric varieties associated to a quiver} 
In general, $X(Q, 0)$ is a singular variety. A toric resolution of singularities can be obtained via projective GIT as follows.

The ring of polynomials $S\coloneqq \CC[z_e \colon e \in E]$ is graded by $\ZZ^{r-1}$ in such a way that
$\deg(z_e)=Ae$. For $\theta \in \ZZ^{r-1}$, let $S_{\theta}$ be the $\CC$-vector space of homogenous polynomials of degree $\theta$. For instance, $S_{0}$ is the coordinate ring of $X(Q, 0)$.

\begin{defn}
The semiprojective toric variety $X(Q, \theta)$ is the projective GIT quotient of $\CC^{s}$ by the torus $\mathbb{T}^{r-1}$ with respect to the stability $\theta \in \ZZ^{r-1}$, i.e.
\[X(Q, \theta) \coloneqq \CC^{s}\sslash_{\theta} \mathbb{T}^{r-1} \coloneqq \Proj \big( \bigoplus^{\infty}_{k=0} t^k S_{k \theta} \big). \]
\end{defn}

Consider now the coarsest complete fan $\Gamma(\mathscr{A})$ in $\RR^{r-1}\simeq \ZZ^{r-1}\otimes \RR$ that refines all complete simplicial fans in $\RR^{r-1}$ whose rays lie in $\mathscr{A}\coloneqq \{Ae \colon e \in E\}$. 
We say that $\theta \in \ZZ^{r-1}$ is generic if it lies in the interior of the maximal cones of $\Gamma(\mathscr{A})$.

\begin{prop}\emph{\cite[Prop. 8.2]{HauselStrumfels2002}}
For generic $\theta \in \Z^{r-1}$, the affinization morphism 
\[\pi_{X} = \pi_X (Q, \theta) \colon X(Q, \theta) \to \Spec H^0(X(Q, \theta), \mathcal{O}_{X(Q, \theta)})= X(Q, 0)\]
is a resolution of singularities.
\end{prop}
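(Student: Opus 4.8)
The statement is a known result about GIT quotients of affine space by a torus (Hausel--Sturmfels), so the natural strategy is to reduce to the combinatorics of the fan $\Gamma(\mathscr{A})$ and invoke standard toric geometry. First I would observe that $X(Q,\theta) = \CC^s \sslash_\theta \mathbb{T}^{r-1}$ is, by the standard GIT-to-toric dictionary, the toric variety associated to a fan refining $\mathscr{A}$: the character $\theta$ determines a polytope/chamber, and the semistable locus $(\CC^s)^{ss}(\theta)$ is a union of torus orbits whose combinatorics is recorded by a simplicial fan $\Sigma_\theta$ with rays among $\mathscr{A} = \{Ae : e \in E\}$. Genericity of $\theta$ — lying in the interior of a maximal cone of $\Gamma(\mathscr{A})$ — is exactly the condition that this fan is \emph{simplicial}, hence $X(Q,\theta)$ has at worst abelian quotient (orbifold) singularities; in fact, since each cone of $\Sigma_\theta$ is spanned by a subset of the $Ae$, which are the images of the standard basis vectors and therefore form part of a lattice basis on each maximal cone when $\theta$ is generic, the fan is \emph{smooth}. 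So $X(Q,\theta)$ is a smooth variety.

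\textbf{Key steps.} (1) Identify $X(Q,0) = \CC^s \sslash_0 \mathbb{T}^{r-1}$ as the affine toric variety whose cone $\sigma_0$ is the cone generated by $\mathscr{A}$ (equivalently, dually, generated by the rows of a Gale dual $B$, as already noted in the excerpt); its fan is the single cone $\sigma_0$ together with its faces. (2) Identify $X(Q,\theta)$ as the toric variety of the fan $\Sigma_\theta$, a simplicial (in fact smooth, for generic $\theta$) subdivision of $\sigma_0$ with the same rays, or a subset of $\mathscr{A}$; this is the content of variation-of-GIT for torus actions on affine space, e.g.\ as in Hausel--Sturmfels or Cox--Little--Schenck. (3) The affinization map $\Spec H^0(X(Q,\theta),\mathcal{O}) \to X(Q,\theta)$ is, on the toric side, induced by the refinement of fans $\Sigma_\theta \to \{\sigma_0, \text{faces}\}$ (both have support $\sigma_0$), and a toric morphism induced by a refinement of fans with the same support is proper and birational. (4) Conclude: a proper birational morphism from a smooth toric variety is a resolution of singularities, and the target is $X(Q,0)$ because $H^0(X(Q,\theta),\mathcal{O}_{X(Q,\theta)}) = S_0$ is the coordinate ring of $X(Q,0)$ (the section ring in degree $0$ of the Proj, i.e.\ global functions, coincides with $\mathbb{T}^{r-1}$-invariants of $S$).

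\textbf{Main obstacle.} The only non-formal point is verifying that for generic $\theta$ the fan $\Sigma_\theta$ is actually \emph{smooth} (not merely simplicial), so that $X(Q,\theta)$ is genuinely nonsingular rather than an orbifold. This follows because the rays of $\Sigma_\theta$ are among $\{Ae = s(e)-t(e) : e\in E\}$, which are differences of standard basis vectors of $\ZZ^r$ restricted to the sublattice $\ZZ^{r-1}$, and the incidence structure of $\Gamma(\mathscr{A})$'s maximal cones forces each top-dimensional cone of $\Sigma_\theta$ to be spanned by $r-1$ such vectors forming a $\ZZ$-basis — this is a graph-theoretic fact (spanning-tree / unimodularity of the incidence matrix of a quiver). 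I would either cite \cite[Prop.\ 8.2]{HauselStrumfels2002} directly for this, or spell out that the boundary matrix $A$ is totally unimodular, so every maximal minor is $\pm 1$, giving smoothness. Everything else is the standard equivalence between variation of torus GIT and subdivisions of fans, which I would simply quote.

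\begin{proof}
See \cite[Prop.\ 8.2]{HauselStrumfels2002}. The variety $X(Q,0)$ is the affine toric variety of the cone $\sigma_0 \subset \RR^{r-1}$ generated by $\mathscr{A} = \{Ae : e\in E\}$, while for generic $\theta$ the GIT quotient $X(Q,\theta)$ is the toric variety of a fan $\Sigma_\theta$ refining $\sigma_0$, with rays contained in $\mathscr{A}$. Since the boundary matrix $A$ of a quiver is totally unimodular, every maximal cone of $\Sigma_\theta$ is spanned by a $\ZZ$-basis of $\ZZ^{r-1}$, so $X(Q,\theta)$ is smooth. The affinization morphism $\pi_X$ is induced by the refinement of fans $\Sigma_\theta \to \sigma_0$, which have the same support; hence $\pi_X$ is proper and birational, and its target is $\Spec S_0 = X(Q,0)$. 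A proper birational morphism from a smooth variety is a resolution of singularities.
\end{proof}
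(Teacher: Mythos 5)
The paper itself gives no proof of this proposition; it simply cites \cite[Prop.\ 8.2]{HauselStrumfels2002}, so your decision to lean on that citation is consistent with what the authors do. The overall skeleton of your expanded argument (GIT quotient corresponds to a fan refining a cone, genericity gives a simplicial/unimodular refinement, proper birational from a smooth source is a resolution) is also the right one.

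However, your identification of the relevant fans is garbled in a way that would make the argument wrong if read literally. The vectors $\mathscr{A} = \{Ae : e \in E\}$ live in $\RR^{r-1}$, the character space of the acting torus $\mathbb{T}^{r-1}$; the complete fan $\Gamma(\mathscr{A})$ built from them is the \emph{secondary fan}, whose maximal cones are the GIT chambers in which $\theta$ is required to lie. The fan $\Sigma_\theta$ of the toric variety $X(Q,\theta)$ lives in a completely different space: the cocharacter lattice of the quotient torus $\mathbb{T}^s/\mathbb{T}^{r-1}$, which has rank $b_1 = s - r + 1$. Its rays are (among) the Gale-dual vectors $\mathscr{B} = \{\beta_e\} \subset \ZZ^{b_1}$ — exactly as the paper records in the sentence preceding the proposition — and its maximal cones are $b_1$-dimensional, not $(r-1)$-dimensional. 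Your steps (1), (2) and the Main Obstacle paragraph assert that the cone $\sigma_0$ of $X(Q,0)$ is generated by $\mathscr{A}$ and that the maximal cones of $\Sigma_\theta$ are spanned by $r-1$ of the vectors $Ae$; both statements conflate $\Sigma_\theta$ with $\Gamma(\mathscr{A})$. The conclusion you want still holds — total unimodularity of the boundary matrix $A$ implies total unimodularity of a Gale dual $B$, so every regular triangulation of $\mathrm{cone}(\mathscr{B})$ is unimodular and $X(Q,\theta)$ is smooth — but the reasoning as written is dimensionally inconsistent and should be corrected before it could be presented as a proof. (Also, a minor slip: you write the affinization map with the arrow reversed, as $\Spec H^0 \to X(Q,\theta)$.)
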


\subsection{Lawrence varieties and hypertoric quiver varieties}
We specialize the previous constructions to doubled quivers. Let $Q$ be a quiver with $r$ vertices, $s$ oriented edges $E$, and first Betti number $b_1$. 
\begin{defn}
The \emph{double} of $Q$, denoted by $Q^{\pm}$, is the quiver obtained from $Q$ by replacing each edge $e$ of $Q$ with a pair of edges $e^+$ and $e^-$ having the same endpoints as $e$ but opposite orientations.
\end{defn}
\begin{defn} 
The \emph{affine Lawrence toric variety} associated to $Q$ is the affine GIT quotient 
\[X(Q^{\pm}, 0) = \CC^{2s}\sslash_0 \mathbb{T}^{r-1}. \]
The \emph{Lawrence toric variety} associated to $Q$ is the projective GIT quotient 
\[X(Q^{\pm}, \theta) = \CC^{2s}\sslash_{\theta} \mathbb{T}^{r-1} = \Proj \big( \bigoplus^{\infty}_{k=0} t^k S_{k \theta} \big). \]
The action of $\mathbb{T}^{r-1}$ on $\CC^{2s}$ is prescribed by the boundary map of the double quiver $Q^{\pm}$ as in \S \ref{sec:affinetoricvarieties}.
\end{defn}
\begin{notation}
For convenience, we relabel the coordinates $z_{e^+}$ and $z_{e-}$ of $\CC^{2s}$ by $z_{e}$ and $w_{e}$, indexed by $e \in E$. Note that the definition of $X(Q^{\pm}, \theta)$ is independent of the choice of an orientation of $Q$: a different choice corresponds to exchange some coordinates $z_e$ with the corresponding $w_e$. 
\end{notation}

Suppose that the matrix $A=(a_{ie})$ corresponds to the boundary map for $Q$. We consider the homogeneous ideal
\begin{equation}\label{eq:circ}
    \mathrm{Circ}(\mathscr{B})\coloneqq \langle \sum_{e \in E} a_{ie}z_{e}w_{e} \colon i=2, \ldots r\rangle \subset S.
\end{equation}
\begin{defn}\label{def:torichyperkahlervarieties}
The \emph{hypertoric quiver variety} $Y(Q,0)$ (resp. $Y(Q,\theta)$) is the irreducible subvariety of the affine Lawrence variety $X(Q^{\pm},0)$ (resp. of the Lawrence variety $X(Q^{\pm},\theta)$) cut by the homogeneous ideal $\mathrm{Circ}(\mathscr{B})$.
\end{defn}

\begin{rmk}  
Hypertoric quiver varieties are both hypertoric varieties in the sense of \cite{BD2000, HP2004}, and \emph{Nakajima quiver varieties} whose dimension vector has all coordinates equals to one; see \cite{Nakajima98}.
In \cite{HauselStrumfels2002}, $Y(Q, 0)$ is called \emph{toric quiver variety}, but note that it is only rarely toric; see \cite[\S 10]{HauselStrumfels2002}. Moreover, Altmann and Hille called toric quiver varieties the fibre over $0$ of the morphism $\pi_Y\colon Y(Q, \theta) \to Y(Q, 0)$. To avoid confusion, we will refer to $Y(Q, 0)$ and $Y(Q, \theta)$ as hypertoric quiver variety.  
\end{rmk}

Since the product $z_e w_e$ is $\mathbb{T}^{r-1}$-invariant, the injective morphism of $\CC$-algebras
\[\CC[z_ew_e \colon e \in E] \hookrightarrow \CC[z_e, w_e \colon e \in E]^{\mathbb{T}^{r-1}}\] induces the fibre product square
\begin{equation}\label{eq:diagramsingular}
\begin{tikzpicture}[baseline= (a).base]
\node[scale=0.97] (a) at (0,0){
\begin{tikzcd}
  Y(Q, 0) =\Spec\bigg(\ffrac{\CC[z_e, w_e \colon e \in E]^{\mathbb{T}^{r-1}}}{\mathrm{Circ}(\mathscr{B})}\bigg)   \arrow[hookrightarrow, "\iota_{M}"]{r}\ar[d,"\chi_Q"'] & X(Q^{\pm}, 0)=\Spec\big(\CC[z_e, w_e \colon e \in E]^{\mathbb{T}^{r-1}}\big) \ar[d, "\pi_Q"] \\
     \CC^{b_1} = \Spec\bigg(\ffrac{\CC[z_e w_e \colon e \in E]}{\mathrm{Circ}(\mathscr{B})}\bigg) \arrow[hookrightarrow, "\iota_A"]{r}& \CC^{s} = \Spec\big(\CC[z_e w_e \colon e \in E]\big).
\end{tikzcd}
};
\end{tikzpicture}
\end{equation}
Hence, $Y(Q, 0)$ is the inverse image under the map $\pi_Q$ of the linear subspace $\CC^{b_1}$. 

\subsection{Cohomology of hypertoric quiver varieties} The cohomology of hypertoric quiver varieties can be expressed in terms of the cohomology of the cographic matroid. This permits to compare the Hodge-to-singular correspondence with \cite[Thm 6.11]{deCataldoHeinlothMigliorini19}.

\begin{defn} The \emph{cographic matroid} $\mathscr{C}_{Q}$ of the unoriented graph $|Q|$ is the matroid whose independent subsets are the subsets $I$ of unoriented edges of $|Q|$ such that $|Q| \setminus I$ is connected.
 The set $\mathscr{C}_{Q}$ is partially ordered with respect to inclusions and we denote by $|\mathscr{C}_{Q}|$ the associated simplicial complex, i.e.\ the complex whose $k$-dimensional faces are the independent subsets of cardinality $(k+1)$.
\end{defn}

We now recall some facts about the affinization morphisms
\[\pi_X: X(Q^{\pm}, \theta) \to X(Q^{\pm}, 0), \qquad \pi_Y: Y(Q, \theta) \to Y(Q, 0).\]
In particular, we observe that the top cohomology of $Y(Q, \theta)$ can be identified with the top cohomology of $|\mathscr{C}_{Q}|$,   which explains the occurence of the cographic matroid in \cite[\S 6]{deCataldoHeinlothMigliorini19}  via the Hodge-to-singular correspondence; see Sections \ref{sec:singularitiesUnivComp} and \ref{sec:summary}.  

In the following all the stability  $\theta, \theta' \in \ZZ^{r-1}$ are generic. 
\begin{prop}\label{FactToricQuiver}
The following facts hold:
\begin{enumerate}
    \item \emph{\cite[Prop. 8.2]{HauselStrumfels2002}} $X(Q^{\pm}, \theta)$ is a smooth toric variety of dimension $b_1+s$. 
    \item \emph{\cite[Prop. 8.2]{HauselStrumfels2002}} $Y(Q, \theta)$ is a smooth symplectic variety of dimension $2b_1$. 
    \item \emph{\cite[Lem. 6.4]{HauselStrumfels2002}} $\pi^{-1}_X(0)=\pi^{-1}_Y(0) \eqqcolon C(Q^{\pm}, \theta)$.
    \item \label{eq:Lagrangian} The irreducible components of $C(Q^{\pm}, \theta)$ are Lagrangian in $Y(Q, \theta)$. In particular, they all have dimension $b_1$.
    \item \label{item:CohoToricQuiver}\emph{\cite[Rmk after Lem. 6.5]{HauselStrumfels2002}} 
    The spaces $C(Q^{\pm}, \theta) \subset Y(Q, \theta) \subset X(Q^{\pm}, \theta)$ are deformation retracts in one another. 
    \item \emph{\cite[Lem. 2.1]{HP2004} \cite[Thm 6.3]{HauselStrumfels2002}} \label{rmk:diffeo} 
   The symplectic resolutions $Y(Q, \theta)$ and $Y(Q, \theta')$ are diffeomorphic.
    \item By \eqref{item:CohoToricQuiver} and \eqref{rmk:diffeo}, the cohomology 
    \[H^*(C(Q^{\pm}, \theta), \QQ)\simeq H^*(Y(Q, \theta), \QQ) \simeq H^*(X(Q^{\pm}, \theta), \QQ)\]
    is independent of $\theta$.
    \item \label{item:8}  \emph{\cite[Thm 7.3.3, 7.8.1]{Bjorner1992}}
There exists an isomorphism
\[H^{2b_1}(Y(Q, \theta), \QQ)\simeq \widetilde{H}^{2b_1-1}(|\mathscr{C}_Q|, \QQ).\]
\end{enumerate}
\end{prop}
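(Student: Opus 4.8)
The plan is to run through the eight items in order: five of them are straight quotations from the references already cited, one is a formal corollary, and the only content lies in items \eqref{eq:Lagrangian}, \eqref{item:CohoToricQuiver} and (7).

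\textbf{Items (1), (2), (3), \eqref{rmk:diffeo}, \eqref{item:8}.} Items (1)--(3) are \cite[Prop.~8.2, Lem.~6.4]{HauselStrumfels2002}: for generic $\theta$ the toric variety $X(Q^{\pm},\theta)$ is smooth and semiprojective of dimension $b_1+s$, the subvariety $Y(Q,\theta)\subset X(Q^{\pm},\theta)$ cut by $\mathrm{Circ}(\mathscr{B})$ is smooth holomorphic symplectic of dimension $2b_1$ (both dimension counts are consistent, as $\mathrm{Circ}(\mathscr{B})$ has $r-1$ generators), and the affinization maps $\pi_X,\pi_Y$ are resolutions of singularities with common central fibre $C(Q^{\pm},\theta)=\pi_X^{-1}(0)=\pi_Y^{-1}(0)$. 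Item \eqref{rmk:diffeo}, the diffeomorphism of $Y(Q,\theta)$ and $Y(Q,\theta')$ for two generic stabilities, is \cite[Thm~6.3]{HauselStrumfels2002} (variation of GIT / hyperk\"ahler rotation; see also \cite[Lem.~2.1]{HP2004}). Finally, item \eqref{item:8} will follow from the shellability of the independence complex of $\mathscr{C}_Q$ and the computation of its top reduced cohomology in \cite[Thm~7.3.3, 7.8.1]{Bjorner1992}, once $H^*(Y(Q,\theta),\QQ)$ has been identified, via items \eqref{item:CohoToricQuiver}, \eqref{rmk:diffeo} and (7) below, with the (combinatorially computable) cohomology of the smooth toric variety $X(Q^{\pm},\theta)$ and hence with the Stanley--Reisner cohomology of $|\mathscr{C}_Q|$; the only content there is a matching of conventions placing $H^{2b_1}$ of $Y(Q,\theta)$ in the top reduced degree of $|\mathscr{C}_Q|$.

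\textbf{Items \eqref{item:CohoToricQuiver} and (7).} Scaling all the coordinates $z_e,w_e$ of $\CC^{2s}$ with weight one defines a $\Gm$-action which commutes with the $\mathbb{T}^{r-1}$-action and preserves the homogeneous ideal $\mathrm{Circ}(\mathscr{B})$ (its generators are quadratic), hence descends to $X(Q^{\pm},\theta)$ and $Y(Q,\theta)$ and contracts $X(Q^{\pm},0)$, $Y(Q,0)$ onto their cone points. Since $X(Q^{\pm},\theta)$ is semiprojective this $\Gm$-action is contracting, so $x\mapsto\lim_{t\to 0}t\cdot x$ is a well-defined morphism, and by the gradient-flow / Bia\l ynicki--Birula argument of \cite[after Lem.~6.5]{HauselStrumfels2002} it realizes $C(Q^{\pm},\theta)=\pi_X^{-1}(0)$ as a deformation retract of $X(Q^{\pm},\theta)$; restricting the same flow to the invariant subvariety $Y(Q,\theta)$ realizes $C(Q^{\pm},\theta)=\pi_Y^{-1}(0)$ as a deformation retract of $Y(Q,\theta)$. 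This proves \eqref{item:CohoToricQuiver}, and (7) is then the immediate combination of \eqref{item:CohoToricQuiver} with \eqref{rmk:diffeo}.

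\textbf{Item \eqref{eq:Lagrangian}, and the main obstacle.} Since $\pi_Y$ is a symplectic resolution it is semismall, so, as $\{0\}$ is a $0$-dimensional stratum of $Y(Q,0)$, each irreducible component of $C(Q^{\pm},\theta)=\pi_Y^{-1}(0)$ has dimension $\le b_1=\tfrac12\dim Y(Q,\theta)$. For the reverse inequality one invokes the hypertoric description: $C(Q^{\pm},\theta)$ is the \emph{core} of $Y(Q,\theta)$, i.e.\ the union of the compact toric subvarieties attached to the bounded regions of the associated hyperplane arrangement (simple, because $\theta$ is generic), and each of these is a smooth projective toric variety of dimension exactly $b_1$ which is Lagrangian in $Y(Q,\theta)$; see \cite{BD2000} and \cite[\S6]{HauselStrumfels2002} (alternatively, $C(Q^{\pm},\theta)$ lies in a fibre of the Lagrangian fibration $\chi_Q\circ\pi_Y\colon Y(Q,\theta)\to\CC^{b_1}$, so it is isotropic, and a $b_1$-dimensional isotropic subvariety is Lagrangian). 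Hence every component of $C(Q^{\pm},\theta)$ is Lagrangian of dimension $b_1$. Essentially everything above is a matter of assembling the cited results; the two steps where an argument is actually needed are the equidimensionality of the core in \eqref{eq:Lagrangian} and the identification of the toric and matroid cohomologies (together with the correct degrees) underlying \eqref{item:8}. I expect the latter to be the more delicate point, since it is precisely the step through which the cographic matroid enters the Hodge-to-singular correspondence.
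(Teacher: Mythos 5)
Your plan matches the paper in spirit: both treat items (1), (2), (3), (5), (6), (8) as direct citations, and reserve argument only for item \eqref{eq:Lagrangian}. (Your expanded $\Gm$-action sketch for \eqref{item:CohoToricQuiver} and (7) is extra detail the paper doesn't bother to spell out, and your worry that \eqref{item:8} is the "delicate point" is unnecessary --- the paper just cites Bj\"orner there.) Where you genuinely diverge is in the one item the paper actually proves.

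For item \eqref{eq:Lagrangian} the paper argues in the opposite order and with different tools. It first gets the \emph{lower} bound $\dim F \ge b_1$ by observing that each irreducible component $F$ of $C(Q^{\pm},\theta)$ is a fibre of the map $Y(Q,\theta)\to Y(Q,0)\xrightarrow{\chi_Q}\CC^{b_1}$ (a $2b_1$-dimensional variety mapping to a $b_1$-dimensional one), so semicontinuity of fibre dimension gives $\dim F\ge b_1$. It then gets the \emph{upper} bound by noting that $F$ is a (projective) toric variety, hence $H^0(F,\Omega^2_F)=0$, hence the restriction of the symplectic form vanishes, hence $F$ is isotropic, hence $\dim F\le b_1$. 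Your argument instead gets the upper bound from semismallness of the symplectic resolution $\pi_Y$ --- which works and is essentially equivalent --- but for the lower bound you lean on the combinatorial hypertoric description (cores of simple arrangements are toric of dimension exactly $b_1$, citing \cite{BD2000}). That citation is fine, but the paper's semicontinuity argument is more elementary and self-contained. Also, beware: your parenthetical ``alternatively'' only re-establishes isotropy (i.e.\ the \emph{upper} bound again) and does not by itself give $\dim F\ge b_1$, so as written it is not actually an alternative route to the reverse inequality. With that caveat removed, your proof is sound; it just buys the bound in a more citation-heavy way than the paper's short Lagrangian-fibration-plus-toric argument.
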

\begin{proof}
We just comment on \eqref{eq:Lagrangian}. Let $F$ be an irreducible component of $C(Q^{\pm}, \theta)$. Since $C(Q^{\pm}, \theta)$ is a fibre of $Y(Q, \theta) \to Y(Q,0) \xrightarrow{\chi_{Q}} \CC^{b_1}$, then $\dim F \geq \dim Y(Q, \theta)-\dim \CC^{b_1}=b_1$ by semicontinuity.  
On the other hand, the symplectic form vanishes along $F^{\mathrm{reg}}$. Indeed, $F$ is a proper toric variety by (1) and (3). As all toric varieties, it have rational singularities, and its resolution $\widetilde{F}$ has no global differential forms, so $H^0(F^{\mathrm{reg}}, \Omega^2_{F^{\mathrm{reg}}})=H^0(\widetilde{F}, \Omega^2_{\widetilde{F}})=0$, see for instance by \cite[Cor. 1.8]{KS2021}. So $F$ is isotropic as in \cref{defn:Lagrangian}, and $\dim F \leq b_1$.
\end{proof}



\subsection{Decomposition theorem for hypertoric quiver varieties}\label{sec:decthm}
The goal of this section is to describe the restriction $\IC(X(Q^{\pm},0), \QQ)|_{Y(Q,0)}$; see \cref{prop:perverselocal}. To this end, we briefly recall the statement of the decomposition theorem for semismall maps.

Let $f \colon X \to Y$ be a proper morphism of irreducible varieties. A stratification of $f$ is a collection of finitely many locally closed subsets $Y_k$ such that $f^{-1}(Y_k) \to Y_k$ are topologically locally trivial fibrations.  A stratum $Y_k$ is \emph{relevant} if $2 \dim f^{-1}(Y_k) - \dim(Y_k)= \dim X$.
\begin{defn}[Semismall maps]
The map $f$ is said to be \emph{semismall} if 
\[2 \dim f^{-1}(Y_k) - \dim(Y_k)\leq \dim X \qquad \text{ for any } k\geq 0.\]
Further if the stronger inequalities \[2 \dim f^{-1}(Y_k) - \dim(Y_k)< \dim X\qquad \text{ for any } k> 0\]
hold, we say that $f$ is \emph{small}.
\end{defn}

\begin{thm}[Decomposition theorem for semismall maps]\label{DecThm}
Let $f\colon X \to Y$ be a semismall map from a smooth variety $X$. Then there exists a canonical isomorphism in $D^bMHM_{\text{alg}}(Y)$ (resp. $D^b(Y, \QQ)$) 
\[\mathrm{R}f_* \QQ_X \simeq \bigoplus_{Y_k} \IC\big(\overline{Y}_k, \mathrm{R}^{\codim Y_k}f_*\QQ_{f^{-1}(Y_k)}\big)\langle \codim Y_k/2 \rangle,\]
where the summation index runs over all the relevant strata of a stratification of $f$. 

In particular, if $f$ is small, then $\mathrm{R}f_* \QQ_X \simeq \IC(Y, \QQ)$.
\end{thm}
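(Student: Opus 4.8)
The statement to prove is the \textbf{Decomposition theorem for semismall maps} (\cref{DecThm}). The plan is to reduce the mixed Hodge module statement to the classical topological statement of the decomposition theorem applied to semismall maps, which is a well-documented special case where no nontrivial shifts occur and all summands sit in a single perverse degree.

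First I would recall the general decomposition theorem (Beilinson--Bernstein--Deligne--Gabber, in the Hodge module version of Saito): for a proper map $f\colon X \to Y$ with $X$ smooth, $\mathrm{R}f_*\QQ_X[\dim X]$ splits in $D^bMHM_{\mathrm{alg}}(Y)$ (resp. $D^b(Y,\QQ)$) as a direct sum of shifted intersection complexes $\IC(\overline{Z}, \mathcal{L})[j]$ with $\mathcal{L}$ a polarizable variation of Hodge structure (resp. a semisimple local system) on a smooth open dense subset of a closed subvariety $\overline{Z} \subseteq Y$. The key step is then a \emph{support and degree} analysis specific to semismall maps: using a stratification $Y = \bigsqcup Y_k$ adapted to $f$, one shows that the only $\IC$-summands that can appear are those supported on the closures $\overline{Y}_k$ of \emph{relevant} strata, and that each appears with \emph{zero} shift in perverse degree. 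Concretely, the stalk cohomology of $\mathrm{R}f_*\QQ_X[\dim X]$ at a point of $Y_k$ is computed by proper base change as $H^{*+\dim X}(f^{-1}(y),\QQ)$, which vanishes above degree $2\dim f^{-1}(Y_k) - \dim X$; the semismallness inequality $2\dim f^{-1}(Y_k) - \dim Y_k \le \dim X$ forces this to be concentrated exactly so that the support condition for a perverse sheaf is satisfied, with equality (relevance) characterising when a summand is actually present. The relative hard Lefschetz / self-duality of $\mathrm{R}f_*\QQ_X[\dim X]$ pins the shifts to zero, so the decomposition is genuinely a perverse sheaf, not merely a complex.

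Next I would identify the local systems: for a relevant stratum $Y_k$, restricting the decomposition to $Y_k$ and taking the top stalk cohomology shows the corresponding summand is $\IC(\overline{Y}_k, \mathrm{R}^{\codim Y_k}f_*\QQ_{f^{-1}(Y_k)})$, where the monodromy local system $\mathrm{R}^{\codim Y_k}f_*\QQ_{f^{-1}(Y_k)}$ is semisimple because it underlies a polarizable variation of Hodge structure (here $\codim Y_k = \dim Y - \dim Y_k$, matching the indexing $Y_k$ of dimension $\dim Y - k$, so $\codim Y_k = k$). The normalisation $\langle \codim Y_k/2\rangle = [-\codim Y_k](-\codim Y_k/2)$ accounts for the shift back from the perverse convention and the Tate twist making the variation pure of the appropriate weight; I would just check the bookkeeping against the convention $\langle \bullet\rangle = [-2\bullet](-\bullet)$ fixed earlier, noting $\IC$ here is already the shifted (non-perverse) normalisation from the Notation subsection. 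Finally, the small case is immediate: smallness means no stratum other than the open dense one is relevant, so the sum collapses to the single term $\IC(Y,\QQ)$.

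The main obstacle is not the existence of the splitting (that is BBD/Saito), but the two refinements that make it \emph{canonical} and \emph{explicit}: (a) showing that the perverse cohomology is concentrated in a single degree so that $\mathrm{R}f_*\QQ_X$ is a genuine (shifted) perverse sheaf, which uses self-duality plus the semismall inequality; and (b) identifying each isotypic piece with the intersection complex of an explicit monodromy local system on a relevant stratum, which requires the local analysis of stalks together with the decomposition-into-strata compatibility. For the Hodge module statement one additionally invokes that Saito's decomposition is functorial and that the local systems carry the claimed weight-zero (after twist) Hodge structures; I would cite \cite{deCataldoMigliorini05} (or the standard references therein) for the topological version and Saito's theory for the upgrade to $D^bMHM_{\mathrm{alg}}(Y)$, so that in practice the proof is a short reduction rather than a from-scratch argument.
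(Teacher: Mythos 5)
The paper does not prove \cref{DecThm}: it explicitly ``recall[s] the statement of the decomposition theorem for semismall maps'' as a classical result (de Cataldo--Migliorini, with the Hodge-module upgrade via Saito), so there is no proof in the paper to compare against. Your outline is the standard argument and is essentially correct: semismallness gives the support estimate for $\mathrm{R}f_*\QQ_X[\dim X]$, Verdier self-duality (from smoothness of $X$ and properness of $f$) gives the cosupport estimate, hence perversity in a single degree; then the Saito/BBD semisimplicity and the stalk computation over each relevant stratum identify the summands with $\IC(\overline{Y}_k,\mathrm{R}^{\codim Y_k}f_*\QQ_{f^{-1}(Y_k)})$, and the $\langle\codim Y_k/2\rangle$ bookkeeping is the passage from the paper's topological normalisation of $\IC$ to the perverse one together with the Tate twist (which is well defined because a relevant stratum has even codimension: $\codim Y_k = 2\dim(\text{fibre over }Y_k)$). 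One small imprecision worth flagging: concentration in a single perverse degree does not require relative hard Lefschetz --- that is what handles the spread over perverse degrees for a general proper map. Here it is Poincar\'e--Verdier self-duality alone that, together with the support condition, forces perversity; you state this correctly in your summary item (a), so only the earlier phrasing ``relative hard Lefschetz / self-duality \ldots pins the shifts to zero'' should be tightened.
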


We note that, since, for every $k$, we have that  $f^{-1}(Y_k) \to Y_k$ is a  topologically locally trivial fibration, the sheaves   $\mathrm{R}^{\codim Y_k}f_*\QQ_{f^{-1}(Y_k)}$ are in fact local systems. The stalk at a point $y_k \in Y_k$ is the vector space generated by the irreducible components of $f^{-1}(y_k)$, therefore these local systems have finite monodromy.

Given a quiver $Q=(V,E)$, we  describe complex Whitney stratifications of the toric Lawrence variety $X(Q^{\pm},0)$ and of hypertoric quiver variety $Y(Q,0)$.

\begin{defn}
 A partition $\underline{V}$ of the set $V$ is a set $\{V_1, \ldots, V_r\}$ of disjoint subsets $V_i \subset V$ such that $V = \bigsqcup^r_{i=1} V_i$. The quiver $Q_{\underline{V}}$ is obtained from $Q$ by identifying all vertices in $V_i$, for any $i=1, \ldots, r$, and by deleting all loops.
\end{defn}

\begin{prop}\label{prop:toricLawrenceWhitney}
The toric Lawrence variety $X(Q^{\pm},0)$ admits a complex Whitney stratification
\begin{equation}\label{eq:whitneyI}
    X(Q^{\pm},0) = \bigsqcup_{\underline{V} \vdash V} X_{\underline{V}},
\end{equation}
by toric-invariant strata $X_{\underline{V}}$ whose analytic normal slice is isomorphic to $X(Q^{\pm}_{\underline{V}}, 0)$. This induces a complex Whitney stratification of $Y(Q,0)$ 
\begin{equation}\label{eq:whitneyII}
    Y(Q,0) = \bigsqcup_{\underline{V} \vdash V} Y_{\underline{V}} \qquad \text{with }Y_{\underline{V}} \coloneqq X_{\underline{V}} \cap Y(Q,0),
\end{equation}
whose normal slice is isomorphic to $Y(Q_{\underline{V}}, 0)$. 
\end{prop}
\begin{proof}
 Let $N$ be the lattice of one-parameter subgroups of the torus $\mathbb{T}^{b_1+s} = N \otimes \CC^*$.
 The affine toric variety $X(Q^{\pm}, 0)$ corresponds to the cone $\sigma \subset N \otimes \RR$ generated by the columns of a matrix $B^{\pm}$ Gale dual to the boundary map $A^{\pm}\coloneqq (A, -A)\colon \ZZ^{2s} \to \ZZ^{r-1}$ of the quiver $Q^{\pm}$; see \cref{sec:affinetoricvarieties}. 
 Let $\tau$ be a face of $\sigma$, $N_{\tau}$ be the sublattice of $N$ spanned by $\tau \cap N$, and $N(\tau)\coloneqq N/N_{\tau}$. The sets of rays of $\sigma$, $\tau$, and those of $\sigma$ not in $\tau$ are denoted respectively $\sigma(1)$, $\tau(1)$ and $\tau(1)^{c}$. The exact sequences
 \begin{equation*}
\begin{tikzpicture}[baseline= (a).base]
\node[scale=1] (a) at (0,0){
\begin{tikzcd}
0 \ar[r] & \ZZ^{\tau(1)} \ar[r] \ar[d, two heads, "(B^{\pm}_{\tau})^T"] & \ZZ^{\sigma(1)} \ar[r] \ar[d, two heads, "(B^{\pm})^T"]& \ZZ^{\tau(1)^c} \ar[d, two heads] \ar[r] & 0\\
0 \ar[r] & N_{\tau} \ar[r]  & N \ar[r] & N(\tau) \ar[r] & 0
\end{tikzcd}
};
\end{tikzpicture}
\end{equation*}
 are dual to
  \begin{equation*}
\begin{tikzpicture}[baseline= (a).base]
\node[scale=1] (a) at (0,0){
\begin{tikzcd}
0 \ar[r] & M(\tau) \ar[r] \ar[d, hookrightarrow] & \ZZ^{\tau(1)^{c}} \ar[r, "A^{\pm}|_{\tau(1)^c}"] \ar[d, hookrightarrow]& \ZZ^{r-1} \ar[d] & \\
0 \ar[r] & M \ar[r] \ar[d, two heads] & \ZZ^{\sigma(1)} \ar[r, "A^{\pm}"] \ar[d, two heads]& \ZZ^{r-1} \ar[d, two heads] \ar[r] & 0\\
0 \ar[r] & M_{\tau} \ar[r]  & \ZZ^{\tau(1)} \ar[r, "A^{\pm}_{\tau}"] & \ZZ^{r_{\tau}-1} \ar[r] & 0.
\end{tikzcd}
};
\end{tikzpicture}
\end{equation*}
 
 In particular, the resulting map $A^{\pm}_{\tau}$ is the boundary map of a quiver $Q(\tau)$ obtained by contracting the edges corresponding to $\tau(1)^{c}$. Up to loops, $Q(\tau)$ is isomorphic to $Q_{\underline{V}}$ for some partition $\underline{V}$ of $V$. Therefore, $X(Q^{\pm},0)$ is covered by toric charts of the form
\begin{equation}\label{eq:chart}
    X(Q(\tau), 0) \times N(\tau)\otimes \CC^* \simeq X(Q^{\pm}_{\underline{V}}, 0) \times \text{smooth factor},
\end{equation}
where the isomorphism follows from \cref{lem:loops}. We have obtained the stratification \eqref{eq:whitneyI} which is Whitney by \cref{lem:analyticallytrivialWhitney}. By inspecting the restriction of the equations \eqref{eq:circ} to each chart \eqref{eq:chart}, we get the Whitney stratification \eqref{eq:whitneyII} too; see also \cite[\S 2]{PW2007}.
\end{proof}

\begin{rmk}
     A face $\tau$ of the cone defining the toric variety $X(Q^{\pm}, 0)$ corresponds to a toric orbit $X_{\tau}$ of $X(Q^{\pm}, 0)$. By the proof of \cref{prop:toricLawrenceWhitney} and adopting the notation in there,  $X_{\underline{V}}$ is the union of the toric strata $X_{\tau}$ whose corresponding quiver $Q(\tau)$ is isomorphic to $Q_{\underline{V}}$ up to loops.
\end{rmk}

\begin{prop}
The map $\pi_X: X(Q^{\pm}, \theta) \to X(Q^{\pm}, 0)$ is small.
\end{prop}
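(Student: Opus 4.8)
The statement to prove is that the affinization morphism $\pi_X \colon X(Q^{\pm}, \theta) \to X(Q^{\pm}, 0)$ is small (for generic $\theta$). My plan is to use the toric stratification \eqref{eq:whitneyI} of $X(Q^{\pm},0)$ by the strata $X_{\underline{V}}$, together with the local product structure \eqref{eq:chart} and a dimension count on the fibres. Since $\pi_X$ is a projective toric morphism, it is automatically a topologically locally trivial fibration over each toric stratum, so \eqref{eq:whitneyI} is a stratification of the map $\pi_X$ in the sense required by \cref{DecThm}, and it suffices to bound the fibre dimensions stratum by stratum.

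First I would record the numerology. By \cref{FactToricQuiver}(1), $X(Q^{\pm},\theta)$ is smooth of dimension $b_1 + s = b_1(|Q|) + |E|$; and $X(Q^{\pm},0)$ has the same dimension. Over the open stratum (the big torus orbit) the fibre of $\pi_X$ is a point, so smallness amounts to the strict inequality $2\dim \pi_X^{-1}(X_{\underline{V}}) - \dim X_{\underline{V}} < b_1+s$ for every non-open stratum $X_{\underline{V}}$, equivalently $\operatorname{codim}_{X(Q^{\pm},0)} X_{\underline{V}} > 2\dim\pi_X^{-1}(x)$ for $x \in X_{\underline V}$. Using the local analytic product decomposition \eqref{eq:chart}, a neighbourhood of $x$ splits as $X(Q^{\pm}_{\underline{V}},0)\times(\text{smooth factor of dimension }d_{\underline V})$ with $\operatorname{codim}_{X(Q^{\pm},0)}X_{\underline V} = \dim X(Q^{\pm}_{\underline V},0) = b_1(|Q_{\underline V}|) + |E_{\underline V}|$, where $|E_{\underline V}|$ is the number of edges of $|Q|$ that are not contracted (i.e.\ whose endpoints lie in different blocks of $\underline V$). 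Moreover the fibre $\pi_X^{-1}(x)$ is, compatibly with this product, the central fibre of the corresponding affinization morphism for the sub-Lawrence-variety, i.e.\ $C(Q^{\pm}_{\underline V},\theta')$ for an induced generic $\theta'$.

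The heart of the argument is then the inequality
\[
2\dim C(Q^{\pm}_{\underline V}, \theta') \;<\; b_1(|Q_{\underline V}|) + |E_{\underline V}|
\]
for every \emph{non-trivial} partition $\underline V$. By \cref{FactToricQuiver}(4) the irreducible components of $C(Q^{\pm}_{\underline V},\theta')$ are Lagrangian in the $2b_1(|Q_{\underline V}|)$-dimensional symplectic variety $Y(Q_{\underline V},\theta')$, hence $\dim C(Q^{\pm}_{\underline V},\theta') = b_1(|Q_{\underline V}|)$. So the desired inequality becomes $2 b_1(|Q_{\underline V}|) < b_1(|Q_{\underline V}|) + |E_{\underline V}|$, that is $b_1(|Q_{\underline V}|) < |E_{\underline V}|$. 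Now $b_1(|Q_{\underline V}|) = |E_{\underline V}| - |V_{\underline V}| + (\text{number of connected components})$ where $|V_{\underline V}| = r$ is the number of blocks of $\underline V$; since $|Q|$ — and hence $|Q_{\underline V}|$ — is connected, $b_1(|Q_{\underline V}|) = |E_{\underline V}| - r + 1 \le |E_{\underline V}|$, with equality iff $r = 1$, i.e.\ iff $\underline V$ is the trivial partition collapsing $V$ to a point. For any non-trivial $\underline V$ we have $r \ge 2$, giving the strict inequality $b_1(|Q_{\underline V}|) < |E_{\underline V}|$ and hence smallness. (One should note the trivial partition $\underline V$ with a single block corresponds to the cone point of $X(Q^{\pm},0)$, the most special stratum; there the fibre is all of $C(Q^{\pm},\theta)$, of dimension $b_1 = b_1(|Q|)$, and the inequality is exactly $2b_1 < b_1 + s$, i.e.\ $b_1 < s$, which again holds as soon as $r\ge 2$, i.e.\ as soon as $Q$ is not a bouquet of loops on one vertex — and if $Q$ has a single vertex then $X(Q^{\pm},0)$ is already smooth by \cref{lem:loops} and there is nothing to prove.)

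The only genuinely delicate point is the bookkeeping: identifying the fibre $\pi_X^{-1}(x)$ over a point of $X_{\underline V}$ with $C(Q^{\pm}_{\underline V},\theta')$ compatibly with the product splitting \eqref{eq:chart}, and checking that the induced stability $\theta'$ is again generic so that \cref{FactToricQuiver}(4) applies. This follows from the toric description: the face $\tau$ of the cone $\sigma$ used in \eqref{eq:chart} determines the sub-quiver $Q(\tau)\cong Q_{\underline V}$ (up to loops) by edge contraction, and the GIT stability on the smaller torus is the restriction of $\theta$, which is generic for the smaller fan because $\theta$ was chosen generic for the big one — alternatively one invokes \cref{FactToricQuiver}(6) to replace $\theta'$ by any convenient generic chamber without changing fibre dimensions. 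Once this identification is in place, the dimension count above is immediate, and smallness follows; I expect this compatibility statement, rather than the numerics, to be where the argument needs care.
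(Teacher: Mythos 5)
Your proof follows the same route as the paper: restrict to the toric charts \eqref{eq:chart}, identify the fibre over a point of $X_{\underline V}$ with $C(Q^{\pm}_{\underline V}, [\theta])$, use the Lagrangian/isotropic argument of \cref{FactToricQuiver}\eqref{eq:Lagrangian} to obtain $\dim C(Q^{\pm}_{\underline V}, [\theta]) = b_1(Q_{\underline V})$, and compare with $\codim X_{\underline V} = \dim X(Q^{\pm}_{\underline V},0) = b_1(Q_{\underline V}) + s_{\underline V}$. The explicit Euler characteristic computation $b_1(Q_{\underline V}) = s_{\underline V} - r_{\underline V} + 1$ with the observation that $Q_{\underline V}$ stays connected is a useful clarification that the paper leaves implicit.

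However, the closing parenthetical has the indexing of \eqref{eq:whitneyI} backwards. The single-block partition $\underline V$ arises from the face $\tau = \{0\}$, which contracts all of $\tau(1)^c = \sigma(1)$; it therefore corresponds to the \emph{open} torus orbit, with $\codim X_{\underline V} = \dim X(Q^{\pm}_{\underline V},0) = 0$ and point fibre. The cone point corresponds instead to $\tau = \sigma$, no edges contracted, i.e.\ the \emph{finest} partition, where $r_{\underline V}$ equals the number of vertices of $Q$; there the fibre really is $C(Q^{\pm},\theta)$ of dimension $b_1(|Q|)$ and the inequality to be checked is $b_1 < s$. So the computation in your parenthetical is the correct one for the cone point, but you have attached it to the wrong partition, and that case is already covered by your main argument since $r_{\underline V}\ge 2$ for the finest partition. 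The parenthetical is thus both mislabeled and redundant; it also contradicts your own earlier (correct) statement that the open stratum is the one over which $\pi_X$ is an isomorphism. The main argument is unaffected, but the parenthetical as written should be removed or corrected.
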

\begin{proof}
The fan of $X(Q^{\pm}, \theta)$ is a regular triangulation of the cone $\sigma \subset N \otimes \RR$ of $X(Q^{\pm}, 0)$; see \cite[\S 2]{HauselStrumfels2002}. Since any regular  triangulation of $\sigma$ restricts to a regular triangulation of its face $\tau$, the restriction $\pi_X$ to the open set $X(Q^{\pm}_{\underline{V}}, 0) \times X_{\underline{V}}$ is the resolution
\[X(Q^{\pm}_{\underline{V}}, [\theta]) \times X_{\underline{V}} \to X(Q^{\pm}_{\underline{V}}, 0) \times X_{\underline{V}}.\] Therefore, given $y \in X_{\underline{V}} \subset X(Q^{\pm}, 0) \times X_{\underline{V}}$ we obtain \[
2\dim \pi_X^{-1}(y)= 2 \dim C(Q^{\pm}_{\underline{V}}, [\theta]) =2 b_1(Q_{\underline{V}}) < b_1(Q_{\underline{V}})+s = \dim X(Q^{\pm}_{\underline{V}}, 0) =  \dim X(Q^{\pm}, 0) - \dim X_{\underline{V}},
\]
i.e. $\pi_X$ is small.
\end{proof}

\begin{prop}
The map $\pi_Y: Y(Q, \theta) \to Y(Q, 0)$ is semismall.
\end{prop}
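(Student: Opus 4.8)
The plan is to verify the semismallness inequality stratum by stratum on the Whitney stratification $Y(Q,0)=\bigsqcup_{\underline V\dashv V}Y_{\underline V}$ of \eqref{eq:whitneyII}, in the same spirit as the proof above that $\pi_X$ is small, but now recording that the fibres of $\pi_Y$ are \emph{half}-dimensional rather than of low codimension.

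First I would describe $\pi_Y$ over each stratum. By the chart description above --- restricting the local toric models \eqref{eq:chart} to the subvariety cut by the $\mathrm{Circ}$-equations \eqref{eq:circ}, exactly as for $\pi_X$ --- an analytic neighbourhood of a point $y\in Y_{\underline V}$ in $Y(Q,0)$ is isomorphic to $Y(Q_{\underline V},0)\times Y_{\underline V}$ with $y\mapsto(0,y)$, and over it $\pi_Y$ is identified with $\pi_Y(Q_{\underline V},[\theta])\times\id_{Y_{\underline V}}$, the smooth stratum being a trivial factor. In particular $\pi_Y^{-1}(Y_{\underline V})\to Y_{\underline V}$ is a topologically locally trivial fibration, so $\{Y_{\underline V}\}$ is a stratification of $\pi_Y$, and every fibre of $\pi_Y$ over $Y_{\underline V}$ is isomorphic to $C(Q^{\pm}_{\underline V},[\theta])=\pi_Y(Q_{\underline V},[\theta])^{-1}(0)$.

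Then I would assemble the three dimension counts. The source satisfies $\dim Y(Q,\theta)=2b_1(Q)$ by \cref{FactToricQuiver}. The fibre $C(Q^{\pm}_{\underline V},[\theta])$ has dimension $b_1(Q_{\underline V})$, since its irreducible components are Lagrangian in the $2b_1(Q_{\underline V})$-dimensional symplectic variety $Y(Q_{\underline V},[\theta])$, again by \cref{FactToricQuiver}. Finally, the analytic normal slice to $Y_{\underline V}$ in $Y(Q,0)$ is $Y(Q_{\underline V},0)$, so \cref{defn:analytictrivial} gives $\dim Y_{\underline V}=\dim Y(Q,0)-\dim Y(Q_{\underline V},0)=2b_1(Q)-2b_1(Q_{\underline V})$. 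Combining these, for every stratum $Y_{\underline V}$,
\[
2\dim\pi_Y^{-1}(Y_{\underline V})-\dim Y_{\underline V}=2\bigl(\dim Y_{\underline V}+b_1(Q_{\underline V})\bigr)-\dim Y_{\underline V}=\dim Y_{\underline V}+2b_1(Q_{\underline V})=2b_1(Q)=\dim Y(Q,\theta),
\]
which is precisely the semismallness inequality; note that it is an equality on \emph{every} stratum, so every $Y_{\underline V}$ is relevant for $\pi_Y$ and (unless $Q$ is a tree) $\pi_Y$ is not small.

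The computation itself is bookkeeping; the point that needs care --- and the only real obstacle --- is the first step, namely that the $\mathrm{Circ}$-equations interact with the toric charts \eqref{eq:chart} and their regular triangulations in the same way as in the proof of the smallness of $\pi_X$, so that the normal slice to $Y_{\underline V}$ is genuinely $Y(Q_{\underline V},0)$ and the fibre over $Y_{\underline V}$ is genuinely $C(Q^{\pm}_{\underline V},[\theta])$; this is the hypertoric analogue of the argument for $X(Q^{\pm},\theta)$ and can be checked by inspecting the restriction of \eqref{eq:circ} to each chart, as in \cite[\S 2]{PW2007}. Once this is granted, the displayed identity --- which unwinds to additivity of $b_1$ under the contraction $Q\mapsto Q_{\underline V}$ together with the normal-slice dimension relation --- finishes the proof, and identifies every $Y_{\underline V}$ as a relevant stratum, for later use of \cref{DecThm}.
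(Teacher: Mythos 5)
Your proof is correct, but it takes a genuinely different route from the paper's. The paper's proof is a one-line structural argument: since $Y(Q,\theta)$ is a smooth holomorphic symplectic variety by \cref{FactToricQuiver}(2), the affinization $\pi_Y$ is a symplectic resolution, and every symplectic resolution is semismall by Kaledin's Lemma 2.11 in \cite{Kaledin06}. Your argument instead verifies the semismallness inequality directly on the stratification \eqref{eq:whitneyII}, using the local toric-chart description $Y(Q_{\underline V},0)\times Y_{\underline V}$ of the normal slice, the Lagrangian bound $\dim C(Q^{\pm}_{\underline V},[\theta])=b_1(Q_{\underline V})$ from \cref{FactToricQuiver}(4), and the codimension relation $\codim Y_{\underline V}=2b_1(Q_{\underline V})$ — all of which the paper has already set up for the smallness of $\pi_X$ — and finds that $2\dim\pi_Y^{-1}(Y_{\underline V})-\dim Y_{\underline V}=2b_1(Q)$ with equality on every stratum. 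The paper's route is shorter and isolates the conceptual reason (symplectic geometry forces semismallness); your route is longer but yields strictly more: it exhibits every $Y_{\underline V}$ as a relevant stratum with fibre $C(Q^{\pm}_{\underline V},[\theta])$, which is exactly the information needed to read off the summands in the semismall decomposition theorem in \cref{prop:perverselocal}, a fact the paper instead defers to \cite[\S 5]{PW2007}. Your observation that $\pi_Y$ is never small unless $Q$ is a tree is also correct and a nice byproduct.
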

\begin{proof}
By \cref{FactToricQuiver}.(2), $Y(Q, \theta)$ is a smooth symplectic variety. Any symplectic resolution of singularities is semismall by \cite[Lemma 2.11]{Kaledin06}.
\end{proof}


\begin{thm}[Local Hodge-to-singular correspondence]\label{prop:perverselocal}  We have
a canonical isomorphism in $D^bMHM_{\text{alg}}(Y)$ (resp. $D^b(Y, \QQ)$) 
\[
\IC(X(Q^{\pm},0), \QQ)|_{Y(Q,0)}[2b_1]
\simeq  \bigoplus_{\underline{V} \vdash V}
\IC(Y_{\underline{V}},\QQ) \otimes H^{b_1(Q_{\underline{V}})}(Y(Q_{\underline{V}},\theta), \QQ)[2b_1] \langle b_1(Q_{\underline{V}}) \rangle.
\]
In particular, the restriction $\IC(X(Q^{\pm},0), \QQ)|_{Y(Q,0)}[2b_1]$ is pure and perverse.

\end{thm}
\begin{proof} Consider the cartesian square
\[
\begin{tikzpicture}[baseline= (a).base]
\node[scale=0.97] (a) at (0,0){
\begin{tikzcd}
  Y(Q, \theta) \arrow[hookrightarrow, "\iota_{M, \theta}"]{r}\ar[d,"\pi_Y"'] & X(Q^{\pm}, \theta) \ar[d, "\pi_X"] \\
     Y(Q, 0) \arrow[hookrightarrow, "\iota_A"]{r}& X(Q^{\pm},0).
\end{tikzcd}
};
\end{tikzpicture}
\]
The map $\pi_X$ is small, the map $\pi_Y$ is semismall and the inclusion $\iota_{M, \theta}$ is a regular embedding. Therefore,
the decomposition theorem for small maps gives
$\IC(X(Q^{\pm},0), \QQ) \simeq R\pi_{X,*}\QQ_{X(Q^{\pm}, \theta)}$. By proper base change, we obtain
$R\pi_{X,*}\QQ_{X(Q^{\pm}, \theta)}|_{Y(Q,0)}\simeq R\pi_{Y,*}\QQ_{Y(Q, \theta)}$. The statement now follows from the decomposition theorem for semismall maps. In this case, the local systems $\mathrm{R}^{\dim X - \dim Y_k}f_*\QQ_{f^{-1}(Y_k)}$ of \cref{DecThm}
are trivial, supported on the relevant strata $Y_{\underline{V}}$ and with fiber $H^{\mathrm{top}}(\pi_Y^{-1}(y), \QQ) \simeq H^{b_1(Q_{\underline{V}})}(Y(Q_{\underline{V}},\theta), \QQ)$ for all $y \in Y_{\underline{V}}$; see \cite[\S 5]{PW2007}.
\end{proof}

\section{Universal compactified Jacobians}\label{sec:univcompactifiedJac}

The goal of this section is to enlarge the Hitchin fibration to a universal family of compactified Jacobians. Fix integers $g>1$, $n>1$, $d$ as above.  Let $X$ be a projective curve with ample canonical bundle $\omega_X$ and planar singularities. 
\begin{defn}\label{def:compactifiedJac}
 A rank 1 torsion free sheaf $\mathcal{I}$ on $X$ is semistable with respect to the canonical polarization $\omega_{X}$ if for any proper subcurve $Y \subset X$ we have that
 \begin{equation}\label{eq:slopestability}
     \frac{\chi(X, \mathcal{I})}{\deg(\omega_X)}\leq \frac{\chi(Y,\mathcal{I}_{Y})}{\deg(\omega_X|_Y)},
 \end{equation}
 where $\mathcal{I}_Y$ is the biggest torsion free quotient of the restriction $\mathcal{I}|_Y$ of $\mathcal{I}$ to the subcurve $Y$. 
 The \textbf{compactified Jacobian}  $\overline{J}_d(X)$ is 
the projective coarse moduli space of semistable rank 1 torsion free sheaves on $X$ of degree $d+(n-n^2)(1-g)$ with respect to the canonical polarization; see \cite{Simpson1994I}. 
\end{defn}

The BNR correspondence presents the Dolbeault moduli space $M(n,d)$ as the relative compactified
Jacobian of the spectral curve family; see \cite{Hitchin1987a, BNR89, Schaub1998}. It says that given a rank $1$ torsion free sheaf $\mathcal{I}$ on the spectral curve $\pi_a \colon C_a \to C$, the sheaf $\mathcal{E} \simeq \pi_{a,*} \mathcal{I}$ is a vector
bundle of rank $n$ equipped with a Higgs
field $\phi$, induced from the $\mathcal{O}_{C_a}$-module structure. Viceversa, any Higgs bundle $(\mathcal{E}, \phi)$ is the pushfoward of some rank $1$ torsion free sheaf on $C_a$.

\begin{prop} \cite{Hitchin1987a, BNR89, Schaub1998, MRV2019}
The fibre of the Hitchin fibrations $\chi^{-1}(n,d)(a)$ with $a \in A^{\text{red}}_n$ is isomorphic to the compactified Jacobian $\overline{J}_d(C_{a})$ of the spectral curve $C_{a}$.
\end{prop}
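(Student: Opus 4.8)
The plan is to deduce the statement from the Beauville--Narasimhan--Ramanan (BNR) correspondence together with the description of $\overline{J}_d(C_a)$ as a moduli space of rank $1$ torsion free sheaves; all the inputs are already available in \cite{Hitchin1987a, BNR89, Schaub1998, MRV2019}, so the work consists in assembling them.

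\emph{Step 1: geometry of the spectral curve.} For $a\in A^{\mathrm{red}}_n$ the spectral curve $C_a\subset \Tot(\omega_C)$ is reduced by definition of $A^{\mathrm{red}}_n$, and since it is a Cartier divisor on the smooth surface $\Tot(\omega_C)$ it has planar singularities. I would compute its dualizing sheaf by adjunction: $\Tot(\omega_C)$ has trivial canonical bundle and $C_a$ is the zero locus of a section of $p^*\omega_C^{\otimes n}$, where $p\colon \Tot(\omega_C)\to C$ is the projection and the section is the characteristic equation in the tautological section of $p^*\omega_C$; hence $\omega_{C_a}\simeq \pi_a^*\omega_C^{\otimes n}$, which is ample because $\omega_C$ is ample ($g>1$) and $\pi_a\colon C_a\to C$ is finite. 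Thus $\overline{J}_d(C_a)$ is well defined as in \cref{def:compactifiedJac}, and $\deg\omega_{C_a}=n^2(2g-2)$.

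\emph{Step 2: BNR and bookkeeping.} The morphism $\pi_a$ is finite flat of degree $n$ with $\pi_{a,*}\mathcal{O}_{C_a}\simeq\bigoplus_{i=0}^{n-1}\omega_C^{\otimes -i}$, so by the Cayley--Hamilton theorem a $\pi_{a,*}\mathcal{O}_{C_a}$-module structure on a coherent $\mathcal{O}_C$-module $\mathcal{E}$ is exactly a Higgs field $\phi$ with $\operatorname{char}(\phi)=a$; since $\pi_a$ is affine, $\mathcal{I}\mapsto \pi_{a,*}\mathcal{I}$ identifies $\mathcal{O}_{C_a}$-modules with such pairs $(\mathcal{E},\phi)$. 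I would then check: (i) $\mathcal{I}$ is rank $1$ torsion free iff $\mathcal{E}=\pi_{a,*}\mathcal{I}$ is a rank $n$ vector bundle (torsion freeness is preserved in both directions, and on the smooth curve $C$ torsion free equals locally free); and (ii) using $\chi(C_a,\mathcal{I})=\chi(C,\mathcal{E})$ and $\chi(\mathcal{O}_{C_a})=n^2(1-g)$, that $\deg\mathcal{E}=d$ if and only if $\deg\mathcal{I}=d+(n-n^2)(1-g)$, matching the normalization in \cref{def:compactifiedJac}. Since the construction is functorial in $C$, this bijection on objects promotes to an isomorphism of the corresponding moduli functors.

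\emph{Step 3 (the hard part): matching stability.} The remaining and genuinely delicate point is to compare slope semistability of $(\mathcal{E},\phi)$, tested on $\phi$-invariant subbundles as in the definition of $M(n,d)$, with semistability of $\mathcal{I}$ with respect to the canonical polarization $\omega_{C_a}$, tested on proper subcurves $Y\subseteq C_a$ as in \eqref{eq:slopestability}. I would use the dictionary that $\phi$-invariant subsheaves of $\mathcal{E}$ are precisely the pushforwards of $\mathcal{O}_{C_a}$-submodules of $\mathcal{I}$, and that the torsion free quotients $\mathcal{I}\twoheadrightarrow\mathcal{I}_Y$ attached to subcurves give, after pushforward, enough $\phi$-invariant quotients to test semistability; combined with the degree computations afforded by $\omega_{C_a}\simeq\pi_a^*\omega_C^{\otimes n}$, this turns \eqref{eq:slopestability} into the Higgs slope inequality and matches polystable representatives, hence S-equivalence classes. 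I expect this comparison to be the main obstacle; rather than reproving it I would invoke \cite{Schaub1998, MRV2019}, which establish exactly this equivalence and the resulting isomorphism of coarse moduli spaces. Putting the three steps together yields the asserted isomorphism $\chi(n,d)^{-1}(a)\simeq\overline{J}_d(C_a)$ for every $a\in A^{\mathrm{red}}_n$.
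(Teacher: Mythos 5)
Your proof is correct and follows the same route as the sources the paper cites (the paper states this proposition as a citation to Hitchin, BNR, Schaub, and Melo--Rapagnetta--Viviani and does not give its own argument): the dualizing sheaf computation via adjunction on $\Tot(\omega_C)$, the equivalence of $\mathcal{O}_{C_a}$-modules with Higgs pairs via $\pi_{a,*}$, the Euler-characteristic bookkeeping giving $\deg \mathcal{I} = d + (n-n^2)(1-g)$, and the stability comparison deferred to \cite{Schaub1998, MRV2019} are exactly the standard BNR package. Your Step 3 is precisely what the paper addresses in the remark immediately following the proposition, where it verifies via Riemann--Roch that the canonical-polarization condition \eqref{eq:slopestability} matches the stability used in \cite[(10.5)]{MRV2019}.
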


\begin{rmk}
The stability condition for rank 1 torsion free sheaves on reduced curves used in \cite[(10.5)]{MRV2019} and \cite{deCataldoHeinlothMigliorini19} coincides with \eqref{eq:slopestability}.
Suppose that $X$ is a reduced spectral curve $C_{a}=\bigcup^r_{i=1} C_i$ whose component $C_i$ has degree $n_i$, and $n=\sum^{r}_{i=1}n_i$. Let $\mathcal{I}$ be a rank 1 torsion free sheaf of degree $d+(n-n^2)(1-g)$ on $C_{a}$. By Riemann--Roch, \eqref{eq:slopestability} reads as follows: for any subcurve $C_{I}=\bigcup_{i \in I} C_i$, with $I \subset \{1, \ldots, r\}$, we have that
\[
     \chi(Y,\mathcal{I}_{Y})\geq (d+n(1-g))\frac{\sum_{i \in I}n_i}{n},
\]
which is the stability condition \cite[(10.5)]{MRV2019}.
\end{rmk}

Following Simpson, Caporaso, Pandharipande and Esteves \cite{Simpson1994I, P1996, E2001} (cf also \cite[\S 2.3, Fact 2.7, Fact 2.12]{CMKV2015} and \cite{deCataldoHeinlothMigliorini19}) we can enlarge the Hitchin fibration to a relative compactified Jacobian of a versal deformation of $C_{a}$. To this end, let $\overline{\mathcal{M}}_{g'}$ be the Deligne--Mumford stack of semi-stable curves of genus $g'\coloneqq n^2(g-1)+1$, and $A^{\times}_n \coloneqq \bigsqcup_{\underline{n}\vdash \, n} S^{\times}_{\underline{n}}$. By \cite[Lem. 2.1]{deCataldoHeinlothMigliorini19} the flat universal spectral curve $\mathcal{C}^{\times}_{n} \to A^\times_{n}$ induces a morphism
\[f^\times: A^{\times}_{n}\to \overline{\mathcal{M}}_{g'}.\]
\begin{prop}[Universal compactified Jacobian]
There exists an open substack $\mathcal{B} \subseteq \overline{\mathcal{M}}_{g'}$
containing $f^{\times}(A^{\times}_{n})$, and
an irreducible (non necessarily smooth) Deligne--Mumford stack $\pi: \unJB \to \mathcal{B}$ that \'{e}tale locally is a
relative compactified Jacobian parametrizing semistable rank 1 torsion free sheaves of degree $d + (n-n^2)(1-g)$ with respect to the canonical polarization.
\end{prop}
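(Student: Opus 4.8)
The plan is to deduce the statement by specialising to the case at hand the existing constructions of relative compactified Jacobians over moduli of curves, due to Caporaso \cite{Caporaso94}, Pandharipande \cite{P1996} and Esteves \cite{E2001} (see also \cite[\S 2.3]{CMKV2015} and \cite[\S 2]{deCataldoHeinlothMigliorini19}). Concretely I would proceed in three steps: first choose the open substack $\mathcal{B}$; then run Simpson's/Esteves' relative moduli construction on an \'{e}tale atlas of $\overline{\mathcal{M}}_{g'}$ using the canonical polarization; and finally descend it to a Deligne--Mumford stack over $\mathcal{B}$, checking properness, flatness and irreducibility along the way.

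For the first step, one observes that every nodal reduced spectral curve $C_a$ with $a \in A^{\times}_{n}$ is in fact a \emph{stable} curve of genus $g'$: each of its irreducible components maps finitely onto $C$ under $\pi_a$, so its normalisation has genus $\geq g \geq 2$ by Riemann--Hurwitz, whence $C_a$ has no rational components and $\deg(\omega_{C_a}|_{C_i}) = 2p_a(C_i)-2+\#(C_i\cap\overline{C_a\setminus C_i}) > 0$ for every component $C_i$, i.e.\ $\omega_{C_a}$ is ample (compare \cite[Lem. 2.1]{deCataldoHeinlothMigliorini19}). Since ampleness of the relative dualising sheaf is an open condition in families, the locus $\mathcal{B} \subseteq \overline{\mathcal{M}}_{g'}$ of curves $X$ with $\omega_X$ ample is an open substack containing $f^{\times}(A^{\times}_{n})$; every curve it parametrises is reduced with planar (nodal) singularities and carries the ample canonical polarization, so that Definition~\ref{def:compactifiedJac} applies fibrewise.

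For the second and third steps, since $\overline{\mathcal{M}}_{g'}$ is a Deligne--Mumford stack I would pick an \'{e}tale atlas $U \to \overline{\mathcal{M}}_{g'}$ by a scheme over which the universal curve becomes an honest flat projective family $\mathcal{Z}_U \to U$; over $U \times_{\overline{\mathcal{M}}_{g'}} \mathcal{B}$ the relative dualising sheaf $\omega_{\mathcal{Z}_U/U}$ is a $U$-relatively ample polarization, and Simpson's construction of relative moduli of semistable sheaves \cite{Simpson1994I} --- equivalently Esteves' construction for families of reduced curves \cite{E2001} --- produces a projective coarse moduli space $\overline{J}_{d}(\mathcal{Z}_U/U) \to U\times_{\overline{\mathcal{M}}_{g'}}\mathcal{B}$ of $S$-equivalence classes of semistable rank $1$ torsion free sheaves of degree $d+(n-n^2)(1-g)$, whose formation commutes with base change. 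As semistability with respect to the canonical polarization and the passage to the coarse space of $S$-equivalence classes are intrinsic to the curve, these spaces satisfy the cocycle condition for descent along the \'{e}tale groupoid presenting $\mathcal{B}$ and therefore glue to a stack $\pi\colon \unJB \to \mathcal{B}$ restricting to $\overline{J}_d(\mathcal{Z}_U/U)$ on each chart; it is of Deligne--Mumford type because its fibres are schemes and automorphism groups of genus-$g'$ stable curves are finite, and $\pi$ is proper (projective on charts) and, by planarity of the singularities, flat, cf.\ \cite[Fact 2.12]{CMKV2015} and \cite{MRV2019}. Irreducibility then follows: $\mathcal{B}$ is irreducible, $\pi$ is flat and proper, and over the dense open substack $\mathcal{B}^{\mathrm{sm}}$ of smooth curves $\pi$ restricts to the relative Jacobian, which has connected abelian-variety fibres and is irreducible; hence every irreducible component of $\unJB$ dominates $\mathcal{B}$, and the generic fibre being irreducible forces a single component. (Alternatively one may cite directly the irreducibility of compactified Jacobians of reduced curves with planar singularities from \cite{MRV2019}.)

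The main obstacle --- or rather the only genuine subtlety --- is that the canonical polarization $\omega_X$ is \emph{not} a generic polarization, so strictly semistable sheaves occur and one obtains coarse moduli spaces of $S$-equivalence classes, not fine moduli: this is exactly why the statement is phrased ``\'{e}tale locally'', and why the descent step must be justified by the intrinsic nature of the canonical polarization rather than by a universal sheaf. A secondary and more routine point is the flatness of $\pi$, which rests on all curves parametrised by $\mathcal{B}$ having planar singularities --- automatic here, as $\overline{\mathcal{M}}_{g'}$ parametrises at worst nodal curves.
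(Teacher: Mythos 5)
Your proposal is correct and takes essentially the same route as the paper: the paper's proof simply cites the first paragraph of the proof of Proposition 5.9 in \cite{deCataldoHeinlothMigliorini19} (which runs the Caporaso/Pandharipande/Esteves/Simpson construction of relative compactified Jacobians over an \'{e}tale atlas of $\overline{\mathcal{M}}_{g'}$ and descends using the intrinsicality of the canonical polarization) and observes that it never uses the coprimality of $n$ and $d$, which is exactly the content you reconstruct. Your remark that $\omega_X$ is not a generic polarization, so one obtains only a coarse moduli space of $S$-equivalence classes, is precisely what the ``\'{e}tale locally'' qualifier in the statement accounts for, and is the one genuine point of departure from the coprime setting of the cited reference.
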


\begin{proof}  
The first paragraph of the proof of \cite[Prop. 5.9]{deCataldoHeinlothMigliorini19} does not require the coprimality of $n$ and $d$, and works verbatim in our context.  
\end{proof}
\begin{prop}\label{prop:unramified}
For every partition $\underline{n}$ of $n$, let  $a \in S^{\times}_{\underline{n}}$.
There exists a local multisection $A_{a}$ of $f^{\times}: A^{\times}_{n}\to f^{\times}(A^{\times}_{n}) \subseteq \mathcal{B}$ passing through $a$, i.e. $A_{a}$ is a smooth locally closed subset of $A^{\times}_n$, passing through $a$, 
such that $f^{\times}(A_{a})$ is an open subset in $f^{\times}(A^{\times}_{n})$, and the restriction $f^{\times}|_{A_{a}}: A_{a}\to f^{\times}(A_{a})$ is \'{e}tale.
Furthermore we have a cartesian diagram
\begin{equation*}
\begin{tikzpicture}[baseline= (a).base]
\node[scale=1] (a) at (0,0){
\begin{tikzcd}
\chi(n,d)^{-1}(A_a) \ar[r, "\simeq"] \ar[dr, "{\chi(n,d)}"'] & \unJB \times_{\mathcal{B}}A_{a}  \ar[r, "i_{M}"]\ar[d] & \unJB \ar[d, "\pi"] \\
    & A_a \ar[r, "f^{\times}|_{A_{a}}"] & \mathcal{B}.
\end{tikzcd}
};
\end{tikzpicture}
\end{equation*}
\end{prop}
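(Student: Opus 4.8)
The plan is to construct the multisection $A_a$ by combining the transversality of Hitchin base slices established in Section \ref{sec:KS} with the étale-locally-universal property of $\pi\colon\unJB\to\mathcal B$. First I would recall that, by the description of the Kodaira--Spencer map for nodal spectral curves (the auxiliary result announced for Section \ref{sec:KS}), the morphism $f^\times\colon A^\times_n\to\overline{\mathcal M}_{g'}$ restricted near $a\in S^\times_{\underline n}$ is a submersion onto its image in the relevant normal directions: concretely, the differential of $f^\times$ at $a$ surjects onto the deformation space of the nodal curve $C_a$ modulo the directions tangent to the equisingular stratum. Since $S^\times_{\underline n}$ is (up to the automorphisms of the curves, which is why we pass to $\overline{\mathcal M}_{g'}$) exactly the locus where the combinatorial type of $C_a$ is constant, a generic linear slice $A_a\subseteq A^\times_n$ through $a$ of the complementary dimension will meet $S^\times_{\underline n}$ transversally and map étale onto $f^\times(A^\times_n)$ near $f^\times(a)$. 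This is the content of the transversality statement and of \cite[Lem. 2.1]{deCataldoHeinlothMigliorini19}, applied pointwise; shrinking $A_a$ if necessary we may assume $f^\times|_{A_a}$ is étale onto its image.

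Next I would assemble the cartesian diagram. By the previous proposition, $\pi\colon\unJB\to\mathcal B$ étale locally (over $\mathcal B$) parametrizes semistable rank $1$ torsion free sheaves of degree $d+(n-n^2)(1-g)$ with respect to the canonical polarization on the fibres of the universal curve. Pulling back along $f^\times|_{A_a}\colon A_a\to\mathcal B$, the fibre product $\unJB\times_{\mathcal B}A_a$ is, étale locally over $A_a$, the relative compactified Jacobian of the pulled-back family of curves, which — since $f^\times$ classifies precisely the spectral curve family $\mathcal C^\times_n\to A^\times_n$ — is the family $\mathcal C^\times_n|_{A_a}\to A_a$. By the BNR correspondence and the preceding Proposition identifying $\chi(n,d)^{-1}(a)$ with $\overline J_d(C_a)$ fibrewise, the relative compactified Jacobian of $\mathcal C^\times_n|_{A_a}$ over $A_a$ is canonically $\chi(n,d)^{-1}(A_a)$ together with its Hitchin map to $A_a$. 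This gives the left-hand isomorphism $\chi(n,d)^{-1}(A_a)\xrightarrow{\sim}\unJB\times_{\mathcal B}A_a$ compatible with the maps to $A_a$, and the right-hand square is cartesian by definition of the fibre product.

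The main obstacle I anticipate is the subtlety that $\pi$ is only \emph{étale locally} a relative compactified Jacobian, so that neither the polarization nor the degree is canonically globally defined on $\unJB$; one must check that the étale-local Jacobian descriptions glue correctly along $A_a$ and that the identification with $\chi(n,d)^{-1}(A_a)$ is independent of the chosen étale chart. This is handled exactly as in the coprime case of \cite{deCataldoHeinlothMigliorini19}: the stability condition \eqref{eq:slopestability} with respect to the canonical polarization is intrinsic to each fibre $C_a$ (see the Remark following the BNR correspondence), so the étale-local moduli functors agree on overlaps, and the gluing is canonical. A secondary point is to ensure $A_a$ can be chosen \emph{locally closed} in $A^\times_n$ rather than merely as the germ of an analytic slice; this follows because the slice can be taken to be (the intersection with $A^\times_n$ of) an affine-linear subspace of the affine Hitchin base $A_n$, using that transversality and étaleness are open conditions. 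With these points addressed, the statement follows; no further computation is needed beyond invoking Sections \ref{sec:KS} and \ref{sec:tubularneighbou} for the precise transversality input.
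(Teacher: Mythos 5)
The paper's entire proof of this proposition is a single citation: ``\cite[Cor. 5.10]{deCataldoHeinlothMigliorini19} works verbatim.'' Your proposal is a reasonable unpacking of what that corollary contains, and its overall architecture---(i) existence of the multisection via constancy of the kernel of $df^{\times}$, (ii) identification of $\unJB\times_{\mathcal B}A_a$ with $\chi(n,d)^{-1}(A_a)$ via the spectral curve family and BNR, (iii) the observation that the \'etale-local Jacobian charts glue because the stability condition \eqref{eq:slopestability} is intrinsic---is the right one and matches the argument in \cite{deCataldoHeinlothMigliorini19}. Your second and third paragraphs are essentially correct.

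However, the first paragraph contains a genuine error. You assert that ``the differential of $f^{\times}$ at $a$ surjects onto the deformation space of the nodal curve $C_a$ modulo the directions tangent to the equisingular stratum.'' This is false, and indeed contradicts \cref{prop:KS} of the same paper: the image of $T_a N_{\underline{n}}$ under $df^{\times}$ composed with the projection to the smoothing directions $\mathbb{T}^1_{\underline{n}}(C_a)\simeq\CC^{E(\Gamma_{\underline{n}})}$ is cut by the circuit relations $\mathrm{Circ}(\mathscr{B})$, hence is a \emph{proper} subspace of dimension $b_1(\Gamma_{\underline{n}})<\#E(\Gamma_{\underline{n}})$ whenever $\underline{n}$ has more than one part. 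The failure of this surjectivity is precisely what makes the local model of \cref{thm:localmodelJM} nontrivial (the Hitchin base embeds into $\CC^s$ as a proper linear subspace, not isomorphically). What is actually needed, and what is established in \cite[\S 5.2]{deCataldoHeinlothMigliorini19} and recalled in \cref{sec:KS}, is that $\ker df^{\times}\simeq H^0(C,\mathcal O_C)\oplus H^0(C,\omega_C)$ has \emph{constant} dimension $1+g$ along $A^{\times}_n$; this makes $f^{\times}$ submersive onto a smooth image of codimension $1+g$ less than $\dim A^{\times}_n$, and a linear slice $A_a$ transversal to the fibers of $f^{\times}$ (not to the equisingular stratum per se) of codimension $1+g$ then maps \'etale onto $f^{\times}(A^{\times}_n)$. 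You should replace the surjectivity claim with this constancy-of-kernel statement; the rest of your argument then goes through as written.
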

\begin{proof}
\cite[Cor. 5.10]{deCataldoHeinlothMigliorini19} works verbatim.
\end{proof}
\begin{rmk}\label{rmk:Aa}
The $\Gm$-action on $A_n$ and the translation action $H^0(C,\omega_C)\otimes A_n \to A_n$
lift to the universal spectral curve $\mathcal{C}_{n} \to A_n$ and so to $\MDol(n,d)$. Therefore, the group action induces an analytic isomorphism between an open neighbourhood $U_a \subseteq A_n$ of $A_{a}$ and a neighbourhood of the zero-section of trivial vector bundle $p: A_{a} \times (H^0(C, \mathcal{O}_C) \oplus H^0(C,\omega_C)) \to A_{a}$, such that there exists a homeomorphism
\[\chi(n,d)^{-1}(U_{a})\simeq \chi(n,d)^{-1}(A_{a}) \times (H^0(C, \mathcal{O}_C) \oplus H^0(C,\omega_C)),\]
and so
\[R\chi(n,d)_*\IC(\MDol(n,d), \QQ)|_{U_a}\simeq p^*(R\chi(n,d)_*\IC(\chi(n,d)^{-1}(A_a), \QQ)).\]
\end{rmk}

\section{Kodaira--Spencer map for spectral curves}\label{sec:KS}
  The differential of the map $f^{\times} \colon A_n \to \mathcal{B}$ in \cref{prop:unramified} can be identified with the Kodaira--Spencer map; see for instance \cite[\S 5.2]{deCataldoHeinlothMigliorini19}. Its kernel is isomorphic to $H^0(C, \mathcal{O}_C) \oplus H^0(C, \omega_{C})$; see again \cite[\S 5.2]{deCataldoHeinlothMigliorini19}. In \cref{prop:KS} we study its image.
  
  \subsection{Kodaira--Spencer map} We start with some recollection about the deformation of nodal spectral curves.
\begin{defn} The Kodaira--Spencer map is the map
 \begin{equation}KS_a \colon T_a A_n \simeq H^0(C_a, N_{C_a/T^*C_a}) \to \mathrm{Ext}^1(\Omega^1_{C_a}, \mathcal{O}_{C_a})\end{equation}
 obtained by applying the functor $\mathrm{Hom}(-, \mathcal{O}_{C_a})$ to the relative cotangent sequence
\begin{equation}\label{eq:relativecotangentsequence}
    0 \to N^{*}_{C_a/T^*C} \to \Omega^1_{T^*C}|_{C_a} \to \Omega^1_{C_a} \to 0.
\end{equation}
\end{defn}
Applying the functor $\mathcal{H}om(-, \mathcal{O}_{C_a})$ to \eqref{eq:relativecotangentsequence}, we obtain the exact sequence of coherent sheaves
\[0 \to T_{C_a} \to T_{T^*C}|_{C_a} \to N_{C_a/T^*C} \to \mathbb{T}^1 \coloneqq \mathcal{E}xt^1(\Omega^1_{C_a}, \mathcal{O}_{C_a}) \to 0.\]

The first tangent sheaf $\mathbb{T}^1$ is a skyscraper sheaf supported at the nodes $E(C_{a})$ of $C_a$ and it parametrises smoothings of the nodes. The kernel of $N_{C_a/T^*C} \to \mathbb{T}^1$ is easily identified to $N_{C_a/T^*C} \otimes \mathcal{I}_{E(C_a)}$, where $\mathcal{I}_{E(C_a)}$ is the product of the maximal ideals of the nodes $E(C_{a})$; see \cite[Ex. 4.7.1]{Sernesi2006}. Hence, there exists a short exact sequence 
\[
    0 \to N_{C_a/T^*C} \otimes \mathcal{I}_{E(C_a)} \to N_{C_a/T^*C} \to \mathbb{T}^1 \to 0
\]
inducing the exact sequence in cohomology
\[0 \to H^0(C_a, N_{C_a/T^*C} \otimes \mathcal{I}_{E(C_a)}) \to T_{a}A_{n} = H^0(C_a,N_{C_a/T^*C}) \to \mathbb{T}^1(C_a).
\]
Note that the last map $T_{a}A_{n} \to \mathbb{T}^1(C_a)$ factors as
\begin{equation}\label{eq:Kscomp}
  T_a A_{n}= H^0(C_a,N_{C_a/T^*C}) \xrightarrow{KS_a}  \mathrm{Ext}^1(\Omega^1_{C_a}, \mathcal{O}_{C_a}) \to H^0(C_a, \mathcal{E}xt^1(\Omega^1_{C_a}, \mathcal{O}_{C_a}))=\mathbb{T}^1(C_a),  
\end{equation}
where the latter map comes from the local-to-global Ext spectral sequence.

\subsection{Image of the Kodaira--Spencer map for spectral curves} 
 Let $\underline{n}'$  be a partition of $n$. Then $\chi(n,d)^{-1}(S^{\times}_{\underline{n}'})$ admits a stratification by type 
\[\chi(n,d)^{-1}(S^{\times}_{\underline{n}' }) = \bigsqcup_{\underline{n}\vdash \, n} \chi(n,d)^{-1}(S^{\times}_{\underline{n}'}) \cap M^{\circ}_{\underline{n}}(d).\] 
Given the partition $\underline{n}=(n_1, \ldots, n_r)$, the condition $(\mathcal{E}, \phi) \in \chi(n,d)^{-1}(S^{\times}_{\underline{n}'}) \cap M^{\circ}_{\underline{n}}(d)$ means that the Higgs bundle $(\mathcal{E}, \phi)$ splits as 
\begin{equation}\label{eq:directsumHiggs}
    (\mathcal{E}, \phi) = \bigoplus^r_{i=1} (\mathcal{E}_i, \phi_i),
\end{equation}
where $(\mathcal{E}_i, \phi_i)$ are stable Higgs bundles of rank $n_i$ and degree $d_i = dn_i/n$. 
 Further, the spectral curve $C_{a}$ of $(\mathcal{E}, \phi)$ is nodal and the union of the (possibly reducible) spectral curves $C_{a_i}$ of $(\mathcal{E}_i, \phi_i)$
 \begin{equation}\label{eq:split}
     C_{a}=\bigcup^r_{i=1} C_{a_i}.
 \end{equation}
 
A normal slice $N_{\underline{n}} \subseteq A_{a}$ to $S_{\underline{n}}$ through $a$ parametrises spectral curves obtained by smoothing the nodes $E(\Gamma_{\underline{n}}) \coloneqq \bigcup^r_{i,j=1} C_{a_i} \cap C_{a_j}$ of $C_a$.

\begin{notation}
For a partition $\underline{n}$ of $n$ we will denote by $\Gn$ the dual graph of any spectral
curve given by a point of $S^{\times}_{\underline{n}}$, i.e.\ $\Gn$ is the graph with vertices $[r]\coloneqq \{1, \ldots , r\}$ corresponding to
the irreducible components of the curve and $n_in_j (2g-2)$ edges between the vertices $i$, $j$,
corresponding to the intersection points of the components. By abuse of
notation we continue to denote by $E(\Gamma_{\underline{n}})$ the set of edges of $\Gamma_{\underline{n}}$.
\end{notation} 

\begin{prop}\label{prop:KS}
The image of the map
\[T_a N_{\underline{n}} \hookrightarrow T_a A_{a} \xrightarrow{df^{\times}}  \mathrm{Ext}^1(\Omega^1_{C_a}, \mathcal{O}_{C_a}) \twoheadrightarrow \bigoplus_{e \in E(\Gamma_{\underline{n}})}H^0(C_a, \mathcal{E}xt^1(\Omega^1_{C_a}, \mathcal{O}_{C_a})_e) \simeq \Spec \CC[T_e \colon e \in E(\Gamma_{\underline{n}})]\]
is cut by the linear equations
\[ \mathrm{Circ}(\mathscr{B})=\langle \sum_{e \in E} a_{ie}T_e \colon i=2, \ldots r\rangle, \]
where $A=(a_{ie})$ is the boundary map of a quiver underlying the graph $\Gamma_{\underline{n}}$ (cf Section \ref{sec:toric hyperkahler variety}).
\end{prop}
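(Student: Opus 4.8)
The plan is to compute the differential $df^\times$ on the normal slice $N_{\underline n}$ explicitly in terms of the local-to-global Ext spectral sequence and the local smoothing parameters of the nodes, and then identify the resulting linear subspace with the circuit ideal. First I would recall that, by the discussion in Section~\ref{sec:KS}, the composition $T_aA_n\xrightarrow{KS_a}\mathrm{Ext}^1(\Omega^1_{C_a},\mathcal O_{C_a})\to\mathbb T^1(C_a)$ of \eqref{eq:Kscomp} records the smoothing of each node, and that $\mathbb T^1(C_a)=\bigoplus_{e\in E(\Gamma_{\underline n})}\mathbb T^1_e$ is one-dimensional at each node since the singularities are planar nodes. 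So the map in the statement is really the composite of $KS_a$ restricted to $T_aN_{\underline n}$ with the projection to $\bigoplus_{e}\mathbb T^1_e\simeq\Spec\CC[T_e]$. By \cite[\S 5.2]{deCataldoHeinlothMigliorini19} the kernel of $df^\times$ is $H^0(C,\mathcal O_C)\oplus H^0(C,\omega_C)$ and, after quotienting by this, $T_aN_{\underline n}$ maps isomorphically onto its image in $\mathrm{Ext}^1(\Omega^1_{C_a},\mathcal O_{C_a})$; one then needs to understand this $\mathrm{Ext}^1$ and the local-to-global map to the $\mathbb T^1_e$'s.

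The heart of the argument is the five-term exact sequence of the local-to-global Ext spectral sequence,
\[
0\to H^1(C_a,T_{C_a})\to \mathrm{Ext}^1(\Omega^1_{C_a},\mathcal O_{C_a})\to \bigoplus_{e\in E(\Gamma_{\underline n})}\mathbb T^1_e\xrightarrow{\ \partial\ } H^2(C_a,T_{C_a}),
\]
(with $H^2(C_a,T_{C_a})=0$ here since $C_a$ is a curve, so the map to $\bigoplus_e\mathbb T^1_e$ is surjective on $\mathrm{Ext}^1$; the point is rather the obstruction living in the cokernel of the map from global deformations onto the sublattice spanned by the actual spectral deformations). The key computation is that a tangent vector to $N_{\underline n}$ — i.e.\ an infinitesimal deformation of $C_a$ inside $\Tot(\omega_C)$ that smooths the nodes $E(\Gamma_{\underline n})$ — produces a collection of smoothing parameters $(t_e)_{e\in E(\Gamma_{\underline n})}$ that is \emph{not} arbitrary: it must be compatible with the fact that, for the doubled graph, the node $e=C_{a_i}\cap C_{a_j}$ can be reached along a cycle in $\Gamma_{\underline n}$, and summing the local smoothing data around such a cycle must cancel. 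Concretely, I would use the description of a neighbourhood of $C_a$ in $\Tot(\omega_C)$ and of the spectral equation $\det(\lambda-\phi)=\prod_i\det(\lambda-\phi_i)$: deforming the characteristic polynomial by an element of $A_n$ changes the product $\prod_i(\text{local factor at }e)$, and the linear term at the node $e$ between components $i,j$ is governed by the pairing of the deformation with the two branches. The constraint that this linear term be consistent across all nodes glued along the cycle space of $\Gamma_{\underline n}$ is exactly the vanishing of $\sum_{e}a_{ie}T_e$ for $i=2,\dots,r$, i.e.\ membership in $\mathrm{Circ}(\mathscr B)$; here $(a_{ie})$ is the incidence/boundary matrix $A$ of a quiver on $\Gamma_{\underline n}$, and one gets $r-1$ independent equations because the all-ones relation $\sum_i a_{ie}=0$ is automatic.

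I would organize this as follows: (1) reduce to the local situation at the nodes via the local-to-global Ext sequence, identifying the target with $\Spec\CC[T_e]$; (2) on $N_{\underline n}$, write down the Kodaira--Spencer image of a tangent vector by differentiating the spectral equation, getting for each node $e$ between branches $i,j$ a linear functional $T_e$ on $T_aN_{\underline n}$; (3) show, by a dimension count using $\dim N_{\underline n}=\dim A_n-\dim S_{\underline n}=|E(\Gamma_{\underline n})|-(r-1)=b_1(\Gamma_{\underline n})$ (where the codimension equals $r-1$ because $S_{\underline n}$ parametrises the $r$ factors up to the common $A_1\oplus\dots$ direction), that the image has exactly the expected dimension $b_1(\Gamma_{\underline n})$; and (4) check that the image is contained in $V(\mathrm{Circ}(\mathscr B))$, which also has dimension $b_1$, hence equality. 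The main obstacle I expect is step (2)/(4): making the identification of the linear equations canonical and sign-consistent, i.e.\ verifying that the relations produced by differentiating the spectral equation at the various nodes are precisely the rows of the boundary matrix $A$ of \emph{one and the same} oriented graph structure on $\Gamma_{\underline n}$, rather than merely spanning a subspace of the right dimension. This is essentially a careful bookkeeping of orientations of the edges (which of the two branches at $e$ is ``source'' and which is ``target'') matched against the choice implicit in $\Spec\CC[z_ew_e]$ in diagram \eqref{eq:diagramsingular}; once this is pinned down, equality of the two $b_1$-dimensional subspaces is forced.
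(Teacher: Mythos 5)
Your approach --- differentiate the spectral equation locally at the nodes and then argue by a dimension count --- is genuinely different from the paper's, but as written it does not close the argument. The paper applies $\mathcal{H}om(-,\omega_{C_a})$ to the partial-normalization sequence $0\to\mathcal O_{C_a}\to\nu_{\underline n,*}\mathcal O_{C_{\underline n,a}}\to\bigoplus_e\mathcal O_e\to 0$, takes the cohomology long exact sequence, and uses Serre duality throughout. This simultaneously identifies $\ker(T_aA_n\to\mathbb T^1_{\underline n}(C_a))$ with $T_aS_{\underline n}$ --- so the map $T_aN_{\underline n}\to\bigoplus_e\CC\, T_e$ is injective, a fact your dimension count silently requires and never establishes --- and exhibits the image as the kernel of a connecting map which dualizes to the restriction $\bigoplus_eH^0(\mathcal O_e)^\vee\to\ker\{H^0(\nu_{\underline n,*}\mathcal O)^\vee\to H^0(\mathcal O_{C_a})^\vee\}\simeq\CC^{r-1}$. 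The latter is tautologically the boundary map of $\Gamma_{\underline n}$: a node $e$ between $C_{a_i}$ and $C_{a_j}$ is sent to $\pm$(the difference of the two component indicators). No local coordinate computation, orientation bookkeeping, or auxiliary dimension count is needed.

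The gap in your plan is that it defers to ``careful bookkeeping of orientations'' precisely the step that constitutes the proposition, namely that the linear equations produced by differentiating the spectral equation are the rows of the incidence matrix of $\Gamma_{\underline n}$. You flag this yourself as ``the main obstacle,'' but the heuristic you offer points in the wrong direction: ``summing the local smoothing data around such a cycle must cancel'' would place the image in the orthogonal complement of the cycle space, whereas the proposition asserts the image \emph{is} the cycle space $\ker A$ --- a vertex-incidence condition, stating that at each component $C_{a_i}$ the signed sum of the smoothing parameters of its nodes vanishes. Relatedly, the five-term sequence of local-to-global Ext you invoke carries no information here, since $H^2(C_a,T_{C_a})=0$ automatically for a curve; the constraint does not come from obstruction theory of $C_a$ but from the fact that $T_aA_n$ hits only a proper subspace of $\bigoplus_e\mathbb T^1_e$, which your sketch acknowledges only obliquely and the paper's global argument computes directly.
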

\begin{proof}
Let $\nu_{\underline{n}} \colon C _{\underline{n}, a} \coloneqq \bigsqcup^r_{i=1}C_{a_i} \to C_{a}$ be the partial normalization obtained by blowing-up $C_{a}$ at $E(\Gamma_{\underline{n}})$. Applying the functor $\mathcal{H}om( - , \mathcal{\omega}_{C_a})$ to the short exact sequence
\[0 \to \mathcal{O}_{C_a} \to \nu_{\underline{n}, *}\mathcal{O}_{C _{\underline{n}, a}} \to \bigoplus_{e \in E(\Gamma_{\underline{n}})} \mathcal{O}_e \to 0,\]
we obtain the short exact sequence  
\[
    0 \to N_{C_a/T^*C} \otimes \mathcal{I}_{E(\Gamma_{\underline{n}})} \to N_{C_a/T^*C} \to \mathbb{T}^1_{\underline{n}} \coloneqq \bigoplus_{e \in E(\Gamma_{\underline{n}})} \mathcal{E}xt^1(\mathcal{O}_e, \omega_{C_a}) \to 0,
\]
where $\mathcal{I}_{E(\Gamma_{\underline{n}})}$ is the product of the maximal ideals of the nodes $E(\Gamma_{\underline{n}})$, and $\omega_{C_a}$ is identified with $N_{C_a/T^*C}$ by adjunction. Taking global cohomology, we get the long exact sequence 
\begin{align} 0 \to H^0(C_a, N_{C_a/T^*C} \otimes \mathcal{I}_{E(\Gamma_{\underline{n}})}) \to T_{a}A_{n} = H^0(C_a,N_{C_a/T^*C}) \to \mathbb{T}^1_{\underline{n}}(C_a) \to  \nonumber\\
\to H^1(C_a, N_{C_a/T^*C} \otimes \mathcal{I}_{E(\Gamma_{\underline{n}})}) \to H^1(C_a,N_{C_a/T^*C}) \to 0. \label{eq:normcomb}
\end{align}
By Serre duality we see
\begin{align} \label{eq:TAN2}
 H^0(C_a, N_{C_a/T^*C} \otimes \mathcal{I}_{E(\Gamma_{\underline{n}})}) \simeq & H^0(C_a,\mathcal{H}om( \nu_{\underline{n}, *} \mathcal{O}_{C _{\underline{n}, a}}, \omega_{C_a})) \simeq H^1(C_a, \nu_{\underline{n}, *} \mathcal{O}_{C _{\underline{n}, a}})^{\vee} \nonumber\\
    & \simeq \bigoplus^r_{i=1} H^1(C_i, \mathcal{O}_{C_i})^{\vee} \simeq \bigoplus^r_{i=1} H^0(C_i, \omega_{C_i})\simeq T_a S_{\underline{n}}.
\end{align}
Combining \eqref{eq:Kscomp}, \eqref{eq:normcomb} and \eqref{eq:TAN2}, the image of the map
\[df^{\times} \colon T_a N_{\underline{n}} \simeq T_a A_{n}/T_a S_{\underline{n}} \to \mathbb{T}^1_{\underline{n}}(C_a)\]
coincides with the kernel of $\mathbb{T}^1_{\underline{n}}(C_a) \to \ker\{H^1(C_a, N_{C_a/T^*C} \otimes \mathcal{I}_{E(\Gamma_{\underline{n}})}) \to H^1(C_a,N_{C_a/T^*C})\}$. Again via Serre duality, this map is isomorphic to 
\[\CC^{E(\Gamma_{\underline{n}})} \simeq \bigoplus_{e \in E(\Gamma_{\underline{n}})} H^0(\mathcal{O}_e)^{\vee} \to \ker\{H^0(C_a, \nu_{\underline{n}, *}\mathcal{O}_{C _{\underline{n}, a}} )^{\vee} \to H^0(C_a, \mathcal{O}_{C_a})^{\vee}\}\simeq \CC^{r-1},\]
which in turn can be identified with the boundary map of a quiver underlying the graph $\Gamma_{\underline{n}}$.
\end{proof}

\section{Singularities of universal compactified Jacobians}\label{sec:singularitiesUnivComp}

By the BNR correspondence a Higgs bundle $(\mathcal{E}, \phi)$ corresponds to a rank 1 torsion free sheaf $\mathcal{I}$ on the spectral curve $C_{a}$, where $\mathcal{E} $ is isomorphic to $\pi_{a,*} \mathcal{I}$ via the projection $\pi_a\colon C_{a} \to C$; see \cite{Hitchin1987a, BNR89, Schaub1998}. Suppose that $C_{a}$ is nodal, and let $\Sigma$ be the set of the nodes where $\mathcal{I}$ fails to be locally free. Define $M^{\times}_{\underline{n}}(d)$ as the open subsets of Higgs bundles of $M^{\circ}_{\underline{n}}(d)$ with nodal spectral curve.

\begin{prop}\label{prop:charMn}
Let $\underline{n}=(n_1, \ldots, n_r)$ be a partition of $n$. Then $(\mathcal{E}, \phi)$ lies in $M^{\times}_{\underline{n}}(d)$ if and only if there exists a partial normalization $\nu_{\Sigma} \colon C_{\Sigma} \to C_{a}$ obtained by blowing-up the nodes $\Sigma$ and a line bundle $\mathcal{L}$ on $C_{\Sigma}$ such that (1) $C_{\Sigma}$ has exactly $r$ connected components, and (2) $\mathcal{E} \simeq \nu_{\Sigma, *}\mathcal{L}$.
\end{prop}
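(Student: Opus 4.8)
The statement is a characterization of which Higgs bundles (equivalently, which rank 1 torsion free sheaves on a nodal spectral curve) lie in the locus $M^{\times}_{\underline{n}}(d)$ of multirank $\underline{n}$ with nodal spectral curve. The plan is to translate everything through the BNR correspondence $(\mathcal{E},\phi)\leftrightarrow\mathcal{I}$ with $\mathcal{E}\simeq\pi_{a,*}\mathcal{I}$, and to recall the standard local and global structure theory of rank 1 torsion free sheaves on a nodal curve: at a node, such a sheaf is either locally free or locally isomorphic to the maximal ideal of the node, and in the latter case it is the pushforward along the blow-up of the node of the structure sheaf (tensored with an invertible sheaf); see the references to \cite{Caporaso94, E2001, CMKV2015}. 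Globally this says precisely that if $\Sigma$ is the (finite) set of nodes of $C_a$ where $\mathcal{I}$ is not locally free, and $\nu_{\Sigma}\colon C_{\Sigma}\to C_a$ is the partial normalization blowing up exactly the nodes in $\Sigma$, then $\mathcal{I}\simeq\nu_{\Sigma,*}\mathcal{L}$ for a line bundle $\mathcal{L}$ on $C_{\Sigma}$, and conversely any such pushforward is a rank 1 torsion free sheaf failing to be locally free exactly along $\Sigma$.

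\textbf{Step 1 (from Higgs bundle to sheaf data).} First I would record that $(\mathcal{E},\phi)\in M^{\circ}_{\underline{n}}(d)$ with nodal spectral curve means, by definition, that $(\mathcal{E},\phi)=\bigoplus_{i=1}^r(\mathcal{E}_i,\phi_i)$ with $(\mathcal{E}_i,\phi_i)$ stable of rank $n_i$, and that the spectral curve decomposes as $C_a=\bigcup_{i=1}^r C_{a_i}$ with $C_{a_i}$ the (reduced, possibly reducible) spectral curve of $(\mathcal{E}_i,\phi_i)$; this is exactly \eqref{eq:directsumHiggs}–\eqref{eq:split}. Correspondingly, under BNR the sheaf $\mathcal{I}$ decomposes as a direct sum $\mathcal{I}=\bigoplus_{i=1}^r\mathcal{I}_i$ with $\mathcal{I}_i$ supported on $C_{a_i}$. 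A direct sum of sheaves supported on the distinct reduced components $C_{a_i}$ meeting along nodes is never locally free at those mutual intersection nodes — its stalk there is $\mathcal{O}_{C_{a_i},p}\oplus\mathcal{O}_{C_{a_j},p}$, which is the normalization of the node — so those nodes are contained in $\Sigma$; I would show this forces $C_\Sigma$ to have at least $r$ connected components, one over each $C_{a_i}$.

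\textbf{Step 2 (the two implications).} For the forward direction: given $(\mathcal{E},\phi)\in M^{\times}_{\underline{n}}(d)$, apply the local structure theory to $\mathcal{I}$ to obtain $\Sigma$ and the blow-up $\nu_\Sigma\colon C_\Sigma\to C_a$ with $\mathcal{I}\simeq\nu_{\Sigma,*}\mathcal{L}$, $\mathcal{L}$ a line bundle; push forward further by $\pi_a$ to get $\mathcal{E}\simeq\nu_{\Sigma,*}\mathcal{L}$ (abusively writing $\nu_\Sigma$ for its composition with $\pi_a$, as in the statement). Condition (1), that $C_\Sigma$ has exactly $r$ connected components, follows because $\mathcal{I}=\bigoplus\mathcal{I}_i$ with the $\mathcal{I}_i$'s supported on the $r$ distinct curves $C_{a_i}$: the normalization locus $\Sigma$ is precisely the set of nodes separating these components (plus possibly nodes internal to a single $C_{a_i}$, but a component $C_{a_i}$ is connected and stays connected after such internal blow-ups since the sheaf $\mathcal{I}_i$ restricted there is again a pushforward from a partial normalization of $C_{a_i}$ along which it is a line bundle — wait, one must argue $\Sigma$ does not disconnect a single $C_{a_i}$; this is where I would be careful — in fact $\Sigma\cap C_{a_i}$ is where $\mathcal{I}_i$ is not locally free, and the blow-up $C_\Sigma$ over $C_{a_i}$ is connected because $\mathcal{I}_i$ is an indecomposable — being the BNR-dual of a \emph{stable} Higgs bundle, hence simple — torsion free sheaf). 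For the converse: if $C_\Sigma$ has exactly $r$ connected components and $\mathcal{E}\simeq\nu_{\Sigma,*}\mathcal{L}$, then $\mathcal{I}=\nu_{\Sigma,*}\mathcal{L}$ is rank 1 torsion free, and the decomposition of $C_\Sigma$ into its $r$ connected components $C_{\Sigma,i}$ induces $\mathcal{I}=\bigoplus_i\nu_{\Sigma,*}(\mathcal{L}|_{C_{\Sigma,i}})$, which by BNR gives $(\mathcal{E},\phi)=\bigoplus_i(\mathcal{E}_i,\phi_i)$; each summand has reduced irreducible-or-reducible support $C_{a_i}$ of total degree $n_i$ over $C$ (determined by the image curve $\nu_\Sigma(C_{\Sigma,i})$), so the multirank is $\underline n$ and the spectral curve is nodal, i.e. $(\mathcal{E},\phi)\in M^{\times}_{\underline{n}}(d)$. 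The degrees $n_i$ are read off from $\deg(\nu_\Sigma(C_{\Sigma,i})\to C)$ and sum to $n$.

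\textbf{Main obstacle.} The delicate point — and the one I would spend the most care on — is Step 2's claim that the blow-up $C_\Sigma$ of a single component $C_{a_i}$ along the non-locally-free locus of $\mathcal{I}_i$ stays connected, i.e. that $\Sigma$ contributes exactly $r$ (not more) connected components. This is really the statement that an indecomposable rank 1 torsion free sheaf on a connected nodal curve has connected minimal partial normalization on which it becomes a line bundle; it follows from the fact that the BNR-dual of a \emph{stable} (hence simple) Higgs bundle is a simple sheaf, combined with the local classification at nodes. The rest is bookkeeping with the BNR correspondence and the combinatorics of connected components versus the dual graph, which is routine given Section \ref{sec:univcompactifiedJac} and the cited references.
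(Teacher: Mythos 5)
Your proof follows essentially the same route as the paper's: translate to rank 1 torsion free sheaves via the BNR correspondence, decompose $\mathcal{I}=\bigoplus_i\mathcal{I}_i$ according to the stable summands $(\mathcal{E}_i,\phi_i)$, use the local structure theory of torsion free sheaves on nodal curves to write $\mathcal{I}=\nu_{\Sigma,*}\mathcal{L}$ for a line bundle $\mathcal{L}$ on a partial normalization (the paper cites Gagn\'e's thesis for this), and invoke stability of each $(\mathcal{E}_i,\phi_i)$ to force $C_\Sigma$ to have exactly $r$ connected components. The paper phrases that last step as ``otherwise the decomposition could be further refined, a contradiction,'' which is logically the same as your appeal to simplicity/indecomposability of each $\mathcal{I}_i$.
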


\begin{proof}
By assumption, $(\mathcal{E}, \phi)$ splits as $(\mathcal{E}, \phi) = \bigoplus^r_{i=1} (\mathcal{E}_i, \phi_i)$,
where $(\mathcal{E}_i, \phi_i)$ are stable Higgs bundles of rank $n_i$ and degree $d_i = dn_i/n$. The spectral curve $C_{a}$ of $(\mathcal{E}, \phi)$ is the union of the spectral curves $C_{a_i}$ 
\[C_{a_i} \xhookrightarrow{j_i}C_{a}= \bigcup^r_{i=1}C_{a_i} \xrightarrow{\pi_a} C.\]
By the BNR correspondence, there exist rank $1$ torsion free sheaves $\mathcal{I}$ and $\mathcal{I}_i$  on $C_{a}$ and $C_{a_i}$ respectively such that $\mathcal{E} = \pi_{a, *} \mathcal{I}$, $\mathcal{E}_i = (\pi_{a} \circ j_i)_{*} \mathcal{I}_i$, and 
\begin{equation}\label{eq:rankonetorsion}
    \mathcal{I} = \bigoplus^r_{i=1} j_{i, *}\mathcal{I}_i.
\end{equation}
Note that $\mathcal{I}$ fails to be locally free at the node $C_{a_i} \cap C_{a_j}$, $i \neq j$. Therefore, the blow-up $\nu_{\Sigma} \colon C_{\Sigma} \to C_{a}$ of $C_{a}$ at $\Sigma$ factors through the disjoint union of $C_{a_i}$
\begin{equation}\label{eq:partialnorml}
    \nu_{\Sigma} \colon C_{\Sigma} \to \bigsqcup^r_{i=1}C_{a_i} \to C_{a}.
\end{equation}
Since $C_{a}$ has only nodal singularities, there exists a line bundle $\mathcal{L}$ on $C_{\Sigma}$ such that $\mathcal{I} = \nu_{\Sigma, *} \mathcal{L}$; see \cite[Prop. 3.4]{Gagne97}. By \eqref{eq:partialnorml}, the curve $C_{\Sigma}$ has at least $r$ components, and exactly $r$ by the stability of $(\mathcal{E}_i, \phi_i)$. Otherwise, the decomposition \eqref{eq:rankonetorsion} could be further refined, and so \eqref{eq:directsumHiggs} could be refined too, which is a contradiction. 
\end{proof}

\begin{prop}\label{prop:dualcomplex}
Suppose that $(\mathcal{E}, \phi)$ lies in $M^{\times}_{\underline{n}}(d)$.
After removing all the loops, the dual graph $\Gamma_{C_{a}}(\Sigma)$ of the curve obtained by smoothing the nodes not in $\Sigma$ is the graph $\Gn$.
\end{prop}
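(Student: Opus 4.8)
The plan is to describe the curve $C_a^{\mathrm{sm}}$ obtained from $C_a$ by smoothing the nodes not in $\Sigma$, and then to read off its dual graph. For $(\mathcal{E},\phi)\in M^{\times}_{\underline{n}}(d)$, recall from the proof of \cref{prop:charMn} the splitting $(\mathcal{E},\phi)=\bigoplus_{i=1}^r(\mathcal{E}_i,\phi_i)$ into stable summands of rank $n_i$; it induces $C_a=\bigcup_{i=1}^r C_{a_i}$, with $C_{a_i}$ the spectral curve of $(\mathcal{E}_i,\phi_i)$, of degree $n_i$ over $C$. Since $C_a$ is nodal, hence reduced, the characteristic polynomials $\operatorname{char}(\phi_i)$ are squarefree and pairwise coprime; consequently the $C_{a_i}$ are reduced, share no irreducible component, and the irreducible components of $C_a$ are the disjoint union of those of the $C_{a_i}$.

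The first step is to observe that every node lying on two distinct subcurves $C_{a_i}$ and $C_{a_j}$ belongs to $\Sigma$: near such a point $\mathcal{I}$ is, by \eqref{eq:rankonetorsion}, the pushforward of a line bundle from the two separate branches, i.e.\ the non-locally-free rank $1$ torsion free sheaf at the node. Hence the nodes of $C_a$ outside $\Sigma$ are self-nodes of the individual $C_{a_i}$, and smoothing them yields $C_a^{\mathrm{sm}}=\bigcup_{i=1}^r C_i^{\mathrm{sm}}$, where $C_i^{\mathrm{sm}}$ is $C_{a_i}$ with its nodes outside $\Sigma$ smoothed; in particular $C_i^{\mathrm{sm}}$ still has degree $n_i$ over $C$, and for $i\neq j$ the common points of $C_i^{\mathrm{sm}}$ and $C_j^{\mathrm{sm}}$ are exactly those of $C_{a_i}\cap C_{a_j}$.

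The crux is the irreducibility of each $C_i^{\mathrm{sm}}$. Here I would use \cref{prop:charMn}: the partial normalization $\nu_{\Sigma}$ factors as $C_\Sigma\to\bigsqcup_{i=1}^r C_{a_i}\to C_a$, so $C_\Sigma=\bigsqcup_{i=1}^r\widetilde{C}_i$ with $\widetilde{C}_i\to C_{a_i}$ the blow-up of $C_{a_i}$ at its nodes lying in $\Sigma$, and \cref{prop:charMn} asserts that $C_\Sigma$ has exactly $r$ connected components, so each $\widetilde{C}_i$ is connected. Since $\widetilde{C}_i$ has the same irreducible components as $C_{a_i}$ and retains exactly the nodes of $C_{a_i}$ outside $\Sigma$, its connectedness says that any two irreducible components of $C_{a_i}$ are linked by a chain of nodes outside $\Sigma$, which is precisely the condition for $C_i^{\mathrm{sm}}$ to be irreducible. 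Thus the dual graph $\Gamma_{C_a}(\Sigma)$ of $C_a^{\mathrm{sm}}$ has vertex set $\{1,\dots,r\}$; its loops are the nodes of the $C_{a_i}$ lying in $\Sigma$, and after deleting them the edges between $i$ and $j$ biject with the points of $C_{a_i}\cap C_{a_j}$.

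Finally I would count these points, which is routine but worth stating with care. As $C_a$ is nodal, $C_{a_i}$ and $C_{a_j}$ meet transversally, so $|C_{a_i}\cap C_{a_j}|$ equals their intersection number in $\operatorname{Tot}(\omega_C)$, namely the degree of the resultant $\operatorname{Res}_y(\operatorname{char}(\phi_i),\operatorname{char}(\phi_j))$; this resultant is isobaric of weight $n_in_j$ in the coefficients of the two monic polynomials, the degree-$k$ coefficient being weighted by $k$ as a section of $\omega_C^{\otimes k}$, so it is a section of $\omega_C^{\otimes n_in_j}$, and the count is $n_in_j(2g-2)$. Hence $\Gamma_{C_a}(\Sigma)$ with loops removed has vertices $\{1,\dots,r\}$ and $n_in_j(2g-2)$ edges between $i$ and $j$, which is exactly $\Gn$. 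I expect the only real subtlety to lie in the third step---turning the statement that $C_\Sigma$ has $r$ connected components into the irreducibility of each $C_i^{\mathrm{sm}}$---the remaining inputs, the local shape of $\mathcal{I}$ at the nodes and the intersection count, being standard.
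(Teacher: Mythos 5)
Your proof is correct and takes essentially the same approach as the paper's: both rest on the factorization of $\nu_\Sigma$ through $\bigsqcup_i C_{a_i}$ and the count of $r$ connected components of $C_\Sigma$ from \cref{prop:charMn}, plus a count of the nodes joining $C_{a_i}$ and $C_{a_{i'}}$. The only cosmetic differences are that the paper phrases the argument as contraction of edges in $\Gamma_{C_a}$ rather than smoothing of nodes, and it gets the edge count $n_in_{i'}(2g-2)$ by summing the standard per-component counts $n'_jn'_k(2g-2)$ rather than via the resultant of the characteristic polynomials of $\phi_i$ and $\phi_{i'}$.
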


\begin{proof} As in the proof of \cref{prop:charMn}, we denote the spectral curves of $(\mathcal{E}, \phi)$ and of its stable summands by $C_{a}$ and $C_{a_i}$ respectively, and we write
\[C_{a} = \bigcup^r_{i=1} C_{a_i}=\bigcup^{r'}_{j=1}C_j \qquad C_{a_i}=\bigcup_{j\in S_i}C_j,\]
where $C_j$ are the irreducible components of $C_{a}$, and $[r']=\bigsqcup^r_{i=1} S_{i}$ is a partition of the set of $r'$ elements. Clearly if $C_j$ has degree $n'_j$ over $C$, the spectral curves $C_{a_i}$ has degree $n_i = \sum_{j \in S_i}n'_j$.

The graph $\Gamma_{C_{a}}(\Sigma)$ is obtained from the dual graph $\Gamma_{C_{a}}$ of the spectral curve $C_{a}$ by contracting all the edges not in $\Sigma$.\footnote{By abuse of notation, we identify the set of nodes $\Sigma$ with the corresponding set of edges in the dual graph $\Gamma_{C_{a}}$.}
Let $q \colon \Gamma_{C_{a}} \to \Gamma_{C_{a}}(\Sigma)$ be the contraction map. If some edge between $v_{j}$ and $v_{k}$ is not contained in $\Sigma$, then $q(v_{j})=q(v_{k})$, and all edges in $\Sigma$ are sent to loops in $\Gamma_{C_{a}}(\Sigma)$. The graph $\Gamma_{C_{a}}(\Sigma)$ has as many vertices as connected components in $\Gamma_{C_{a}}\setminus \Sigma \simeq \Gamma_{C_{\Sigma}}$, i.e. $r$ by \cref{prop:charMn}. The edges between two distinct vertices $w_{i}$ and $w_{i'}$ in $\Gamma_{C_{a}}(\Sigma)$ are exactly 
\begin{align*}
  \sum_{\substack{v_{j} \in q^{-1}(w_{i}), \\v_{k} \in q^{-1}(w_{i'})}} (\# \text{ edges between }v_{j}\text{ and }v_k) & = \sum_{\substack{v_{j} \in q^{-1}(w_{i}), \\v_{k} \in q^{-1}(w_{i'})}} n'_j n'_k(2g-2)\\
  & =\bigg(\sum_{j \in S_{i}}n'_j\bigg) \cdot \bigg(\sum_{k \in S_{i'}}n'_k\bigg)(2g-2)= n_i n_{i'} (2g-2).
\end{align*}
We conclude that the graphs $\Gamma_{C_{a}}(\Sigma)$ and $\Gn$ coincide after removing all loops in $\Gamma_{C_{a}}(\Sigma)$. 
\end{proof}

\begin{rmk}
The dual graph of the spectral curve may depend on the choice of $(\mathcal{E}, \phi)$ in $M^{\times}_{\underline{n}}(d)$. \cref{prop:dualcomplex} asserts that up to loops the dual graph $\Gamma_{C_{a}}(\Sigma)$ does not! 
\end{rmk}


The singularities of the universal compactified Jacobian $\unJB$ are affine Lawrence varieties $X(\Gamma_{\underline{n}}^{\pm}, 0)$, as explained in \cite{CMKV2015} and \cite{CMKV2017}. The corresponding hypertoric quiver variety  $Y(\Gamma_{\underline{n}}, 0)$ provides the local model for the singularities of $\MDol(n,d)$.

\begin{thm}[Local model of $\unJB$ and $\MDol(n,d)$]\label{thm:localmodelJM} Let  $a \in S^{\times}_{\underline{n}'}$, and $A_{a}$ be the local multisection of $f^{\times}: A^{\times}_{n}\to f^{\times}(A^{\times}_{n}) \subseteq \mathcal{B}$ defined in \cref{prop:unramified}.
Let $(\mathcal{E}, \phi)$ be a Higgs bundle in $ M^{\times}_{\underline{n}}(d)$,
and $\mathcal{I}$ be the corresponding rank 1 torsion free sheaf on the spectral curve $C_{a}$. 
Then there exists an analytic neighbourhood $V$   of an \'{e}tale chart of $\unJB$ centered at $(C_a, \mathcal{I})$   such that the restriction to $V$ of the fibre product square in \cref{prop:unramified}
\begin{equation*}
\begin{tikzpicture}[baseline= (a).base]
\node[scale=1] (a) at (0,0){
\begin{tikzcd}
\MDol(n,d) \supset U \ar[r, hookrightarrow]\ar[two heads]{d}[swap]{\chi(n,d)} & V \subset \unJB \ar[d, two heads, "\pi"] \\
     \chi(n,d)(U)\ar[r, hookrightarrow, "f^{\times}"] & \pi(V),
\end{tikzcd}
};
\end{tikzpicture}
\end{equation*}
with $U \coloneqq V \cap \chi(n,d)^{-1}(A_a)$,
is locally isomorphic to
\begin{equation}
\begin{tikzpicture}[baseline= (a).base]
\node[scale=1] (a) at (0,0){
\begin{tikzcd}
  Y(\Gamma_{\underline{n}}, 0) \times \CC^{\dim M_{\underline{n}}-1-g}   \arrow[hookrightarrow ]{rr}{(\iota_{M}, l_1)}\ar[two heads]{d}[swap]{(\chi_{\Gamma_{\underline{n}}}, l_2)} & & X(\Gamma_{\underline{n}}^{\pm}, 0)\times \CC^{c(\underline{n}, g) } \ar[two heads]{d}{(\pi_{\Gamma_{\underline{n}}}, l_4)} \\
     \CC^{b_1(\Gamma_{\underline{n}})} \times  \CC^{d(\underline{n},g)}  \arrow[hookrightarrow ]{rr}{(\iota_A,l_3)}& & \CC^{s} \times  \CC^{3n^2(g-1)-s},
\end{tikzcd}
};
\end{tikzpicture}
\end{equation}
where the maps $\chi$, $\pi$, $\iota_M$ and $\iota_A$ are defined in \eqref{eq:diagramsingular}, the maps $l_i$ are linear, $s \coloneqq \#E(\Gamma_{\underline{n}})$, $d(\underline{n},g) \coloneqq \frac{1}{2}(\dim \MDol(n,d)-\dim Y(\Gamma_{\underline{n}}, 0)) -g-1 = (n^2-1)(g-1)-1 -b_1(\Gamma_{\underline{n}})$ and $c(\underline{n}, g) \coloneqq \dim \unJB -\dim X(\Gamma_{\underline{n}}^{\pm}, 0) = 4n^2(g-1)+1 - b_1(\Gamma_{\underline{n}})-s$.
\end{thm}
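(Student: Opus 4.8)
The plan is to split the problem into a ``Jacobian direction'' and a ``base direction'' and to treat each separately, then reassemble via the fibre product square of \cref{prop:unramified}. First I would work on the right-hand (total space) column. By \cite{CMKV2015} and \cite{CMKV2017}, the complete local ring of an \'etale chart of $\unJB$ at the point $(C_a,\mathcal{I})$ is the completed local ring of the affine Lawrence variety $X(\Gamma^{\pm}_{C_a}(\Sigma),0)$ at its torus-fixed point, times a smooth factor whose dimension is $\dim\unJB - \dim X(\Gamma^{\pm}_{C_a}(\Sigma),0)$; here $\Sigma$ is the set of nodes of $C_a$ at which $\mathcal{I}$ is not locally free, and $\Gamma_{C_a}(\Sigma)$ is the subgraph recording exactly those nodes. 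By \cref{prop:dualcomplex}, after deleting loops $\Gamma_{C_a}(\Sigma)$ is precisely $\Gamma_{\underline{n}}$, and by \cref{lem:loops} deleting loops only introduces additional affine smooth factors $\CC^m$, so $X(\Gamma^{\pm}_{C_a}(\Sigma),0)\simeq X(\Gamma^{\pm}_{\underline{n}},0)\times\CC^{m}$ for the appropriate $m$. This identifies the right-hand column with $X(\Gamma^{\pm}_{\underline{n}},0)\times\CC^{c(\underline{n},g)}$, and one reads off $c(\underline{n},g)=\dim\unJB-\dim X(\Gamma^{\pm}_{\underline{n}},0)$; the arithmetic $\dim\unJB = 4n^2(g-1)+1$ (it is a relative compactified Jacobian of relative dimension $g'=n^2(g-1)+1$ over $\mathcal{B}$, which has dimension $3g'-3 = 3n^2(g-1)$) then gives the stated formula for $c(\underline{n},g)$, and $s = \#E(\Gamma_{\underline{n}})$ forces the smooth factor on the base to have dimension $3n^2(g-1)-s$.

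Next I would handle the left column, i.e.\ the Dolbeault side $U = V\cap\chi(n,d)^{-1}(A_a)$. Here the point is that $\chi(n,d)^{-1}(A_a)$ is cut inside $\unJB\times_{\mathcal{B}}A_a$ by the locally free (``line bundle'') condition, and by \cref{prop:charMn}–\cref{prop:dualcomplex} this locus corresponds, in the Lawrence chart, exactly to the hypertoric quiver subvariety $Y(\Gamma_{\underline{n}},0)\subset X(\Gamma^{\pm}_{\underline{n}},0)$ cut by the ideal $\mathrm{Circ}(\mathscr{B})$ of \eqref{eq:circ}; this is the content of the fibre product square \eqref{eq:diagramsingular}. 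The crucial compatibility — that the equations $\mathrm{Circ}(\mathscr{B})$ defining $Y(\Gamma_{\underline{n}},0)$ inside the Lawrence variety match the equations cutting out $A_a$ (equivalently $N_{\underline{n}}$) inside the base — is supplied by \cref{prop:KS}: the image of $T_aN_{\underline{n}}$ in $\mathbb{T}^1_{\underline{n}}(C_a)\simeq\Spec\CC[T_e]$ is cut precisely by $\mathrm{Circ}(\mathscr{B})$. Combining the two columns, the square \eqref{eq:diagramsingular} produces the inner square of the target diagram, and the outer smooth factors $\CC^{\dim M_{\underline{n}}-1-g}$, $\CC^{d(\underline{n},g)}$ are determined by dimension count: $\frac12(\dim\MDol(n,d)-\dim Y(\Gamma_{\underline{n}},0))$ is the relative dimension of $\chi(n,d)$ over $A_a$ minus the relative dimension $b_1(\Gamma_{\underline{n}})$ of $\chi_{\Gamma_{\underline{n}}}$, and the $-g-1$ accounts for the kernel $H^0(C,\mathcal{O}_C)\oplus H^0(C,\omega_C)$ of the Kodaira–Spencer map recorded in \cref{sec:KS} and \cref{rmk:Aa}. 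That $l_1,l_2,l_3,l_4$ can be taken linear follows because the smooth factors arise from genuine vector-space directions (cotangent directions annihilated by $df^\times$, and the loop coordinates $z_\ell$), on which the projections are literally linear.

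The main obstacle I anticipate is the \emph{compatibility of the two squares}: one must check that the isomorphism identifying the $\unJB$-chart with $X(\Gamma^{\pm}_{\underline{n}},0)\times\CC^{c}$ can be chosen so that it simultaneously carries the base $\mathcal{B}$-chart to $\CC^{s}\times\CC^{3n^2(g-1)-s}$ \emph{and} carries the sub-locus $\chi(n,d)^{-1}(A_a)$ to $Y(\Gamma_{\underline{n}},0)\times\CC^{\dim M_{\underline{n}}-1-g}$ compatibly with $\pi$ and $f^\times$. In other words, the three identifications (of the total space, of the base, and of the Higgs sub-locus) are a priori produced by different deformation-theoretic arguments, and one must show they can be made to agree. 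I would address this by choosing a single versal family: the Lawrence-variety chart of $\unJB$ already comes with its affinization map $\pi_{\Gamma_{\underline{n}}}$ to $\CC^{s}$, which by \cite{CMKV2015} is identified with the map to the miniversal deformation space of the nodes of $C_a$; \cref{prop:KS} then pins down the sub-base $A_a$ inside this deformation space as the zero locus of $\mathrm{Circ}(\mathscr{B})$ (plus a transverse smooth piece), and by the cartesianness in \cref{prop:unramified} the Higgs sub-locus is forced to be the preimage, which is exactly $Y(\Gamma_{\underline{n}},0)$ times a smooth factor. A secondary, purely bookkeeping, difficulty is getting every dimension constant right: $d(\underline{n},g)$, $c(\underline{n},g)$, and $\dim M_{\underline{n}}-1-g$ must satisfy the evident consistency relations with $\dim\MDol(n,d)=2n^2(g-1)+2$, $\dim\unJB=4n^2(g-1)+1$, $\dim\mathcal{B}=3n^2(g-1)$, $\dim Y(\Gamma_{\underline{n}},0)=2b_1(\Gamma_{\underline{n}})$, and $\dim X(\Gamma^{\pm}_{\underline{n}},0)=b_1(\Gamma_{\underline{n}})+s$; I would verify these relations at the end as a consistency check rather than deriving each constant independently.
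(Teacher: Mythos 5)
Your proposal follows essentially the same route as the paper's proof: invoke \cite[Thm A]{CMKV2015} for the analytic local model of $\unJB$ as $X(\Gamma_{C_a}(\Sigma)^{\pm},0)$ times a smooth factor, use \cref{prop:dualcomplex} together with \cref{lem:loops} (the paper instead cites \cite[Lemma 4.3]{CMKV2017}) to replace $\Gamma_{C_a}(\Sigma)$ by $\Gamma_{\underline{n}}$, use \cite[\S 7.2]{CMKV2017} to identify $\pi$ with the map to the miniversal deformation space of the nodes, and then use the Kodaira--Spencer interpretation of $df^{\times}$ with \cref{prop:KS} to identify the restriction to the slice $A_a$ with the hypertoric square \eqref{eq:diagramsingular}; the dimension bookkeeping you carry out is correct. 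One sentence in your outline is off and should be fixed: $\chi(n,d)^{-1}(A_a)$ is not ``cut inside $\unJB\times_{\mathcal{B}}A_a$ by a locally free condition'' --- by \cref{prop:unramified} it simply \emph{is} $\unJB\times_{\mathcal{B}}A_a$, and the hypertoric subvariety $Y(\Gamma_{\underline{n}},0)\subset X(\Gamma_{\underline{n}}^{\pm},0)$ appears because the sub-base $A_a$ (more precisely its slice $N_{\underline{n}}$) is cut inside the miniversal node-smoothing parameters by $\mathrm{Circ}(\mathscr{B})$, exactly as your later ``main obstacle'' paragraph correctly explains.
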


\begin{proof} 
Let $\Sigma$ be the set of the nodes where $\mathcal{I}$ fails to be locally free, and $\Gamma_{C_{a}}(\Sigma)$ be the dual graph of the curve obtained by smoothing the nodes not in $\Sigma$. 
By \cite[Theorem A]{CMKV2015} there exists an analytic neighbourhoods $V$ of $(C_a, \mathcal{I})$ in $\unJB$ such that \[V \simeq_{\mathrm{loc}} X(\Gamma_{C_{a}}(\Sigma)^{\pm}, 0) \times \CC^{c(\underline{n},\Sigma)},\]
with $c(\underline{n},\Sigma) \coloneqq 4 g(C_a)-3-b_1(\Gamma_{C_{a}}(\Sigma))-\#E(\Gamma_{C_{a}}(\Sigma))$. By \cite[Lemma 4.3]{CMKV2017} and \cref{prop:dualcomplex} we obtain that 
\[V \simeq_{\mathrm{loc}} X(\Gamma_{C_{a}}(\Sigma)^{\pm}, 0) \times \CC^{c(\underline{n},\Sigma)} \simeq X(\Gamma_{\underline{n}}^{\pm}, 0) \times \CC^{c(\underline{n},g)}.\]
The fact that the morphism $\pi \colon V \subset \unJB \to \pi(V)$ can be identified with $(\pi_{\Gamma_{\underline{n}}}, l_4)$ is explained in \cite[\S 7.2]{CMKV2017}. The differential of the map $f^{\times}$ is the Kodaira-Spencer map $KS_a \colon T_a A_a \to H^1(C_a, \mathbb{T}^1_{C_a})$, and so the identification of the Hitchin fibration $\chi(n,d)$ with the product of the map 
$\chi_{\Gamma_{\underline{n}}} \colon Y(\Gamma_{\underline{n}}, 0) \to \CC^{b_1(\Gamma_{\underline{n}})}$ with a linear map $l_2$ follows from \cref{prop:KS} and \cref{def:torichyperkahlervarieties}.
\end{proof}
\begin{thm}\label{prop:perverse} Let $A_a \subset A_{n}$ as in \cref{prop:unramified}. Then  $\IC(\unJB,\QQ)|_{\chi(n,d)^{-1}(A_a)}[\dim M(n,d)]$ is a semi-simple perverse sheaf. In particular, we have
\[
\IC(\unJB,\QQ)|_{\chi(n,d)^{-1}(A_a)}[\dim M(n,d)]\simeq \bigoplus_{\substack{\underline{n} \in \mathcal{P}_d}} i_{M}^*
\IC(M_{\underline{n}}(d),\mathcal{L}'_{\underline{n}})[\dim M(n,d)]\langle \codim S_{\underline{n}} \rangle,
\]
where $\mathcal{L}'_{\underline{n}}\coloneqq \mathcal{H}^{\codim M_{\underline{n}}(d)}(\IC(\unJB,\QQ))$ is a local system on $M_{\underline{n}}(d)$ of rank $h^{b_1(\Gamma_{\underline{n}})}(Y(\Gamma_{\underline{n}},0), \QQ)$.
\end{thm}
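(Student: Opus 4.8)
The strategy is to globalize the local statement \cref{prop:perverselocal} by means of the local model provided by \cref{thm:localmodelJM}. The assertion is local on the base $A_a$, so it suffices to verify the decomposition after restriction to an analytic (or \'{e}tale) neighbourhood of each point of $\unJB$ lying over $A_a$. Fix a stratum $M^{\times}_{\underline n}(d) \subset \MDol(n,d)$ meeting $\chi(n,d)^{-1}(A_a)$ and a point $(C_a,\mathcal{I})$ of it. By \cref{thm:localmodelJM}, on a suitable neighbourhood $V$ the inclusion $U \hookrightarrow V$ of the relevant piece of $\MDol(n,d)$ into $\unJB$ is, up to a smooth factor $\CC^{\dim M_{\underline n}-1-g} \hookrightarrow \CC^{c(\underline n,g)}$, isomorphic to the inclusion $Y(\Gamma_{\underline n},0) \hookrightarrow X(\Gamma_{\underline n}^{\pm},0)$ of a hypertoric quiver variety into its ambient affine Lawrence variety. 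Since $\IC$ commutes with smooth base change (and with products by a smooth factor, up to the obvious shift), \cref{prop:perverselocal} applied to the quiver $Q = \Gamma_{\underline n}$ gives
\[
\IC(\unJB,\QQ)|_{V} \;\simeq\; \bigoplus_{\underline{V}\dashv V(\Gamma_{\underline n})}
\IC\big(Y_{\underline V}\times \CC^{\bullet},\QQ\big)\otimes H^{b_1((\Gamma_{\underline n})_{\underline V})}(Y(\Gamma_{\underline n},\theta),\QQ)\langle b_1((\Gamma_{\underline n})_{\underline V})\rangle,
\]
after the appropriate shift by $\dim M(n,d)$. In particular the restriction $\IC(\unJB,\QQ)|_{U}[\dim M(n,d)]$ is a semisimple perverse sheaf locally, hence semisimple perverse on all of $\chi(n,d)^{-1}(A_a)$.

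Next I would match the local strata $Y_{\underline V}$ with the global strata $M_{\underline n}(d)$. A partition $\underline V$ of the vertex set of $\Gamma_{\underline n}$ contracts $\Gamma_{\underline n}$ to the graph $(\Gamma_{\underline n})_{\underline V}$, which, by the dictionary of \cref{sec:KS} and \cref{prop:KS} (combined with \cref{prop:dualcomplex}), is precisely $\Gamma_{\underline m}$ for the coarser partition $\underline m$ of $n$ obtained by summing the $n_i$ over the blocks of $\underline V$. Thus the local stratum $Y_{\underline V}$ is the trace on the local chart of the global stratum $M_{\underline m}(d)$, and the multirank indices appearing are exactly the partitions $\underline m \in \mathcal{P}_d$ that refine to $\underline n'$ — equivalently, after reindexing by the finest partition realised at $a$, all of $\mathcal{P}_d$. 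This identifies the summands of the local decomposition with the $\IC(M_{\underline n}(d),\bullet)$ for $\underline n \in \mathcal{P}_d$, with the codimension shift $\codim S_{\underline n}$ reading off as $b_1(\Gamma_{\underline n})$ on the base — note $\codim S_{\underline n} = b_1(\Gamma_{\underline n})$ because $S_{\underline n}$ is cut inside the normal slice $N_{\underline n}$ precisely by the $\mathrm{Circ}(\mathscr B)$ equations of \cref{prop:KS}, which impose $r-1$ independent conditions on $\CC^{E(\Gamma_{\underline n})}$, leaving $b_1(\Gamma_{\underline n})=\#E(\Gamma_{\underline n})-(r-1)$ free directions.

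Finally I would identify the coefficient local systems. Set $\mathcal{L}'_{\underline n}\coloneqq \mathcal{H}^{\codim M_{\underline n}(d)}(\IC(\unJB,\QQ))|_{M_{\underline n}(d)}$; by the strong support conditions for $\IC$ this is the only cohomology sheaf supported in that codimension, so it is a local system on the smooth locus $M^{\times}_{\underline n}(d)$, and the decomposition theorem identifies the $\underline n$-summand with $\IC(M_{\underline n}(d),\mathcal{L}'_{\underline n})$ up to the Tate shift. The local computation above shows the stalk of $\mathcal{L}'_{\underline n}$ is $H^{b_1(\Gamma_{\underline n})}(Y(\Gamma_{\underline n},\theta),\QQ)$, which is the top cohomology by \cref{FactToricQuiver}.\eqref{eq:Lagrangian}, of dimension $h^{b_1(\Gamma_{\underline n})}(Y(\Gamma_{\underline n},0),\QQ)$ by \cref{FactToricQuiver}.\eqref{item:CohoToricQuiver}; since it is the top cohomology, it is concentrated in a single degree and therefore genuinely a local system (no higher monodromy-induced extensions obstruct this). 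The main obstacle I anticipate is precisely the bookkeeping in this last gluing step: one must check that the locally defined summands $\IC(Y_{\underline V},\mathcal{L})$ patch to the global $\IC(M_{\underline n}(d),\mathcal{L}'_{\underline n})$ — i.e.\ that the monodromy of $\mathcal{L}'_{\underline n}$ around $M_{\underline n}(d)$ is consistently captured by the local hypertoric models, using the stratified-triviality of the family (\cref{rmk:Aa}) and the fact that the isomorphism in \cref{thm:localmodelJM} is canonical enough to be compatible on overlaps. The semisimplicity of $\IC$ and the uniqueness of the perverse decomposition then force the global identification once it is known locally.
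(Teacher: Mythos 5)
Your proposal identifies the right ingredients (the local model of \cref{thm:localmodelJM}, the local hypertoric decomposition \cref{prop:perverselocal}, the dictionary between partitions of vertices and coarsenings of $\underline n$), and the bookkeeping you carry out for the strata and local systems is essentially what the paper leaves implicit. However, there is a genuine gap exactly at the step you yourself flag as "the main obstacle," namely the inference
\[
\text{locally a semisimple perverse sheaf} \;\Longrightarrow\; \text{globally a semisimple perverse sheaf.}
\]
Perversity is local, but semisimplicity is not: a perverse sheaf can be locally split and still sit in a nontrivial global extension (a nontrivial extension of local systems with nontrivial monodromy already exhibits this on a curve). So the local decompositions coming from \cref{prop:perverselocal} do not automatically patch into a global direct sum, and the "gluing" you defer is precisely the missing content. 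The paper circumvents this entirely by a purity argument: the local models are used only to show that $i^*\IC(\unJB,\QQ)$ coincides with $i^!\IC(\unJB,\QQ)$ (i.e.\ the restriction is Verdier self-dual), whence, because $\IC(\unJB,\QQ)$ is pure and the two restriction functors bound the weight from above and below respectively, the restriction is a pure perverse Hodge module. Purity then forces semisimplicity by Saito (\cite[Prop.~1.7, 1.9]{Saito89}), with no gluing argument required. In short: you should replace "locally semisimple, hence globally semisimple" with "locally self-dual, hence self-dual, hence pure, hence semisimple"; the rest of your stratum and local-system identification then follows from semisimplicity plus the local computation you already carried out.
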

We anticipate that the local systems $\mathcal{L}'_{\underline{n}}$ are pullback of the line bundles $\mathcal{L}_{\underline{n}}$ in \cite[Cor. 6.20]{deCataldoHeinlothMigliorini19}; see \cref{cor:trivialityalongfibers} and the proof of \cref{cor:degree1} below. In particular, their monodromy is known and induced by the automorphisms of the graph $\Gamma_{\underline{n}}$; see \cref{rmk:repIH}.

The idea of the proof of \cref{prop:perverse} is to glue the local Hodge-to-singular correspondence (\cref{prop:perverselocal}) given the local models in \cref{thm:localmodelJM}.

\begin{proof} Let $X$ be a complex algebraic variety. The category $MH(X, w)$ of polarisable Hodge modules of fixed weight $w$ admits an exact and faithful functor $\mathrm{rat} \colon MH(X, w) \to \mathrm{Perv}(X)_{\QQ}$ to the category of perverse sheaves (with coefficient in $\QQ$); see \cite{Saito1988}. The category $MH(X, w)$ is semisimple by \cite[Lemma 5]{Saito1988}, and the perverse sheaf that one obtain applying the functor $\mathrm{rat}$ is semisimple \cite[Lemma 4]{Saito1988}. Therefore, 
the semisimplicity of $\IC(\unJB,\QQ)|_{\chi(n,d)^{-1}(A_a)}[\dim M(n,d)]$ follows from purity and perversity. 
Since purity and perversity are local conditions in the analytic topology (cf for instance \cite[Lem. 2.2]{Davison2021}), it is enough to check them on the local model, which  is the content of \cref{prop:perverselocal}.
\end{proof} 

\begin{prop}\label{cor:trivialityalongfibers}
Define $M^{\circ \circ}_{\underline{n}}(d) \coloneqq \chi(n,d)^{-1}(S^{\times}_{\underline{n}}) \cap M_{\underline{n}}(d)$. The local system $\mathcal{L}'_{\underline{n}}$ on $M^{\circ \circ}_{\underline{n}}(d)$ descends to a local system $\mathcal{L}_{\underline{n}}$ on $S^{\times}_{\underline{n}}$
\[\mathcal{L}'_{\underline{n}}=\chi(n,d)^{*}\mathcal{L}_{\underline{n}}.\]
\end{prop}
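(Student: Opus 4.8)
\emph{Setup and fibration.} The plan is to show that the monodromy of $\mathcal{L}'_{\underline n}$, restricted to $M^{\circ\circ}_{\underline n}(d)$, factors through $\pi_1(S^{\times}_{\underline n})$ via the Hitchin map. First observe that over $S^{\times}_{\underline n}$ the closure $M_{\underline n}(d)$ picks up no deeper stratum: by \cref{prop:charMn}, a point of $M_{\underline n}(d)$ over $a\in S^{\times}_{\underline n}$ is the pushforward of a rank $1$ torsion free sheaf on the reduced nodal curve $C_a$, which has exactly $r$ smooth irreducible components $C_{a_1},\dots,C_{a_r}$ of degrees $n_1,\dots,n_r$; for such a pushforward to lie in $\overline{M^{\circ}_{\underline n}(d)}$ it must split as a direct sum of $r$ distinct stable Higgs bundles supported on the components, hence already lie in $M^{\circ}_{\underline n}(d)$. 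Thus $M^{\circ\circ}_{\underline n}(d)=M^{\circ}_{\underline n}(d)\cap \chi(n,d)^{-1}(S^{\times}_{\underline n})$, and $\chi\coloneqq \chi(n,d)|_{M^{\circ\circ}_{\underline n}(d)}\colon M^{\circ\circ}_{\underline n}(d)\to S^{\times}_{\underline n}$ is a proper submersion whose fibre over $a$ is $\prod_{i=1}^r\operatorname{Pic}^{d_i}(C_{a_i})$, a torsor under the abelian scheme $g_{\underline n}\colon A^{\times}_{\underline n}\to S^{\times}_{\underline n}$ (the Jacobian of the normalization of $\mathcal{C}^{\times}_{\underline n}$). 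By Ehresmann's theorem $\chi$ is a $C^{\infty}$ fibre bundle with connected fibres, so for a fibre $F$ there is an exact sequence $\pi_1(F)\to\pi_1(M^{\circ\circ}_{\underline n}(d))\xrightarrow{\ \chi_*\ }\pi_1(S^{\times}_{\underline n})\to 1$.

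\emph{Reduction to fibrewise triviality.} A local system on the total space of a fibre bundle with connected fibres is the pullback of a (unique) local system from the base if and only if its monodromy is trivial on the image of $\pi_1(F)$. Hence it suffices to prove that the monodromy representation $\rho$ of $\mathcal{L}'_{\underline n}|_{M^{\circ\circ}_{\underline n}(d)}$ on its stalk $V$ vanishes on the image of $\pi_1(F)$ in $\pi_1(M^{\circ\circ}_{\underline n}(d))$.

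\emph{The monodromy factors through $\Aut(\Gamma_{\underline n})$.} By \cref{prop:perverse}, $V\cong H^{b_1(\Gamma_{\underline n})}(Y(\Gamma_{\underline n},\theta),\QQ)$, the top cohomology of the symplectic resolution $\widetilde W_{\underline n}=Y(\Gamma_{\underline n}^{\pm},\theta)$. By \cref{thm:localmodelJM} together with the local Hodge-to-singular correspondence \cref{prop:perverselocal} applied to the deepest stratum — the one whose analytic normal slice is all of $Y(\Gamma_{\underline n},0)$ — near any point of $M^{\circ\circ}_{\underline n}(d)$ the sheaf $\mathcal{L}'_{\underline n}$ is identified with the constant sheaf with fibre $V$ on the contractible local chart $\CC^{\dim M_{\underline n}-1-g}\cong M^{\circ}_{\underline n}(d)\cap U$. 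By \cref{thm:localmodelJM} and \cref{prop:dualcomplex}, the graph $\Gamma_{\underline n}$ entering this model is canonically the dual graph whose vertices are the components $C_{a_i}$ of the spectral curve $C_a$ and whose edges are the nodes of $C_a$ at which the torsion free sheaf fails to be locally free, i.e.\ the intersection points $C_{a_i}\cap C_{a_j}$; the identification above is canonical up to the automorphisms of $\Gamma_{\underline n}$ permuting these edges and vertices, acting on $V=H^{b_1(\Gamma_{\underline n})}(Y(\Gamma_{\underline n},\theta),\QQ)$ through the functoriality of $\Gamma\rightsquigarrow Y(\Gamma^{\pm},\theta)$ in the graph. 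Since $\Gamma_{\underline n}$ together with its labelled vertices and edges depends only on the spectral curve $C_a$, hence only on $a=\chi(\mathrm{pt})\in S^{\times}_{\underline n}$, the monodromy representation factors as
\[
\rho\colon\ \pi_1(M^{\circ\circ}_{\underline n}(d))\ \xrightarrow{\ \chi_*\ }\ \pi_1(S^{\times}_{\underline n})\ \longrightarrow\ \Aut(\Gamma_{\underline n})\ \longrightarrow\ \Gl(V),
\]
where the middle arrow is the monodromy of the universal dual graph (the monodromy of the components of $\mathcal{C}^{\times}_{\underline n}$ and of their intersection points as $a$ varies in $S^{\times}_{\underline n}$). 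In particular $\rho$ vanishes on the image of $\pi_1(F)$, because a loop contained in a single fibre of $\chi$ keeps $a$, hence $C_a$, its components and its nodes, fixed, and so induces the identity automorphism of $\Gamma_{\underline n}$. By the reduction step, $\mathcal{L}'_{\underline n}|_{M^{\circ\circ}_{\underline n}(d)}$ descends to a local system on $S^{\times}_{\underline n}$ whose monodromy is the composite $\pi_1(S^{\times}_{\underline n})\to\Aut(\Gamma_{\underline n})\to\Gl(V)$; this is precisely $\mathcal{L}_{\underline n}$ (whose stalk is $H^{\dim W_{\underline n}}(\widetilde W_{\underline n},\QQ)$ and whose monodromy is induced by the automorphisms of $\Gamma_{\underline n}$), proving $\mathcal{L}'_{\underline n}=\chi(n,d)^{*}\mathcal{L}_{\underline n}$.

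\emph{Main obstacle.} The delicate point is the naturality assertion in the third step: one must verify that the local trivialisations of $\mathcal{L}'_{\underline n}$ furnished by \cref{thm:localmodelJM}--\cref{prop:perverselocal} agree on overlaps up to exactly the graph automorphisms coming from relabelling the nodes and components of the spectral curve, so that the resulting monodromy cocycle is literally the $\chi_*$-pullback of the cocycle of the universal dual graph. Equivalently, one must identify the family $\{Y(\Gamma_{\underline n},\theta)\}$ of resolved transverse singularities over $M^{\circ\circ}_{\underline n}(d)$ with the $\chi$-pullback of the family obtained by applying the functorial construction $\Gamma\rightsquigarrow Y(\Gamma^{\pm},\theta)$ to the universal dual graph over $S^{\times}_{\underline n}$; everything else is formal.
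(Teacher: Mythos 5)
There is a genuine gap, and it is precisely the point you flag as the ``main obstacle'' at the end: the assertion that the local trivialisations of $\mathcal{L}'_{\underline n}$ produced by \cref{thm:localmodelJM} and \cref{prop:perverselocal} glue along a fibre $F_b$ up to graph automorphisms of $\Gamma_{\underline n}$. Calling this ``formal'' is not justified; it is the content of \cref{prop:localtrivial}, to which the paper devotes all of Section \ref{sec:tubularneighbou}. Moreover your plan is essentially circular: you want to deduce triviality of $\mathcal{L}'_{\underline n}|_{F_b}$ from the claim that its monodromy factors through $\Aut(\Gamma_{\underline n})$, but establishing that the monodromy factors through $\Aut(\Gamma_{\underline n})$ already presupposes that the local charts match up along $F_b$ by graph symmetries only, which is the very gluing statement left unproved. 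Nothing in \cref{thm:localmodelJM} or \cref{prop:perverselocal} rules out, a priori, monodromy contributions coming from the geometry of $F_b \simeq \prod_i \Pic^{d_i}(C_{a_i})$ itself.

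The reason the gluing is genuinely hard is recorded in \cref{rmk:analvshomeo}: a tubular neighbourhood of $F_b$ in $\unJvers$ is \emph{not} analytically a product (the Kodaira--Spencer class of $\mathcal{C}^\nu$ over $B_\Gamma$ is nonzero), so one cannot simply patch the analytic local models of \cref{thm:localmodelJM}. The paper sidesteps this by proving a purely topological trivialisation: a degeneration of a neighbourhood of $F_b$ to $\Tot(N_Z)\sslash \Sl_R$ (\cref{lem:triviality}, via \cref{lem:splitnormalbundle} and \cref{lem:trivialvectorbundle}), followed by a stratified Ehresmann argument (\cref{lem:localtriviality} and the proof of \cref{prop:localtrivial}). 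Once this global homeomorphism $F_b \times X(\Gamma(C_b)^{\pm},0) \times \CC^{3g'-3-s}$ is in hand, the proof of \cref{cor:trivialityalongfibers} is immediate: $\IC(\unJB,\QQ)|_{F_b}$ is pulled back from the $X(\Gamma^{\pm},0)$ factor, hence a complex of constant sheaves on $F_b$, so $\mathcal{L}'_{\underline n}|_{F_b}$ is constant and descent follows from the fibration structure of $\chi(n,d)\colon M^{\circ\circ}_{\underline n}(d)\to S^{\times}_{\underline n}$ (the part of your argument that is correct). In short: your reduction to triviality along fibres and the identification of the stalk with $H^{b_1(\Gamma_{\underline n})}(Y(\Gamma_{\underline n},\theta),\QQ)$ are fine, but the proof is not complete until the tubular neighbourhood of $F_b$ is globally trivialised, and that step cannot be dismissed.
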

\begin{proof}
Denote by $F_{b}$ the fibre of $\chi(n,d) \colon M^{\circ \circ}_{\underline{n}}(d) \to S_{\underline{n}}$. By \cref{prop:localtrivial}, the restriction $\IC(\unJB,\QQ)|_{F_{b}}$ is isomorphic to the complex 
\[\QQ_{F_{b}} \otimes \IH^{i}(X(\Gamma_{\underline{n}}^{\pm},0), \QQ)[-i] \simeq \QQ_{F_{b}} \otimes H^{i}(Y(\Gamma_{\underline{n}},0), \QQ)[-i]\]
of constant sheaves with trivial differential. Hence, $\mathcal{L}'_{\underline{n}}|_{F_{b}} \simeq \QQ_{F_{b}} \otimes H^{b_1(\Gamma_{\underline{n}})}(Y(\Gamma_{\underline{n}},0), \QQ)$ is a component of the complex, so a trivial local system. Therefore, $\mathcal{L}'_{\underline{n}}$ descends to a local system on $S^{\times}_{\underline{n}}$.
\end{proof}

\section{Tubular neighbourhood of the Jacobian of the normalisation of a spectral curve}\label{sec:tubularneighbou}
To complete the proof of \cref{cor:trivialityalongfibers}, we must trivialize a tubular neighbourhood of the subvariety $F_{b}$ in the universal compactified Jacobian.   This is a global statement which asserts that the local trivializations in \cref{thm:localmodelJM} glue along $F_{b}$.  

Let $f\colon \mathcal{C} \to B$ be a flat projective versal family of semistable curves and arithmetic genus $g'$, and $\pi\colon \unJvers \to B$ be the relative compactified Jacobian with respect to the canonical polarization. 
For any $b \in B$, we denote by $F_{b}$ the subvariety in $\pi^{-1}(b)$ parametrising rank 1 torsion free sheaves $\mathcal{I}$ on $C_b \coloneqq f^{-1}(b)$ which are pushforward of line bundles from the normalization of $C_b$, i.e.\ $\mathcal{I} = \nu_* M$ where $M$ is a line bundle on the normalization $\nu \colon C^{\nu}_{b}=\bigsqcup^{r}_{i=1} C_{b,i} \to C_{b}$. Denote by $\Sigma$ the set of nodes of $C_b$.

\begin{prop}\label{prop:localtrivial}
There exists a homeomorphism between a euclidean neighbourhood of $F_{b}$ in $\unJvers$ and the product
\begin{equation}\label{eq:product}
    F_{b} \times X(\Gamma(C_b)^{\pm}, 0) \times \CC^{3g'-3-s},
\end{equation}
where $\Gamma(C_b)$ is the dual graph of $C_b$ and $s$ is the number of edges of the graph.
\end{prop}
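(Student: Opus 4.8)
The plan is to realise a euclidean neighbourhood of $F_b$ in $\unJvers$ as a locally trivial fibre bundle over $F_b\times\CC^{3g'-3-s}$ with fibre the affine Lawrence variety $X(\Gamma(C_b)^{\pm},0)$, and then to trivialise that bundle. Throughout, write $\Gamma\coloneqq\Gamma(C_b)$, $b_1\coloneqq b_1(\Gamma)$, and let $B_b\subseteq B$ be the equisingular stratum through $b$, i.e.\ the locus of curves with dual graph $\Gamma$; since each of the $s$ nodes imposes one condition, $\dim B_b=3g'-3-s$. Let $\mathcal{F}\subseteq\pi^{-1}(B_b)\subseteq\unJvers$ be the locally closed (possibly disconnected) subvariety of sheaves of the form $\nu_*M$ with $M$ a line bundle on the normalisation of the corresponding fibre of $f$; it is smooth of dimension $(g'-b_1)+(3g'-3-s)$, and $F_b=\pi^{-1}(b)\cap\mathcal{F}$.

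First I would recall the local models. For a point $(C_b,\mathcal{I})$ of $F_b$ the sheaf $\mathcal{I}=\nu_*M$ fails to be locally free at \emph{all} nodes of $C_b$, so by \cite[Theorem A]{CMKV2015} (exactly as in the proof of \cref{thm:localmodelJM}, taking $\Sigma$ to be the full set of nodes) there is an analytic neighbourhood of $(C_b,\mathcal{I})$ in $\unJvers$ isomorphic to $X(\Gamma^{\pm},0)\times\CC^{4g'-3-b_1-s}$, compatibly with $\pi$ and with the embedding $X(\Gamma^{\pm},0)\times\CC^{s}\hookrightarrow\CC^{s}\times\CC^{3g'-3-s}$ of \eqref{eq:diagramsingular}; in this model $F_b$ is the cone point of $X(\Gamma^{\pm},0)$ times the linear subspace $\{0\}\times\CC^{g'-b_1}$ inside $\CC^{4g'-3-b_1-s}=\CC^{g'-b_1}\times\CC^{3g'-3-s}$, while $B_b$ is identified with the second factor $\CC^{3g'-3-s}$. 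In particular these charts exhibit $\mathcal{F}$ as a (union of connected components of a) stratum of the Whitney stratification of $\unJvers$ underlying \cite{CMKV2015}, which is Whitney by \cref{lem:analyticallytrivialWhitney}, with constant normal slice $X(\Gamma^{\pm},0)$.

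Next I would apply Thom--Mather theory. The variety $X(\Gamma^{\pm},0)$ is an affine toric cone, so a euclidean neighbourhood of its cone point is homeomorphic to all of $X(\Gamma^{\pm},0)$ (contract along the residual $\Gm$-action on the fibres of $\pi_{\Gamma}$). By Thom's first isotopy lemma, a euclidean neighbourhood $\mathcal{N}$ of $\mathcal{F}$ in $\unJvers$ is, over $\mathcal{F}$, a locally trivial fibre bundle with fibre $X(\Gamma^{\pm},0)$. Choosing a contractible neighbourhood $U$ of $b$ in $B$ for which the family of smooth curves $\{C^{\nu}_{b'}\}_{b'\in U\cap B_b}$ is topologically trivial (Ehresmann), its relative Picard $\mathcal{F}|_{U}\to U\cap B_b\cong\CC^{3g'-3-s}$ is topologically trivial too, so $\mathcal{F}|_U\cong F_b\times\CC^{3g'-3-s}$ and $\mathcal{N}|_U$ becomes a locally trivial $X(\Gamma^{\pm},0)$-bundle over $F_b\times\CC^{3g'-3-s}$.

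Finally I would argue that this bundle is trivial, which yields $\mathcal{N}|_U\cong F_b\times\CC^{3g'-3-s}\times X(\Gamma^{\pm},0)$ and hence \eqref{eq:product}. The identification of the normal slice with $X(\Gamma^{\pm},0)$ in \cite[Theorem A]{CMKV2015} is assembled from the local deformation spaces $\mathrm{Ext}^1$ of $\nu_*M$ at the nodes of the fibre, cut by the linear relations $\mathrm{Circ}(\mathscr{B})$ attached to $\Gamma$ (cf.\ \cref{prop:KS}); neither of these depends on the line bundle $M$, since near a node $\nu_*M$ has the same local structure as $\nu_*\mathcal{O}$, so the \cite{CMKV2015}-charts can be chosen compatibly along $\mathcal{F}$ and the transition functions of the bundle are trivial (this also gives a way to bypass Thom--Mather entirely, gluing the charts directly). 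The main obstacle is precisely this last step: because $F_b$ is a torsor under the abelian variety $\prod_i\Pic^0(C_{b,i})$ it is far from simply connected, so Thom isotopy by itself only produces a locally trivial bundle, and one genuinely has to unwind the construction of \cite{CMKV2015} to see that its monodromy along $F_b$ vanishes; the remaining ingredients — Thom--Mather theory, the toric description of \cref{sec:toric hyperkahler variety}, and triviality over contractible bases — are standard.
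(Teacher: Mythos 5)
Your strategy is genuinely different from the paper's. You argue directly with a stratified tubular neighbourhood of the stratum $\mathcal{F}$ (Thom--Mather theory, or direct gluing of the CMKV charts), whereas the paper deforms $\unJvers$ to the normal cone of the locus $Z$ of closed orbits lying over $F_b$ inside the GIT presentation $Q^{ss}$, proves that $\Tot(N_Z)\sslash\Sl_R$ is the product \eqref{eq:product} (\cref{lem:splitnormalbundle}--\cref{lem:triviality}), and then uses a stratified Ehresmann argument for the degeneration $\mathfrak{n}\colon\mathcal{Q}\sslash\Sl_R\to\CC$ to identify the special fibre with a neighbourhood of $F_b$. Both routes reduce to the same question, and you correctly isolate it as the main obstacle: since $F_b$ is not simply connected, local triviality (which Thom--Mather does give) does not yield global triviality of the $X(\Gamma^{\pm},0)$-bundle.

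The gap is precisely at this point, and it is a real one. You assert that the \cite{CMKV2015}-charts ``can be chosen compatibly along $\mathcal{F}$'' because the local deformation space at a node of $\nu_*M$ is the same as for $\nu_*\mathcal{O}$. That shows the transition functions \emph{can} be taken to preserve the fibre $X(\Gamma^{\pm},0)$, i.e.\ that the structure group of the bundle can be reduced to $\Aut(X(\Gamma^{\pm},0))$; it does not show that the resulting cocycle on $F_b$ is trivial, which is what the homeomorphism \eqref{eq:product} requires. The paper supplies the missing ingredient in \cref{lem:trivialvectorbundle}: after passing to the normal cone the bundle linearises to the vector bundle $R^0p_*\mathcal{E}xt^1_{C_b}(\mathcal{U}|_Z,\mathcal{U}|_Z)$ on $Z$ (resp.\ $F_b$), and this is trivialised explicitly by the transitive translation action of $\Pic^{d'}(C^\nu_b)$ on $F_b$ together with the fact that twisting $\mathcal{I}=\nu_*M$ by $L\in\Pic^0(C^\nu_b)$ can be realised as tensoring $\mathcal{I}$ by a line bundle $L'$ on $C_b$ itself, so $\mathcal{E}xt^1(\mathcal{I}',\mathcal{I}')\cong\mathcal{E}xt^1(\mathcal{I},\mathcal{I})$. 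The deformation to the normal cone is not just a presentational choice: it is what turns the triviality of an $X(\Gamma^{\pm},0)$-bundle (a highly nonlinear object) into the triviality of a vector bundle, which can then be settled by a group action argument. Your phrase ``unwind the construction of \cite{CMKV2015}'' points in the right direction, but it is exactly the content of Lemmas~\ref{lem:splitnormalbundle}--\ref{lem:triviality}, not a corollary of the local structure of $\nu_*M$ at the nodes.

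Two smaller remarks. The observation that a euclidean neighbourhood of the cone point of $X(\Gamma^{\pm},0)$ is homeomorphic to all of $X(\Gamma^{\pm},0)$ via the $\Gm$-contraction is correct and is implicitly used on both sides. Also, as \cref{rmk:analvshomeo} in the paper emphasises, the statement you want to glue along $F_b$ cannot hold analytically (the Kodaira--Spencer boundary map is nonzero), so any gluing argument must at some point drop to the topological category; your use of Thom--Mather does this, but it is worth flagging that the compatibility of the CMKV charts you invoke can therefore only be topological, which again makes the monodromy computation unavoidable rather than a matter of bookkeeping.
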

\begin{rmk}\label{rmk:analvshomeo}
Let $B_{\Gamma} \subseteq B$ the locus of curve with the same dual graph as $C_{b}$. The restriction of the family $\mathcal{C}$ to $B_{\Gamma}$ can be simultaneously normalized, and let $\mathcal{C}^{\nu} \to \mathcal{C} \times_{B} B_{\Gamma}$ the simultaneous normalization.  \cref{prop:localtrivial} says in particular that a euclidean neighbourhood of $F_{b}$ in the relative Picard scheme $\Pic^{d'}(\mathcal{C}^{\nu}) \subseteq \unJvers$ is homeomorphic to $F_{b} \times \CC^{3g'-3-s}$, but this is not a formal or analytic isomorphism. Otherwise $T \Pic^{d'}(\mathcal{C}^{\nu})|_{F_{b}}\simeq \mathcal{O}^{\oplus c'}_{F_b}$ with $c'\coloneqq \dim(F_b)+3g'-3-s$, and the boundary map $H^{0}(N_{F_{b}/\Pic^{d'}(\mathcal{C}^{\nu})}) \to H^{1}(TF_{b})$ of the short exact sequence
\[
0 \to TF_{b} \to T\Pic^{d'}(\mathcal{C}^{\nu})|_{F_{b}}\to N_{F_{b}/\Pic^{d'}(\mathcal{C}^{\nu})}\to 0
\]
would be trivial, i.e.\ $\mathcal{C}^{\nu}$ is an isotrivial family, which is a contradiction. This suggests that \cref{prop:localtrivial} is false in the analytic category. 
\end{rmk}

In order to show \cref{prop:localtrivial}, we need to recall the GIT description of $\unJvers$ and the deformation theory of the pair $(C_{b}, \mathcal{I})$ as in \cite{CMKV2015, CMKV2017}. This construction goes back to Simpson \cite{P1996}. Over the versal curve $f \colon \mathcal{C} \to B$ one considers the relative quot scheme $\mathrm{Quot}(\mathcal{O}^{\oplus R})$, where $R$ is an appropriately high positive integer which can be made explicit, parameterising quotient sheaves $\mathcal{O}^{\oplus R} \to \mathcal F \to 0$ with a fixed Hilbert polynomial. The group $\mathrm{SL}_{R}$ acts on $\mathrm{Quot}(\mathcal{O}^{\oplus R})$ by changing the basis of $\mathcal{O}^{\oplus R}$ by linear maps in $\Sl_R$. The GIT stability for the natural linearized action of $\Sl_R$ on  $\mathrm{Quot}(\mathcal{O}^{\oplus R})$ corresponds to slope stability. Then 
the relative compactified Jacobian $\unJvers$ can then be constructed as the GIT quotient
\[ \unJvers = Q^{ss} \sslash \mathrm{SL}_{R},\]
where $Q^{ss} \subseteq \mathrm{Quot}(\mathcal{O}^{\oplus R})$ is the locus of semistable rank 1 torsion free quotients. See \cite[\S I]{Simpson1994I} for details or \cite[Fact 2.7]{CMKV2015}.
 
Note that $Q^{ss}$ is smooth along the closed $\mathrm{SL}_{R}$-orbits by \cite[Lemma 6.4.(ii)]{CMKV2015} and by the smoothness of the miniversal deformation of the pair $(C_{b}, \mathcal{I})$; see for instance \cite[\S 3]{CMKV2015}.
 
Let $Z$ be the locus of closed orbits over $F_{b} \subset \unJvers = Q^{ss}\sslash \Sl_R$. Note that $F_{b}$ is isomorphic to the Jacobian $\Pic^{d'}(C^{\nu}_{b})$ with $d'\coloneqq d+(n-n^2)(1-g)$, and $Z$ is a fiber bundle over $F_b$. 

Now let
\begin{equation}\label{eq:Q}
    \mathfrak{n}'\colon \mathcal{Q} = \mathrm{Bl}_{Z \times 0}(Q^{ss} \times \CC) \setminus Q^{ss}\times 0\to \CC
\end{equation} 
be the deformation of $Q^{ss}$ to the normal cone of $Z$. The central fibre is the total space $\Tot(N_Z)$ of the normal bundle $N_{Z}$ of $Z$ in $Q^{ss}$, and the restriction of $\mathfrak{n}'$ to $\CC^*$ is the trivial fibration $Q^{ss} \times \CC^* \to \CC^*$. Since $Z$ is $\Sl_R$-invariant, $\mathcal{Q}$ inherits a $\Sl_R$-action preserving the fibres of $\mathfrak{n}'$. The quotient
\begin{equation*}
    \mathfrak{n}\colon \mathcal{Q}\sslash\Sl_R \to \CC
\end{equation*}
is a deformation of $\unJB$ to $\Tot(N_Z)\sslash\Sl_R$. We can then reduce the proof of \cref{prop:localtrivial} to the following statements:
\begin{itemize}
    \item $\Tot(N_Z)\sslash\Sl_R$ is homeomorphic to the product \eqref{eq:product};
    \item $\Tot(N_Z)\sslash\Sl_R$ is homeomorphic to a euclidean neighbourhood of $F_{b}$ in $\unJvers$.
\end{itemize}
We prove the statements respectively in \cref{lem:triviality} and \cref{sec:Ehresmann}.
\begin{rmk}
In general, the deformation $\mathfrak{n}$ is not the deformation of $\unJvers$ to the normal cone of $F_{b}$. Indeed, a neighbourhood $U$ of a point in $F_{b}$ is locally analytically isomorphic to $X(\Gamma(C_b)^{\pm}, 0) \times \CC^{c'}$ with $c'\coloneqq 4g'-3-b_1(\Gamma)-s$ by \cite[Theorem A]{CMKV2015}. Up to a smooth factor, the degeneration to the normal cone deforms $U$ to 
the normal cone of $X(\Gamma(C_b)^{\pm}, 0)$ at $0$. The latter may fail to be isomorphic to $X(\Gamma(C_b)^{\pm}, 0)$: they are isomorphic only if the homogeneous polynomial cutting $X(\Gamma(C_b)^{\pm}, 0)$ have the same degree, equivalently if the number of edges between different vertices of $\Gamma(C_b)^{\pm}$ is constant. On the other hand,  $\mathfrak{n}$ is locally trivial, so it deforms $U$ to $X(\Gamma(C_b)^{\pm}, 0) \times \CC^{c'}$.  
\end{rmk}
\subsection{Deformation theory of rank 1 torsion free sheaves}
A slice of $Q^{ss}$ normal to an $\Sl_{R}$-orbit of the pair $(C_{b},\mathcal{I})$ is given by a miniversal deformation $\defo(C_{b},\mathcal{I})$ of the pair; see \cite[\S 6]{CMKV2015}. 

The space $\defo(C_{b},\mathcal{I})$ is endowed with a natural action of the torus \[\Aut(\mathcal{I}) \simeq H^0(C_{b}^{\nu}, \mathcal{O}^*_{C_{b}^{\nu}})\simeq \bigoplus^r_{i=1} H^0(C_{b,i}, \mathcal{O}^*_{C_{b,i}}) \simeq \mathbb{T}^{r} \ni (\lambda_i)^r_{i=1},\] see \cite[Def. 3.4]{CMKV2015}. Since $\Aut(\mathcal{I})$ is reductive, 
the tangent space $T\defo(C_{b},\mathcal{I})$ splits into the product 
\[T\defo(C_{b},\mathcal{I})_{\mathrm{inv}} \oplus T\defo(C_{b},\mathcal{I})_{\mathrm{var}},\] where the invariant part $T\defo(C_{b},\mathcal{I})_{\mathrm{inv}}$ 
is fixed by the action of $\Aut(\mathcal{I})$, while the variant part $T\defo(C_{b},\mathcal{I})_{\mathrm{var}}$ is the unique $\Aut(\mathcal{I})$-invariant complement of $T\defo(C_{b},\mathcal{I})_{\mathrm{inv}}$. In fact, these subspaces have a modular interpretation: 
\[
    T\defo(C_{b},\mathcal{I})_{\mathrm{inv}} \simeq T\defo^{lt}(C_{b},\mathcal{I}) \qquad  T\defo(C_{b},\mathcal{I})_{\mathrm{var}} \simeq \prod_{e \in \Sigma} T\defo(C_{b, e},\mathcal{I}_e),
\]
where $\defo^{lt}(C_{b},\mathcal{I})$ is the miniversal deformation of locally trivial deformations of $(C_{b},\mathcal{I})$, and $\defo(C_{b, e},\mathcal{I}_e)$ is a miniversal deformation of the localised pair $(C_{b, e},\mathcal{I}_e)$; see \cite[\S 3 and \S 5]{CMKV2015}. 
In particular, it follows from  \cite[\S 5]{CMKV2015} that $\Aut(\mathcal{I})$ acts on $T\defo(C_{b, e},\mathcal{I}_e)\simeq \Spec k[z_e, w_e]$ as 
\begin{equation}\label{eq: groupactionII}
z_e \mapsto \lambda_{s(e)} z_e \lambda_{t(e)}^{-1} \qquad w_e \mapsto \lambda_{s(e)}^{-1} w_e \lambda_{t(e)}.
\end{equation}

Now let $q: Q^{ss} \to \unJvers$ be the quotient map. Choose $\mathcal{U}$ a universal rank 1 torsion-free sheaf over $Q^{ss} \times_{B} \mathcal{C}$, and take $p: Q^{ss}  \times_{B} \mathcal{C} \to {Q}^{ss}$ the natural projection. 
\begin{lem}\label{lem:splitnormalbundle} Let $N_{Z}$ be the normal bundle of $Z$ in $Q^{ss}$.
There exists a $\mathbb{T}^{r}$-equivariant splitting
\[N_Z \simeq N^{\mathbb{T}^{r}}_{Z} \oplus R^0 p_*\mathcal{E}xt^1_{C_b}(\mathcal{U}|_{Z}, \mathcal{U}|_{Z}),\]
where $N^{\mathbb{T}^{r}}_{Z}$ is the $\mathbb{T}^{r}$-invariant part of $N_Z$.
\end{lem}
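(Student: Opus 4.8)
The plan is to combine Luna's \'{e}tale slice theorem with the deformation theory of the pair $(C_{b},\mathcal{I})$ recalled above. Fix $x \in Z$ lying over $(C_{b},\mathcal{I}) \in F_{b}$, write $\mathcal{I}=\nu_{*}M$ for a line bundle $M$ on $C^{\nu}_{b}=\bigsqcup^{r}_{i=1}C_{b,i}$, and let $\Sigma$ be the set of nodes of $C_{b}$, so that $\mathcal{I}$ is non-locally-free exactly along $\Sigma$ and $\Aut(\mathcal{I})\simeq \mathbb{T}^{r}$. Since $Q^{ss}$ is smooth along the closed orbit of $x$ \cite[Lemma 6.4]{CMKV2015} and a slice normal to that orbit is the miniversal deformation $\defo(C_{b},\mathcal{I})$ \cite[\S 6]{CMKV2015}, the slice theorem identifies, $\Aut(\mathcal{I})$-equivariantly and \'{e}tale-locally near $x$, the scheme $Q^{ss}$ with $\Sl_{R}\times_{\mathrm{Stab}(x)}\defo(C_{b},\mathcal{I})$, where $\mathbb{T}^{r}=\Aut(\mathcal{I})\subseteq \mathrm{Stab}(x)$. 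Under this identification $Z$ — a fibre bundle over $F_{b}=\Pic^{d'}(C^{\nu}_{b})$ with fibre the closed orbit — corresponds to $\Sl_{R}\times_{\mathrm{Stab}(x)}F'_{b}$ for a smooth $\mathrm{Stab}(x)$-invariant sublocus $F'_{b}\subseteq\defo(C_{b},\mathcal{I})$ with $T_{0}F'_{b}=T_{\mathcal{I}}F_{b}\simeq H^{1}(C^{\nu}_{b},\mathcal{O}_{C^{\nu}_{b}})$. Since the orbit directions cancel, the fibre of $N_{Z}$ at $x$ is $\mathbb{T}^{r}$-equivariantly $T\defo(C_{b},\mathcal{I})/T_{\mathcal{I}}F_{b}$.

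Next I would feed in the $\Aut(\mathcal{I})$-equivariant decomposition $T\defo(C_{b},\mathcal{I})=T\defo^{lt}(C_{b},\mathcal{I})\oplus\prod_{e\in\Sigma}T\defo(C_{b,e},\mathcal{I}_{e})$ recalled above; here $\prod_{e\in\Sigma}T\defo(C_{b,e},\mathcal{I}_{e})$ is canonically $H^{0}(C_{b},\ext^{1}_{C_{b}}(\mathcal{I},\mathcal{I}))$, because $\ext^{1}_{C_{b}}(\mathcal{I},\mathcal{I})$ is a skyscraper on $\Sigma$ whose stalk at $e$ is $T_{0}\defo(C_{b,e},\mathcal{I}_{e})\simeq\CC^{2}$; see \cite[\S 3, \S 5]{CMKV2015}. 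A deformation of $\mathcal{I}$ inside $F_{b}$ keeps $C_{b}$ fixed and keeps $\mathcal{I}$ a pushforward from the normalization, hence is locally trivial, so $T_{\mathcal{I}}F_{b}$ is contained in the invariant summand $T\defo^{lt}(C_{b},\mathcal{I})$. On the other hand, by \eqref{eq: groupactionII} the torus acts on each factor $T\defo(C_{b,e},\mathcal{I}_{e})$ so that $z_{e}$ and $w_{e}$ have opposite, nonzero weights (here $e$ joins two distinct components $s(e)\neq t(e)$ of $C^{\nu}_{b}$, which holds in our situation since the components of $C_{b}$ are smooth), so this summand carries no nonzero $\mathbb{T}^{r}$-invariant. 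Since the weight-zero part of any representation of the torus $\mathbb{T}^{r}$ is the canonical complement of the sum of its nonzero weight spaces, quotienting by $T_{\mathcal{I}}F_{b}$ yields a $\mathbb{T}^{r}$-equivariant splitting
\[
N_{Z}|_{x}\;=\;\big(T\defo^{lt}(C_{b},\mathcal{I})/T_{\mathcal{I}}F_{b}\big)\;\oplus\;H^{0}(C_{b},\ext^{1}_{C_{b}}(\mathcal{I},\mathcal{I}))\;=\;N^{\mathbb{T}^{r}}_{Z}|_{x}\;\oplus\;H^{0}(C_{b},\ext^{1}_{C_{b}}(\mathcal{I},\mathcal{I})).
\]

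Finally I would globalize over $Z$. The relative automorphism group of $\mathcal{U}|_{Z}$ is an \'{e}tale-locally trivial bundle of tori $\mathbb{T}^{r}$ acting on the vector bundle $N_{Z}$; since the combinatorial type of $(C_{b},\mathcal{I})$ is constant along $Z$ (the curve is the fixed $C_{b}$, and $\mathcal{I}$ varies over $F_{b}$), the fibrewise weight decomposition has locally constant ranks, so its weight-zero part forms a subbundle $N^{\mathbb{T}^{r}}_{Z}\subseteq N_{Z}$ with canonical complement the sum of the nonzero-weight parts. By functoriality of the miniversal deformations in \cite{CMKV2015} and of the sheaves $\ext^{1}_{C_{b}}(-,-)$, the fibrewise identifications of this complement with $H^{0}(C_{b},\ext^{1}_{C_{b}}(\mathcal{I},\mathcal{I}))$ patch; and since $\ext^{1}_{C_{b}}(\mathcal{U}|_{Z},\mathcal{U}|_{Z})$ is flat over $Z$, supported along the relative nodes of $Z\times_{B}\mathcal{C}\to Z$, and its formation commutes with base change, one gets $R^{>0}p_{*}\ext^{1}_{C_{b}}(\mathcal{U}|_{Z},\mathcal{U}|_{Z})=0$ and that $R^{0}p_{*}\ext^{1}_{C_{b}}(\mathcal{U}|_{Z},\mathcal{U}|_{Z})$ is exactly that complement. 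This gives the asserted $\mathbb{T}^{r}$-equivariant isomorphism $N_{Z}\simeq N^{\mathbb{T}^{r}}_{Z}\oplus R^{0}p_{*}\ext^{1}_{C_{b}}(\mathcal{U}|_{Z},\mathcal{U}|_{Z})$. The main obstacle is this globalization step: formulating the $\mathbb{T}^{r}$-action carefully (the stabilizers of distinct points of $Z$ are only conjugate) and checking that the slice-theoretic splittings are independent of the chosen point and glue to a direct-sum decomposition of vector bundles — for which I would rely on the fact that all the data are pulled back from the versal deformation of the pair and that $\ext^{1}_{C_{b}}(\mathcal{U}|_{Z},\mathcal{U}|_{Z})$ is a globally defined sheaf with base-change-compatible formation.
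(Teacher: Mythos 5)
Your proof is correct and takes essentially the same route as the paper: pass to the normal slice via Luna's slice theorem (so $N_{Z,z}\simeq T\defo(C_b,\mathcal I)/T_{\mathcal I}F_b$), invoke the $\Aut(\mathcal I)$-equivariant splitting $T\defo = T\defo^{lt}\oplus\prod_{e\in\Sigma}T\defo(C_{b,e},\mathcal I_e)$, observe that $T_{\mathcal I}F_b$ sits in the invariant (locally trivial) summand while the localized factors have nonzero weights by \eqref{eq: groupactionII}, and identify $\prod_e T\defo(C_{b,e},\mathcal I_e)$ with $H^0(C_b,\ext^1_{C_b}(\mathcal I,\mathcal I))$ (the paper does this via \cite[Lemma 7.13]{Drezet04} and the local-to-global Ext spectral sequence, you via the skyscraper description — same content). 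You spell out the Luna-slice reduction and the globalization over $Z$ in considerably more detail than the paper's terse proof, and you also flag the needed hypothesis that no edge of $\Sigma$ is a loop (so all weights appearing in the variant part are nonzero), which is implicit in the paper's spectral-curve setting where the irreducible components of $C_b$ are smooth; these are useful clarifications rather than a different argument.
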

\begin{proof}
The normal space to $Z$ in $Q^{ss}$ at $z \in Z$ with $q(z)=(C_{b}, \mathcal{I})$ admits the $\mathbb{T}^{r}$-equivariant splitting
\begin{equation}\label{eq:Normal}
    N_{Z,z} \simeq (T_{z}  \defo^{lt}(C_{b},\mathcal{I})/q^*T_{(C_b, \mathcal{I})}F_{b}) \oplus \bigoplus_{e \in \Sigma} T_{z}\defo(C_{b, e},\mathcal{I}_e),
\end{equation}
and $\mathbb{T}^{r}$ acts trivially on the first summand $(T_{z}  \defo^{lt}(C_{b},\mathcal{I})/q^*T_{(C_b, \mathcal{I})}F_{b})$ and as \eqref{eq: groupactionII} on the second summand.
By \cite[Lemma 7.13]{Drezet04} and the local-to-global Ext spectral sequence, $\bigoplus_{e \in \Sigma} T_{z}\defo(C_{b, e},\mathcal{I}_e)$ can be identified with
\[H^0(C_{b},\mathcal{E}xt^1_{C_{b}}(\mathcal{I}, \mathcal{I}))\simeq \bigoplus_{e \in \Sigma}H^0(C_b, \mathcal{E}xt^1(\mathcal{I}, \mathcal{I})_e) \simeq \CC^{s},\]
i.e.\ the fibre of the vector bundle $R^0 p_*\mathcal{E}xt^1_{C_b}(\mathcal{U}|_{Z}, \mathcal{U}|_{Z})$.
\end{proof}

\begin{lem}\label{lem:trivialvectorbundle}
$R^0 p_*\mathcal{E}xt^1_{C_b}(\mathcal{U}|_{Z}, \mathcal{U}|_{Z})$ is a trivial vector bundle on $Z$.
\end{lem}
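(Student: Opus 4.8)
The sheaf $\mathcal{I}=\nu_*M$ is locally free away from the nodes $\Sigma$ of $C_b$, so $\ext^1_{C_b}(\mathcal{U}|_{Z},\mathcal{U}|_{Z})$ is supported on the disjoint point‑sections $Z\times\Sigma\subset Z\times C_b$; since $p$ restricts to an isomorphism on each of them, $R^0p_*\ext^1_{C_b}(\mathcal{U}|_{Z},\mathcal{U}|_{Z})=\bigoplus_{e\in\Sigma}\mathcal{V}_e$, where $\mathcal{V}_e$ is the locally free sheaf on $Z$ with fibre $\ext^1_{\mathcal{O}_{C_b,e}}(\mathcal{I}_e,\mathcal{I}_e)$ at a point over $(C_b,\mathcal{I})$ (this has constant dimension $2$, since every $\mathcal{I}\in F_b$ fails to be locally free at exactly the nodes of $\Sigma$, cf.\ the proof of \cref{lem:splitnormalbundle}). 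Thus it suffices to show each $\mathcal{V}_e$ is (topologically) trivial — this is the sense of "trivial" here, cf.\ \cref{rmk:analvshomeo}. Throughout write $p_e\in C_{b,i}$, $q_e\in C_{b,j}$ for the two preimages of $e$ in $C_b^\nu$, let $\mathcal{M}$ be the line bundle on $Z\times C_b^\nu$ with $\nu_*\mathcal{M}\simeq\mathcal{U}|_{Z}$ (it exists by \cite[Prop.\ 3.4]{Gagne97}, as in \cref{prop:charMn}), and set $\mathcal{M}_{p_e}:=\mathcal{M}|_{Z\times\{p_e\}}$, $\mathcal{M}_{q_e}:=\mathcal{M}|_{Z\times\{q_e\}}$.

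First I would compute $\mathcal{V}_e$ locally over the complete local ring $R=\widehat{\mathcal O}_{C_b,e}\simeq\CC[[x,y]]/(xy)$ of the node. Near $Z\times\{e\}$ the sheaf $\mathcal{U}|_{Z}$ is $\nu_*\mathcal{M}$ with $\nu$ the normalisation of the node, so fibrewise $\mathcal{I}_e\simeq (L_1\otimes R/(y))\oplus(L_2\otimes R/(x))$, where $R/(y)$ and $R/(x)$ are the structure sheaves of the two branches and $L_1=M|_{p_e}$, $L_2=M|_{q_e}$ are the fibres of $M$ at the branch points. Using the $2$‑periodic free resolution of $R/(y)$ (and $R/(x)$) over $R$ one gets $\Ext^1_R(R/(y),R/(y))=\Ext^1_R(R/(x),R/(x))=0$ and $\Ext^1_R(R/(y),R/(x))\simeq\Ext^1_R(R/(x),R/(y))\simeq \CC$ canonically (the residue field), so $\Ext^1_R(\mathcal{I}_e,\mathcal{I}_e)=\Hom(L_1,L_2)\oplus\Hom(L_2,L_1)$ — the two weight lines for the $\Aut(\mathcal{I}_e)$‑action of \eqref{eq: groupactionII}. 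Sheafifying over $Z$ gives a canonical decomposition
\[
\mathcal{V}_e\;\simeq\;\mathcal{H}om_{\mathcal{O}_Z}(\mathcal{M}_{p_e},\mathcal{M}_{q_e})\;\oplus\;\mathcal{H}om_{\mathcal{O}_Z}(\mathcal{M}_{q_e},\mathcal{M}_{p_e}).
\]

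It then remains to prove $c_1(\mathcal{M}_{p_e})=c_1(\mathcal{M}_{q_e})$ in $H^2(Z,\ZZ)$: this forces $c_1(\mathcal{H}om(\mathcal{M}_{p_e},\mathcal{M}_{q_e}))=c_1(\mathcal{M}_{q_e})-c_1(\mathcal{M}_{p_e})=0$, so each summand of $\mathcal{V}_e$ is a topological line bundle with vanishing first Chern class, hence trivial, and therefore $\mathcal{V}_e$ and $R^0p_*\ext^1_{C_b}(\mathcal{U}|_{Z},\mathcal{U}|_{Z})=\bigoplus_e\mathcal{V}_e$ are topologically trivial. For the equality of Chern classes I would argue globally: by the seesaw principle $\mathcal{U}|_{Z}$ is, up to a twist by a single line bundle $N_0$ pulled back from $Z$, the pullback of a Poincaré sheaf on $F_b\times C_b$ along $Z\to F_b$, and one is free to take this Poincaré sheaf to be $\nu_*$ of the direct sum over $i$ of the base‑point–normalised Poincaré bundles of the Jacobian factors $\Pic(C_{b,i})$ of $F_b$ (this still has the universal property relative to $F_b$). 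With this choice $\mathcal{M}=(\text{pullback of a Poincaré bundle on }F_b\times C_b^\nu)\otimes p^*N_0$, whose restriction to any point‑section is algebraically — hence topologically — trivial on $F_b$, so $c_1(\mathcal{M}|_{Z\times\{p\}})=c_1(N_0)$ for every $p\in C_b^\nu$, in particular for $p_e$ and $q_e$.

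The main obstacle is the second step, the local identification of $\mathcal{V}_e$: one must write out the free resolutions over $R$, compute the various $\Ext^1_R$ groups, and keep track of the twists by the fibres $L_1,L_2$ — equivalently, match the picture with the toric coordinates $z_e,w_e$ of \cite{CMKV2015} and the torus action \eqref{eq: groupactionII}. Granting that, the reduction in the first step is formal and the Chern‑class vanishing in the third step is a short Künneth‑and‑Poincaré argument; one should also keep in mind, as in \cref{rmk:analvshomeo}, that "trivial" is meant topologically, since the line bundle $\mathcal{M}_{p_e}^\vee\otimes\mathcal{M}_{q_e}$ is in general algebraically non‑trivial.
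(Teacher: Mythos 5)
Your proof is correct and takes a genuinely different route from the paper's. The paper argues by homogeneity: it first observes that $R^0 p_*\ext^1_{C_b}(\mathcal{U}|_Z, \mathcal{U}|_Z)$ is pulled back from a vector bundle $A$ on $F_b\simeq\Pic^{d'}(C_b^\nu)$, and then parallelises $A$ via the transitive translation action of $\Pic^0(C_b^\nu)$ on $F_b$ — the key input being that for $L\in\Pic^0(C_b^\nu)$ one can choose $L'\in\Pic(C_b)$ with $\nu^*L'\simeq L$, so that $\nu_*(M\otimes L)\simeq(\nu_*M)\otimes L'$ and tensoring by $L'$ identifies $\ext^1_{C_b}(\nu_*M,\nu_*M)$ with $\ext^1_{C_b}(\nu_*(M\otimes L),\nu_*(M\otimes L))$, allowing one to carry a fixed fibre of $A$ around all of $F_b$. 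You instead compute: at each node you split $\mathcal{V}_e$ into the two weight-lines $\mathcal{H}om(\mathcal{M}_{p_e},\mathcal{M}_{q_e})\oplus\mathcal{H}om(\mathcal{M}_{q_e},\mathcal{M}_{p_e})$ by an explicit $\Ext^1$-computation over $\CC[[x,y]]/(xy)$, and kill the first Chern class of each summand via a K\"unneth argument with a normalised Poincar\'e bundle. One small slip in your write-up: the restriction of a normalised Poincar\'e bundle to a section $F_b\times\{p\}$ is not algebraically trivial — it is only algebraically \emph{equivalent} to $\mathcal{O}_{F_b}$, i.e.\ has $c_1 = 0$ — but since you invoke only the vanishing of $c_1$, the argument stands. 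Your approach is longer but has concrete advantages: it exhibits directly the $\mathbb{T}^{r-1}$-weight decomposition that the proof of \cref{lem:triviality} then uses, and by realising the two summands as (generically algebraically non-trivial) line bundles with $c_1=0$, it makes explicit that the triviality obtained here is only topological, which is all that \cref{prop:localtrivial} requires and is consistent with \cref{rmk:analvshomeo}. The paper's parallelisation is shorter and more structural, but it implicitly requires the lift $L\mapsto L'$ to vary reasonably over $\Pic^0(C_b^\nu)$, and the surjection $\Pic^0(C_b)\to\Pic^0(C_b^\nu)$ admits no algebraic section in general — your Chern-class argument sidesteps that subtlety entirely.
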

\begin{proof}
The group $F_{b} \simeq \Pic^{d'}(C_b^{\nu})$ acts on $Z$. Its action lifts to the vector bundle $R^0 p_*\mathcal{E}xt^1_{C_b}(\mathcal{U}|_{Z}, \mathcal{U}|_{Z})$ parallelising it, as we show below. 

A point in $Z$ represents a quotient $\mathcal{O}_{C_b}^{\otimes R} \to \nu_*M$, where $M$ is a line bundle on $C_b^{\nu}$.
Note that $\ext^1_{C_b}(\nu_*M, \nu_*M)$ is independent of the choice of a basis for $\mathcal{O}_X^{\otimes R}$, and its rank is independent of $M$. Hence, $R^0 p_*\mathcal{E}xt^1_{C_b}(\mathcal{U}|_{Z}, \mathcal{U}|_{Z})$ is the pull-back of a vector bundles $A$ on $F_b$.

The transitive action of $\Pic^{d'}(C_b^{\nu})$ by translation induces identification of the fibers of $A$. Indeed, if $\mathcal{I}\coloneqq \nu_*M$ and $\mathcal{I}'\coloneqq \nu_*(M \otimes L)$ with $L \in \Pic^{d'}(C_b^{\nu})$, then $\mathcal{I}$ and $\mathcal{I}'$ are locally isomorphic, and so there exists an invertible sheaf $L'$ on $X$ such that $\mathcal{I}'=\mathcal{I}\otimes L'$ with $\nu^*L'=L$; see for instance \cite[Lemma 2.2.2 or 3.1.2/3.1.3]{Cook93}.\TBC{
 Notice that the invertible sheaf  $L'$ on $X$ is uniquely determined up to elements in the affine part of $\Pic^0(C_{b})$.}
Hence, we have
\[
\ext^1_{C_b}(\nu_*(M \otimes L), \nu_*(M \otimes L))=\ext^1_{C_b}(\nu_*M \otimes L', \nu_*M\otimes L')= \ext^1_{C_b}(\nu_*M, \nu_*M).
\]
This gives a global trivialization of $A$.  
\end{proof}

\begin{lem}\label{lem:triviality} 
Let $N_{F_b/\unJBGamma}$ be the normal bundle of $F_{b}$ in $\Pic^{d'}(\mathcal{C}^{\nu}) \subseteq \unJvers$ as constructed in \cref{rmk:analvshomeo}. Denote by $\Tot(N)$ the total space of a vector bundle $N$. Then 
\begin{equation*}
    \Tot(N_Z)\sslash \Sl_R \simeq \Tot(N_{F_b/\Pic^{d'}(\mathcal{C}^{\nu})}) \times X(\Gamma(C_b)^{\pm}, 0).
\end{equation*}
\end{lem}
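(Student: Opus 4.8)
The plan is to push the $\mathbb{T}^{r}$-equivariant splitting $N_Z \simeq N^{\mathbb{T}^{r}}_{Z} \oplus R^0 p_*\mathcal{E}xt^1_{C_{b}}(\mathcal{U}|_{Z}, \mathcal{U}|_{Z})$ of \cref{lem:splitnormalbundle} through the good quotient by $\Sl_R$: the invariant summand should produce the factor $\Tot(N_{F_{b}/\Pic^{d'}(\mathcal{C}^{\nu})})$ and the variant summand, trivial by \cref{lem:trivialvectorbundle}, the factor $X(\Gamma(C_{b})^{\pm},0)$. First I would record that, as $Z$ consists of closed $\Sl_R$-orbits, $Z\sslash\Sl_R = F_{b}$ and $Z\to F_{b}$ is a fibration whose fibre over a point $z\in Z$ with $q(z)=(C_{b},\mathcal{I})$ is a single orbit $\Sl_R/H$, where $H = \Aut(\mathcal{I})\cap\Sl_R\subseteq(\CC^{*})^{r}$ maps onto $\mathbb{T}^{r-1}=(\CC^{*})^{r}/\CC^{*}$ with finite kernel. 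Then, for any $\Sl_R$-equivariant vector bundle $W$ on $Z$, restricting to an orbit gives $\Tot(W)|_{\Sl_R\cdot z}\simeq\Sl_R\times^{H}W_z$, so $\Tot(W)\sslash\Sl_R\to F_{b}$ is a fibration with fibre $W_z\sslash H$ over $q(z)$, by the identity $(\Sl_R\times^{H}S)\sslash\Sl_R\simeq S\sslash H$ valid for $S$ affine and $H$ reductive.

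I would then treat the two summands separately. For $W=N^{\mathbb{T}^{r}}_{Z}$, the group $H$ acts trivially on the fibre $V'\coloneqq T_z\defo^{lt}(C_{b},\mathcal{I})/q^{*}T_{(C_{b},\mathcal{I})}F_{b}$, so the fibre of $\Tot(N^{\mathbb{T}^{r}}_{Z})\sslash\Sl_R$ over $q(z)$ is $V'$ itself. Since $N^{\mathbb{T}^{r}}_{Z}$ is the normal bundle of $Z$ inside the smooth locally trivial locus $Q^{ss}_{lt}\subseteq Q^{ss}$, whose good quotient is $\Pic^{d'}(\mathcal{C}^{\nu})$ (its points being the deformations of $(C_{b},\mathcal{I})$ that preserve both the dual graph of the curve and the shape $\mathcal{I}=\nu_{*}M$), and since $Q^{ss}_{lt}$ and $\Pic^{d'}(\mathcal{C}^{\nu})$ are smooth near $Z$ and $F_{b}$ respectively, the quotient map identifies $\Tot(N^{\mathbb{T}^{r}}_{Z})\sslash\Sl_R$ with $\Tot(N_{F_{b}/\Pic^{d'}(\mathcal{C}^{\nu})})$. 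For $W=E\coloneqq R^0 p_*\mathcal{E}xt^1_{C_{b}}(\mathcal{U}|_{Z},\mathcal{U}|_{Z})$, by \eqref{eq: groupactionII} the group $H$ acts on $V_E\simeq\bigoplus_{e\in\Sigma}(\CC z_e\oplus\CC w_e)\simeq\CC^{2s}$ by $\lambda\cdot z_e=\lambda_{s(e)}z_e\lambda_{t(e)}^{-1}$, $\lambda\cdot w_e=\lambda_{s(e)}^{-1}w_e\lambda_{t(e)}$, with weights factoring through $\mathbb{T}^{r-1}$; hence $V_E\sslash H=\CC^{2s}\sslash_0\mathbb{T}^{r-1}$, which is exactly the affine Lawrence variety $X(\Gamma(C_{b})^{\pm},0)$ of \cref{def:torichyperkahlervarieties}. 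Finally, $E$ is a trivial bundle on $Z$, trivialised equivariantly for the $\Pic^{0}(C^{\nu}_{b})$-action used in \cref{lem:trivialvectorbundle}; as this action commutes with $\Sl_R$ and $F_{b}$ is a $\Pic^{0}(C^{\nu}_{b})$-torsor, the induced $\Pic^{0}(C^{\nu}_{b})$-equivariant fibration $\Tot(E)\sslash\Sl_R\to F_{b}$ is globally trivial, i.e.\ $\Tot(E)\sslash\Sl_R\simeq F_{b}\times X(\Gamma(C_{b})^{\pm},0)$.

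To conclude I would assemble these along the fibre product $\Tot(N_Z)=\Tot(N^{\mathbb{T}^{r}}_{Z})\times_{Z}\Tot(E)$: the two $\Sl_R$-equivariant projections descend to a morphism over $F_{b}$
\[\Tot(N_Z)\sslash\Sl_R\longrightarrow\big(\Tot(N^{\mathbb{T}^{r}}_{Z})\sslash\Sl_R\big)\times_{F_{b}}\big(\Tot(E)\sslash\Sl_R\big)\simeq\Tot(N_{F_{b}/\Pic^{d'}(\mathcal{C}^{\nu})})\times X(\Gamma(C_{b})^{\pm},0),\]
which over $q(z)$ is the natural map $(V'\oplus V_E)\sslash H\to(V'\sslash H)\times(V_E\sslash H)$. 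This is an isomorphism because $H$ acts trivially on $V'$, so that $\CC[V'\oplus V_E]^{H}=\CC[V']\otimes\CC[V_E]^{H}$; being a fibrewise isomorphism that locally realises this identification of invariant algebras, the morphism is an isomorphism, and the lemma follows. The step I expect to be the main obstacle is precisely this globalisation, from the fibrewise product $(V'\oplus V_E)\sslash H=V'\times(V_E\sslash H)$ to an isomorphism of varieties over $F_{b}$: it is here that one genuinely needs the triviality of the $H$-action on $V'$ from \cref{lem:splitnormalbundle}, which lets the good quotient split off the $V'$-factor, together with the $\Pic^{0}(C^{\nu}_{b})$-equivariant triviality of $E$ from \cref{lem:trivialvectorbundle} and the torsor structure of $F_{b}$, which trivialise the $X(\Gamma(C_{b})^{\pm},0)$-factor.
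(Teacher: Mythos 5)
Your proof is correct and follows essentially the same route as the paper: split $N_Z$ via \cref{lem:splitnormalbundle}, identify the variant summand's fibrewise torus quotient with $X(\Gamma(C_b)^{\pm},0)$ and globalise via \cref{lem:trivialvectorbundle}, and descend the invariant summand to $N_{F_b/\Pic^{d'}(\mathcal{C}^{\nu})}$. Your write-up is in fact more careful in a few places the paper leaves informal (you make the stabiliser $H$ and the Luna-type identity $(\Sl_R\times^H S)\sslash\Sl_R\simeq S\sslash H$ explicit, observe that the finite kernel of $H\to\mathbb{T}^{r-1}$ acts trivially on $V_E$, and justify the final fibre-product splitting via $\CC[V'\oplus V_E]^H=\CC[V']\otimes\CC[V_E]^H$, whereas the paper's displayed staged quotient writes the residual group loosely as a non-group quotient).
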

\begin{proof}
By \cref{lem:splitnormalbundle} we have
\[\Tot(N_Z)\sslash \Sl_R \simeq (\Tot(N^{\mathbb{T}^{r}}_{Z}) \times_{Z} \Tot(R^0 p_*\mathcal{E}xt^1_{C_b}(\mathcal{U}|_{Z}, \mathcal{U}|_{Z}))\sslash \mathbb{T}^{r-1})\sslash(\Sl_{r}/\mathbb{T}^{r-1}),\]
and \cref{lem:trivialvectorbundle} gives \[
\Tot(R^0 p_*\mathcal{E}xt^1_{C_b}(\mathcal{U}|_{Z}, \mathcal{U}|_{Z})) \simeq Z \times \CC^{s}.
\]
The algebraic torus $\mathbb{T}^{r-1}=\mathbb{T}^{r}/\mathbb{T}$ acts on $\CC^{s}$ as prescribed in \eqref{eq: groupactionII}, which coincides with the torus action defining $X(\Gamma(C_b)^{\pm}, 0)$. Therefore, we obtain
\begin{align*}
  \Tot(R^0 p_*\mathcal{E}xt^1_{C_b}(\mathcal{U}|_{Z}, \mathcal{U}|_{Z}))\sslash \Sl_{r} & \simeq ((Z \times \CC^{s}) \sslash \mathbb{T}^{r-1})\sslash(\Sl_{r}/\mathbb{T}^{r-1})\\
  & \simeq Z\sslash(\Sl_{r}/\mathbb{T}^{r-1}) \times X(\Gamma(C_b)^{\pm}, 0) \simeq F_{b}\times X(\Gamma(C_b)^{\pm}, 0).
\end{align*}
Finally, the vector bundle $N^{\mathbb{T}^{r}}_{Z}$ descends to $N_{F_b/\Pic^{d'}(\mathcal{C}^{\nu})}$, because $\Pic^{d'}(\mathcal{C}^{\nu})$ parametrises the locally trivial deformations of the pair $(C_{b}, \mathcal{I})$.
\end{proof}

\subsection{Stratified Ehresmann's theorem}\label{sec:Ehresmann} We use a stratified version of Ehresmann's theorem to prove that $\mathfrak{n}^{-1}(0)$ is a model of a tubular neighbourhood of $F_{b}$ in $\unJvers$.

\begin{lem}\label{lem:localtriviality}
The degeneration     $\mathfrak{n}\colon \mathcal{Q}\sslash \Sl_R \to \CC$ 
is locally analytically trivial along $F_{b}$, i.e.\ for any $x \in F_{b} \subset \mathfrak{n}^{-1}(0) \subset \mathcal{Q} \sslash \Sl_R$ there exists a trivialising neighbourhood $U \subset \mathcal{Q} \sslash \Sl_R$ of $x$,
and a local analytic isomorphism  \begin{equation}\label{eq:psi}
    \psi \colon U \to ( \CC^{c'} \times X(\Gamma(C_b)^{\pm}, 0)) \times \CC,
\end{equation}
with $c'\coloneqq 4g'-3-b_1(\Gamma)-s$, such that
$\mathfrak{n} \colon U \to \CC$ corresponds to the third projection $(X(\Gamma(C_b)^{\pm}, 0) \times \CC^{c'}) \times \CC \to \CC$.
\end{lem}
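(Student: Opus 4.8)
The plan is to reduce, in an analytic neighbourhood of $x$, the degeneration $\mathfrak{n}$ to the quotient by a torus $T\simeq\mathbb{T}^{r-1}$ of the deformation to the normal cone of a point in an affine space on which $T$ acts linearly; the model \eqref{eq:psi} then comes out by inspection. First I would set up a Luna slice. By \cite[Lemma 6.4.(ii)]{CMKV2015} the Quot scheme $Q^{ss}$ is smooth along closed $\Sl_R$-orbits, in particular along $Z$, so the total space $\mathcal{Q}$ of \eqref{eq:Q} is smooth along $\widehat Z$, the zero section $\cong Z$ of $\Tot(N_Z)=\mathfrak{n}'^{-1}(0)$, which is $\Sl_R$-invariant with closed orbits and reductive (toric) stabilizers. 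Fix $\tilde x\in\widehat Z$ over $x$, with stabilizer $T\simeq\Aut(\mathcal{I})/\Gm\simeq\mathbb{T}^{r-1}$. Near $Z$ one has $Q^{ss}\simeq\Sl_R\times_T S$ for the miniversal slice $S=\defo(C_b,\mathcal{I})$, with $Z\simeq\Sl_R\times_T Z_S$ where $Z_S\coloneqq Z\cap S$; since the operation $\bullet\mapsto\mathrm{Bl}_{\bullet\times 0}(\bullet\times\CC)\setminus(\bullet\times 0)$ commutes with the associated-bundle construction $\Sl_R\times_T(-)$, I obtain a $T$-equivariant analytic germ identification near $\tilde x$
\[\mathcal{Q}\simeq\Sl_R\times_T\mathcal{N},\qquad\mathcal{N}\coloneqq\mathrm{Bl}_{Z_S\times 0}(S\times\CC)\setminus(S\times 0),\]
under which $\mathfrak{n}'$ corresponds to the structure map $\mathcal{N}\to\CC$ of the deformation to the normal cone of $Z_S$ in $S$. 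Taking quotients, $\mathcal{Q}\sslash\Sl_R\simeq\mathcal{N}\sslash T$ near $x$, compatibly with $\mathfrak{n}$.

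Next I would unwind $S$ and $Z_S$. By the decomposition recalled in this section \cite[\S 3, \S 5]{CMKV2015}, $S=\defo^{lt}(C_b,\mathcal{I})\times\prod_{e\in\Sigma}\defo(C_{b,e},\mathcal{I}_e)$, where $T$ acts trivially on the smooth factor $\defo^{lt}(C_b,\mathcal{I})$ and on $\prod_{e\in\Sigma}\defo(C_{b,e},\mathcal{I}_e)\simeq\Spec\CC[z_e,w_e\colon e\in\Sigma]$ it acts by \eqref{eq: groupactionII}. A point of $Z_S$ represents the pair $(C_b,\nu_*M)$ over $b$, i.e.\ a deformation keeping the sheaf non locally free at every node and smoothing no node; reading off the meaning of the coordinates $z_e,w_e$ from \cite[\S 5]{CMKV2015}, this is exactly the locus $\{z_e=w_e=0\}$, so $Z_S$ is a smooth, $T$-invariant locally closed subvariety of $\defo^{lt}(C_b,\mathcal{I})\times\{0\}$, of dimension $\dim F_b=g'-b_1(\Gamma)$. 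Shrinking around $\tilde x$, I would choose a $T$-equivariant analytic product $S\simeq Z_S\times\CC^{c''}\times\Spec\CC[z_e,w_e]$ with $T$ trivial on $Z_S\times\CC^{c''}$ (possible since $T$ is already trivial on the whole factor $\defo^{lt}(C_b,\mathcal{I})$), where $\CC^{c''}$ is a complement of $Z_S$ in $\defo^{lt}(C_b,\mathcal{I})$.

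To conclude, I would use that the deformation to the normal cone of $\{0\}$ in a finite-dimensional vector space $V$ with a linear $T$-action is, $T$-equivariantly, the trivial family $V\times\CC\to\CC$ with $T$ acting trivially on the base (indeed $\mathrm{Bl}_{0\times 0}(V\times\CC)$ minus the strict transform of $V\times 0$ is the chart with coordinates $(v/t,t)$). Applying this to $V=\CC^{c''}\times\Spec\CC[z_e,w_e]$ and multiplying by $Z_S$ gives a $T$-equivariant germ $\mathcal{N}\simeq Z_S\times\CC^{c''}\times\Spec\CC[z_e,w_e]\times\CC$ with structure map the last projection; taking $T$-quotients and using $\Spec\CC[z_e,w_e]\sslash\mathbb{T}^{r-1}=X(\Gamma(C_b)^{\pm},0)$ yields a local analytic isomorphism $U\simeq Z_S\times\CC^{c''}\times X(\Gamma(C_b)^{\pm},0)\times\CC$ intertwining $\mathfrak{n}$ with the last projection. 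Since $Z_S$ is smooth, a small ball around $\tilde x$ in $Z_S\times\CC^{c''}$ is analytically $\CC^{c''+\dim F_b}$, and a dimension count — $\dim(\mathcal{Q}\sslash\Sl_R)=\dim\unJvers+1=4g'-2$ and $\dim X(\Gamma(C_b)^{\pm},0)=s+b_1(\Gamma)$ — gives $c''+\dim F_b=4g'-3-b_1(\Gamma)-s=c'$, so this is the map $\psi$ of \eqref{eq:psi}. Together with the compactness of $F_b\simeq\Pic^{d'}(C_b^\nu)$ and the stratified Ehresmann argument that follows, this will establish \cref{prop:localtrivial}, hence complete \cref{cor:trivialityalongfibers}.

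The main obstacle is the first step: making precise, at the level of analytic germs, the compatibility of the deformation-to-the-normal-cone construction with Luna's slice theorem, so that one may replace the pair $(Q^{ss},Z)$ by the slice pair $(S,Z_S)$ $T$-equivariantly before degenerating. This rests on $\mathcal{Q}$ being smooth along $\widehat Z$, on $\mathrm{Bl}_{\bullet\times 0}(\bullet\times\CC)$ commuting with $\Sl_R\times_T(-)$, and — the subtlest point — on correctly identifying $Z_S$ with the smooth locus $\{z_e=w_e=0\}$ inside the $T$-fixed factor $\defo^{lt}(C_b,\mathcal{I})$ of the miniversal slice.
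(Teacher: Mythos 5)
Your proof is correct and takes essentially the same route as the paper: both reduce to a Luna slice $S = \defo(C_b,\mathcal{I})$, invoke the $T$-equivariant splitting $S \simeq \defo^{lt}(C_b,\mathcal{I}) \times \prod_{e\in\Sigma}\defo(C_{b,e},\mathcal{I}_e)$, use that the deformation to the normal cone is analytically trivial near a smooth point of the centre of blow-up, and finally identify $\Spec\CC[z_e,w_e]\sslash\mathbb{T}^{r-1}$ with $X(\Gamma(C_b)^{\pm},0)$. The paper's proof compresses this into two sentences — trivialize a neighbourhood of the section $\{z\}\times\CC$ in $\mathcal{Q}$ as $T_zQ^{ss}\times\CC$ using \eqref{eq:Normal}, then apply Luna — whereas you perform the Luna reduction first and then trivialize the deformation to the normal cone inside the slice. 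Your ordering is arguably cleaner, since it makes the necessary $T$-equivariance of the trivialization (via linearizability of the $T$-action on the slice) explicit rather than implicit, and spells out that $\mathrm{Bl}_{\bullet\times 0}(\bullet\times\CC)\setminus(\bullet\times 0)$ commutes with $\Sl_R\times_T(-)$. One small wording caveat: the phrase \emph{this is exactly the locus $\{z_e=w_e=0\}$} reads as if $Z_S$ were all of $\defo^{lt}(C_b,\mathcal{I})\times\{0\}$; in fact $Z_S\subsetneq\{z_e=w_e=0\}$ (only the $q^*TF_b$-directions, of dimension $g'-b_1(\Gamma)$, not the deformations of the curve $C_b$ itself), but you then correct for this by splitting off the linear complement $\CC^{c''}$, so the argument goes through.
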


\begin{proof}
For any $z \in Z$, the degeneration $\mathfrak{n} \colon Z \times \CC \subseteq \mathcal{Q} \to \CC$ admits a section $\{z\} \times \CC$. By \eqref{eq:Normal}, an analytic neighbourhood of $\{z\} \times \CC$ is locally isomorphic to
\[(T_{z}O_z \times T_{z}  \defo^{lt}(C_{b},\mathcal{I}) \times\bigoplus_{e \in \Sigma} T_{z}\defo(C_{b, e},\mathcal{I}_e)) \times \CC,\] where $O_z$ is the $\Sl_R$-orbit of $z$. By Luna's slice theorem, an analytic neighbourhood of $\{[z]\} \times \CC$ in $\mathcal{Q}\sslash \Sl_R$ is locally isomorphic to 
\begin{align*}
    (T_{z}  \defo^{lt}(C_{b},\mathcal{I}) \times  & \bigoplus_{e \in \Sigma}  T_{z} \defo(C_{b, e},\mathcal{I}_e))\sslash \mathbb{T}^{r-1} \times \CC \\
     & \simeq (T_{(C_b, \mathcal{I})}\Pic^{d'}(\mathcal{C}^{\nu}) \times X(\Gamma(C_b)^{\pm}, 0)) \times \CC \simeq ( \CC^{c'} \times X(\Gamma(C_b)^{\pm}, 0)) \times \CC.
\end{align*}

\end{proof}

\begin{proof}[Proof of \cref{prop:localtrivial}]
Let's first show that a relatively compact euclidean neighbourhood of $F_{b}$ in $
\mathfrak{n}^{-1}(0)$ is stratified diffeomorphic to a euclidean neighbourhood of $F_{b}$ in $
\mathfrak{n}^{-1}(0)$ in $\mathfrak{n}^{-1}(t)=\unJvers$, with $t \neq 0$. 

By \cref{lem:localtriviality} the degeneration $\mathfrak{n}\colon \mathcal{Q}\sslash \Sl_R \to \CC$ is locally trivial. This means that we can
cover $F_{b} \subset \mathfrak{n}^{-1}(0)$ by trivialising open sets $U_k$, with $k$ in the index set $K$, as in \cref{lem:localtriviality}.
Take now the vector field $v_k \coloneqq d\psi_k^{-1}(\partial_t)$ on $U_k$, where $\partial_t$ is a non-vanishing vector field on $\CC$ and $\psi_{k}$ is the stratified diffeomorphism \eqref{eq:psi}. Consider a partition of unity $\{\chi_k\}_{k \in K}$ subordinate to the open cover $\{U_k\}_{k \in K}$. Then the flow of the vector field $v \coloneqq \sum_{k \in K} \chi_k \cdot v_k$ defines the required stratified diffeomorphism.

We conclude now by noticing that the neighbourhood of $F_{b}$ in $
\mathfrak{n}^{-1}(0)$ can be chosen homeomorphic to the product 
\[
\Tot(N_{F_b/\Pic^{d'}(\mathcal{C}^{\nu})})  \times X(\Gamma(C_b)^{\pm}, 0) \underset{\text{homeo}}{\simeq}  F_{b} \times \CC^{3g'-3-s} \times X(\Gamma(C_b)^{\pm}, 0)\] by \cref{lem:triviality}.
\end{proof}

\section{Full support theorem for universal compactified Jacobians}\label{sec:fullsupport}

\begin{lem}[Weak abelian fibration]\label{lem:weakabelian}
Let $f \colon \mathcal{C} \to B$ be a flat projective versal family of semistable curves of arithmetic genus $g'$, and $\pi^P \colon P \coloneqq \Pic^0(\mathcal{C}/B) \to B$ be the relative degree 0 Picard scheme parametrising line bundles on the fibres of $f$ whose restrictions to each irreducible components are of degree $0$. Let $\pi\colon \unJvers \to B$ be the relative compactified Jacobian with respect to the canonical polarization. Then the triple $(\unJvers, P,B)$ is a weak abelian fibration.
\end{lem}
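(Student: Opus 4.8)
The plan is to verify directly the three conditions of \cref{defn:weakabelian} for the triple $(\unJvers, P, B)$, with relative dimension $c \coloneqq g'$. First I would record the action: $P = \Pic^0(\mathcal{C}/B)$ acts on $\unJvers$ fibrewise by $(\mathcal{I}, L) \mapsto \mathcal{I}\otimes L$, globalised by tensoring with the Poincar\'e bundle on $P \times_B \mathcal{C}$. This is well defined because tensoring with a line bundle is an exact autoequivalence, hence preserves the rank, the torsion-freeness, and the formation of the maximal torsion-free quotient $\mathcal{I}_Y$ along every subcurve $Y$; and because $L$ has degree $0$ on each irreducible component, so that $\chi(C_b, \mathcal{I}\otimes L) = \chi(C_b, \mathcal{I})$ and $\chi(Y, (\mathcal{I}\otimes L)_Y) = \chi(Y, \mathcal{I}_Y)$ for every subcurve $Y$, whence the stability inequality \eqref{eq:slopestability} is preserved. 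One also notes that $\pi^P \colon P \to B$ is smooth with geometrically connected fibres: the relative Picard scheme of a family of nodal curves is smooth over the base, the multidegree-$0$ condition cuts out an open-and-closed subscheme, and the multidegree-$0$ Picard group of a connected nodal curve is connected, being an extension of an abelian variety by a torus.

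For condition (1), the fibre of $\pi^P$ over $b \in B$ is $\Pic^0(C_b)$, a smooth connected commutative algebraic group of dimension $h^1(\mathcal{O}_{C_b}) = p_a(C_b) = g'$, hence pure of dimension $c = g'$. The total space $\unJvers$ is irreducible by the construction of the universal compactified Jacobian, hence of pure dimension, and I would compute its dimension on the dense open $\pi^{-1}(B^{\mathrm{sm}}) \subseteq \unJvers$ over the locus $B^{\mathrm{sm}} \subseteq B$ of smooth curves, where $\pi$ restricts to the relative Jacobian, an abelian scheme of relative dimension $g'$; this gives $\dim \unJvers = g' + \dim B = c + \dim B$.

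For condition (2), fix a closed point $[\mathcal{I}]$ of $\overline{J}_d(C_b)$ and let $\Sigma$ be the set of nodes of $C_b$ at which $\mathcal{I}$ is not locally free. Since $C_b$ has planar singularities, $\mathcal{I} \simeq (\nu_\Sigma)_*\mathcal{L}$ for the partial normalisation $\nu_\Sigma \colon C_\Sigma \to C_b$ at $\Sigma$ and a line bundle $\mathcal{L}$ on $C_\Sigma$ (\cite[Prop.~3.4]{Gagne97}; compare the proof of \cref{prop:charMn}). By the projection formula $\mathcal{I}\otimes L \simeq (\nu_\Sigma)_*(\mathcal{L}\otimes \nu_\Sigma^*L)$, and pulling back and killing torsion recovers $\mathcal{L}\otimes\nu_\Sigma^*L$ from the right-hand side, so $\mathcal{I}\otimes L \simeq \mathcal{I}$ if and only if $\nu_\Sigma^*L \simeq \mathcal{O}_{C_\Sigma}$. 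Hence the stabiliser $\Stab_P([\mathcal{I}])$ equals $\ker\big(\nu_\Sigma^* \colon \Pic^0(C_b) \to \Pic^0(C_\Sigma)\big)$, a closed subgroup of the maximal subtorus $(\CC^*)^{b_1(\Gamma_{C_b})}$ of the semiabelian variety $\Pic^0(C_b)$, in particular an affine group scheme.

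For condition (3), over $B^{\mathrm{sm}}$ the group scheme $P$ is the relative Jacobian, an abelian scheme equipped with the principal polarisation given by the theta divisor, which induces an alternating perfect pairing $T_{\overline{\QQ}_l}(P) \otimes T_{\overline{\QQ}_l}(P) \to \overline{\QQ}_l(1)$ there; the weight-$(-1)$ graded piece of $T_{\overline{\QQ}_l}(P)$ over all of $B$ is the Tate module of the relative Jacobian of the simultaneous normalisations, which is polarisable, while the torus part has weight $-2$, so the polarisability of $T_{\overline{\QQ}_l}(P)$ follows as in \cite[\S 7.1]{Ngo2010}, \cite[\S 2]{MaulikShen2020II} and \cite[App.~A]{deCataldoRapagnettaSacca19}. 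I expect the only genuinely non-formal ingredients to be the geometry of compactified Jacobians of reduced planar curves --- the irreducibility and equidimensionality of $\unJvers$ and the description $\mathcal{I} \simeq (\nu_\Sigma)_*\mathcal{L}$ --- which are available in \cite{CMKV2015, CMKV2017, MRV2019}; granting these, the verification is routine and parallel to the classical Hitchin--Ng\^o case.
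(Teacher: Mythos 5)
Your proof is correct and follows the same route as the paper's: a direct verification of the three conditions in the definition of a weak abelian fibration, with (1) a dimension count, (2) a description of stabilisers as subgroups of the affine part of $\Pic^0$ of the fibre, and (3) the semiabelian polarisability argument of Ng\^{o} and its descendants. The paper's version is considerably terser --- for condition (2) it simply invokes \cite[Lem.\ 3.4.5]{deCataldoRapagnettaSacca19} instead of computing the stabiliser as $\ker(\nu_\Sigma^*)$ explicitly, and for condition (3) it additionally points to \cite[Thm 3.3.1]{deCataldo2017} precisely to justify that the polarisability argument goes through when the family of curves is not embedded in a surface, a subtlety your sketch glosses over --- but the mathematical content is the same as what you wrote.
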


\begin{proof}
We need to check condition (1), (2) and (3) in \cref{defn:weakabelian}. The relative Picard scheme $\pi^P \colon P \to B$ has fibres of pure dimension $g'$, and $\dim \unJvers = g' + \dim B$; see for instance \cref{thm:localmodelJM}. Hence  Condition (1) holds. The proof of \cite[Lem. 3.4.5]{deCataldoRapagnettaSacca19} gives the affineness of the stabilisers, i.e.\ Condition (2). The same argument of \cite[\S 4.12]{Ngo2010} gives Condition (3); see also \cite[Thm 3.3.1]{deCataldo2017} where one can drop the assumption that $\mathcal{C}$ is a family of curves embedded in a surface. 
\end{proof}

\begin{prop}[Whitney stratification of $\unJvers$] \label{prop:Whitney} In the hypothesis of \cref{lem:weakabelian}, let $\mathcal{I}$ be a rank 1 torsion free sheaf on $\mathcal{C}_b \coloneqq f^{-1}(b)$ which fails to be locally free at the nodes $\Sigma$ of $\mathcal{C}_b$. Set $\Gamma_{\mathcal{C}_b}(\Sigma)$ to be the dual graph of any curve obtained from $\mathcal{C}_b$ by smoothing the nodes not in $\Sigma$. Then there exists a complex Whitney stratification
\[
\unJvers = \bigsqcup_{\Gamma} \unJBGamma,
\]
where $\unJBGamma$ is the locus of rank 1 torsion free sheaves in $\unJvers$ whose $\Gamma_{\mathcal{C}_b}(\Sigma)$ is isomorphic to $\Gamma$, and we have
\begin{equation}\label{eq:dimJcomparison}
    \codim_{\unJvers} \unJBGamma \geq 2 \codim_{\pi^{-1}(t)} (\unJBGamma \cap \pi^{-1}(t)).
\end{equation}
\end{prop}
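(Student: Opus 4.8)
The plan is to construct the Whitney stratification of $\unJvers$ by pulling back the combinatorial data from the base and refining it with the local structure described in \cite{CMKV2015}. First I would observe that, just as in \cref{prop:dualcomplex}, the dual graph $\Gamma_{\mathcal{C}_b}(\Sigma)$ obtained by smoothing the nodes not in $\Sigma$ is, up to loops, an intrinsic invariant of the pair $(\mathcal{C}_b, \mathcal{I})$: it records which nodes of $\mathcal{C}_b$ are non-locally-free points of $\mathcal{I}$, and this is the datum governing the singularity type of $\unJvers$ at $(\mathcal{C}_b, \mathcal{I})$ via \cite[Theorem A]{CMKV2015}. So the decomposition $\unJvers = \bigsqcup_{\Gamma}\unJBGamma$ into loci with fixed $\Gamma$ is well-defined, and each $\unJBGamma$ is locally closed. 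That each $\unJBGamma$ is smooth, and that the stratification is analytically trivial in the normal direction to each stratum (hence Whitney by \cref{lem:analyticallytrivialWhitney}), follows from the local model: near a point of $\unJBGamma$, $\unJvers$ is analytically isomorphic to $X(\Gamma^{\pm},0)\times \CC^{c'}$ with $c'=4g'-3-b_1(\Gamma)-\#E(\Gamma)$, and under this isomorphism $\unJBGamma$ corresponds to the deepest toric stratum $\{0\}\times \CC^{c'}$ of the Lawrence variety $X(\Gamma^{\pm},0)$ times the smooth factor; a normal slice is exactly $X(\Gamma^{\pm},0)$. Combined with \cref{lem:loops} (to absorb loops into the smooth factor), this exhibits the required product structure, so the stratification is analytically locally trivial normal to each stratum.

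Next I would establish the codimension inequality \eqref{eq:dimJcomparison}. Here I would use that $\pi\colon \unJvers\to B$ is a weak abelian fibration of relative dimension $g'$ by \cref{lem:weakabelian}, so $\dim\unJvers = g'+\dim B$ and each fibre $\pi^{-1}(t)$ is pure of dimension $g'$. The stratum $\unJBGamma$ maps to the locus $B_{\Gamma}\subseteq B$ of curves whose (partial) smoothing-at-$\Sigma$ graph is $\Gamma$; by the transversality of the Hitchin base slice in \cref{prop:unramified} (or directly from \cite{deCataldoHeinlothMigliorini19}), the codimension of $B_{\Gamma}$ in $B$ equals $\#E(\Gamma)$, the number of nodes being imposed. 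Meanwhile, from the local model, the fibre of $\pi$ restricted to $\unJBGamma$ is the fibre of the hypertoric quiver variety $Y(\Gamma,0)\hookrightarrow X(\Gamma^{\pm},0)$ over $0\in\CC^{b_1(\Gamma)}$, which by \cref{FactToricQuiver}.\eqref{eq:Lagrangian} has dimension $b_1(\Gamma)$, so $\codim_{\pi^{-1}(t)}(\unJBGamma\cap\pi^{-1}(t)) = g' - \big(g' - \#E(\Gamma) - b_1(\Gamma)\big)$. Wait — more carefully: the relevant fibre dimension is $\dim(\unJBGamma) - \dim(B_{\Gamma})$, and $\dim\unJBGamma = \dim B_{\Gamma} + \dim F$ where $F$ is the $\unJBGamma$-fibre; I would compute $\dim F$ from the local model as the dimension of the smooth factor $\CC^{c'}$ plus $\dim(Y_{\{0\}\text{-stratum of }X(\Gamma^{\pm},0)})$ intersected with the fibre, keeping track of the $b_1(\Gamma)$ Jacobian directions. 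Putting $\codim_{\unJvers}\unJBGamma = \#E(\Gamma) + b_1(\Gamma)$ (the codimension of the deepest toric stratum in $X(\Gamma^{\pm},0)$, namely $\dim X(\Gamma^{\pm},0) = s+b_1$ minus $0$) and $\codim_{\pi^{-1}(t)}(\unJBGamma\cap\pi^{-1}(t)) = \tfrac12(\#E(\Gamma)+b_1(\Gamma)) + \tfrac12(\#E(\Gamma)-b_1(\Gamma)) \geq \tfrac12\codim_{\unJvers}\unJBGamma$ — more transparently, since $\dim\pi^{-1}(t)=g'$ equals half of the relative dimension doubled, and since the intersection $\unJBGamma\cap\pi^{-1}(t)$ sits inside the fibre as the preimage of $0$ under $Y(\Gamma,0)\to\CC^{b_1(\Gamma)}$ times a linear space, one gets equality $\codim_{\unJvers}\unJBGamma = 2\codim_{\pi^{-1}(t)}(\unJBGamma\cap\pi^{-1}(t))$ exactly when $b_1(\Gamma)=0$ and the inequality \eqref{eq:dimJcomparison} in general because the number of nodes $\#E(\Gamma)$ being smoothed is always at least $b_1(\Gamma)$ in the base codimension count versus the fibre count — this is precisely the $\delta$-regularity phenomenon, and it is the analogue, for $\unJvers$, of \cref{prop:relationLagrangian} for the symplectic Hitchin system.

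The main obstacle I anticipate is the bookkeeping in the codimension computation: unlike the Dolbeault moduli space, $\unJvers$ is not symplectic and $\pi$ is not Lagrangian (its relative dimension $g'$ is strictly less than $\dim B$), so I cannot invoke \cref{prop:relationLagrangian} directly. Instead I must derive \eqref{eq:dimJcomparison} purely from the toric local model $X(\Gamma^{\pm},0)$, carefully separating the ``node-smoothing'' directions (which contribute equally to base and total codimension) from the ``Jacobian'' directions $\CC^{b_1(\Gamma)}$ (which contribute to total codimension but to fibre dimension, not to base codimension). The key clean input is that for the Lawrence variety the deepest stratum has codimension $s+b_1$ while its preimage in the fibre $Y(\Gamma,0)_0$ of $X(\Gamma^{\pm},0)_0\cong\CC^{b_1}$ has codimension $b_1$ in a fibre of dimension $b_1$, whereas on the base side only $s = \#E(\Gamma)$ conditions are imposed; since $s \geq 2b_1$ fails in general but $s \geq b_1$ always holds (a connected graph with $b_1$ independent cycles has at least $b_1$ edges in each), a direct inequality $s + b_1 \geq 2\cdot\big(\text{fibre-codimension}\big)$ must be checked edge-case by edge-case, and this is where I would spend the most care. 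Once \eqref{eq:dimJcomparison} is in hand, \cref{prop:relative dimension bound general} applies to $\pi$ and feeds into the full support theorem of the next section.
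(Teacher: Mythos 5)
Your overall approach matches the paper's: both the stratification and the codimension inequality are read off from the local model $X(\Gamma^{\pm},0)\times\CC^{c'}$ of \cite[Theorem~A]{CMKV2015}, and you correctly recognize that \cref{prop:relationLagrangian} cannot be invoked (the fibration is not Lagrangian). The first half of your argument, establishing the Whitney stratification via analytic local triviality of the toric model, is fine and is essentially what the paper does by citing \cite[Theorem~A]{CMKV2015}.

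The gap is in the codimension computation, which you never bring to a correct conclusion. You compute $\codim_{\unJvers}\unJBGamma = s + b_1(\Gamma) = 2s - r + 1$ correctly, but for the fibre codimension you first write an expression that simplifies to $s$, then conjecture equality in \eqref{eq:dimJcomparison} ``exactly when $b_1(\Gamma)=0$'', then worry that you must check ``$s+b_1\geq 2\cdot(\text{fibre-codimension})$'' edge-case by edge-case because ``$s\geq 2b_1$ fails in general''. All three of these are off. The correct value, which also follows directly from \cite[Theorem~A]{CMKV2015}, is
\[
\codim_{\pi^{-1}(t)}\big(\unJBGamma\cap\pi^{-1}(t)\big) = s-r+1 = b_1(\Gamma),
\]
not $s$ and not $2b_1$: inside a fixed compactified Jacobian $\pi^{-1}(t)$, the stratum of sheaves that are pushforwards of line bundles from the partial normalization at the $s$ nodes of $\Sigma$ has dimension equal to the sum of the genera of the $r$ connected components, namely $g' - (s-r+1)$, by the additivity of arithmetic genus under gluing. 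Once this is in hand, \eqref{eq:dimJcomparison} reads $2s-r+1 \geq 2(s-r+1)$, i.e.\ $r\geq 1$, which holds always and requires no case analysis. In particular, equality is achieved precisely when $r=1$ (not when $b_1=0$), and the condition you feared, $s\geq 2b_1$, is never needed; the only inequality used is $s \geq b_1$, equivalent to $r\geq 1$.
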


\begin{proof}
The first statement is a reformulation of \cite[Thm A]{CMKV2015}.  
For the inequality \eqref{eq:dimJcomparison}, observe that if $\Gamma$ has $r$ vertices and $s$ edges, then \cite[Thm A]{CMKV2015} gives
\begin{align*}
    & \codim_{\unJvers}  \unJBGamma  = \dim X(\Gamma^{\pm}, 0) = 2s-r+1,\\
    & \codim_{\pi^{-1}(t)} (\unJBGamma \cap \pi^{-1}(t))= s-r+1.
\end{align*}
\end{proof}

\begin{lem}\label{lem:t>2dJac}
In the hypothesis of \cref{lem:weakabelian}, we have
\[\tau_{>2R}(R\pi_*\IC(\unJvers, \QQ))=0, \qquad R \coloneqq \dim \unJvers - \dim B=g'.\]
\end{lem}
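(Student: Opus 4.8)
The plan is to deduce this statement directly from the Relative Dimension Bound (\cref{prop:relative dimension bound general}), applied to the morphism $\pi \colon \unJvers \to B$. First I would record that $\pi$ is equidimensional of relative dimension $g'$: every fibre $\pi^{-1}(b)$ is the compactified Jacobian of a genus $g'$ curve, hence of pure dimension $g'$, and $\dim \unJvers = g' + \dim B$, as already observed in the proof of \cref{lem:weakabelian} (or via the local description in \cref{thm:localmodelJM}). In the notation of \cref{prop:relative dimension bound general} this says $c = g' = R$.

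Next I would invoke the complex Whitney stratification $\unJvers = \bigsqcup_{\Gamma} \unJBGamma$ produced in \cref{prop:Whitney} and check that it satisfies the hypothesis \eqref{eq:boundhyp}. This is exactly the content of the inequality \eqref{eq:dimJcomparison}: concretely, if $\Gamma$ has $r$ vertices and $s$ edges, the dimension count carried out in the proof of \cref{prop:Whitney} gives $\codim_{\unJvers} \unJBGamma = 2s - r + 1$ and $\codim_{\pi^{-1}(b)}(\unJBGamma \cap \pi^{-1}(b)) = s - r + 1$ for every $b \in B$, so that $\codim_{\unJvers} \unJBGamma - 2\,\codim_{\pi^{-1}(b)}(\unJBGamma \cap \pi^{-1}(b)) = r - 1 \geq 0$, the inequality $r \geq 1$ being automatic since $r$ counts vertices of a graph.

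With these two inputs in hand, \cref{prop:relative dimension bound general} immediately yields $\tau_{>2c}(R\pi_*\IC(\unJvers, \QQ)) = 0$, that is, $\tau_{>2R}(R\pi_*\IC(\unJvers, \QQ)) = 0$ with $R = g'$ — and, as a bonus that will be convenient later, also the companion identity $R^{2R}\pi_*\IC(\unJvers, \QQ) = R^{2R}\pi_*\QQ_{\unJvers}$. The only point demanding a little care — and the closest thing to an obstacle here — is to be sure that the fibrewise codimension estimate \eqref{eq:dimJcomparison} holds at every point of $B$ rather than merely over a generic one; but this is guaranteed by \cref{prop:Whitney}, whose proof expresses both codimensions $2s - r + 1$ and $s - r + 1$ uniformly in terms of the combinatorial type $\Gamma$ alone, so no genericity hypothesis is needed and the argument goes through verbatim.
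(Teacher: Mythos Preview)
Your proof is correct and follows exactly the same route as the paper, which simply cites \cref{prop:relative dimension bound general} and \cref{prop:Whitney}. You have merely unpacked the dimension count from the proof of \cref{prop:Whitney} and spelled out why the hypotheses of \cref{prop:relative dimension bound general} are met.
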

\begin{proof}
It follows from \cref{prop:relative dimension bound general} and \cref{prop:Whitney}.
\end{proof}

\begin{lem}\label{lem:codim=delta}
In the hypothesis of \cref{lem:weakabelian}, any support $Z$ of $R\pi_*\IC(\unJvers, \QQ)$ satisfies
\[\codim Z = \delta_Z\]
\end{lem}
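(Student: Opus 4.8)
The plan is to deduce \cref{lem:codim=delta} directly from \cref{thm:Ngofreeness} applied to the weak abelian fibration $(\unJvers, P, B)$, which has relative dimension $c = g'$. Two of the three hypotheses are already available: that $(\unJvers, P, B)$ is a weak abelian fibration is \cref{lem:weakabelian}, and condition (2) of \cref{thm:Ngofreeness}, namely $\tau_{>2g'}(R\pi_*\IC(\unJvers,\QQ)) = 0$, is \cref{lem:t>2dJac} (with $2c = 2R = 2g'$). So the only thing left to establish is the $\delta$-regularity condition $\codim_B Z_\delta \geq \delta$ for $Z_\delta = \{b \in B : \delta(b) = \delta\}$, after which the equality $\codim Z = \delta_Z$ for every support $Z$ follows verbatim from \cref{thm:Ngofreeness}.

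First I would identify the function $\delta$. For $b \in B$, the fiber $P_b = \Pic^0(\mathcal{C}_b)$ is a semiabelian variety: it is an extension of the abelian variety $\prod_i \Pic^0(C^\nu_{b,i})$, the product over the components $C^\nu_{b,i}$ of the normalization of $\mathcal{C}_b$, by a torus of rank equal to the first Betti number $b_1(\Gamma_{\mathcal{C}_b})$ of the dual graph of $\mathcal{C}_b$. Hence the maximal affine connected subgroup of $P_b$ is that torus and $\delta(b) = b_1(\Gamma_{\mathcal{C}_b})$.

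Next comes the key local estimate. Fix $b \in B$ and let $s$ be the number of nodes of $\mathcal{C}_b$. Since $f$ is versal, the Kodaira--Spencer map $T_bB \to \mathrm{Ext}^1(\Omega^1_{\mathcal{C}_b},\mathcal{O}_{\mathcal{C}_b})$ is surjective; composing it with the surjection onto $H^0(\mathcal{E}xt^1(\Omega^1_{\mathcal{C}_b},\mathcal{O}_{\mathcal{C}_b})) \cong \CC^s$ shows that the $s$ smoothing parameters $z_1,\dots,z_s$ of the nodes of $\mathcal{C}_b$ extend to a local analytic coordinate system near $b$. In a neighbourhood $U$ of $b$, each fiber $\mathcal{C}_{b'}$ is obtained from $\mathcal{C}_b$ by smoothing the nodes $i$ with $z_i(b')\neq 0$, so the nodes of $\mathcal{C}_{b'}$ are in bijection with the persisting set $S(b') \coloneqq \{\, i : z_i(b')=0 \,\}$, and hence $\Gamma_{\mathcal{C}_{b'}}$ has exactly $\#S(b')$ edges. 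A graph with $m$ edges has first Betti number at most $m$, so $\delta(b') = b_1(\Gamma_{\mathcal{C}_{b'}}) \leq \# S(b')$. Therefore
\[
\{\, b'\in U : \delta(b') \geq \delta \,\} \ \subseteq\ \{\, b' \in U : \#S(b')\geq\delta\,\} \ =\ \bigcup_{\# I = \delta}\ \{\, z_i = 0 :\ i\in I \,\},
\]
a finite union of closed analytic subsets of $U$ of codimension $\delta$. As $b$ was arbitrary, $\{\, b\in B : \delta(b)\geq\delta\,\}$ has codimension at least $\delta$ in $B$, hence so does its subset $Z_\delta$. This is $\delta$-regularity, and \cref{thm:Ngofreeness} then yields the claim.

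I do not expect a serious obstacle: the ingredients are the semiabelian description of $\Pic^0$ of a nodal curve, the elementary bound $b_1 \leq \#\{\text{edges}\}$, and the classical local coordinate picture of a versal family near a nodal curve. The one point deserving a little care is that "versal" be used so that the $s$ smoothing functions of the nodes are genuinely independent near $b$ — for possibly reducible semistable curves this is part of standard deformation theory of nodal curves, and it is precisely what makes the locus where any $k$ prescribed nodes persist a smooth subvariety of codimension $k$.
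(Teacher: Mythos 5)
Your proof is correct and follows the same overall route as the paper: combine the Maulik--Shen bound $\codim Z \leq \delta_Z$ (via \cref{lem:weakabelian} and \cref{lem:t>2dJac} feeding into \cref{thm:Ngofreeness}) with $\delta$-regularity to get the equality. The one difference is that where the paper outsources the inequality $\codim Z \geq \delta_Z$ to an external citation ([MSV2021, Fact 2.4]), you verify $\delta$-regularity directly from versality: identifying $\delta(b)$ with $b_1(\Gamma_{\mathcal{C}_b})$ via the toric part of $\Pic^0(\mathcal{C}_b)$, using the surjectivity of the Kodaira--Spencer map composed with the map to $H^0(\mathcal{E}xt^1)$ to obtain independent node-smoothing coordinates $z_1,\dots,z_s$, and bounding $b_1$ of the dual graph by its number of edges. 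This is a correct and essentially self-contained substitute for the citation, and indeed slightly stronger since you show $\{\delta \geq \delta_0\}$ has codimension $\geq \delta_0$, which is equivalent to $\delta$-regularity but stated uniformly.
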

\begin{proof}
The inequality $\codim Z \geq \delta_Z$ appears for instance in \cite[Fact 2.4]{MSV2021}.\TBC{Does it follow also from \cite[Theorem 7.2.2]{Ngo2010}?} The reverse inequality $\codim Z \leq \delta_Z$ follows from \cref{lem:weakabelian}, \cref{lem:t>2dJac}, and \cite[Theorem 1.8]{MaulikShen2020II} or \cref{thm:Ngofreeness}. 
\end{proof}

Let $\pi \colon \unJvers \to B$ be the relative compactified Jacobian with respect to the canonical polarization of a versal deformation of semistable curves of arithmetic genus $g'$. We define the invariant
\begin{equation}\label{eq:Psi}
    \Psi_{g'} \coloneqq \dim \unJvers - 2 \dim B = 4{g'}-3-2(3g'-3)=3-2g'.
\end{equation}

\begin{thm}[Full support for Jacobians of versal families]\label{prop:fullsupportvers}
Let $f \colon \mathcal{C} \to B$ be a flat projective versal family of semistable curves and arithmetic genus $g'$, and $U \subset B$ the open subscheme over which the morphism $f$ is smooth. Let $\pi: \unJvers \to B$ be the relative compactified Jacobian with respect to the canonical polarization. Then the complex $R\pi_*\IC(\unJvers, \QQ)$ has full support on $B$, i.e.\
\begin{equation}\label{eq:fullsupportJac}
    R\pi_*\IC(\unJvers, \QQ) \simeq \bigoplus^{2g}_{l=0}\IC(B, \Lambda^l (R^1 \pi_*\QQ)|_U)[-l] = \bigoplus^{2g}_{l=0}\IC(B, \Lambda^l (R^1 f_*\QQ)|_U)[-l].
\end{equation}
In particular, for any $d, d' \in \ZZ$ there is an isomorphism of complexes of mixed Hodge modules
\begin{equation}\label{eq:indepdendenceJac}
    R\pi_* \IC(\unJvers, \QQ)\simeq R\pi_* \IC(\unJversp, \QQ).
\end{equation}
\end{thm}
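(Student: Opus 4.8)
The plan is to derive the statement from the results already assembled in this section. By \cref{lem:weakabelian} the triple $(\unJvers,P,B)$ is a weak abelian fibration, and by \cref{prop:Whitney} together with \cref{prop:relative dimension bound general} the truncation hypothesis needed to run Ng\^o's freeness theorem in the singular form of \cite[Prop.~1.5]{MaulikShen2020II} holds for $\pi$; here the Whitney stratification of \cref{prop:Whitney} with the inequality \eqref{eq:dimJcomparison} plays the role of \cref{prop:relationLagrangian}, so holomorphic symplecticity of $\unJvers$ (which fails, $\dim\unJvers$ being odd) is not needed. Writing $I$ for the set of supports, one thus obtains a Ng\^o-string decomposition
\[
R\pi_*\IC(\unJvers,\QQ)\ \simeq\ \bigoplus_{Z\in I}\ \bigoplus_{l=0}^{2a_Z}\IC\big(Z,\Lambda^l_Z\otimes\mathscr L_Z\big)[-l]\langle\codim Z\rangle,
\]
where $\mathscr L_Z$ is a local system on a dense open $Z^\times\subseteq Z$ and $a_Z$ is the relative dimension over $Z^\times$ of the abelian part of $P|_{Z^\times}$; by \cref{lem:codim=delta} one has $\codim Z=\delta_Z$, hence $a_Z=g'-\delta_Z$, for every $Z\in I$. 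Taking the degree-$2g'$ cohomology sheaf and using $R^{2g'}\pi_*\IC(\unJvers,\QQ)=R^{2g'}\pi_*\QQ_{\unJvers}$ from \cref{prop:relative dimension bound general}, one gets exactly as in \cref{thm:Ngostingsymplectic} that $\bigoplus_{Z\in I}(i_Z)_*\mathscr L_Z$ is a direct summand of $R^{2g'}\pi_*\QQ_{\unJvers}$, where $i_Z\colon Z^\times\hookrightarrow B$.

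The heart of the argument is to show $I=\{B\}$. Suppose $Z\in I$ is a proper support, so $\delta\coloneqq\codim Z=\delta_Z\ge 1$. For a semistable, hence nodal, curve $\mathcal C_b$ the generalized Jacobian $P_b$ is an extension of an abelian variety by a torus of rank $b_1(\Gamma_{\mathcal C_b})$, so $\delta(b)=b_1(\Gamma_{\mathcal C_b})$; by upper semicontinuity the generic $b\in Z$ satisfies $b_1(\Gamma_{\mathcal C_b})=\delta$, whence $\mathcal C_b$ has at least $\delta$ nodes. Since $f$ is versal, the closed locus of curves with at least $\delta$ nodes has codimension $\delta$ in $B$ (each node is a codimension-one smoothing condition, cf.\ \cite[Ex.~4.7.1]{Sernesi2006}); as $Z$ is contained in it and has the same codimension, $Z$ is an irreducible component, so the generic $\mathcal C_b$ over $Z$ has exactly $\delta$ nodes, and $\#E(\Gamma_{\mathcal C_b})=\delta=b_1(\Gamma_{\mathcal C_b})$ forces $\Gamma_{\mathcal C_b}$ to have a single vertex. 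Thus the generic $\mathcal C_b$ over $Z$ is integral with planar singularities, so $\overline{J}_d(\mathcal C_b)$ is an integral projective variety of dimension $g'$ (see e.g.\ \cite{MRV2019}), and hence $\dim_\QQ H^{2g'}(\overline{J}_d(\mathcal C_b),\QQ)=1$ for $b$ the generic point of $Z$. On the other hand, in the decomposition above the summand indexed by $B$ with $l=2g'$ contributes, in cohomological degree $2g'$, the sheaf $\IC(B,\Lambda^{2g'}(R^1\pi_*\QQ|_U))$, which equals $\QQ_B$ up to a Tate twist because the rank-$2g'$ local system $R^1\pi_*\QQ|_U$ has symplectic, hence trivial, determinant; and the summand indexed by $Z$ with $l=2a_Z$ contributes, again in degree $2g'$, a sheaf whose restriction to $Z^\times$ is $\mathscr L_Z$. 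By proper base change $\dim_\QQ(R^{2g'}\pi_*\QQ_{\unJvers})_b=\dim_\QQ H^{2g'}(\overline{J}_d(\mathcal C_b),\QQ)=1$, whereas the two contributions just described show this dimension is at least $1+\rank\mathscr L_Z$; therefore $\mathscr L_Z=0$ and $Z\notin I$, a contradiction.

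Hence $I=\{B\}$ and $R\pi_*\IC(\unJvers,\QQ)\simeq\bigoplus_l\IC(B,\Lambda^l_B\otimes\mathscr L_B)[-l]$. To identify the summands I would restrict to $U$: since $\pi^{-1}(U)$ is smooth, $\IC(\unJvers,\QQ)|_{\pi^{-1}(U)}\simeq\QQ_{\pi^{-1}(U)}$, and $\pi|_U$ is a torsor under the abelian scheme $\Pic^0(\mathcal C/B)|_U$, so Deligne's decomposition (cf.\ \cite[Lemma~1.3.5]{deCataldoHauselMigliorini2012}) gives $R(\pi|_U)_*\QQ\simeq\bigoplus_l\Lambda^l(R^1\pi_*\QQ|_U)[-l]$; comparing over $U$ identifies $\Lambda^l_B\otimes\mathscr L_B$ with $\Lambda^l(R^1\pi_*\QQ|_U)$ and yields \eqref{eq:fullsupportJac}. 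Finally, $d$ played no role above, so \eqref{eq:fullsupportJac} holds for every $d$, and its right-hand side only involves the local system $R^1\pi_*\QQ|_U\simeq R^1f_*\QQ|_U$ of first cohomology groups of the smooth fibres of $f$, which is manifestly independent of $d$; this proves \eqref{eq:indepdendenceJac}.

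The step I expect to be the main obstacle is the middle paragraph: one must be certain that $\delta_Z=\codim Z$ together with versality really forces the generic curve over a proper support to be \emph{integral}, since this is exactly what makes the top cohomology of its compactified Jacobian one-dimensional and collapses the associated Ng\^o string. Once that is secured, everything else is a formal consequence of the decomposition theorem and of \cref{lem:weakabelian,prop:Whitney,lem:t>2dJac,lem:codim=delta}.
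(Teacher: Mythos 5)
Your argument is correct, but its key step is genuinely different from the one in the paper. The paper rules out a proper support $Z$ by adapting the numerical argument of \cite[Prop.\ 4.4]{MaulikShen2020II}: it introduces the invariant $\Psi_{g'}=\dim\unJvers-2\dim B=3-2g'$ and the improved inequality $\Psi_{g'}+\codim Z\geq\sum_i\Psi_{g'_i}+\delta_Z$, which, together with $\codim Z=\delta_Z$ from \cref{lem:codim=delta}, gives $3(r-1)+2b_1(\Gamma)=0$ and hence $r=1$, $b_1(\Gamma)=0$, so $Z=B$. You instead show that the generic curve over a proper support must be \emph{integral} (versality gives the $\geq\delta$-nodal locus pure codimension $\delta$; since $\codim Z=\delta_Z=\delta$, $Z$ is a component of it, forcing exactly $\delta$ nodes and, with $b_1(\Gamma)=\delta$, a single-vertex dual graph), and then observe that the compactified Jacobian of an integral locally planar curve has one-dimensional $H^{2g'}$, leaving no room in the stalk of $R^{2g'}\pi_*\QQ$ for the top piece $\mathscr L_Z$ of an additional Ng\^o string. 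Both approaches are valid; the paper's is shorter and stays closer to the Maulik--Shen template, whereas yours avoids invoking the $\Psi$-inequality altogether and makes the geometric mechanism more transparent (integrality of the generic curve over a putative proper support collapses its string). Your preliminary observation that symplecticity is not needed to apply the Ng\^o machinery here, because \cref{prop:Whitney} together with \eqref{eq:dimJcomparison} substitutes for \cref{prop:relationLagrangian}, and your identification of the summands over $U$ via the abelian-scheme decomposition, agree with the paper's intent.
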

\begin{proof}
The argument of \cite[\S 3]{MaulikShen2020II} can be adapted to work in the present setting.  
The base $B$
admits a stratification by type
\[B = \bigsqcup B^{\circ}_{ \underline{g}'},\] 
which extends \eqref{eq:typestratification}. For any $r\geq 1$, and
for any $r$-tuples of positive integers $\underline{g}'=(g'_i)$, the closed points $b \in B$ in $B^{\circ}_{\underline{g}'}$ corresponds to the fibers of $f$ of the form $\mathcal{C}_b=\bigcup_i \mathcal{C}_i$, where $\mathcal{C}_i$ are irreducible curves of genus $g'_i=g(\mathcal{C}_i)$. 

Assume that the irreducible subvariety $Z \subseteq B$ is a support of $R\pi_*\IC(\unJvers, \QQ)$, and that the generic point of $Z$ is contained in $B^{\circ}_{\underline{g}'}$.
As in \cite[Prop. 4.4]{MaulikShen2020II}\TBC{Do you agree that the same computation holds?}, the inequality $\codim Z \geq \delta_Z$ can be improved to 
\begin{equation}\label{eq:conditiona'}
    \Psi_{g'}+\codim Z \geq \sum^r_{i=1}\Psi_{g'_i}+\delta_Z.
\end{equation}
Combining \eqref{eq:conditiona'}, \eqref{eq:Psi} and \cref{lem:codim=delta}, we obtain
\[3(r-1)+2(g-\sum^r_{i=1}g_i)=0.\]
The only possibility is that $r=1$. Hence, $Z=B$, which implies \eqref{eq:fullsupportJac} and \eqref{eq:indepdendenceJac}.
\end{proof}

\begin{rmk}\label{rmk:comparisonmeromhol} The proof of \cref{prop:fullsupportvers} does not work for the Hitchin fibration. In this case, the invariant $\Psi = \dim M(n,d)-2 \dim A_n$ vanishes, and the inequality \eqref{eq:conditiona'} is inconclusive.
\end{rmk}

\section{Proof of the main theorems}\label{sec:summary}

\begin{proof}[Proof of \cref{thm:IHde} (Hodge-to-singular correspondence)]
In view of the decomposition \eqref{eq:decthmNgo}, it suffices to show that for any partition $\underline{n}$ of $n$ the isomorphism \eqref{eq:HtS} holds on an open euclidean neighbourhood of a general $a \in S^{\times}_{\underline{n}}$ containing a Zariski-dense open set of $S^{\times}_{\underline{n}}$. By \cref{rmk:Aa}, it is actually enough to show the isomorphism on the subsets $A_{a}$ constructed in the proof of \cref{prop:unramified}.

By \cref{prop:unramified}, proper base change gives
\[R\chi(n,d)_* (i_{M}^*\IC(\unJB, \QQ)) \simeq (f^{\times}|_{A_{a}})^*R\pi_*\IC(\unJB, \QQ).\]
 For any \'{e}tale chart $B \to \mathcal{B}$, \cref{prop:fullsupportvers} says that $R\pi_*\IC(\unJverse, \QQ)$ and $R\pi_*\IC(\unJvers, \QQ)$ have full support, which implies that
\[(f^{\times}|_{A_{a}})^*R\pi_*\IC(\unJBe, \QQ)\simeq (f^{\times}|_{A_{a}})^*R\pi_*\IC(\unJB, \QQ).\]
Since $\chi(n,e)^{-1}(A_a)$ is regularly embedded in $\unJBe$, we have
\[i_{M}^*\IC(\unJBe, \QQ) \simeq \QQ_{\chi(n,e)^{-1}(A_a)} \]
Together with \cref{prop:perverse} and \cref{cor:trivialityalongfibers}, we obtain \cref{thm:IHde}. 
\end{proof}

\begin{proof}[Proof of \cref{cor:degree1}]
By \cref{prop:perverse}, the descent of $\mathcal{L}'_{\underline{n}}$ on $S^{\times}_{\underline{n}}$ in \cref{cor:trivialityalongfibers} is a sublocal system of the sheaf $\mathscr{L}_{\underline{n}}(e)$ in the Ng\^{o} strings of \cref{thm:Ngostring}. By \cite[Thm 6.11]{deCataldoHeinlothMigliorini19},  \cref{FactToricQuiver}.\eqref{item:8} and \cref{prop:perverse},
they have the same rank
\[\rank(\mathscr{L}_{\underline{n}}(e))=\dim \widetilde{H}^{2b_1-1}(|\mathscr{C}_Q|, \QQ)= \dim H^{2b_1}(Y(Q, \theta), \QQ)= \rank(\mathcal{L}'_{\underline{n}}),\]
so they must coincide.
\end{proof}

\begin{proof}[Proof of \cref{thm:Ngostring0}]
\cref{thm:IHde} for $d=0$ yields
\begin{equation} \label{eq:mainthm0}
R\chi(n,e)_*\QQ_{M(n,e)}|_{A^{\mathrm{red}}_n}\simeq \bigoplus_{\underline{n}\vdash \, n} R\chi(n,0)_*\IC(M_{\underline{n}}(0),\chi(n,0)^*\mathcal{L}_{\underline{n}})[2(\dim S_n - c(g,n))]|_{A^{\mathrm{red}}_n}.
\end{equation}
On the open set of $S_{\underline{n}}$ over which the Hitchin map $\chi(n,0)\colon M_{\underline{n}}(0)\to S_{\underline{n}}$ is smooth, there are isomorphisms
\[R\chi(n,0)_*\IC(M_{\underline{n}}(0),\chi(n,0)^*\mathcal{L}_{\underline{n}})\simeq \bigoplus^{2 \dim S_{\underline{n}}}_{l=0} \Lambda^l_{\underline{n}}\otimes \mathcal{L}_{\underline{n}}[-l]\simeq \mathscr{S}_{\underline{n}}.\]
Therefore, by the decomposition theorem \cite{BeilinsonBernsteinDeligne1981, Saito89}, the string $\mathscr{S}_{\underline{n}}$ is a direct summand of \[R\chi(n,0)_*\IC(M_{\underline{n}}(0),\chi(n,0)^*\mathcal{L}_{\underline{n}})\] on the whole $S_{\underline{n}}$. Now \cref{cor:degree1} gives
\[
\bigoplus_{\underline{n}\vdash \, n} \mathscr{S}_{\underline{n}}|_{A^{\mathrm{red}}_n}  \langle \codim S_{\underline{n}} \rangle   \simeq R\chi(n,e)_*\QQ_{\MDol(n,e)}|_{A^{\mathrm{red}}_n} \simeq \text{ RHS of }\eqref{eq:mainthm0} \supseteq \bigoplus_{\underline{n}\vdash \, n} \mathscr{S}_{\underline{n}}|_{A^{\mathrm{red}}_n} \langle \codim S_{\underline{n}} \rangle.
\]
This implies that the complexes $R\chi(n,0)_*\IC(M_{\underline{n}}(0),\chi(n,0)^*\mathcal{L}_{\underline{n}})$ have full support on $S_{\underline{n}}$. For $\underline{n}=\{n\}$, this gives that $R\chi(n,0)_*\IC(\MDol(n,0),\QQ)$ has full support.
\end{proof}

\begin{proof}[Proof of \cref{thm:Fullsupportranktwo}]
By \cref{cor:degree1} and \cref{thm:Ngostring0}, it suffices to check that no summand of $R\chi(2,1)_*\QQ_{\MDol(2,1)}$ and  $R\chi(2,0)_*\IC({\MDol(2,0)},\QQ)$ is supported over $A_2 \setminus A^{\mathrm{red}}_2=\{0\}$. This is indeed the case by \cref{cor:vanishingintersectionform} (\cref{cor:van}).
\end{proof}

\begin{rmk}\label{rmk:repIH}
Let $\underline{n}=(n_1, \ldots, n_{r})=1^{\alpha_1}\cdot \ldots \cdot n^{\alpha_{n}}$ be a partition of $n$, where $\alpha_i$ is the number of elements in $n$ equal to $i$. Denote by $\mathfrak{S}_{\underline{n}} \coloneqq \prod^r_{i=1} \mathfrak{S}_{\alpha_{i}}$ the subgroup of the symmetric group $\mathfrak{S}_{r}$ stabilizing $\underline{n}$.
Then the horizontal arrows of the following commutative square
\begin{equation*}
\begin{tikzpicture}[baseline= (a).base]
\node[scale=1] (a) at (0,0){
\begin{tikzcd}
\prod^{r}_{i=1} \MDol(n_i,d_i) \ar[r]\ar[]{d}[swap]{\prod^r_{i=1}\chi(n_i,d_i)} & M_{\underline{n}}(d) \ar[]{d}{\chi(n,d)} & & (\mathcal{E}_i, \phi_i)^r_{i=1} \ar[r, mapsto]\ar[d, mapsto]& (\bigoplus^r_{i=1}\mathcal{E}_i, \bigoplus^r_{i=1}\phi_i)\ar[d, mapsto]\\
    \prod^r_{i=1}A_{n_i}\ar[r, "\mathrm{mult}_{\underline{n}}"]& S_{\underline{n}}, & & (\mathrm{char}(\phi_i))^r_{i=1} \ar[r, mapsto]& \prod^{r}_{i=1}\mathrm{char}(\phi_i)
\end{tikzcd}
};
\end{tikzpicture}
\end{equation*}
are quotient maps by the action of $\mathfrak{S}_{\underline{n}}$. By \cite[Cor. 6.20]{deCataldoHeinlothMigliorini19}, the monodromy of the local systems $\mathcal{L}_{\underline{n}}$ is given
by the restriction to the subgroup $\mathfrak{S}_{\underline{n}}$ of the representation of $\mathfrak{S}_{r}$ induced by a primitive character of a maximal cyclic subgroup. In particular,  $\mathrm{mult}^{*}_{\underline{n}}\mathcal{L}_{\underline{n}}$ is a trivial local system. Hence, $\mathfrak{S}_{\underline{n}}$ acts on the sheaf $\bigboxtimes^{r}_{i=1}R \chi(n_i,d_i)_* \IC(M(n_i,d_i), \QQ) \otimes \QQ^{\rank \mathcal{L}_{\underline{n}}}$ via the $\mathfrak{S}_{\underline{n}}$-representation  $\mathcal{L}_{\underline{n}}$.
This means that the summand of the RHS of \eqref{eq:HtS} are the $\mathfrak{S}_{\underline{n}}$-invariant part of \begin{equation}\label{eq:summandexplicit}
    \bigboxtimes^{r}_{i=1}R \chi(n_i,d_i)_* \IC(M(n_i,d_i), \QQ) \otimes \QQ^{\rank \mathcal{L}_{\underline{n}}} \langle \codim S_{\underline{n}} \rangle.
\end{equation} 
In particular, the summands of the RHS of \eqref{eq:decompglobal} are $\mathfrak{S}_{\underline{n}}$-subrepresentations of 
\[\bigotimes^{r}_{i=1} \IH^{k - 2\codim S_n}(M(n_i, d_i)^{\mathrm{red}}, \QQ).\]
\end{rmk}

\begin{proof}[Proof of \cref{cor:independence}] The summation indices and the summands in \eqref{eq:HtS} 
depends only on $\gcd(n,d)$.
\end{proof}

\subsection{Dependence of $\IH^*(\MDol(n,d), \QQ)$ on degree}
\label{rmk:dependence}
The description of the summands of the decomposition theorem for the Hitchin fibration in the intermediate case, $\gcd(n,d)\neq 0$ or $\neq 1$, is more subtle. A close formula appears now in \cite[Thm 1.1, Thm 1.6, Thm 1.8]{MMP2022}, but \eqref{eq:HtS} offers already a recursive approach. Combining \cref{thm:IHde} and \cref{thm:Ngostring0}, one obtain the isomorphism in $D^bMHM_{\mathrm{alg}}(A^{\mathrm{red}}_n)$
\begin{equation}\label{eq:dec} \small
    R\chi(n,d)_*\IC(\MDol(n,d), \QQ) \oplus \bigoplus_{\substack{\underline{n} \in \mathcal{P}_{d}, \, \underline{n} \neq \{n\}}} R\chi(n,d)_*\IC(M_{\underline{n}}(d),\chi(n,d)^*\mathcal{L}_{\underline{n}})\langle \codim S_{\underline{n}}\rangle \simeq \bigoplus_{\underline{n}=\{n_i\}\vdash \, n}  \mathscr{S}_{\underline{n}}.
\end{equation}
Recursively from this identity, one can determine the Ng\^{o} strings for $R\chi(n,d)_*\IC(\MDol(n,d), \QQ)$, and the subvarieties $S_{\underline{n}}$ which are actual support of $R\chi(n,d)_*\IC(\MDol(n,d), \QQ)$. We refer to \cite{MMP2022} for the explicit computation of the ranks of the Ng\^{o} strings. Here we limit to illustrate the first non-trivial example $n=4$. More care is needed to determine explicitly the monodromy of the associated local systems.

\begin{exa}[$n=4$] We determine the rank of the local systems $\mathscr{L}_{\underline{n}}(d)$ in \cref{thm:Ngostring} for $n=4$. To this end, observe that the Hodge-to-singular correspondence gives
\begin{equation}\label{eq:explicit}
    R\chi(4,2)_*\IC(\MDol(4,2), \QQ) \oplus R\chi(4,2)_*\IC(M_{\{2,2\}}(2),\chi(4,2)^*\mathcal{L}_{\{2,2\}})\langle 8g-9\rangle \simeq \bigoplus_{\underline{n}=\{n_i\}\vdash \, 4}  \mathscr{S}_{\underline{n}}.
\end{equation}
Consider the commutative square
\begin{equation*}
\begin{tikzpicture}[baseline= (a).base]
\node[scale=1] (a) at (0,0){
\begin{tikzcd}
\MDol(2,1)^2 \ar[r]\ar[]{d}[swap]{\chi(2,1)^2} & M_{\{2,2\}}(2) \ar[]{d}{\chi(4,2)}\\
    A_{2}^2\ar[r, "\mathrm{mult}_{\{2,2\}}"]& S_{\{2,2\}}.
\end{tikzcd}
};
\end{tikzpicture}
\end{equation*}
The map $\mathrm{mult}_{\{2,2\}}$ factors through the normalization $\nu_{\{2,2\}} \colon \mathrm{Sym}^2 A_2 \to S_{\{2,2\}}$ of $S_{\{2,2\}}$ as follows:
\begin{equation*}
   \mathrm{mult}_{\{2,2\}} \colon  A^2_{2} \xrightarrow{\eta_{\{2,2\}}}  \mathrm{Sym}^{2}A_{2} \xrightarrow{\nu_{\{2,2\}}} S_{\{2,2\}}.
\end{equation*}
Set $\underline{n}= \{2,2\}, \{2,1,1\}$ or $\{1,1,1,1\}$. The degree of $\nu_{\{2,2\}}$ over $S_{\underline{n}}$, denoted by $d_{\underline{n}}$, is 
\[
d_{\underline{n}} = \begin{cases}
1 \quad \text{ for }\underline{n}= \{2,2\}, \{2,1,1\} \text{, and} \\
3 \quad \text{ for } \underline{n}=\{1,1,1,1\}.
\end{cases}
\]
In fact, the invariant $d_{\underline{n}}$ counts in how many ways one can sum the elements of the partition $\underline{n}$ to get $\{2,2\}$. The map $\eta_{\{2,2\}}$ instead is \'{e}tale at the general points of $S_{\underline{n}}$. By the behaviour of intersection cohomology under normalization and \'{e}tale maps, we obtain that $R\chi(2,4)_*\IC(M_{\{2,2\}}(2),\chi(4,2)^*\mathcal{L}_{\{2,2\}})$ is locally isomorphic to 
\begin{equation}\label{eq:normal}
    R\chi(2,1)^2_*\IC(\MDol(2,1)^2, \QQ) \otimes \QQ^{\rank \mathcal{L}_{\{2,2\}}} \otimes \QQ^{d_{\underline{n}}}
\end{equation} at the general points of $S_{\underline{n}}$.
We omit the tensorisation by $\otimes  \QQ^{\rank \mathcal{L}_{\{2,2\}}}$ since $\rank \mathcal{L}_{\{2,2\}}=1$. Indeed, \begin{equation}
    \mathscr{L}_{\underline{n}}(d) \leq \rank \mathcal{L}_{\underline{n}}=(|\underline{n}|-1)!
\end{equation} by \cref{thm:IHde} and \cite[Cor. 6.20]{deCataldoHeinlothMigliorini19}.
By the K\"{u}nneth formula and \cref{thm:Fullsupportranktwo}, we have
\[
R\chi(2,1)^2_*\IC(\MDol(2,1)^2, \QQ) \simeq (\mathscr{S}_{\{2\}} \oplus \mathscr{S}_{\{1,1\}}) \boxtimes (\mathscr{S}_{\{2\}} \oplus \mathscr{S}_{\{1,1\}}),
\]
so all the Ng\^{o} strings of $R\chi(2,1)^2_*\IC(\MDol(2,1)^2, \QQ)$ start with a local system of rank one. By \eqref{eq:explicit} and \eqref{eq:normal}, this implies that $\rank \mathscr{L}_{\underline{n}}(2) = \rank \mathcal{L}_{\underline{n}}-d_{\underline{n}}$. In particular, $S_{\{2,2\}}$ is not a support of $R\chi(2,4)_*\IC(\MDol(2,4), \QQ)$. We summarise the results in Table \ref{table}.

\begin{table}[h]
\centering
\renewcommand{\arraystretch}{1}
   \begin{tabular}{c|ccccc}
  $\operatorname{gcd}(n,d)$  \quad & \{4\} & \{3,1\} & \{2,2\} & \{2,1,1\} & \{1,1,1,1\}\\ \hline
  0 & 1 & 0 & 0 & 0 & 0 \\
  1 & 1 & 1 & 1 & 2 & 6 \\
  2 & 1 & 1 & 0 & 1 & 3
    \end{tabular}
    \vspace{0.2 cm}
    \caption{The entries of the table are the rank of the local systems $\mathscr{L}_{\underline{n}}(d)$ for $n=4$.}   \label{table}
    \end{table}
\end{exa}

\cref{thm:Ngostring} says that the complex $R \chi(n,d)_* \IC(\MDol(n,d), \QQ)$ is a direct sum of Ng\^{o} strings
\[
\mathscr{S}(\mathscr{L}_{\underline{n}}(d)) \coloneqq \bigoplus^{2 \dim S_{ \underline{n}}}_{l=0} \IC(S_{\underline{n}}, \Lambda^l_{\underline{n}} 
\otimes \mathscr{L}_{ \underline{n}}(d))[-l]\langle \codim S_{\underline{n}}\rangle. 
\]
By \cref{rmk:repIH}, the summands $R\chi(n,d)_*\IC(M_{\underline{n}}(d),\chi(n,d)^*\mathcal{L}_{\underline{n}})$ of the LHS of \eqref{eq:dec} are the $\mathfrak{S}_{\underline{n}}$-invariant part of \[
    \bigboxtimes^{r}_{i=1}R \chi(n_i,d_i)_* \IC(M(n_i,d_i), \QQ) \otimes \QQ^{\rank \mathcal{L}_{\underline{n}}},\] where $n_i < n$ and $d_i < d$.
   Therefore, using \eqref{eq:dec}, one could rewrite the local system $\mathscr{L}_{\underline{n}}(d)$ as a combinations of direct sums, linear quotients and $\mathfrak{S}_{\underline{n}'}$-quotients of $\mathscr{L}_{\underline{n}'}(d')$, $\mathcal{L}_{\underline{n}}$ and $\mathcal{L}_{\underline{n}'}$, with $n' < n$ and $d' < d$. Recursively, we could remove the dependence of $\mathscr{L}_{\underline{n}}(d)$ on $\mathscr{L}_{\underline{n}'}(d')$, and write $\mathscr{L}_{\underline{n}}(d)$ as depending only on $\mathcal{L}_{\underline{k}}$ with $k \leq n$. For any partition $\underline{n}=(n_1, \ldots, n_r)$, \cref{rmk:repIH} says that the Ng\^{o} string associated to $\mathcal{L}_{\underline{n}}$, namely
   \[
\mathscr{S}_{\underline{n}} \coloneqq \bigoplus^{2 \dim S_{\underline{n}}}_{l=0} \IC(S_{\underline{n}}, \Lambda^l_{\underline{n}}
\otimes \mathcal{L}_{\underline{n}})[-l]\langle \codim S_{\underline{n}} \rangle,\]
is the $\mathfrak{S}_{\underline{n}}$-invariant part of $
    (\bigboxtimes^{r}_{i=1}  \mathscr{S}_{n_i})^{\rank \mathcal{L}_{\underline{n}}}$.
    This means that $R \chi(n,d)_* \IC(\MDol(n,d), \QQ)$ can be expressed in terms of the Ng\^{o} strings $\mathscr{S}_{k} \simeq R \chi(k,0)_* \IC(\MDol(k,0), \QQ)$ with $k \leq n$. In other words, one obtain a recursive (though highly nontrivial) formula for $IH^*(\MDol^{\text{red}}(n,d), \QQ)$ in terms of $IH^*(\MDol^{\text{red}}(k,0), \QQ) = H^*(\mathscr{S}_{k})$ with $k \leq n$. 

\subsection{Some applications of the Hodge-to-singular correspondence}
\begin{proof}[Proof of \cref{cor:stab}] Taking global cohomology in \eqref{eq:decthmNgo}, we have
\[\IH^{\leq k}(M(n,d))\simeq \IH^{\leq k}(M(n,0))\]
with $k = \min \{ 2(\dim \mathcal{A}_n - \dim \mathcal{A}^{\text{red}}_n), \dim M_{\underline{n}} \colon \underline{n}\neq \{n\}\}$. By \cite[Lem. 2.2]{BellamySchedler2019}, we have 
\begin{align}
    k & = 2(n^2(g-1)+1)-2\max \bigg\{ r+(g-1)\sum^{r}_{i=1} n^2_{i}\, \bigg| \, m_i, n_i, \sum^r_{i=1} m_i n_i = n \bigg\}\label{eq:codimension}\\
    & =4(g-1)(n-1)-2 = O(n). \nonumber
\end{align}
This gives the independence of the limits \eqref{eq:limstab} by the degree. The finiteness follows from the generation of the cohomology of $M(n,e)$ with $\gcd(e,n)=1$ by finitely many tautological classes in \cite[Thm 7]{Markman02}.
\end{proof}
Recall that the perverse filtration on $\IH^*(M(n,d))$ associated to the Hitchin map $\chi(n,d)$ is given by
\begin{equation}\label{eq:perversefiltration}
    P_j \IH^k(M(n,d), \QQ)=\ker\{\IH^k(M(n,d), \QQ) \to \IH^k(\chi(n,d)^{-1}(\Lambda^{k-j-1}), \QQ)\},
\end{equation}
where $\Lambda^s$ is a general $s$-dimensional affine hyperplane of $A_{n}$; see \cite[Thm 4.1.1]{deCataldoMigliorini2010}. By \eqref{eq:codimension}, $\Lambda^s$ does not intersect the supports of the RHS of \eqref{eq:HtS} which are properly contained in $A_n$, provided that $s<2(g-1)(n-1)-1$. As a result, we obtain the following proposition.
\begin{prop}\label{prop:perverseindependet}
The graded piece of the perverse filtration
\[\Gr^P_j \IH^k(M(n,d), \QQ) \coloneqq P_j \IH^k(M(n,d), \QQ)/P_{j-1} \IH^k(M(n,d), \QQ)\]
are independent of the degree $d$ for $j>k-2(g-1)(n-1)+1$.
\end{prop}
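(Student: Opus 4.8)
The plan is to deduce \cref{prop:perverseindependet} directly from the Hodge-to-singular correspondence (\cref{thm:IHde}) together with the dimension estimate in the proof of \cref{cor:stab}, by analysing how the perverse filtration interacts with the splitting \eqref{eq:HtS}. First I would recall that the isomorphism \eqref{eq:decompglobal} is stated to respect the perverse filtrations carried by the two sides, where on the right-hand side the perverse filtration on each summand $\IH^{k-2\codim S_{\underline{n}}}(M_{\underline{n}}(d)^{\mathrm{red}},\chi(n,d)^*\mathcal{L}_{\underline{n}})(-\codim S_{\underline{n}})$ is the one associated to the restricted Hitchin map $\chi(n,d)\colon M_{\underline{n}}(d)^{\mathrm{red}}\to S_{\underline{n}}$, suitably shifted. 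Passing to graded pieces, \eqref{eq:decompglobal} gives a decomposition
\[
\Gr^P_j\IH^k(M(n,e)^{\mathrm{red}},\QQ)\simeq \bigoplus_{\underline{n}\in\mathcal{P}_e}\Gr^P_{j'}\IH^{k'}(M_{\underline{n}}(e')^{\mathrm{red}},\chi^*\mathcal{L}_{\underline{n}})
\]
with $e'$ coprime to $n$, $k'=k-2\codim S_{\underline{n}}$ and $j'=j-\codim S_{\underline{n}}$ (after accounting for the Tate shift), and the summand indexed by $\underline{n}=\{n\}$ is exactly $\Gr^P_j\IH^k(M(n,d),\QQ)$ for the degree $d$ with $\gcd(n,d)=\gcd(n,e)$.

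Next I would invoke the description \eqref{eq:perversefiltration} of the perverse filtration via restriction to a general affine subspace $\Lambda^{k-j-1}\subset A_n$. The key point, already used in \eqref{eq:codimension}, is that every support $S_{\underline{n}}$ with $\underline{n}\neq\{n\}$ of the right-hand side of \eqref{eq:HtS} has codimension at least $2(g-1)(n-1)-1$ in $A_n$; hence a general affine subspace of dimension $s<2(g-1)(n-1)-1$ is disjoint from all these proper supports. Therefore, for $j$ in the stated range $j>k-2(g-1)(n-1)+1$, i.e.\ $k-j-1<2(g-1)(n-1)-1$, the restriction $\chi(n,d)^{-1}(\Lambda^{k-j-1})$ only sees the full-support summand $\mathscr{S}_{\{n\}}$. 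Concretely, restricting the decomposition \eqref{eq:dec} to such a $\Lambda^{k-j-1}$ kills all summands on both sides except $R\chi(n,d)_*\IC(M(n,d),\QQ)$ and $\bigoplus_{\underline{n}=\{n\}}$... — more precisely it leaves the equality $R\chi(n,d)_*\IC(M(n,d),\QQ)|_{\Lambda}\simeq \mathscr{S}_{\{n\}}|_{\Lambda}\simeq R\chi(n,0)_*\IC(M(n,0),\QQ)|_{\Lambda}$, using \cref{thm:Ngostring0} and that $\mathscr{S}_{\{n\}}$ is independent of $d$ by construction.

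Combining the two observations gives the result: for $j>k-2(g-1)(n-1)+1$ the map $\IH^k(M(n,d),\QQ)\to\IH^k(\chi(n,d)^{-1}(\Lambda^{k-j-1}),\QQ)$ has kernel $P_j\IH^k(M(n,d),\QQ)$, and since both the target and the map factor through data depending only on $\mathscr{S}_{\{n\}}$ (equivalently on $R\chi(n,0)_*\IC(M(n,0),\QQ)$ via \cref{cor:independence} and \cref{thm:Ngostring0}), the graded pieces $\Gr^P_j\IH^k(M(n,d),\QQ)$ are canonically identified for all $d$. The main obstacle I anticipate is bookkeeping: one must check that the perverse filtration induced on the $\{n\}$-summand of \eqref{eq:decompglobal} really is the perverse filtration of $\chi(n,d)$ on $\IH^k(M(n,d),\QQ)$ (not merely a filtration abstractly isomorphic to it), and that the shifts $\langle\codim S_{\underline{n}}\rangle$ line up so that the ``proper'' summands contribute only in perverse degrees bounded by $2\dim S_{\underline{n}}+(\text{shift})<2(g-1)(n-1)-1$ away from the top; this is where the estimate \eqref{eq:codimension} from \cite[Lem. 2.2]{BellamySchedler2019} enters, and it must be applied with care to the codimensions rather than the dimensions. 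Once this alignment is in place, the argument is essentially the restriction-to-a-general-hyperplane computation already sketched before \cref{prop:perverseindependet}.
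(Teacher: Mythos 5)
Your proposal is correct and matches the paper's argument, which is exactly the restriction-to-a-general-affine-subspace computation: by the estimate \eqref{eq:codimension}, a general $\Lambda^{k-j-1}$ with $k-j-1<2(g-1)(n-1)-1$ misses every proper support $S_{\underline{n}}$ ($\underline{n}\neq\{n\}$) of \eqref{eq:HtS}, so via \eqref{eq:perversefiltration} both $P_j$ and $P_{j-1}$, hence $\Gr^P_j$, are governed solely by the degree-independent string $\mathscr{S}_{\{n\}}$. The bookkeeping worry you flag at the end is automatic: the splitting \eqref{eq:decthmNgo} is an isomorphism in the derived category of $A^{\mathrm{red}}_n$, and perverse truncation commutes with finite direct sums, so the filtration induced on the $\{n\}$-summand is the genuine perverse Leray filtration of $\chi(n,d)$, not merely an abstractly isomorphic one.
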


\begin{proof}[Proof of \cref{prop:restr}]
By \eqref{eq:perversefiltration} we have
\[\Gr^P_k \IH^k(M(n,d), \QQ)\simeq \mathrm{Im}\{\IH^k(M(n,d), \QQ) \to H^k(M_a, \QQ)\}.\]
Let $e$ be an integer coprime with $n$. Since \[ \Gr^P_k \IH^k(M(n,d), \QQ) \simeq \Gr^P_k H^k(M(n,e), \QQ)\] by \cref{prop:perverseindependet}, it suffice to determine the latter. 
Let $(\mathbb{E}, \Phi)$ be the universal family of Higgs bundles on $C \times M(n,e)$ normalised such that 
\[\mathrm{ch}_1(\mathbb{E})|_{p \times M(n,e)}=0 \in H^2(M(n,e), \QQ), \qquad \mathrm{ch}_1(\mathbb{E})|_{C \times q}=0 \in H^2(C, \QQ). \]
For any $\gamma \in H^j(C, \QQ)$, the tautological class $c(\gamma, k)$ is the integral
\[
\int_{\gamma} \mathrm{ch}^{k}(\mathbb{E}) \in H^{j+2k-2}(M(n,d), \QQ).
\]
By \cite[Thm 7]{Markman02}, the cohomology $H^k(M(n,e), \QQ)$ is generated by tautological classes as $\QQ$-algebra. The proof of \cite[Cor. 5.1.3]{deCataldoHauselMigliorini2012} works in arbitrary rank, and shows that the only tautological classes that do not vanish along $M_a$ are $c(\gamma, 1)$ with $\gamma \in H^1(C, \QQ)$ and the $\chi(n,e)$-relative ample class $\alpha \coloneqq c([c],2)$, where $[c]$ generates $H^2(C, \QQ)$.
We conclude that  
\begin{equation}\label{eq:ident}
    \Gr^P_{*} H^{*}(M(n,e), \QQ) = \langle c(\gamma, 1)|_{M_a}, \alpha|_{M_a} \rangle  = H^{\bullet}(C, \QQ) \otimes  {\QQ[\alpha|_{M_a}]}/{(\alpha|_{M_a}^{\dim M(n,d)+1})}.
\end{equation}
The result for $\check{M}(n,d)$ is an immediate corollary of \eqref{eq:ident} since \[\Gr^P_{*} H^{*}(M(n,d), \QQ) \simeq \Gr^P_{*} H^{*}(\check{M}(n,d), \QQ) \otimes H^{\bullet}(C, \QQ);\]
see for instance \cite[\S 4.4]{deCataldoHauselMigliorini2012}.
\end{proof}

\subsection{Mukai moduli spaces}\label{sec:Mukai} Let $S$ be a K3 surface. Given an effective Mukai vector\footnote{i.e.\ there exists a coherent sheaf $\mathcal{F}$ on $S$ such that $v = (rk(\mathcal{F}), c_1(\mathcal{F}), \chi(\mathcal{F})-rk(\mathcal{F}))$.} $v \in H^{*}_{\mathrm{alg}}(S, \ZZ)$, we denote by $\Mukai$ the moduli space of Gieseker semistable sheaves on $S$ with Mukai vector $v$ with respect to a polarization $H$; see \cite[\S 1]{Simpson1994I}. If $v$ is the Mukai vector of a pure dimensional one sheaf $\mathcal{F}$, then $v=(0, D, \chi(\mathcal{F}))$, where $D \in \Pic(S, \ZZ)$ is the class of the curve supporting $\mathcal{F}$.
Taking
(Fitting) supports defines a Lagrangian fibration
\[\Mukai \to |D|, \qquad \mathcal{F} \mapsto \mathrm{Supp}(\mathcal{F}),\]
whose fibers are compactified Jacobians. The Hodge-to-singular correspondence extends to this context with no change, so does \cref{cor:stab}. 

\begin{cor}\label{cor:Pic1}
Let $S$ be a K3 surface with $\Pic(S) \simeq \ZZ$ generated by the class of a smooth curve $C$ of genus $g \geq 2$. Then 
$\lim_{n \to \infty} \dim \IH^{p,q}(M(S, (0, nC, \chi), C))$
is finite and independent of $\chi$.
\end{cor}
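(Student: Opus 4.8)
The plan is to deduce \cref{cor:Pic1} from the Mukai\nobreakdash-theoretic form of the Hodge-to-singular correspondence, running the argument of \cref{cor:stab} with the Hitchin base $A_n$ replaced by the complete linear system $|nC|$. Since $\Pic(S)\simeq\ZZ C$ there is no choice of polarization to worry about, the generic member of $|nC|$ is a smooth curve, and the Fitting support map $h_n^\chi\colon M(S,(0,nC,\chi),C)\to|nC|$ is a Lagrangian fibration whose fibres over the locus of reduced curves are compactified Jacobians. By the Mukai analogue of \cref{thm:IHde} (see \cref{sec:Mukai}), $Rh^{\chi}_{n,*}\IC$ splits over the reduced locus of $|nC|$ into Ng\^o strings indexed by partitions $\underline n$ of $n$: the trivial partition $\{n\}$ contributes $\IH^{\ast}(M(S,(0,nC,\chi),C))$, while the remaining strings are supported on the subvarieties $S_{\underline n}\subset|nC|$ parametrising curves $\bigcup_{i=1}^{r}C_i$ with $C_i\in|n_iC|$, shifted by $\langle\codim S_{\underline n}\rangle$.

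The next step is to transport the numerics of \cref{cor:stab} verbatim. Adjunction on the K3 surface gives $(nC)^2=n^2(2g-2)$, hence $\dim|nC|=n^2(g-1)+1$ and $\dim M(S,(0,nC,\chi),C)=n^2(2g-2)+2=2\dim|nC|$ --- exactly the relation $\dim\unJvers=2\dim B$ with arithmetic genus $g'=n^2(g-1)+1$. Moreover $\dim S_{\underline n}=r+(g-1)\sum_i n_i^{2}$, so the computation \eqref{eq:codimension} (equivalently \cite[Lem. 2.2]{BellamySchedler2019}) applies without change: every $S_{\underline n}$ with $\underline n\neq\{n\}$, and likewise the locus of non-reduced members of $|nC|$, has codimension at least $2(g-1)(n-1)-1$. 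As in the proof of \cref{cor:stab}, the Tate shifts in the Ng\^o strings then yield
\[
\IH^{\leq k_n}\!\big(M(S,(0,nC,\chi),C),\QQ\big)\;\simeq\;H^{\leq k_n}\!\big(M(S,(0,nC,\chi_0),C),\QQ\big),\qquad k_n:=4(g-1)(n-1)-2,
\]
for every $\chi$ and every $\chi_0$ with $\gcd(n,\chi_0)=1$; for such $\chi_0$ the Mukai vector is primitive, so $M(S,(0,nC,\chi_0),C)$ is a smooth projective holomorphic symplectic manifold (deformation equivalent to $\mathrm{Hilb}^{\,n^2(g-1)+1}(S)$) on which $\IH=H$. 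In particular the left-hand side is independent of $\chi$ in the range $\ast\leq k_n$, and $k_n\to\infty$.

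Finally, for finiteness and for the existence of the limit I would argue exactly as in \cref{cor:stab}: the cohomology ring of the smooth moduli space $M(S,(0,nC,\chi_0),C)$ is generated by tautological classes --- slant products of the Chern character of a quasi-universal sheaf against classes of $H^{\ast}(S,\QQ)$ --- by \cite{Markman02}; since $\dim_\QQ H^{\ast}(S,\QQ)=24$ is independent of $n$, in each fixed total degree $N$ there are only finitely many such generators and only finitely many monomials in them of degree $\leq N$, with bounds independent of $n$. Hence $\dim H^{N}(M(S,(0,nC,\chi_0),C),\QQ)^{p,q}$ is bounded uniformly in $n$, and combined with the previous paragraph this shows that for $p+q=N$ and $n$ large the numbers $\dim\IH^{p,q}(M(S,(0,nC,\chi),C))$ are independent of $\chi$, uniformly bounded, and stabilise to the corresponding stable intersection Hodge numbers, precisely as in \cref{cor:stab}. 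The main point to check carefully --- and the only real obstacle --- is the input of the first paragraph: that the Mukai version of the Hodge-to-singular correspondence produces, on $|nC|$, the same list of supports $S_{\underline n}$ with the same Tate shifts as \cref{thm:IHde} does on $A_n^{\mathrm{red}}$, and that the stratum-dimension formula $\dim S_{\underline n}=r+(g-1)\sum_i n_i^2$ transfers; once this is in place the remainder is the proof of \cref{cor:stab} word for word.
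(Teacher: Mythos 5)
Your proof is correct and follows the paper's approach closely: the paper's own proof is a one-liner ("the same proof of \cref{cor:stab} holds; the Picard-rank-one assumption grants that the codimension of the non-reduced locus of $|nC|$ grows linearly in $n$"), and you fill in exactly those details — the Mukai-theoretic Hodge-to-singular decomposition over $|nC|^{\mathrm{red}}$, the dimension formulas for $|nC|$, $M(S,(0,nC,\chi),C)$ and $S_{\underline n}$ (which require $\Pic S\simeq\ZZ$ to match the Hitchin combinatorics), the Bellamy--Schedler codimension bound yielding $\IH^{\leq k_n}$-independence of $\chi$ with $k_n\to\infty$, and Markman's tautological generation for uniform finiteness. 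One small slip that does not affect the argument: the identity $\dim M(S,(0,nC,\chi),C)=2\dim|nC|$ parallels the Hitchin relation $\dim M(n,d)=2\dim A_n$, not "$\dim\unJvers=2\dim B$", which is false for the versal compactified Jacobian, where $\dim\unJvers=4g'-3$ and $\dim B=3g'-3$.
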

\begin{proof}
The same proof of \cref{cor:stab} holds in this context too. The assumption on the Picard rank grants that the codimension of the locus of non-reduced curves in $|nC|$ grows linearly in $n$.
\end{proof}

\begin{defn}\cite[Def. 1.15]{PeregoRapagnetta18}
 We say that $(S,v,H)$ is a $(m.k)$-triple such that:
 \begin{enumerate}
     \item the polarization $H$ is primitive and $v$-generic in the sense of \cite[\S 2.1]{PeregoRapagnetta18};
     \item we have $v=mw$, where $w$ is primitive and $w^2=2k$;
     \item if $w=(0, w_1, w_2)$ and $\rho(S)>1$, then $w_2 \neq 0$.
 \end{enumerate}
\end{defn}

Note that the stabilization of the cohomology of $\Mukai$ (\cite[Conjecture 1.1]{CK2018}) is known in the literature if $v$ is primitive; see \cite[p. 4]{CK2018} and references therein. We can now drop the primitivity assumption.

\begin{cor} Let $v^2= m w^2$ and $w^2=2k$, with $v, w \in H^{*}_{\mathrm{alg}}(S, \ZZ)$, $m, k>0$. If $H$ is primitive and $v$-general, the intersection Betti and Hodge numbers of $\Mukai$ stabilize to the
stable Betti and Hodge numbers of the Hilbert scheme of $s$-points as $s$ 
tends to infinity, i.e.\ the limits
$\lim_{m \to \infty} \dim \IH^{p,q}(\Mukai)$ are finite and independent of $k$.
\end{cor}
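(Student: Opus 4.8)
The plan is to reduce the statement for Mukai moduli spaces to the already-established stabilisation result for Dolbeault moduli spaces, \cref{cor:stab}, by transporting the Hodge-to-singular correspondence to the Mukai setting. First I would recall, as indicated in Section \ref{sec:Mukai}, that for a Mukai vector $v=(0,D,\chi(\mathcal F))$ the support map $\Mukai \to |D|$ is a Lagrangian fibration whose fibres are compactified Jacobians of curves in the linear system $|D|$, and that for $v=mw$ with $w$ primitive the generic curve in $|D|$ is integral (indeed smooth if $w=(0,C,w_2)$ with $C$ primitive), so the locus of reduced supports is a dense open subset $|D|^{\mathrm{red}}\subseteq |D|$. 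On this locus the entire machinery of Sections \ref{sec:Ngo}--\ref{sec:summary} applies verbatim: the singularities of $\Mukai$ along strata of decomposable sheaves are again modelled on hypertoric quiver varieties by \cite{CMKV2015, CMKV2017}, the universal compactified Jacobian over a versal deformation of curves on $S$ has the degree-independence property \cref{prop:fullsupportvers}, and hence the Hodge-to-singular correspondence \cref{thm:IHde} holds with $A_n^{\mathrm{red}}$ replaced by $|D|^{\mathrm{red}}$ and $M(n,d)$ by $\Mukai$.

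Next I would run the proof of \cref{cor:stab} in this context. Taking global cohomology of the Ng\^o-string decomposition \eqref{eq:decthmNgo} adapted to $\Mukai$ yields an isomorphism $\IH^{\leq k}(M(S,mw,H))\simeq \IH^{\leq k}(M(S,mw_0,H))$ for a fixed reference multiplicity $m_0$ (say the primitive one, giving a hyperk\"{a}hler manifold deformation-equivalent to $\mathrm{Hilb}^{k+1}(S)$), where $k$ is the minimum, over nontrivial partitions, of $2\codim_{|D|}|D|^{\mathrm{red}}$ and of the dimensions of the deeper strata. The key numerical input is that this range $k$ grows at least linearly in $m$: the codimension of the non-reduced locus in $|mD_0|$ grows linearly in $m$ (this is where the hypothesis that $H$ is $v$-generic and $w$ primitive, together with $\rho(S)=1$ or the condition $w_2\neq 0$, enters, exactly as the Picard-rank-one hypothesis enters in \cref{cor:Pic1}), and the codimensions of the strata of decomposable sheaves likewise grow linearly, by the analogue of \cite[Lem. 2.2]{BellamySchedler2019} for moduli of sheaves on a surface. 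Finiteness of the stable Betti and Hodge numbers follows because $\IH^{*}(M(S,mw,H))$ agrees in the stable range with $H^{*}(M(S,w',H))$ for a primitive Mukai vector $w'$, whose cohomology is generated by finitely many tautological (Markman) classes, so that for the primitive Mukai vector of square $2k$ the moduli space is smooth and deformation-equivalent to $\mathrm{Hilb}^{k+1}(S)$ whose Betti and Hodge numbers are manifestly finite and independent of $k$ in each fixed degree (by G\"{o}ttsche's formula).

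I would then assemble these ingredients: combining the stable-range isomorphism with the identification of $M(S,w',H)$ with a deformation of a Hilbert scheme of points, one concludes that $\lim_{m\to\infty}\dim\IH^{p,q}(\Mukai)$ exists, is finite, and equals the corresponding stable Hodge number of $\mathrm{Hilb}^{s}(S)$ as $s\to\infty$, independently of $k$. The statement about independence of $k$ is immediate once one knows the limit is the Hilbert-scheme stable number, since that number does not see $k$ at all in a fixed cohomological degree.

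The main obstacle I anticipate is verifying that the Hodge-to-singular correspondence and, in particular, the degree-independence of the universal compactified Jacobian (\cref{prop:fullsupportvers}) genuinely transfer to families of curves on a K3 surface rather than spectral curves in $\Tot(\omega_C)$: one must check that a versal deformation of a curve $C'\in|D|$ inside $S$ still surjects appropriately onto the relevant boundary strata of $\overline{\mathcal M}_{g'}$, so that the $\delta$-regularity and the relative-dimension estimates of Section \ref{sec:Ngo} hold. This is essentially the content of \cite{MRV2019, CMKV2015, CMKV2017}, but the precise bookkeeping of which strata occur and with what codimension — and hence the linear growth of the bound $k$ — is the delicate point; everything else is a routine translation of Sections \ref{sec:summary}.
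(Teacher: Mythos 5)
Your proposal misses the single key input of the paper's proof: the deformation invariance of $(m,k)$-triples due to Perego and Rapagnetta (\cite[Thm 1.17]{PeregoRapagnetta18}). That theorem asserts that for two $(m,k)$-triples $(S_1,v_1,H_1)$, $(S_2,v_2,H_2)$ the moduli spaces are deformation equivalent \emph{via a locally analytically trivial family}, so their intersection Hodge numbers agree. The paper uses this at the outset to replace the arbitrary $(S,v,H)$ appearing in the statement --- where $v$ may well have positive rank, or $S$ may have $\rho(S)>1$ --- by the very special $(m,k)$-triple $(S_0,(0,mC,m),C)$, with $\Pic(S_0)\simeq\ZZ\langle C\rangle$ and $g(C)=k+1$. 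Only after this reduction does one have a support/Lagrangian fibration to a linear system, and only there does the Hodge-to-singular machinery (in the form already packaged as \cref{cor:Pic1}) apply. Your proposal begins by writing $v=(0,D,\chi(\mathcal F))$ and discussing the support map $\Mukai\to|D|$, which tacitly restricts to torsion Mukai vectors; for $v$ of positive rank there is no such support map and the compactified-Jacobian technology does not directly see $\Mukai$ at all. Moreover, even in the torsion case, your linear-growth estimate for the codimension of the non-reduced locus in $|mD_0|$ is controlled cleanly only when $\rho(S)=1$, which is exactly what the deformation-equivalence step buys you for free. Without invoking that theorem you would need to prove the codimension estimate (and the corresponding statement about strata of decomposable sheaves) on an arbitrary polarised K3 with $v$-generic $H$, which is a genuinely different and harder bookkeeping than what \cref{cor:Pic1} handles.

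The remaining pieces of your proposal are essentially right but organised more awkwardly than in the paper. You route the finiteness of the stable numbers through Markman's tautological generators for the primitive-Mukai-vector moduli space and then through a Hilbert scheme comparison; the paper instead observes directly that once one is reduced to $\chi=1$ --- which $\cref{cor:Pic1}$ provides, since it gives independence of $\chi$ of the stable numbers at $\rho(S_0)=1$ --- the Mukai vector $(0,mC,1)$ is primitive, so the corresponding moduli space is deformation equivalent to $\mathrm{Hilb}^{s}(S_0)$ with $s=2m^2(g-1)$, whose Betti and Hodge numbers are finite and stabilise by G\"ottsche's formula, with the stable value manifestly independent of $k$. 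So: insert the Perego--Rapagnetta reduction as the first step, specialise to $(S_0,(0,mC,m),C)$, and then quote $\cref{cor:Pic1}$ (which is the genuine Hodge-to-singular content here) rather than re-deriving a Mukai-system version of the full correspondence on an arbitrary K3.
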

\begin{proof}
Given two $(m,k)$-triples $(S_1, v_1, H_1)$ and $(S_2, v_2, H_2)$, $M(S_1, v_1, H_1)$ and $M(S_2, v_2, H_2)$ are deformation equivalent, and the deformation is locally analytically trivial by \cite[Thm 1.17]{PeregoRapagnetta18}. In particular, we have
$\dim \IH^{p,q}(M(S_1, v_1, H_1))= \dim\IH^{p,q}(M(S_2, v_2, H_2))$.

Now choose a K3 surface $S_0$ with $\Pic(S_0) \simeq \ZZ$ generated by the class of a smooth curve $C$ of genus $g=k+1$. Since $(S, v, H)$ and $(S_0, (0,mC, m), C)$ are $(m, g-1)$-triples,  we have \[\dim \IH^{p,q}(M(S, v, H))= \dim\IH^{p,q}(M(S_0, (0,mC, m), C)).\] By \cref{cor:Pic1}, we have 
\[\lim_{m \to \infty} \dim \IH^{p,q}M(S_0, (0,mC, m), C))=\lim_{m \to \infty} \dim \IH^{p,q}(M(S_0, (0,mC, 1), C)),\]
where $M(S_0, (0,mC, 1)$ is deformation equivalent to a Hilbert scheme of $s \coloneqq 2m^2(g-1)$ points on a K3 surface.
\end{proof}

\bibliographystyle{plain}
\bibliography{construction}
\end{document}